\newtheorem{theorem}{Theorem}
\newtheorem{thm}[theorem]{Theorem}
\newtheorem{lem}[theorem]{Lemma}
\newtheorem{cor}[theorem]{Corollary}
\theoremstyle{definition}
\newtheorem{definition}{Definition}
\newcommand{\subjclass}[2][2010]{%
	\let\@oldtitle\@title%
	\gdef\@title{\@oldtitle\footnotetext{#1 \emph{MSC2010: } #2}}%
}
\newcommand{\keywords}[1]{%
	\let\@@oldtitle\@title%
	\gdef\@title{\@@oldtitle\footnotetext{\emph{Keywords: } #1.}}%
}
\title{An alternative foundation and the generalized continuum hypothesis}
\keywords{Continuum hypothesis, Skolem's paradox, Choice, Topology, Measure, Integration}
\subjclass[]{03A05, 03E25, 03E50, 03E65, 54A99, 28E15}
\author{Eddy El Khalil \\ \href{mailto:el\_khalil.eddy@courrier.uqam.ca}{el\_khalil.eddy@courrier.uqam.ca} \\ Universit\'{e} du Qu\'{e}bec \`{a} Montr\'{e}al} 
\date{}
\begin{document}

\maketitle

\begin{abstract}
In this paper I introduce a new and intuitive first-order foundational theory (where the concept of set is not primitive) and use it to show that the power set of an infinite set does not exist. In particular, proofs of uncountability of a set are essentially proofs of the non-existence of that specified set. In a certain sense, uncountability is shown to be a form of incompleteness. Also, the axiom of choice is shown to be a straightforward theorem. In view of the non-existence of a set of all real numbers or more generally the non-guaranteed existence of the completion of a metric space, topological concepts are re-introduced in the context of ``extensions''. Measure theory is also reformulated accordingly.
\end{abstract}

\section{Introduction}
The generalized continuum hypothesis roughly says that there exists no set of cardinality strictly between the cardinality of a given infinite set and the cardinality of its power set. In light of the independence of the generalized continuum hypothesis from ZFC (Cohen\cite{cohen1}\cite{cohen2} and G\"odel\cite{godel}), it is safe to say that redefining the concept of set would be a sensible approach to resolve the generalized continuum hypothesis. The main approach that is used to settle the generalized continuum hypothesis consists in adding axioms directly to ZFC (see Koellner\cite{sep-continuum-hypothesis}); the approach used in this paper consists of ``decomposing'' the concept of set into somewhat more primitive notions. I contend that these ``more primitive notions'' should reflect the fact that a formalized language will be used to express the foundations of mathematics; in a certain sense, the study of mathematical objects will be bootstrapped upon this self-reflection. In ZFC, ``representation of mathematical objects'' and ``meaning of mathematical objects'' are both expressed by the same axiomatization via the concept of set. I claim that foundational issues concerning cardinality can be solved if \textit{representation} and \textit{meaning} are distinguished through two distinct primitive domains (from the point of view of first-order logic) and if the concept of set is expressed using the notions of \textit{representation} and \textit{meaning}. The resulting axiomatic system will resemble the one used in second-order arithmetic (as introduced in Hilbert/Bernays\cite{hilbert2013grundlagen1}\cite{hilbert2013grundlagen2} and as used in Simpson\cite{simpson2009subsystems} for reverse mathematics) having concatenation (characterized closely to what can be found in Tarski\cite{Tarski1936}, Quine\cite{quine1946concatenation}, Corcoron/Frank/Maloney\cite{corcoran1974string} and Grzegorczyk\cite{Grzegorczyk_2005}) as a replacement for arithmetic.

In context of ZFC, the foundation I introduce can be seen as an argument against the adoption of the power set axiom. I take the position that it is more natural to assume that ``every idea can be finitely represented'' than to assume that the power set of any set must exist. I assume that ideas can be expressed using sets and that sets and their elements can be represented by finite binary words (which will serve as medium of \textit{representation} and stand as primitive individuals in the foundation introduced). So using the countability of finite binary words and Cantor's diagonal argument I conclude that the power set of an infinite set cannot exist. Essentially, Cantor's diagonal argument will be used to construct a new element by giving ``new meaning'' to a chosen word; to carry this reasoning out, \textit{meaning} will be tracked and developed using ``systems'' (which will stand as primitive individuals). Moreover, for any set of subsets of some infinite set there exists a set of subsets (of the same infinite set) which can be obtained by ``adding'' a new subset; similar to the fact that given any set of ordinals it is possible to ``add'' a new ordinal to that set. In a certain sense, this perspective provides a natural resolution of Skolem's paradox. Skolem's paradox roughly asks how can a consistent theory that proves the existence of an uncountable set have a countable model (see Skolem\cite{skolem}). If we step back to a foundational point of view, instead of explaining away Skolem's paradox as a ``mathematics vs. metamathematics'' issue, I contend that Skolem's paradox should be seen as motivation to separate \textit{representation} from \textit{meaning} at the foundational level in order to more accurately capture the nature of metamathematics. Essentially, this allows us to highlight the finite nature of \textit{representation}, which is usually assumed at the metamathematical layer (i.e., first-order formulas are finite expressions), while still allowing the potentially infinite nature of \textit{meaning} to be expressed.

The paper will be structured as follows. In section 2, I introduce a new first-order foundation based on ``words'' and ``systems'', and state some basic definitions and consequences. In section 3, the concept of set and other commonly used structures are defined. In section 4, we see that every set is countable and that the axiom of choice becomes a simple theorem. In section 5 it is shown that the power set of an infinite set does not exist and that in particular a set containing all real numbers cannot exist. In a certain sense, uncountability is shown to be a form of incompleteness. In section 6, topological notions and classical results are reformulated through the lens of ``extensions''. In section 7, measure-theoretic notions and integrability are reformulated.

\section{Words and systems}

The main idea is to assume that every human thought involves exclusively \textit{representation} and \textit{meaning}. \textit{Representation} cannot be understood without \textit{meaning}, and \textit{meaning} cannot be understood without \textit{representation}. It is then assumed that it is necessary and sufficient for a foundational theory to standardize the behavior of \textit{representation} and \textit{meaning}. I claim that the following theory adequately standardizes the interaction of \textit{representation} and \textit{meaning}.

Lets consider a two-sorted first-order theory (with equality) having \texttt{\textsl{words}} and \texttt{\textsl{systems}} as primitive individuals. \texttt{\textsl{Words}} will be denoted by indexed or non-indexed lower-case letters ($a,b,\dots , $ $z, a_0,\dots ,z_0, \dots ,a_n, \dots ,z_n, \dots $). \texttt{\textsl{Systems}} will be denoted by indexed or non-indexed upper-case letters ($A,B,\dots , Z$,$A_0,\dots ,Z_0, \dots ,A_n, \dots ,Z_n, \dots $). There will be two \texttt{\textsl{word}} constants denoted by 0 and 1, one binary relation (seen as a membership relation between \texttt{\textsl{words}} and \texttt{\textsl{systems}}) expressed using $\in$ (first argument is a \texttt{\textsl{word}} and second argument is a \texttt{\textsl{system}}) and one binary function (seen as a concatenation operator on \texttt{\textsl{words}}) expressed by juxtaposing \texttt{\textsl{words}}. The following axioms and axiom schema will be satisfied:
\begin{enumerate}
	\item (symbols) $0\neq 1 \wedge \forall x \forall y ( xy\neq 0 \wedge xy\neq 1 )$
	\item (associativity) $\forall x \forall y \forall z ( (xy)z=x(yz) )$
	\item (reading) $\forall x \forall y ( x0 \neq y1 )$
	\item (simplification) $\forall x_1 \forall x_2 \forall y_1 \forall y_2 ((x_1y_1 = x_2y_2 \wedge y_1 = y_2)\Rightarrow x_1=x_2)$
	\item (extensionality) $\forall S_1 \forall S_2 ( \forall x( x\in S_1 \Leftrightarrow x\in S_2 ) \Rightarrow S_1 = S_2) $
	\item (word induction) $\forall S ( (0\in S \wedge 1\in S \wedge \forall x( x\in S \Rightarrow ( x0\in S \wedge x1\in S ) ) ) \Rightarrow \forall y( y\in S ) )$
	\item (comprehension) $\forall S_1,\dots , S_m \forall x_1,\dots , x_n \exists S \forall x ( x\in S \Leftrightarrow \phi ( x, x_1,\dots ,x_n, S_1,\dots ,S_m ) )$ where $S$ is not a free variable in $\phi$ . 
\end{enumerate}

Essentially, words will be used as medium of representation and systems will be used to express meaning. In a certain sense, systems can be viewed as formal languages, and therefore we could say that meaning is conveyed using formal languages.

Informally and considered in context of each other, the axioms and axiom schema can be roughly described as follows. 
The (symbols) axiom says that the word constants $0$ and $1$ are distinct and ``indivisible'' words. In a certain sense $0$ and $1$ can be seen as letters.
The (associativity) axiom states that concatenation is associative.
The (reading) axiom says that if two words are the same they must end by the same letter.
The (simplification) axiom says that  if two words are the same 
and end by some same word then they begin by some same word.
The (extensionality) axiom says that if two systems contain the same words then these systems must be the same.
Using (comprehension), the (word induction) axiom shows that every word is a finite string of ``zeros'' and ``ones''.
The (comprehension) axiom schema says that any ``adequate'' property determines a system.

The (associativity) axiom will often be used implicitly. 
We now explore some basic definitions and consequences.

\begin{definition}
	Let $S_1$, $S_2$ and $S_3$ be systems and $x$ be a word.
	\begin{itemize}
		\item If $x\in S_1$ then we say that $x$ is an {\bf {\itshape element of $S_1$}} or that {\bf {\itshape $S_1$ contains $x$}}.
		\item If for any word $x$ we have $x\in S_1 \Rightarrow x\in S_2$ then we say that $S_1$ is a {\bf {\itshape subsystem of}} $S_2$ or that $S_1$ is {\bf {\itshape smaller than }} $S_2$ or write $S_1 \subseteq S_2$.
		\item If $\forall x( x\in S_3 \Leftrightarrow (x\in S_1 \vee x\in S_2))$ then we say that $S_3$ is the {\bf {\itshape union of}} $S_1$ {\bf {\itshape and }} $S_2$ or write $S_3=S_1 \cup S_2$.
		\item If $\forall x(x\in S_3 \Leftrightarrow (x\in S_1 \wedge x\in S_2))$ then we say that $S_3$ is the {\bf {\itshape intersection of}} $S_1$ {\bf {\itshape and}} $S_2$ or write $S_3=S_1 \cap S_2$.
		\item If $\forall x(x\in S_3 \Leftrightarrow (x\in S_1 \wedge x\notin S_2))$ then we say that $S_3$ is the {\bf {\itshape system difference of}} $S_1$ {\bf {\itshape by}} $S_2$ or write $S_3=S_1 \setminus S_2$.
		\item If $\forall x(x\notin S_1)$ then we say that $S_1$ is the {\bf {\itshape empty system}} or write $S_1=\emptyset$.
	\end{itemize}
\end{definition}



\begin{definition}
Let $x$ and $y$ be words. If $y=x$ or there exists $z$ such that $y=xz$ then we say that $x$ is a {\bf {\itshape prefix }} of $y$. If $y=x$ or there exists $z$ such that $y=zx$ then we say that $x$ is a {\bf {\itshape suffix}} of $y$.
\end{definition}

\begin{lem}
Let $x$, $y$ and $z$ be words.
\begin{enumerate} 
	\item If $x$ is a prefix of $y$ and $y$ is a prefix of $z$ then $x$ is a prefix of $z$.
	\item If $x$ is a suffix of $y$ and $y$ is a suffix of $z$ then $x$ is a suffix of $z$.
\end{enumerate}
\end{lem}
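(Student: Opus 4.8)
The plan is to prove transitivity of the prefix relation (part 1) directly from the definition, splitting into cases according to whether the witnessing equalities are equalities or genuine concatenations, and then to observe that part 2 follows by an entirely symmetric argument. Recall that by Definition the statement ``$x$ is a prefix of $y$'' means $y = x$ or there exists $w$ such that $y = xw$; similarly for suffixes but with the concatenation written on the left.

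For part 1, suppose $x$ is a prefix of $y$ and $y$ is a prefix of $z$. First I would handle the degenerate cases: if $y = x$, then the hypothesis that $x$ is a prefix of $y$ gives no new word, so $x$ is a prefix of $z$ reduces immediately to ``$y$ is a prefix of $z$''; symmetrically if $z = y$, the conclusion follows from the first hypothesis. The only substantive case is when both witnesses are genuine concatenations: there exist words $u$ and $v$ with $y = xu$ and $z = yv$. Then I would substitute to obtain $z = (xu)v$, and here is the one place a stated result is invoked: by the (associativity) axiom, $(xu)v = x(uv)$, so $z = x(uv)$. Taking $w = uv$ as the witness, this exhibits $x$ as a prefix of $z$, completing the case.

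Part 2 is proved by the mirror-image argument: from $y = wx$ (for some $w$) and $z = vy$ (for some $v$) one gets $z = v(wx) = (vw)x$ by associativity, so $vw$ witnesses that $x$ is a suffix of $z$; the degenerate cases $y=x$ and $z=y$ are handled as before.

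I do not expect any genuine obstacle here, since the argument is a routine case analysis whose only real content is a single application of associativity. The mildest point of care is simply the bookkeeping of the ``or'' in the definition of prefix, ensuring that the degenerate cases (where one of the equalities is $y = x$ rather than $y = xz$) are not overlooked; but these collapse trivially into one of the hypotheses and contribute nothing beyond clerical checking.
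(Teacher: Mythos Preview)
Your proposal is correct and follows essentially the same approach as the paper: handle the degenerate cases $x=y$ or $y=z$ trivially, and in the remaining case combine the two concatenation witnesses via a single application of (associativity). The paper's proof is line-for-line the same argument with different variable names.
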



\begin{proof}
1. If $x=y$ or $y=z$ then trivially $x$ is a prefix of $z$. If $x\neq y$ and $y\neq z$ then there exist $x_1$ and $y_1$ such that $y=xx_1$ and $z=yy_1$ so by (associativity) $z=x(x_1y_1)$ showing that $x$ is a prefix of $z$.

2. If $x=y$ or $y=z$ then trivially $x$ is a suffix of $z$. If $x\neq y$ and $y\neq z$ then there exist $x_1$ and $y_1$ such that $y=x_1x$ and $z=y_1y$ so by (associativity) $z=(y_1x_1)x$ showing that $x$ is a suffix of $z$.

\end{proof}

\begin{lem} \label{prepost} Let $x$ be a word such that $x\neq 0$ and $x\neq 1$.
\begin{enumerate}
	\item There exists $x_0$ such that $x=x_00$ or $x=x_01$.
	\item There exists $x_0$ such that $x=0x_0$ or $x=1x_0$.
\end{enumerate}
\end{lem}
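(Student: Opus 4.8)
The plan is to prove both statements by (word induction), which is exactly the axiom encoding the fact that every word is obtained from the constants $0$ and $1$ by repeatedly appending letters on the right. For each part I would use (comprehension) to form the system consisting of all words possessing the desired decomposition (including the two exceptional values $0$ and $1$ among the disjuncts of the defining property), check that this system satisfies the hypotheses of (word induction), and conclude it contains every word. Specializing to a word $x$ with $x\neq 0$ and $x\neq 1$ then forces the decomposition, since the exceptional cases are excluded.

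For the first part I would set $P(y)\equiv(y=0)\vee(y=1)\vee\exists x_0(y=x_00\vee y=x_01)$ and let $S=\{\,y:P(y)\,\}$ be the system given by (comprehension). The base cases $0\in S$ and $1\in S$ hold immediately from the first two disjuncts. For the inductive step I observe that for an arbitrary word $x$ the word $x0$ is already of the form $x_00$ with $x_0=x$, and likewise $x1$ is of the form $x_01$; hence both satisfy the third disjunct and lie in $S$. Notably this step does not even require the inductive hypothesis $x\in S$. By (word induction) every word lies in $S$, and when $x\neq 0$ and $x\neq 1$ the first two disjuncts fail, so the third must hold.

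For the second part I would set $Q(y)\equiv(y=0)\vee(y=1)\vee\exists x_0(y=0x_0\vee y=1x_0)$ and take $S=\{\,y:Q(y)\,\}$. The base cases are again immediate, and the inductive step is where the genuine content lies: assuming $Q(x)$, I must derive $Q(x0)$ and $Q(x1)$. If $x=0$ or $x=1$, then $x0$ and $x1$ are among $00,01,10,11$, each visibly beginning with $0$ or $1$. If instead $x=0x_0$ for some $x_0$, then by (associativity) $x0=(0x_0)0=0(x_00)$ and $x1=(0x_0)1=0(x_01)$, so both begin with $0$; the case $x=1x_0$ is entirely symmetric. Thus $x0,x1\in S$, and (word induction) yields $\forall y(y\in S)$, from which the claim follows for $x\neq 0$ and $x\neq 1$ as before.

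The only real obstacle is the inductive step of the second part, and it is mild: it requires a short case split on the form of $x$ together with a single application of (associativity) to re-bracket the concatenation so that the leading letter of $x$ becomes the exposed leading letter of $x0$ and $x1$. The first part, by contrast, is essentially immediate once the inductive system is in place, because appending a letter on the right automatically produces a word whose final letter is known.
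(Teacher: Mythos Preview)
Your proof is correct and, for part 2, essentially identical to the paper's: both form the system of words admitting a left-decomposition (together with the exceptional values $0$ and $1$), verify the base cases, and push through the inductive step by case-splitting on the assumed form of $x$ and rebracketing via (associativity).

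For part 1 there is a minor but instructive difference in packaging. You argue directly: form $S=\{y:P(y)\}$ by (comprehension), observe that the inductive step is trivial since $x0$ and $x1$ are already of the required shape, and apply (word induction). The paper instead argues by contradiction through minimality: it takes $S_1$ to be the smallest system closed under the rules $0,1\in S_1$ and $x\in S_1\Rightarrow x0,x1\in S_1$, notes that (word induction) forces $S_1$ to be all words, and then shows that a hypothetical counterexample $x_1$ could be deleted from $S_1$ without breaking closure (precisely because $x_1\neq 0,1$ and $x_1\neq y0,y1$), contradicting that $S_1$ was already all words. Your direct route is cleaner here and avoids the detour through minimality; the paper's formulation, on the other hand, makes visible the general principle that a purported exception to an inductively-proved statement can be excised from the inductive system, which is a template the paper reuses implicitly elsewhere.
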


\begin{proof}
1. Let $S_1$ be the smallest system such that 
$ 0\in S_1 \wedge 1\in S_1 \wedge \forall x(x\in S_1 \Rightarrow (x0\in S_1 \wedge x1\in S_1 ))$. 
$S_1$ exists by (comprehension) and we see by (word induction) that $S_1$ contains every word. Suppose there exists $x_1\in S_1$ with $x_1\neq 0$ and $x_1\neq 1$ such that for all $y$ we have $x_1\neq y0$ and $x_1\neq y1$. Using (comprehension) we could construct a system $S_2$ such that $\forall x(x\in S_2 \Leftrightarrow (x\in S_1 \wedge x\neq x_1))$ and $S_2$ would still contain every word by (word induction) which gives us a contradiction.

2. Let $S$ be the smallest system such that 
$ \forall x( x0\in S \wedge x1\in S) $.
By 1. we see that if $y\neq 0$ and $y\neq 1$ then $y\in S$. If $y=00$ or $y=01$ or $y=10$ or $y=11$ we easily see that there exists $x_0$ such that $y=0x_0$ or $y=1x_0$.

Suppose that for some $k$ with $k\neq 0$ and $k\neq 1$ there exists $x_0$ such that $k=0x_0$ or $k=1x_0$. By (associativity) we see that 
$$ k0 = (0x_0)0 = 0(x_00) \text{ or } k0 = (1x_0)0 = 1(x_00) $$
and
$$ k1 = (0x_0)1 = 0(x_01) \text{ or } k1 = (1x_0)1 = 1(x_01) $$
By (word induction) we conclude that $\forall y(y\in S \Rightarrow \exists x_0(y=0x_0 \vee y=1x_0) )$.
\end{proof}

The previous lemma will often be used implicitly.

The following theorem confirms the ``left'' version of (reading).
\begin{thm} \label{leftreading}
$\forall x\forall y (1x\neq 0y)$
\end{thm}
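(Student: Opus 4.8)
The plan is to reduce the two-variable statement to a single-variable one amenable to (word induction). Observe that $\forall x\forall y(1x\neq 0y)$ is equivalent to saying that no word $w$ can be written simultaneously as $1x$ and as $0y$. So I would first use (comprehension) to form the system
$$ S = \{\, w : \neg\exists x\,\exists y\,( w = 1x \wedge w = 0y ) \,\}, $$
and then show by (word induction) that $S$ contains every word; the theorem is then an immediate reformulation of $\forall w(w\in S)$ (given any $x,y$, apply membership of $w=1x$ in $S$).

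For the base cases, $0\in S$ holds because $1x\neq 0$ by (symbols), and $1\in S$ holds because $0y\neq 1$ by (symbols); in each case one of the two required equalities is already impossible, so the conjunction cannot be witnessed.

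The heart of the argument is the inductive step: assuming $w\in S$, I would show $w0\in S$ and $w1\in S$. Treating $w0$, suppose toward a contradiction that $w0=1x$ and $w0=0y$. The idea is to peel off the final letter. Since $w0$ ends in $0$, (reading) forbids $1x$ and $0y$ from ending in $1$; combined with Lemma~\ref{prepost} this forces $x$ and $y$ either to be the letter $0$ or to have the form $x_00$ and $y_00$. In the boundary subcases (simplification) yields $w=1$ or $w=0$, and in the generic subcases (associativity) and (simplification) yield $w=1x_0$ and $w=0y_0$. I would then close all four combinations: three contradict (symbols) directly (namely $1=0$, or $1x_0=0$, or $0y_0=1$), and the remaining one, $w=1x_0\wedge w=0y_0$, contradicts the induction hypothesis $w\in S$. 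The case $w1\in S$ is entirely analogous, with the terminal letters adjusted and the roles of $x$ and $y$ playing out symmetrically.

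The main obstacle I anticipate is bookkeeping in the inductive step rather than any conceptual difficulty: I must correctly dispatch the boundary subcases where $x$ or $y$ equals $0$ or $1$ before invoking (simplification), and I must set up each application of (reading) with the right parsing, i.e. identifying the correct words playing the roles of the two factors in $a0\neq b1$. Once the terminal letter is stripped consistently in every branch, the reduction to $w$ and the appeal to the induction hypothesis are routine.
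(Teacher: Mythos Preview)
Your proposal is correct and follows essentially the same approach as the paper: both proofs use (word induction) together with (reading), (simplification), (symbols) and Lemma~\ref{prepost} to peel off the final letter and reduce to a shorter instance. The only cosmetic difference is the induction variable---the paper inducts on $x$ (proving $\forall y(1x_0\neq 0y)\Rightarrow \forall y(1x_00\neq 0y)\wedge\forall y(1x_01\neq 0y)$), whereas you induct on the common value $w$; the case analysis and the underlying mechanism are identical.
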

\begin{proof}
$$ 10 \neq 00 \text{ by (simplification) and (symbols) }  $$
$$ 10 \neq 01 \text{ by (reading) } $$
$$ 11 \neq 01 \text{ by (simplification) and (symbols) } $$
$$ 11 \neq 00 \text{ by (reading) } $$

If $y=y_00$ or $y=y_01$ then we obtain $10\neq 0y$ and $ 11\neq 0y$ by (simplification), (reading) and (symbols). So we must have $$ \forall y (10 \neq 0y \wedge 11 \neq 0y) $$

Suppose for a given $x_0$ we have $\forall y (1x_0 \neq 0y)$. If $y=0$ then $1x_00 \neq 0y$ by (simplification) and (symbols) and $1x_01 \neq 0y$ by (reading). If $y=1$ then $1x_00 \neq 0y$ by (reading) and $1x_01 \neq 0y$ by (simplification) and (symbols). If $y=y_00$ for some $y_0$ then $1x_01 \neq 0y$ by (reading) and $1x_00 \neq 0y$ by (simplification) and hypothesis.
If $y=y_01$ for some $y_0$ then $1x_01 \neq 0y$ by (simplification) and hypothesis and $1 x_0 0 \neq 0y$ by (reading). So by lemma \ref{prepost}, (comprehension) and (word induction) we get $\forall x \forall y (1x \neq 0y)$. 

\end{proof}

The next theorem confirms the ``left'' version of (simplification).
\begin{thm} \label{leftsimpli}
$\forall x_1 \forall x_2 \forall y_1 \forall y_2 ((x_1y_1 = x_2y_2 \wedge x_1 = x_2)\Rightarrow y_1=y_2)$
\end{thm}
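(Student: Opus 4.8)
Since the hypothesis already contains $x_1 = x_2$, the statement reduces to pure left cancellation: writing $x$ for the common value $x_1 = x_2$, it suffices to prove $\chi(x) \equiv \forall y_1 \forall y_2 (x y_1 = x y_2 \Rightarrow y_1 = y_2)$ for every word $x$. The plan is to establish this in two stages, first the special case in which the cancelled prefix is a single letter, and then the general case by (word induction) on the prefix $x$. I would deliberately avoid trying to prove $\chi(x)$ directly by induction on the suffix $y_1$ with $x$ left universally quantified, because the base case $y_1 = 0$ would force me to rule out $x = xz$ for some word $z$ (after applying (simplification) to $x0 = (xz)0$), i.e. to prove that no word is a proper prefix of itself --- a fact that is not yet available and that seems to require left cancellation in the first place.

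First I would prove single-letter cancellation, namely $\forall y_1 \forall y_2 (0 y_1 = 0 y_2 \Rightarrow y_1 = y_2)$ together with the analogous statement for a leading $1$. Using (comprehension) I form the system of all $y_1$ satisfying the first implication and apply (word induction). In the two base cases ($y_1 = 0$ and $y_1 = 1$) and in the inductive step I dispose of $y_2$ by splitting it through Lemma \ref{prepost} according to its final letter: (reading) immediately discards the cases where $0y_2$ and the left-hand side end in different letters, (simplification) strips a common trailing letter, and the inductive hypothesis then finishes. The essential point --- and the reason for restricting to a single leading letter --- is that here (simplification) reduces the awkward cases to equalities of the form $0 = 0z$, which (symbols) rules out directly since $0z$ is a concatenation; the companion statement uses $1 \neq 1z$ in the same way.

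With single-letter cancellation in hand, I would prove $\chi(x)$ for all $x$ by (word induction) on $x$. The base cases $x = 0$ and $x = 1$ are exactly the single-letter statements just established. For the inductive step, assuming $\chi(x_0)$, suppose $(x_0 0) y_1 = (x_0 0) y_2$; by (associativity) this is $x_0(0 y_1) = x_0(0 y_2)$, so $\chi(x_0)$ cancels the prefix $x_0$ to give $0 y_1 = 0 y_2$, and single-letter cancellation then yields $y_1 = y_2$; the prefix $x_0 1$ is handled identically using the leading-$1$ statement. Word induction then delivers $\chi(x)$ for every word $x$, and the theorem follows by substituting the common prefix. I expect the single-letter stage to be the main obstacle, both because it carries essentially all of the case analysis and because making the induction go through hinges on the observation above that the prefix must be a bare letter, so that (symbols) can close the residual $0 = 0z$ and $1 = 1z$ cases that (simplification) produces.
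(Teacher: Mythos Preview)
Your proposal is correct and follows essentially the same two-stage structure as the paper: first establish single-letter left cancellation for prefixes $0$ and $1$ by (word induction) on $y_1$, using Lemma~\ref{prepost} to case-split $y_2$ by its final letter and invoking (reading), (simplification), and (symbols) exactly as you describe; then lift to arbitrary prefixes by (word induction) on $x$, using (associativity) to peel off one trailing letter of the prefix and appealing to the single-letter result. Your explicit remark about why one should not induct on $y_1$ with $x$ arbitrary --- because the residual $x = xz$ is Theorem~\ref{integri}, proved only afterwards using this very theorem --- is a correct diagnosis of the circularity and nicely motivates the structure the paper also adopts.
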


\begin{proof}
$$ \forall y_2 (00 = 0y_2 \Rightarrow y_2=0) \text{ by (reading), (simplification), (symbols) and lemma \ref{prepost} } $$
$$ \forall y_2 (01 = 0y_2 \Rightarrow y_2=1) \text{ by (reading), (simplification), (symbols) and lemma \ref{prepost} } $$ 
$$ \forall y_2 (10 = 1y_2 \Rightarrow y_2=0) \text{ by (reading), (simplification), (symbols) and lemma \ref{prepost} } $$
$$ \forall y_2 (11 = 1y_2 \Rightarrow y_2=1) \text{ by (reading), (simplification), (symbols) and lemma \ref{prepost} } $$

Suppose for a given $y_1$ we have $\forall y_2( (0y_1 = 0y_2 \Rightarrow y_1=y_2 ) \wedge (1y_1 = 1y_2 \Rightarrow y_1=y_2)) $. For some $y_2$, if $0y_10 = 0y_2$ then $\exists y_0 (y_2=y_00)$ ( by (reading), (simplification), (symbols) and  lemma \ref{prepost} ). So by (simplification) and hypothesis we get $$(0y_10 = 0y_2) \Rightarrow (0y_1 = 0y_0) \Rightarrow (y_1=y_0) \Rightarrow (y_10 = y_2)$$ 
For some $y_2$, if $0y_11 = 0y_2$ then $\exists y_0 (y_2=y_01)$ ( by (reading), (simplification), (symbols) and lemma \ref{prepost} ). So $$(0y_11 = 0y_2) \Rightarrow (0y_1 = 0y_0) \Rightarrow (y_1=y_0) \Rightarrow (y_11 = y_2)$$
For some $y_2$, if $1y_10 = 1y_2$ then $\exists y_0 (y_2=y_00)$ ( by (reading), (simplification), (symbols) and lemma \ref{prepost} ). So $$(1y_10 = 1y_2) \Rightarrow (1y_1 = 1y_0) \Rightarrow (y_1=y_0) \Rightarrow (y_10 = y_2)$$
For some $y_2$, if $1y_11 = 1y_2$ then $\exists y_0 (y_2=y_01)$ ( by (reading), (simplification), (symbols) and lemma \ref{prepost} ). So $$(1y_11 = 1y_2) \Rightarrow (1y_1 = 1y_0) \Rightarrow (y_1=y_0) \Rightarrow (y_11 = y_2)$$
By (comprehension) and (word induction) we obtain $\forall y_1 \forall y_2((0y_1 = 0y_2 \Rightarrow y_1=y_2) \wedge (1y_1 = 1y_2 \Rightarrow y_1 = y_2))$.

Now suppose that for some $k$ we have $\forall x_1 \forall y_1 \forall y_2 ( (x_1y_1=ky_2 \wedge x_1=k) \Rightarrow y_1=y_2 )$. If $x_1y_1=k0y_2$ and $x_1=k0$ then by (associativity) and hypothesis we obtain $0y_1=0y_2$ so by what was shown above we get $y_1=y_2$. Similarly, if $x_1y_1=k1y_2$ and $x_1=k1$ then we get $y_1=y_2$. Using (comprehension) and (word induction) we obtain the desired result. 

\end{proof}

\begin{thm} \label{integri}
$\forall x \forall y (x\neq xy \wedge x\neq yx)$
\end{thm}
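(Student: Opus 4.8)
The plan is to prove the two conjuncts $\forall x\forall y(x\neq xy)$ and $\forall x\forall y(x\neq yx)$ separately, each by (word induction) on the word that appears on both sides of the putative equality. In both cases I would use (comprehension) to form the relevant system, then verify the base cases from (symbols) and carry out the inductive step by peeling off the appended letter with the appropriate cancellation result.

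For the first conjunct, let $S$ be the system satisfying $\forall w(w\in S \Leftrightarrow \forall y(w\neq wy))$, which exists by (comprehension). The base cases $0\in S$ and $1\in S$ are immediate from (symbols), since $0y$ and $1y$ are concatenations and hence distinct from $0$ and from $1$. For the inductive step I would assume $w\in S$ and show $w0\in S$ by contradiction: if $w0=(w0)y$ for some $y$, then by (associativity) $(w0)y=w(0y)$, so $w\cdot 0=w\cdot(0y)$, and applying the left cancellation of Theorem \ref{leftsimpli} with common left factor $w$ yields $0=0y$, contradicting (symbols). The case $w1\in S$ is identical. Notably this direction never uses the induction hypothesis, and (word induction) then forces $S$ to contain every word.

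For the second conjunct, let $T$ be the system satisfying $\forall w(w\in T \Leftrightarrow \forall y(w\neq yw))$. Again $0\in T$ and $1\in T$ follow from (symbols), since $y0\neq 0$ and $y1\neq 1$. For the inductive step I would suppose $w\in T$ and, toward a contradiction, $w0=y(w0)$ for some $y$; by (associativity) $y(w0)=(yw)0$, so $w\cdot 0=(yw)\cdot 0$, and applying the (simplification) axiom with common right factor $0$ gives $w=yw$, which contradicts the induction hypothesis $w\in T$. The case $w1$ is handled the same way using the right factor $1$. By (word induction), $T$ contains every word, completing the theorem.

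The only real subtlety is the asymmetry forced by the fact that (word induction) builds words by appending letters on the \emph{right}: for $x\neq xy$ one must factor $(w0)y$ as $w(0y)$ and cancel on the left, so the step closes immediately, whereas for $x\neq yx$ one factors $y(w0)$ as $(yw)0$ and cancels on the right, which is exactly where the induction hypothesis becomes essential. Lining up these two factorizations so that Theorem \ref{leftsimpli} and the (simplification) axiom respectively apply is the main point to get right; everything else reduces to the atomic facts supplied by (symbols). I do not expect to need Lemma \ref{prepost} here, since (word induction) already presents every word in the convenient $w0,w1$ form.
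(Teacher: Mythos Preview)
Your proof is correct, but the paper takes a different, non-inductive route. Instead of running (word induction), the paper invokes Lemma~\ref{prepost} to decompose an arbitrary $x\notin\{0,1\}$ on the \emph{right} as $x=x_00$ or $x=x_01$ and on the \emph{left} as $x=0x_1$ or $x=1x_1$, and then cancels in one step. For $x=xy$ the right decomposition plus Theorem~\ref{leftsimpli} gives $0=0y$ or $1=1y$ directly---essentially your argument, and indeed your own observation that the induction hypothesis is idle here reflects that this conjunct needs no induction at all. For $x=yx$ the paper uses the \emph{left} decomposition, so that (simplification) cancels the tail $x_1$ and yields $0=y0$ or $1=y1$ immediately, again with no recursion; your approach, constrained by the right-appending shape of (word induction), cancels the final letter instead and lands on $w=yw$, which is why you genuinely need the hypothesis. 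In short: the paper trades the explicit induction for the prior work packaged in Lemma~\ref{prepost} (whose part~2 was itself proved by induction), while your argument is self-contained and bypasses Lemma~\ref{prepost} entirely. Both are clean; yours is arguably the more economical use of the axiom system as stated.
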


\begin{proof}
We can easily see that $\forall y(0\neq 0y \wedge 0\neq y0 \wedge 1 \neq 1y \wedge 1\neq y1)$ by (symbols), so if $x=0$ or $x=1$ then we get $\forall y (x\neq xy \wedge x\neq yx)$. Let $x$ be such that $x\neq 0$ and $x\neq 1$, by lemma \ref{prepost} we get $\exists x_0(x=x_00 \vee x=x_01)$ and $\exists x_1(x=0x_1 \vee x=1x_1)$. Suppose for some $y$ we have $x=xy$. We get $x_00 = x_00y$ or $x_01 = x_01y$ for some $x_0$ and by left simplification (theorem \ref{leftsimpli}) we obtain $0=0y$ or $1=1y$ which contradicts (symbols). Suppose for some $y$ we have $x=yx$, then we get $0x_1 = y0x_1$ or $1x_1 = y1x_1$ for some $x_1$ and by (simplification) we obtain $0=y0$ or $1=y1$ which contradicts (symbols). So we must conclude that $\forall x \forall y (x\neq xy \wedge x\neq yx)$.
\end{proof}

\begin{thm} \label{integri2}
$\forall x \forall y \forall z(x\neq yxz)$.
\end{thm}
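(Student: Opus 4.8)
The plan is to prove the statement by word induction on $x$, applied to the property $P(x)$ given by $\forall y \forall z(x \neq yxz)$. First I would use (comprehension) to form the system $S$ with $x \in S \Leftrightarrow P(x)$, then verify that $0 \in S$, that $1 \in S$, and that $S$ is closed under appending $0$ and $1$; (word induction) then yields $\forall x\, P(x)$, which is exactly the theorem. Note that no empty word exists (by (symbols) every concatenation differs from the letters $0$ and $1$), so intuitively $yxz$ is strictly ``longer'' than $x$; the point of the formal argument is to turn this length intuition into a legal induction.

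For the base cases, suppose $0 = y0z$. Since $0z$ is a word, $y0z = y(0z)$ is a concatenation, so by (symbols) it cannot equal $0$; hence $0 \neq y0z$. The case $x = 1$ is identical, using that $y(1z) \neq 1$ by (symbols). Thus $0, 1 \in S$.

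For the inductive step I would assume $P(x)$ and establish $P(x0)$, the case $P(x1)$ being entirely symmetric. Suppose toward a contradiction that $x0 = y(x0)z$ for some words $y$ and $z$. By lemma \ref{prepost}, $z$ either ends in $1$ (that is $z = 1$ or $z = z_0 1$) or ends in $0$ (that is $z = 0$ or $z = z_0 0$). In the first situation associativity rewrites the right-hand side as $w1$ for a suitable $w$, while the left-hand side is $x0$, so (reading) gives $x0 \neq y(x0)z$, a contradiction. In the second situation both sides terminate in $0$, so (simplification) cancels the trailing $0$; after re-associating, the surviving equation is $x = yxz'$ with $z' = 0$ or $z' = 0z_0$ respectively. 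This is precisely an instance forbidden by the induction hypothesis $P(x)$ with the same prefix $y$, again a contradiction. Hence $P(x0)$ holds, and by symmetry $P(x1)$, so $S$ is closed under appending and the induction closes.

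The applications of (reading), (simplification) and (symbols) are routine and mirror the bookkeeping already done in Theorems \ref{leftreading} and \ref{leftsimpli}. The one step to get right is the re-association in the inductive step: after cancelling the shared trailing letter one must recognize that the remaining word sits in exactly the shape $x = yxz'$ for the shorter word $x$, pushing the cancelled letter into the new suffix $z'$, so that the hypothesis applies verbatim. I expect this to be the only subtle point. It is worth noting why a direct, non-inductive attempt is unattractive: from $x = yxz$ one reads off that $xz$ is a suffix of $x$ while $x$ is a prefix of $xz$, and unwinding these suffix/prefix relations simply reproduces the equation $x = yxz$, so the argument loops; induction on $x$ is what breaks the loop.
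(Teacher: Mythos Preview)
Your proposal is correct and follows essentially the same route as the paper: word induction on $x$, with the base cases dispatched by (symbols) and the inductive step handled by using lemma~\ref{prepost} to split on the last letter of $z$, ruling out the ``ends in $1$'' case via (reading) and reducing the ``ends in $0$'' case via (simplification) to an instance $x = yxz'$ covered by the hypothesis. The paper's proof is exactly this argument, with the same case split and the same new suffix $z' \in \{0,\,0z_0\}$.
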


\begin{proof}
By (symbols) we see that $0\neq y0z$ and $1\neq y1z$. Suppose for some word $k$ we have that $k\neq ykz$ for all words $y$ and $z$. Lets proceed by contradiction and assume that $k0=y(k0)z$ for some word $y$ and some word $z$. By (reading) and lemma \ref{prepost} we get $z=z_10$ for some $z_1$ or $z=0$. If $z=z_10$ then by (associativity) and (simplification) we get $k=yk0z_1$ which produces a contradiction. If $z=0$ then by (simplification) we get $k=yk0$ giving us also a contradiction. So we must have $k0\neq y(k0)z$. By similar logic we must have $k1\neq y(k1)z$, so using (comprehension) and (word induction) we get the desired result.
\end{proof}

\begin{definition}
Let $S_1$ and $S_2$ be systems and $w$ a word.
	\begin{itemize}
		\item If $S_2$ is the smallest system such that
				$$ w\in S_2 \wedge \forall x( x\in S_2 \Rightarrow xw\in S_2) $$ then we say that $S_2$ is {\bf {\itshape right-generated by}} $w$.
		\item If $S_2$ is the smallest system such that 
				$$ \forall x\in S_1(x\in S_2) \wedge \forall x_1\in S_1 \forall x_2\in S_2( x_2x_1\in S_2) $$
				then we say that $S_2$ is {\bf {\itshape right-generated by}} $S_1$.
		\item If $S_2$ is the smallest system such that
				$$ w\in S_2 \wedge \forall x( x\in S_2 \Rightarrow wx\in S_2) $$ then we say that $S_2$ is {\bf {\itshape left-generated by}} $w$.
		\item If $S_2$ is the smallest system such that 
				$$ \forall x\in S_1(x\in S_2) \wedge \forall x_1\in S_1 \forall x_2\in S_2( x_1x_2\in S_2) $$
				then we say that $S_2$ is {\bf {\itshape left-generated by}} $S_1$.
	\end{itemize}
\end{definition}

\begin{definition}
Given systems $S$, $S_1$ and $S_2$, if $S_1$ is right-generated by $S$ and $S_2$ is the smallest system containing every prefix of every word in $S_1$ then we say that $S_2$ is {\bf {\itshape prefix-generated by }}$S$.
\end{definition}

\begin{lem} \label{prefge}
Given systems $S_1$, $S_2$ and $S_3$, if $S_2$ is right-generated by $S_1$ and $S_3$ is prefix-generated by $S_1$ then $S_2$ is a subsystem of $S_3$.
\end{lem}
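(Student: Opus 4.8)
The plan is to unwind the two generation definitions and then observe that the containment is almost immediate once we recall that every word is a prefix of itself. First I would spell out what each hypothesis provides. Saying that $S_2$ is right-generated by $S_1$ means $S_2$ is the smallest system satisfying $\forall x\in S_1(x\in S_2) \wedge \forall x_1\in S_1 \forall x_2\in S_2(x_2x_1\in S_2)$; concretely it collects every finite right-concatenation of elements of $S_1$. Saying that $S_3$ is prefix-generated by $S_1$ means, by the preceding definition, that there is some system $R$ right-generated by $S_1$ such that $S_3$ is the smallest system containing every prefix of every word in $R$.

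Second I would identify $R$ with $S_2$. Both $R$ and $S_2$ are, by hypothesis, the smallest system right-generated by $S_1$; since the defining closure property is the same for each and (extensionality) forces any two smallest systems with that property to coincide, I get $R=S_2$. Hence $S_3$ contains every prefix of every word in $S_2$.

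The final step uses the definition of prefix directly: taking $y=x$ in that definition shows that every word $x$ is a prefix of itself. Therefore, for each word $w$ with $w\in S_2$, the word $w$ is a prefix of the word $w$, which lies in $S_2=R$; so $w$ belongs to $S_3$ by the defining property of $S_3$. This yields $x\in S_2 \Rightarrow x\in S_3$ for every word $x$, which is exactly $S_2\subseteq S_3$.

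I do not expect a genuine obstacle here; the only point requiring care is the bookkeeping in the definition of prefix-generation, where one must notice that the prefixes are taken from the right-generated system $R$ (which equals $S_2$) rather than directly from the generator $S_1$. Once that identification is made, the self-prefix observation closes the argument with no further computation.
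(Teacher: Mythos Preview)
Your proof is correct and follows essentially the same approach as the paper: the paper's one-line argument (``since every word is a prefix of itself it follows immediately from definition of $S_3$ that $S_2\subseteq S_3$'') is exactly your final step, with your identification $R=S_2$ left implicit. Your version simply makes explicit the bookkeeping that the right-generated system appearing in the definition of prefix-generation coincides with $S_2$ by uniqueness of the smallest closure.
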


\begin{proof}
Since every word is a prefix of itself it follows immediately from definition of $S_3$ that $S_2\subseteq S_3$.
\end{proof}

\begin{lem} \label{overlap}
Given words $x$, $y_1$ and $y_2$, if $y_1y_2=x0$ or $y_1y_2=x1$ then $y_1$ is a prefix of $x$.
\end{lem}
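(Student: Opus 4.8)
The plan is to treat the two hypotheses $y_1y_2 = x0$ and $y_1y_2 = x1$ in parallel, since they differ only by swapping the letters $0$ and $1$, and to carry out a case analysis on the final letter of $y_2$. The conceptual crux is that this foundation has no empty word: every word is built from $0$ and $1$, so $y_2$ is genuinely nonempty and therefore ends in a definite letter. (Were $y_2$ allowed to be empty we would have $y_1 = x0$, which is not a prefix of $x$, so the lemma really does depend on this feature.) Concretely, I would first record that every word falls into one of the forms $0$, $1$, $y_2'0$, or $y_2'1$: the two single-letter cases are immediate, and the remaining cases are exactly Lemma \ref{prepost}(1).

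Focusing on the case $y_1y_2 = x0$, I would split according to which of these four shapes $y_2$ has. If $y_2 = 1$ or $y_2 = y_2'1$, then by (associativity) the word $y_1y_2$ ends in $1$ (it equals $y_11$ or $(y_1y_2')1$), while $x0$ ends in $0$; by (reading) these cannot be equal, so this branch is vacuous. If $y_2 = 0$, then $y_10 = x0$ and (simplification), applied with the matching suffix $0$, yields $y_1 = x$, so $y_1$ is a prefix of $x$ by definition. If $y_2 = y_2'0$, then (associativity) rewrites the hypothesis as $(y_1y_2')0 = x0$, and (simplification) gives $y_1y_2' = x$, exhibiting $y_1$ as a prefix of $x$ with witness $z = y_2'$. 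The case $y_1y_2 = x1$ is identical after interchanging $0$ and $1$ throughout, using the symmetric instance of (reading) when the last letters clash.

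I do not expect a serious obstacle here; the argument is essentially bookkeeping once the right decomposition of $y_2$ is in hand. The only points requiring care are first, remembering to rule out the wrong-final-letter branch via (reading) rather than attempting a cancellation that would not apply, and second, invoking (simplification) in the form where the two words share a common suffix (here the trailing letter), which is precisely the hypothesis that axiom supplies. The genuinely substantive ingredient, as noted, is the nonexistence of an empty word, which guarantees that stripping the last letter from $y_2$ leaves $y_1$ to the left of the appended $0$ or $1$.
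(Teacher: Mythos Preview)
Your proposal is correct and takes essentially the same approach as the paper: a case split on the last letter of $y_2$ via Lemma~\ref{prepost}, eliminating the mismatched-letter cases with (reading) and cancelling the matching trailing letter with (simplification) to obtain $y_1 = x$ or $y_1 y_2' = x$. The paper's write-up is terser (it treats both disjuncts of the hypothesis simultaneously and folds the vacuous subcases into the phrase ``by (reading) and (simplification)''), but the underlying argument is identical.
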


\begin{proof}
If $y_2=0$ then by (reading) and (simplification) we must have $y_1=x$ so $y_1$ would be a prefix of $x$. If $y_2=1$ then similarly $y_1$ would be a prefix of $x$.

If $y_2\neq 0$ and $y_2\neq 1$ then by lemma \ref{prepost} there exists $y_3$ such that $y_2=y_30$ or $y_2=y_31$. So by (associativity), (reading) and (simplification) we must have $y_1y_3=x$ which implies that $y_1$ is a prefix of $x$.
\end{proof}

More generally we have the following theorem.

\begin{thm} \label{overlap2}
Given words $x_1$, $x_2$, $y_1$ and $y_2$, if $x_1x_2=y_1y_2$ then $x_1$ is a prefix of $y_1$ or $y_1$ is a prefix of $x_1$.
\end{thm}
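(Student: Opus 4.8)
The plan is to proceed by (word induction) on $y_2$, establishing for each word $y_2$ the statement
$$P(y_2):\quad \forall x_1\forall x_2\forall y_1\,\bigl(x_1x_2=y_1y_2 \Rightarrow (x_1 \text{ is a prefix of } y_1 \vee y_1 \text{ is a prefix of } x_1)\bigr).$$
Conceptually this just says that two prefixes of a common word are comparable, since $x_1x_2=y_1y_2$ exhibits both $x_1$ and $y_1$ as prefixes of the same word. Because the conclusion is symmetric in $x_1$ and $y_1$, obtaining either comparison suffices. The system $S=\{y_2 : P(y_2)\}$ exists by (comprehension), so by (word induction) it is enough to verify $P(0)$, $P(1)$, and that $P(w)$ implies both $P(w0)$ and $P(w1)$.

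For the base cases $y_2=0$ and $y_2=1$, given $x_1x_2=y_10$ (respectively $x_1x_2=y_11$), lemma \ref{overlap}, read as an instance of its own hypothesis with trailing word $y_1$, immediately yields that $x_1$ is a prefix of $y_1$; hence $P(0)$ and $P(1)$ hold.

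For the inductive step, assume $P(w)$ and suppose $x_1x_2=y_1(w0)$, which by (associativity) reads $x_1x_2=(y_1w)0$. I would then split on $x_2$ using lemma \ref{prepost}. If $x_2=0$, then (simplification) gives $x_1=y_1w$, so $y_1$ is a prefix of $x_1$. If $x_2=1$, then (reading) makes $x_1 1=(y_1w)0$ impossible, so this case is vacuous. If $x_2=x_30$ for some $x_3$, then $(x_1x_3)0=(y_1w)0$ and (simplification) gives $x_1x_3=y_1w$, to which the inductive hypothesis $P(w)$ applies directly, yielding the comparison between $x_1$ and $y_1$. If $x_2=x_31$, then $(x_1x_3)1=(y_1w)0$ is excluded by (reading). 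The argument for $P(w1)$ is identical with the roles of $0$ and $1$ interchanged: now $x_2=1$ or $x_2=x_31$ produce $x_1=y_1w$ or invoke $P(w)$, while $x_2=0$ or $x_2=x_30$ contradict (reading). Collecting the cases gives $P(w0)$ and $P(w1)$, and (comprehension) with (word induction) then delivers $\forall y_2\,P(y_2)$, which is the theorem.

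The step I expect to require the most care is the case analysis on $x_2$ in the inductive step. Since there is no empty word, the single-letter cases $x_2=0$ and $x_2=1$ must be separated from the peel-off cases $x_2=x_30$ and $x_2=x_31$ supplied by lemma \ref{prepost}, and in each sub-case one must correctly decide whether (simplification) applies or whether (reading) forces a contradiction. The two observations that make the induction go through are that lemma \ref{overlap} is precisely the base case, and that peeling one letter off $x_2$ reduces the equation $x_1x_2=y_1(w0)$ to the shorter instance $x_1x_3=y_1w$ matching the hypothesis $P(w)$.
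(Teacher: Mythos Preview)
Your proposal is correct and follows essentially the same route as the paper: word induction on $y_2$, with lemma~\ref{overlap} supplying the base cases $y_2\in\{0,1\}$, and the inductive step peeling the last letter of $x_2$ (via (reading), (simplification), and lemma~\ref{prepost}) to either obtain $x_1=y_1w$ directly or reduce to the hypothesis $P(w)$. Your write-up is simply more explicit about separating the four sub-cases on $x_2$ than the paper's compressed ``by (reading), (associativity) and (simplification) we either get $x_1=y_1k$ \dots\ or $x_1x_3=y_1k$ with $x_2=x_30$.''
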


\begin{proof}
If $y_2=0$ or $y_2=1$ and $x_1x_2=y_1y_2$ then by lemma \ref{overlap} it follows that $x_1$ is a prefix of $y_1$. Suppose that for some $k$ we have $$\forall x_1 \forall x_2 \forall y_1 ( x_1x_2=y_1k \Rightarrow ( \text{ $x_1$ is a prefix of $y_1$ or $y_1$ is a prefix of $x_1$ } ) )$$
If $x_1x_2=y_1k0$ then by (reading), (associativity) and (simplification) we either get $x_1=y_1k$ (which implies that $y_1$ is a prefix of $x_1$) or $x_1x_3=y_1k$ with $x_2=x_30$. So by hypothesis $x_1$ is a prefix of $y_1$ or $y_1$ is a prefix of $x_1$. Similarly if $x_1x_2=y_1k_11$ we see that $x_1$ is a prefix of $y_1$ or $y_1$ is a prefix of $x_1$. By (comprehension) and (word induction) we obtain the desired result.
\end{proof}

\begin{thm} \label{overlas}
Given words $x_1$, $x_2$, $y_1$ and $y_2$, if $x_1x_2=y_1y_2$ then $x_2$ is a suffix of $y_2$ or $y_2$ is a suffix of $x_2$
\end{thm}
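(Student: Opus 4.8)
The plan is to reduce this statement to its already-established prefix counterpart, Theorem \ref{overlap2}, together with left simplification (Theorem \ref{leftsimpli}), rather than mounting a fresh induction. Given $x_1x_2 = y_1y_2$, Theorem \ref{overlap2} guarantees that $x_1$ is a prefix of $y_1$ or $y_1$ is a prefix of $x_1$. I would split into these two cases and, within each, separate the equality case from the proper-prefix case directly from the definition of prefix.

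First, suppose $x_1$ is a prefix of $y_1$. If $x_1 = y_1$, then $x_1x_2 = x_1y_2$, so left simplification (Theorem \ref{leftsimpli}) gives $x_2 = y_2$, and hence $x_2$ is a suffix of $y_2$. Otherwise $y_1 = x_1z$ for some $z$, whence by (associativity) $x_1x_2 = x_1(zy_2)$, and left simplification yields $x_2 = zy_2$, showing that $y_2$ is a suffix of $x_2$.

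Second, suppose $y_1$ is a prefix of $x_1$. The case $y_1 = x_1$ again gives $x_2 = y_2$ by left simplification. Otherwise $x_1 = y_1z$ for some $z$, so by (associativity) $y_1(zx_2) = y_1y_2$, and left simplification gives $zx_2 = y_2$, showing that $x_2$ is a suffix of $y_2$. Since one of the two main cases must hold, the conclusion follows in every case.

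The argument is routine once the reduction is in place; the only point requiring care is the bookkeeping of the direction of the suffix relation, that is, tracking which of $x_2$ and $y_2$ ends up being a suffix of the other in each sub-case, and making sure left simplification is invoked with the correct common left factor. I expect this orientation check to be the main (modest) obstacle, but no new word induction is needed here, in contrast with the proof of Theorem \ref{overlap2}.
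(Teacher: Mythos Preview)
Your proof is correct, and the reduction you describe is clean: once Theorem~\ref{overlap2} hands you a prefix relation between $x_1$ and $y_1$, left simplification (Theorem~\ref{leftsimpli}) immediately converts that into the desired suffix relation between $x_2$ and $y_2$. The case split and the orientation bookkeeping are handled correctly in each sub-case.

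The paper, however, takes a different route: it does \emph{not} reduce to Theorem~\ref{overlap2} but instead runs a fresh word induction on $y_2$, entirely parallel to the proof of Theorem~\ref{overlap2}. The base case ($y_2\in\{0,1\}$) is handled by (reading) and Lemma~\ref{prepost}; the inductive step peels a final letter off $y_2$ using (reading), (associativity), and (simplification), and then invokes the inductive hypothesis. Your approach is shorter and avoids repeating the induction machinery; the paper's approach keeps the suffix theorem self-contained and symmetric with its prefix counterpart, at the cost of some duplication. Both are valid, but yours makes better use of what has already been proved.
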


\begin{proof}
If $y_2=0$ or $y_2=1$ and $x_1x_2=y_1y_2$ then by (reading) and lemma \ref{prepost} it follows that $y_2$ is a suffix of $x_2$.
Suppose that for some $k$ we have $$\forall x_1 \forall x_2 \forall y_1 ( x_1x_2=y_1k \Rightarrow ( \text{$x_2$ is a suffix of $k$ or $k$ is a suffix of $x_2$ } ) )$$ If $x_1x_2=y_1(k0)$ then by (reading), (associativity) and (simplification) we either get $x_1=y_1k$ (which implies that $x_2$ is a suffix of $k0$) or $x_1x_3=y_1k$ with $x_2=x_30$ for some $x_3$. By hypothesis $x_3$ is suffix of $k$ or $k$ is a suffix of $x_3$ which implies that $x_2$ is a suffix of $k0$ or $k0$ is a suffix of $x_2$. Similarly if $x_1x_2=y_1k1$ we see that $x_2$ is a suffix of $k1$ or $k1$ is a suffix of $x_2$. By (comprehension) and (word induction) we obtain the desired result.
\end{proof}

\begin{definition}
Let $S$ be a system and $x$ a word in $S$. If there exists $x_1\in S$ such that $$\forall y\in S ( \text{ $y$ is a prefix of $x$ } \Rightarrow \text{ $x_1$ is a prefix of $y$ } )$$
then we say that $x_1$ is a {\bf {\itshape minimal prefix of $x$ in $S$}}.
\end{definition}

\begin{thm} \label{existsminpref}
Given word $x$ and system $S$, if $x\in S$ then there exists $x_1$ such that $x_1$ is a minimal prefix of $x$ in $S$.
\end{thm}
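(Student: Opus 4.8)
The plan is to prove, by (word induction) on $x$, a statement slightly stronger than the theorem. First I would record an easy observation: if $x_1 \in S$ satisfies the defining condition of a minimal prefix of $x$ in $S$, then, taking $y = x$ (which lies in $S$ and is a prefix of $x$), the condition forces $x_1$ to be a prefix of $x$; thus a minimal prefix is automatically a shortest prefix of $x$ contained in $S$. Because the inductive step will have to pass from $x$ to $x0$ and $x1$ \emph{without} assuming that $x$ itself belongs to $S$, I would prove the strengthened claim $P(x)$: for every system $S$, if $S$ contains some prefix of $x$, then there is a minimal prefix of $x$ in $S$. The theorem is then the instance obtained by noting that $x \in S$ makes $x$ a prefix of $x$ lying in $S$.

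To run the induction I would use (comprehension) to form the system $T$ of all words satisfying $P(x)$; this is a legitimate defining formula even though it quantifies over systems, provided the bound system variable is kept distinct from $T$. For the base cases $x = 0$ and $x = 1$, (symbols) shows that no $yz$ equals $0$ or $1$, so the only prefix of $0$ is $0$ and the only prefix of $1$ is $1$; hence if $S$ contains a prefix of $x$, that prefix is itself minimal. For the inductive step, assuming $P(x)$, I would first describe the prefixes of $x0$: by Lemma~\ref{overlap}, any prefix $y$ of $x0$ with $x0 = yz$ for some $z$ is already a prefix of $x$, and conversely every prefix of $x$ is a prefix of $x0$ by transitivity of the prefix relation (established in the first lemma of the section); so the prefixes of $x0$ are exactly the prefixes of $x$ together with $x0$ itself.

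With this description the step splits into two cases. If $S$ contains some prefix of $x$, the inductive hypothesis yields a minimal prefix $x_1$ of $x$ in $S$, and I would then verify that $x_1$ is also minimal for $x0$: the only new prefix to test is $x0$, and since $x_1$ is a prefix of $x$ it is a prefix of $x0$, so the condition holds. If $S$ contains no prefix of $x$, then the only prefix of $x0$ available in $S$ is $x0$ itself, which is therefore trivially minimal. The argument for $x1$ is identical. Applying (word induction) to $T$ gives $\forall x\,P(x)$, and the theorem follows as the special case described above.

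I expect the one real subtlety — rather than a genuine obstacle — to be the strengthening of the statement: formulating minimality for an arbitrary system $S$ that contains \emph{some} prefix of $x$ (not necessarily $x$ itself) is exactly what lets the hypothesis on $x$ be applied at $x0$ and $x1$. Without this strengthening the induction stalls, since membership of $x$ in $S$ need not be inherited when passing to $x0$.
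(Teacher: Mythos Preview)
Your proof is correct and takes a genuinely different route from the paper. The paper fixes $x$ and $S$, forms the system $S_1$ of all prefixes of $x$ lying in $S$, assumes for contradiction that $S_1$ contains no minimal prefix of $x$, and then runs (word induction) on an auxiliary variable $k$ to show that $S_1$ contains no prefix of any $k$ whatsoever---hence $S_1=\emptyset$, contradicting $x\in S_1$. You instead run the induction on $x$ itself, strengthening the claim to $P(x)$ so that the hypothesis survives the passage from $x$ to $x0$ and $x1$; this yields a direct, contradiction-free argument. Both proofs rest on the same structural fact (via Lemma~\ref{overlap}) that the prefixes of $x0$ are exactly the prefixes of $x$ together with $x0$, but the paper additionally invokes Theorem~\ref{overlap2} (linear comparability of prefixes of a fixed word) when arguing that $k0$, once found in $S_1$, would itself be minimal---a step your decomposition sidesteps. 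Your approach buys directness; the paper's keeps $S$ fixed as a parameter so that the comprehension instance driving the induction involves no bound system quantifier, whereas yours quantifies over systems inside the induction predicate (legitimate under the full comprehension schema, as you note). One small point: your opening observation that a minimal prefix is automatically a prefix of $x$ is argued by taking $y=x$, which presupposes $x\in S$; in the inductive step you need it under the weaker hypothesis of $P(x)$. The fix is immediate---take $y$ to be any prefix of $x$ lying in $S$, which exists by assumption---and you essentially have this in hand.
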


\begin{proof}
Let $S_1$ be the system containing all prefixes of $x$ which are in $S$. Suppose $S_1$ contains no minimal prefix of $x$. $0$ and $1$ would not be in $S_1$ otherwise one of them would be a minimal prefix by (associativity), lemma \ref{prepost} and theorem \ref{leftreading}. Now assume that $S_1$ contains none of the prefixes of some word $k$. If $S_1$ contains some prefix of $k0$ or $k1$ then by definition of prefix, lemma \ref{overlap} and by hypothesis, $S_1$ would contain either $k0$ or $k1$. But by lemma \ref{overlap}, theorem \ref{overlap2}, prefix transitivity and hypothesis, if $S_1$ contains either $k0$ or $k1$ then either $k0$ or $k1$ would be a minimal prefix of $x$ in $S_1$. So both $k0$ and $k1$ are not in $S_1$. By lemma \ref{prefge} we conclude that $S_1$ is empty which produces a contradiction since $x$ is in $S_1$. So $S$ must contain a minimal prefix of $x$.
\end{proof}

\begin{definition}
Let $S$ be right-generated by some word $b$. We say that a word $c$ is a {\bf {\itshape counting number over }} $b$ if $c\in S$. 
\end{definition}

If $c_1$, $c_2$ and $c_3$ are counting numbers over some same word $b$ and $c_1c_2=c_3$ then we will usually write $c_1 + c_2 = c_3$.

\begin{definition}
We say that $n$ is a {\bf {\itshape natural number}} if $n$ is a counting number over $1$.
\end{definition}

\begin{definition}
Given a system $S$, if $\forall n( n\in S \Leftrightarrow n \text{ is a natural number } )$ then we say that $S$ is the {\bf {\itshape natural number system}} or write $S=\mathbb{N}$.
\end{definition}

Whenever some word $n$ is a natural number we will just write $n\in \mathbb{N}$.


For any natural numbers $n_1$ and $n_2$, if there exists a natural number $n_3$ such that $n_1+n_3=n_2$ then we write $n_1<n_2$. If $n_1<n_2$ or $n_1=n_2$ then we write $n_1\leq n_2$.


\begin{lem} \label{wordnat}
Given
\begin{itemize}
	\item words $x_1$ and $x_2$
	\item natural numbers $n_1$ and $n_2$
\end{itemize}
if $x_10n_1=x_20n_2$ then $x_1=x_2$ and $n_1=n_2$.
\end{lem}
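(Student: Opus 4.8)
The plan is to prove the statement by induction on $n_1$ as a natural number, since the key phenomenon is positional: because $n_1$ and $n_2$ consist only of copies of the letter $1$, the $0$ written explicitly in $x_10n_1$ is the \emph{last} $0$ occurring in that word, and likewise for $x_20n_2$; equality of the two words should therefore force these last $0$'s to occupy the same position, which is exactly the conclusion $x_1=x_2$ and $n_1=n_2$. To run this formally I first record the decomposition principle for naturals: every natural number is either the constant $1$ or of the form $m1$ for some natural number $m$. This follows immediately from the fact that $\mathbb{N}$ is right-generated by $1$ (the smallest system containing $1$ and closed under $x\mapsto x1$): the system of all words that are $1$ or of the shape $m1$ with $m\in\mathbb{N}$ contains $1$ and is closed under $x\mapsto x1$, hence contains all of $\mathbb{N}$ by minimality. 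I will also use the auxiliary remark that for any word $w$ and any natural number $m$ the word $wm$ ends in the letter $1$, i.e. $wm=u1$ for some $u$ (take $u=w$ when $m=1$ and $u=wm'$ when $m=m'1$); this is what lets (reading) produce contradictions.

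For the base case $n_1=1$ I consider $x_101=x_20n_2$ and split on the form of $n_2$. If $n_2=1$, two applications of (simplification) (stripping the suffix $1$ and then the suffix $0$) give $x_1=x_2$, and $n_1=n_2=1$. If instead $n_2=m1$, then (simplification) yields $x_10=x_20m$; but $x_20m$ ends in $1$ by the auxiliary remark while $x_10$ ends in $0$, so (reading) forbids the equality and this subcase is vacuous.

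For the inductive step I assume the statement for $n_1$ and prove it for $n_11$, starting from $x_10(n_11)=x_20n_2$. If $n_2=1$, then (simplification) gives $x_10n_1=x_20$, whose left side ends in $1$ (as $n_1$ is a natural number) and whose right side ends in $0$, contradicting (reading); so this subcase is vacuous. If $n_2=m1$ with $m$ natural, then (simplification) reduces the hypothesis to $x_10n_1=x_20m$, to which the induction hypothesis applies and yields $x_1=x_2$ together with $n_1=m$; hence $n_2=m1=n_11$, as required. The induction is legitimized exactly as the decomposition principle above: the system of naturals for which the quantified statement holds contains $1$, is closed under $x\mapsto x1$, and is a subsystem of $\mathbb{N}$, so by minimality it equals $\mathbb{N}$.

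I expect the only real obstacle to be bookkeeping rather than mathematics: one must take care that the induction is carried out over $\mathbb{N}$ (via minimality of the right-generated system) rather than over all words, and one must handle the constant $1$ separately at each branch, since there is no empty word to absorb it — for instance in reading $x_20\cdot 1$ as $(x_20)1$ to see that this word genuinely ends in $1$. Once the decomposition of naturals and the ``$wm$ ends in $1$'' remark are in place, every step is a direct appeal to (simplification), (reading), and lemma \ref{prepost}.
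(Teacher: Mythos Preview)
Your proof is correct and follows essentially the same route as the paper's: induction on $n_1$ over the right-generated system $\mathbb{N}$, using (reading) to rule out the mismatched-length cases and (simplification) to strip the trailing $1$ in the inductive step. Your write-up is more explicit than the paper's about the decomposition of naturals and the auxiliary fact that $wm$ ends in $1$, but the argument is the same.
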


\begin{proof}
If either $n_1=1$ or $n_2=1$ then by (readability) and (simplification) we must have $n_1=1$ and $n_2=1$ which in turn implies that $x_1=x_2$ by (simplification).

Lets proceed by induction on $n_1$.
Suppose for some natural number $k$ we have $\forall x_1 \forall x_2 \forall n_2(x_10k=x_20n_2 \Rightarrow (k=n_2 \wedge x_1=x_2))$. Now if $x_10k1=x_20n_2$ then by (simplification) and by (readability) we must obtain $x_10k=x_20n_3$ where $n_2=n_3 +1$. By hypothesis we get $k=n_3$ which in turn implies $k1=n_2$ and we also get $x_1=x_2$. By (comprehension) and by the inductive definition of $\mathbb{N}$ we obtain the desired result.
\end{proof}


\begin{definition}
Given 
\begin{itemize}
	\item counting number $c$ over $b$
	\item natural number $n$
\end{itemize}
Let $S$ be the smallest system such that 
$$ b01\in S \wedge \forall x\forall m( m\in \mathbb{N} \Rightarrow ( x0m\in S \Rightarrow xb0m1\in S )) $$ If $c0n\in S$ then we say that $n$ is the {\bf {\itshape natural number associated to $c$ over $b$}} or write $n \otimes b = c$.  
\end{definition}

Using lemma \ref{wordnat} we see that if $n_1 \otimes b = w$ and $n_2 \otimes b = w$ then $n_1=n_2$.

If $n_1$, $n_2$ and $n_3$ are natural numbers with $n_1\otimes n_2=n_3$ we will say that $n_3$ is the {\bf {\itshape natural product}} of $n_1$ and $n_2$ or write $n_1\cdot n_2=n_3$.

\begin{thm} \label{existsminnat}
If every element of a non-empty system $S$ is a natural number then $S$ contains a minimal natural number.
\end{thm}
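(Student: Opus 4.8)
The plan is to exploit the fact that, restricted to natural numbers, the order $\le$ is nothing but the prefix relation, and then to extract a least element of $S$ from the minimal-prefix theorem (Theorem \ref{existsminpref}). First I would record the translation: for natural numbers $m$ and $n$ we have $m\le n$ if and only if $m$ is a prefix of $n$. Indeed $m<n$ means $mn_3=n$ for some natural number $n_3$, so $m$ is a proper prefix of $n$; conversely, if $m$ is a proper prefix of $n$ then $n=mz$ for some word $z$, and since $m$ and $n$ are strings of $1$'s the suffix $z$ is again a string of $1$'s, hence a natural number, giving $m<n$. Thus a \emph{minimal} (least) natural number of $S$ is exactly an element of $S$ that is a prefix of every element of $S$.

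Next I would establish that the prefix relation totally orders the natural numbers: for any natural numbers $m$ and $n$, either $m$ is a prefix of $n$ or $n$ is a prefix of $m$. This I would prove by induction along the inductive generation of $\mathbb{N}$, the base case using that $1$ is a prefix of every natural number, and the inductive step using Lemma \ref{prepost} together with transitivity of the prefix relation. With totality in hand, since $S$ is non-empty I pick any $x\in S$ and apply Theorem \ref{existsminpref} to obtain a minimal prefix $x_1\in S$ of $x$; taking $y=x$ in the defining property shows in particular that $x_1$ is a prefix of $x$.

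Finally I would verify that $x_1$ is a prefix of every $z\in S$, which by the translation above makes $x_1$ the sought minimal natural number. Given $z\in S$, totality gives two cases. If $x_1$ is a prefix of $z$ we are done. If instead $z$ is a prefix of $x_1$, then by transitivity of the prefix relation $z$ is a prefix of $x$, so the minimality of $x_1$ forces $x_1$ to be a prefix of $z$; combined with $z$ being a prefix of $x_1$, Theorem \ref{integri} (which rules out $x_1=x_1y$) yields $z=x_1$, and again $x_1$ is a prefix of $z$. I expect the main obstacle to be the totality step: unlike the single-word minimal-prefix argument, it requires comparing two \emph{arbitrary} naturals, and one must be careful that the difference word produced by a prefix really is a natural number (a string of $1$'s) before invoking the order. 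The rest is then routine transitivity and antisymmetry bookkeeping layered on top of Theorem \ref{existsminpref}.
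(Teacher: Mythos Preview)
Your proposal is correct and follows essentially the same route as the paper: establish that any two natural numbers are prefix-comparable (the paper simply asserts this from their being counting numbers over $1$, while you sketch an inductive proof), then invoke Theorem~\ref{existsminpref} and read off a least element. The paper's proof is a two-sentence sketch that leaves implicit exactly the bookkeeping you spell out---the translation between $\le$ and the prefix relation, and the verification that the minimal prefix of a chosen $x$ is in fact $\le$ every member of $S$---so your version is a faithful expansion rather than a different argument.
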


\begin{proof}
Since every natural number is a counting number over $1$ then for any given natural numbers $n_1$ and $n_2$, either $n_1$ is a prefix of $n_2$ or $n_2$ is a prefix of $n_1$. Using theorem \ref{existsminpref} (existence of minimal prefix) we see that $S$ must contain a minimal natural number.
\end{proof}


The following theorem highlights some expected properties of counting numbers and natural numbers.
\begin{thm}
Given
\begin{itemize}
	\item word $x$
	\item natural numbers $n_1$, $n_2$ and $n_3$
\end{itemize}
we have
\begin{enumerate}
	\item $1\otimes x=x$	\text{ (x-neutral)}
	\item $n_1 + n_2 = n_2 + n_1$	\text{ (commutativity of addition)}
	\item $(n_1\otimes x)+(n_2\otimes x)=(n_1+n_2)\otimes x$	\text{ (x-additivity)}
	\item $n_1\otimes (n_2\otimes x)=(n_1\otimes n_2)\otimes x$	\text{ (x-associativity)}
	\item $n_1\otimes ((n_2+n_3)\otimes x)=((n_1\otimes n_2)\otimes x)+((n_1\otimes n_3)\otimes x)$	\text{ (x-distributivity) }
	\item $n_1\otimes (n_2+n_3) = n_1\otimes n_2 + n_1\otimes n_3$	\text{ (distributivity) }
	\item $n_1\otimes n_2=n_2\otimes n_1$	\text{ (commutativity of multiplication)}
	\item $n_1\otimes (n_2\otimes x)=n_2\otimes (n_1\otimes x)$	\text{ (x-commutativity) }
\end{enumerate}
\end{thm}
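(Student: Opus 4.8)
The plan is to distill a usable recursion from the definition of $\otimes$, isolate two auxiliary facts, and then prove the eight identities in essentially the listed order, with all the genuine difficulty concentrated in the two commutativity statements.

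First I would read off the computational behaviour of $\otimes$ from its generating system $S$. The base clause $b01\in S$ gives $1\otimes b=b$ (which is already part 1), and the generating rule $x0m\in S\Rightarrow xb0m1\in S$, together with the uniqueness supplied by lemma \ref{wordnat}, gives the recursion $(m+1)\otimes b=(m\otimes b)\,b$ for every natural number $m$; a routine induction then shows $m\otimes b$ is defined for every $m\in\mathbb{N}$. I would isolate two facts for repeated use: (i) $n\otimes 1=n$ for every natural number $n$, by induction using $1\otimes 1=1$ and $(m+1)\otimes 1=(m\otimes 1)\,1=m1=m+1$; and (ii) for counting numbers over a common base, concatenation \emph{is} their sum, by the very definition of $+$. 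Fact (i) is exactly what lets each ``$x$-identity'' collapse to its bare natural-number form when $x=1$.

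The main obstacle is part 2, since $n_1+n_2=n_2+n_1$ is really the purely combinatorial assertion $n_1n_2=n_2n_1$ about concatenation, and the theory hands us only associativity directly. I would dispatch it in two steps. First an auxiliary lemma that $1$ commutes with every natural number, $1n=n1$, by induction on $n$: the base is immediate and the step is $1(m1)=(1m)1=(m1)1$ using (associativity) and the hypothesis. Then $n_1n_2=n_2n_1$ by induction on $n_2$: the base $n_2=1$ is the lemma applied to $n_1$, and the step is $n_1(m1)=(n_1m)1=(mn_1)1=m(n_11)=m(1n_1)=(m1)n_1$, using (associativity), the hypothesis and the lemma. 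As throughout the paper, these inductions are formalized via (comprehension) and the inductive characterization of $\mathbb{N}$.

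The rest follows by inductions resting on the recursion and on parts 1--2. Part 3 I would prove by induction on $n_2$, the step being $(n_1\otimes x)+((m+1)\otimes x)=[(n_1\otimes x)+(m\otimes x)]\,x=[(n_1+m)\otimes x]\,x=(n_1+(m+1))\otimes x$ via (associativity) and the recursion; combining parts 2 and 3 then yields commutativity of $+$ for counting numbers over an arbitrary base $x$, which I use freely. Part 4 goes by induction on $n_1$, rewriting $(k+1)\otimes(n_2\otimes x)=[k\otimes(n_2\otimes x)]+(n_2\otimes x)$, applying the hypothesis and part 3, and recognizing $(k\otimes n_2)+n_2=(k+1)\otimes n_2$. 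Part 5 goes by induction on $n_1$ as well: part 1 and part 3 settle the base, and in the step part 3 expands $(n_2+n_3)\otimes x$ on the left and both $((k+1)\otimes n_2)\otimes x$ and $((k+1)\otimes n_3)\otimes x$ on the right, leaving four summands that the base-$x$ commutativity just obtained rearranges into the desired grouping. Part 6 is then part 5 specialized to $x=1$ through fact (i). For part 7 I would induct on $n_1$: the base is part 1 and fact (i), and the step matches $(k+1)\otimes n_2=(k\otimes n_2)+n_2$ against $n_2\otimes(k+1)=(n_2\otimes k)+n_2$, the latter by distributivity (part 6) and $n_2\otimes 1=n_2$, closing with the hypothesis. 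Finally part 8 is immediate: by part 4 its two sides equal $(n_1\otimes n_2)\otimes x$ and $(n_2\otimes n_1)\otimes x$, which coincide by part 7.
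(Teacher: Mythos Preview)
Your proposal is correct and follows essentially the same route as the paper: parts 1, 3, 4, 5, 8 are proved by the same inductions on the same variables, part 6 is obtained by specializing part 5, and part 7 is a double induction resting on distributivity. Your write-up is in fact more careful in two places: you make the two-stage induction for part 2 explicit (the paper sketches only the $1+k=k+1$ step and leaves the outer induction implicit), and you isolate the auxiliary fact $n\otimes 1=n$ up front, whereas the paper effectively recovers it inside the proof of part 7 and invokes it tacitly in part 6.
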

\begin{proof}
1. It follows immediately from definition.

2. Suppose for some natural number $k$ we have $1+k=k+1$ then by (associativity) we get $1+(k+1)=(1+k)+1=(k+1)+1$. The desired result is obtained using induction.

3. By (x-neutral) and by definition we obtain $(n_1\otimes x)+(1\otimes x)=(n\otimes x)+x=(n+1)\otimes x$. Suppose for some natural number $k$ we have $(n_1\otimes x)+(k\otimes x)=(n_1+k)\otimes x$. By definition we have 
$$(n_1\otimes x)+((k+1)\otimes x)=(n_1\otimes x)+(k\otimes x)+x$$
$$=((n_1+k)\otimes x)+x\text{ (by hypothesis)}$$ $$=((n_1+k)+1)\otimes x=(n_1+(k+1))\otimes x\text{ (by definition and associativity)}$$ We conclude the proof using induction.

4. By (x-neutral) we have $1\otimes (n_2\otimes x)=n_2\otimes x=(1\otimes n_2)\otimes x$. Now suppose for some natural number $k$ we have $$k\otimes (n_2\otimes x)=(k\otimes n_2)\otimes x$$
By definition we get
$$(k+1)\otimes (n_2\otimes x)=(k\otimes (n_2\otimes x))+n_2\otimes x$$
$$=((k\otimes n_2)\otimes x)+(n_2\otimes x)\text{ (by hypothesis)}$$
$$=((k\otimes n_2)+n_2)\otimes x\text{ (by x-additivity)}$$
$$=((k+1)\otimes n_2)\otimes x\text{ (by definition)}$$
We conclude the proof using induction.

5. By definition we have $$1\otimes ((n_2+n_3)\otimes x)=(n_2+n_3)\otimes x$$
$$=(n_2\otimes x)+(n_3\otimes x)\text{ (by x-additivity)}$$
$$=((1\otimes n_2)\otimes x)+((1\otimes n_3)\otimes x)\text{ (by x-neutral)}$$
Now suppose for some natural number $k$ we have
$$k\otimes ((n_2+n_3)\otimes x)=((k\otimes n_2)\otimes x)+((k\otimes n_3)\otimes x)$$
By definition we get
$$(k+1)\otimes ((n_2+n_3)\otimes x)=k\otimes ((n_2+n_3)\otimes x)+((n_2+n_3)\otimes x)$$
$$=((k\otimes n_2)\otimes x)+((k\otimes n_3)\otimes x)+((n_2+n_3)\otimes x)\text{ (by hypothesis)}$$
$$=((k\otimes n_2)+(k\otimes n_3)+(n_2+n_3))\otimes x\text{ (by x-additivity)}$$
$$=((k\otimes n_2)+n_2+(k\otimes n_3)+n_3)\otimes x\text{ (by commutativity)}$$
$$=(((k+1)\otimes n_2)+((k+1)\otimes n_3))\otimes x\text{ (by definition)}$$
$$=(((k+1)\otimes n_2)\otimes x)+(((k+1)\otimes n_3)\otimes x)\text{ (by x-additivity)}$$
We conclude the proof using (word induction).

6. Follows immediately using (x-neutral) and (x-distributivity)

7. Suppose $1\otimes n=n\otimes 1$. We get 
$$ 1\otimes(n+1) = (1\otimes n)+(1\otimes 1) \text{ (by distributivity) } $$
$$ = (n+1)\otimes 1 \text{ (by hypothesis and definition)} $$
Suppose for some natural number $k$ we have $k\otimes n = n\otimes k$. We get 
$$(k+1)\otimes n = k\otimes n + n \text{ (by definition) }$$
$$ = n\otimes k + n\otimes 1 \text{ (by hypothesis and by (x-neutral))} $$
$$ = n\otimes (k+1) \text{ (by distributivity) } $$
We conclude the proof using induction.

8. By (x-associativity) we have 
$$ n_1\otimes ( n_2\otimes x) = (n_1\otimes n_2)\otimes x $$
$$ = (n_2\otimes n_1 )\otimes x \text{ (by commutativity) } $$
$$ = n_2 \otimes (n_1\otimes x) \text{ (by x-associativity) } $$ 
\end{proof}

Now we can define the concept of length.

\begin{definition}
Let $S$ be the smallest system such that 
$$ 001\in S \wedge 101\in S \wedge \forall x \forall n( n\in \mathbb{N} \Rightarrow (x0n\in S \Rightarrow ( x00n1\in S \wedge x10n1\in S ) ) ) $$
Given any word $w$ and any natural number $n$, if $w0n\in S$ then we say that $n$ is the {\bf {\itshape length}} of $w$. 
\end{definition}

Using lemma \ref{wordnat} we can see that length is adequately defined for every word.

The next theorem shows that the notions of ``left-generated'' and ``right-generated'' are equivalent.


\begin{thm} \label{leftrightequi}
Given systems $S$, $S_1$ and $S_2$, if $S_1$ is right-generated by $S$ and $S_2$ is left-generated by $S$ then $S_1=S_2$.
\end{thm}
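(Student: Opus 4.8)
The plan is to prove the two inclusions $S_1\subseteq S_2$ and $S_2\subseteq S_1$ separately and then invoke (extensionality). Since $S_1$ is by definition the \emph{smallest} system that contains every element of $S$ and satisfies the right-closure condition $\forall x_1\in S\,\forall x_2\in S_1(x_2x_1\in S_1)$, to obtain $S_1\subseteq S_2$ it suffices to check that $S_2$ itself satisfies this very condition; minimality of $S_1$ then forces $S_1\subseteq S_2$. The inclusion $S\subseteq S_2$ is immediate from the definition of left-generation, so the whole matter reduces to the single claim that $S_2$ is closed under appending elements of $S$ on the \emph{right}, i.e. that $x_2\in S_2$ and $x_1\in S$ imply $x_2x_1\in S_2$.

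To establish this claim I would argue by induction on $S_2$ using its own minimality. Using (comprehension), form the subsystem $T$ consisting of those $x\in S_2$ for which $xx_1\in S_2$ holds for every $x_1\in S$. It is enough to show that $T$ satisfies the defining left-closure property of $S_2$, for then minimality of $S_2$ gives $S_2\subseteq T$, whence $T=S_2$ and the claim follows. For the base case, if $s\in S$ and $x_1\in S$ then $x_1\in S_2$, and left-closure (prepending $s\in S$ to $x_1\in S_2$) yields $sx_1\in S_2$, so $s\in T$; thus $S\subseteq T$. For the inductive step, suppose $a\in S$ and $x_2\in T$, and let $x_1\in S$. Then $ax_2\in S_2$ by left-closure, and by (associativity) $(ax_2)x_1=a(x_2x_1)$; since $x_2\in T$ gives $x_2x_1\in S_2$ and $a\in S$, left-closure again yields $a(x_2x_1)\in S_2$. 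Hence $ax_2\in T$, so $T$ is closed under prepending elements of $S$ on the left, and therefore $T=S_2$.

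The reverse inclusion $S_2\subseteq S_1$ is obtained by the mirror-image argument: one checks that $S_1$, which is closed under right-appending, is also closed under prepending elements of $S$ on the left, so that $S_1$ satisfies the defining property of the left-generated system, and minimality of $S_2$ then gives $S_2\subseteq S_1$. Having both inclusions, (extensionality) yields $S_1=S_2$.

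I expect the main obstacle to be the inductive step above, specifically the need to re-associate $(ax_2)x_1$ as $a(x_2x_1)$ so that the right-append can be absorbed \emph{before} the left-prepend is reapplied; this is exactly where (associativity) carries the whole argument, reflecting the informal fact that both constructions produce precisely the finite concatenations $w_1w_2\cdots w_k$ of elements of $S$, independently of whether they are assembled from the left or from the right.
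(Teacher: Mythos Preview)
Your argument is correct. The key observation---that right-closure of $S_2$ follows by showing the subsystem $T=\{x\in S_2:\forall x_1\in S\,(xx_1\in S_2)\}$ satisfies the defining left-closure property of $S_2$, with (associativity) carrying the inductive step---is sound, and the mirror argument for $S_2\subseteq S_1$ goes through identically.

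Your route is genuinely different from the paper's, and in fact considerably more direct. The paper does not argue via minimality in this way; instead it introduces auxiliary ``counting'' systems $S_3$ and $S_4$ whose elements are words of the form $x0n$ with $n\in\mathbb N$ recording the number of $S$-factors used to build $x$, and then runs a strong induction on that counter $n$ (using Lemma~\ref{wordnat} to parse the encoding) to show $S_3=S_4$, from which $S_1=S_2$ is extracted. Your proof avoids all of that machinery: it needs only (comprehension), (associativity), and the bare definition of ``smallest'' system. The paper's longer route seems chosen partly to illustrate the parsing-with-counters technique that it relies on heavily later when defining sets; your argument is the standard algebraic one and is what most readers would expect here.
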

\begin{proof}
By definition we have $S\subseteq S_1$ and $S\subseteq S_2$. Let $S_3$ be the smallest system such that
$$ \forall x\in S ( x01\in S_3) \wedge \forall y \forall n( n\in \mathbb{N} \Rightarrow ( y0n\in S_3 \Rightarrow \forall z\in S(yz0n1\in S_3))) $$
and let $S_4$ be the smallest system such that
$$ \forall x\in S ( x01\in S_4) \wedge \forall y \forall n( n\in \mathbb{N} \Rightarrow ( y0n\in S_3 \Rightarrow \forall z\in S(zy0n1\in S_4))) $$

We have that for any $x\in S_1$ there exists a natural number $n$ such that $x0n\in S_3$ and for any $x\in S_2$ there exists a natural number $n$ such that $x0n\in S_4$.
Using lemma \ref{wordnat} we see that for any word $x$ and any natural number $n$ if $x0n\in S_3$ then $x\in S_1$.  Similarly, using lemma \ref{wordnat} we see that for any word $x$ and any natural number $n$ if $x0n\in S_4$ then $x\in S_2$.  So it will suffice to prove that $S_3=S_4$ to show that $S_1=S_2$.
 
If $x\in S$ then by definition of $S_3$ and $S_4$ we see that $x01\in S_3$ and $x01\in S_4$.
Suppose for some natural number $k$ that for all natural numbers $j\leq k$ we have $\forall x(x0j\in S_3 \Leftrightarrow x0j\in S_4)$.

If $k=1$ and $x0k\in S_3$ then $x\in S$, so we see directly that for any $z\in S$ we have $zx0k1\in S_3$. If $k=1$ and $x0k\in S_4$ then $x\in S$, so we see directly that for any $z\in S$ we have $xz0k1\in S_4$.

If $k>1$ and $x0k\in S_3$ then by hypothesis $x0k\in S_4$ so $x=z_1x_1$ for some $z_1\in S$ where $x_10k_1\in S_4$ with $k = k_1 + 1$. By hypothesis we get $x_10k_1\in S_3$ so it follows again by hypothesis that for any $z\in S$ we obtain $x_1z0k\in S_4$ (since $x_1z0k\in S_3$ by definition of $S_3$) which implies $z_1x_1z0k1\in S_4$ (by definition of $S_4$) showing that $xz0k1\in S_4$.

If $k>1$ and $x0k\in S_4$ then by hypothesis $x0k\in S_3$ so $x=x_1z_1$ for some $z_1\in S$ where $x_10k_1\in S_3$ with $k = k_1 + 1$. By hypothesis we get $x_10k_1\in S_4$ so it follows again by hypothesis that for any $z\in S$ we obtain $zx_10k\in S_3$ (since $zx_10k\in S_4$ by definition of $S_4$) which implies $zx_1z_10k1\in S_3$ (by definition of $S_3$) showing that $zx0k1\in S_3$.
So by inductive definition we see that $S_3=S_4$ which lets us conclude that $S_1=S_2$. 
\end{proof}

We saw in the proof of the preceding theorem that intermediate systems were used to parse the relevant inductive structure. The concept of parsing will serve as a foundational basis on which the concept of set will be defined. To introduce the notion of set we will first need to define some concepts which will be useful to a simple  standardization of ``parsing''.

\begin{definition}
Given
\begin{itemize}
	\item systems $S$ and $S_1$
	\item word $x$
\end{itemize}
if $S_1$ is right-generated by $S$ and $x\in S_1$ then we will say that $x$ is a {\bf {\itshape word over $S$}}.
\end{definition}


\begin{definition}
Let $S$ be a system.
\begin{itemize}
	\item If $\forall x \forall y \forall z_1\in S \forall z_2\in S( xz_1=yz_2 \Rightarrow z_1=z_2 )$ then we say that $S$ is {\bf {\itshape right-readable}}.
	\item If $\forall x \forall y \forall z_1\in S \forall z_2\in S( z_1x=z_2y \Rightarrow z_1=z_2 )$ then we say that $S$ is {\bf {\itshape left-readable}}.
\end{itemize}
\end{definition}

\begin{definition}
We will say that $S$ is an {\bf {\itshape alphabet}} if it is right-readable and left-readable.
\end{definition}

\begin{thm} \label{readabilitychar}
Let $S$ be a system.
\begin{enumerate}
	\item $S$ is right-readable if and only if 
	$$ \forall x\in S \forall y\in S( \text{ $x$ is a suffix of $y$ }\Rightarrow x=y ) $$
	\item $S$ is left-readable if and only if 
	$$ \forall x\in S \forall y\in S( \text{ $x$ is a prefix of $y$ }\Rightarrow x=y ) $$
\end{enumerate}
\end{thm}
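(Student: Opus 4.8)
The plan is to prove the two equivalences separately, exploiting the evident left-right symmetry between them: part 1 pairs the definition of right-readability with the suffix-overlap Theorem \ref{overlas}, while part 2 pairs left-readability with the prefix-overlap Theorem \ref{overlap2}. Each equivalence splits into a forward implication (readability $\Rightarrow$ the suffix/prefix condition) and a backward implication (the condition $\Rightarrow$ readability). The backward implications will be essentially immediate from the overlap theorems; the forward implications will require a small maneuver to compensate for the absence of an empty word in the theory.

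For part 1, the backward direction is the easy one. Assume $x=y$ whenever $x,y\in S$ and $x$ is a suffix of $y$. To show $S$ is right-readable, suppose $az_1=bz_2$ with $z_1,z_2\in S$. Theorem \ref{overlas} applied to this equation yields that $z_1$ is a suffix of $z_2$ or $z_2$ is a suffix of $z_1$; in either case the hypothesis forces $z_1=z_2$, which is exactly right-readability. For the forward direction, assume $S$ is right-readable and let $x,y\in S$ with $x$ a suffix of $y$. If $x=y$ there is nothing to prove, so suppose $y=zx$ for some word $z$. The difficulty here is that right-readability is phrased as $az_1=bz_2$ with genuine (nonempty) words $a,b$, whereas the relation $zx=y$ presents $y$ ``bare'' on the right. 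I would circumvent this by padding: prepend a letter, writing $0zx=0y$, so that with $a=0z$, $b=0$, $z_1=x$, $z_2=y$ the equation $az_1=bz_2$ holds and right-readability delivers $x=y$ directly. (One could alternatively note that $y=zx$ already contradicts Theorem \ref{integri}, but concluding $x=y$ outright is cleaner.)

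Part 2 is handled by the mirror-image argument. For the backward direction, from $z_1u=z_2v$ with $z_1,z_2\in S$, Theorem \ref{overlap2} gives that $z_1$ is a prefix of $z_2$ or $z_2$ is a prefix of $z_1$, and the prefix hypothesis forces $z_1=z_2$, establishing left-readability. For the forward direction, given $x,y\in S$ with $y=xz$, I would append rather than prepend: from $y=xz$ we get $xz0=y0$, so that with $z_1=x$, $z_2=y$ and the trailing words $z0$ and $0$ the defining equation $z_1(z0)=z_2 0$ of left-readability applies and yields $x=y$. The only genuinely delicate point in the whole argument is this padding step in the forward directions --- recognizing that the lack of an empty word means a raw suffix/prefix witness must be reshaped into an instance of the readability quantifier by adjoining a single letter to both sides; everything else is a direct appeal to Theorems \ref{overlas} and \ref{overlap2}.
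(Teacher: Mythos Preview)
Your proposal is correct and follows essentially the same approach as the paper: the backward implications use Theorems \ref{overlas} and \ref{overlap2} exactly as you do, and for the forward implications the paper also pads the suffix/prefix witness to produce an instance of the readability quantifier (the paper adjoins an arbitrary word $w_1$ rather than the single letter $0$, but this is a cosmetic difference). The paper phrases the conclusion of the forward step as a contradiction (implicitly invoking Theorem \ref{integri} as you suggest in your aside), whereas you conclude $x=y$ directly; both are fine.
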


\begin{proof}
1. If $S$ is right-readable suppose for some $x\in S$ and some $y\in S$ we have that $x$ is a suffix of $y$ and that $y=wx$ for some word $w$. So for any word $w_1$ we get $w_1y=w_1wx$ which implies by right-readability that $y=x$ giving us a contradiction. It follows that $ \forall x\in S \forall y\in S( \text{ $x$ is a suffix of $y$ }\Rightarrow x=y )$.

Now suppose we have $ \forall x\in S \forall y\in S( \text{ $x$ is a suffix of $y$ }\Rightarrow x=y ) $. For any words $w_1$ and $w_2$ and any words $x\in S$ and $y\in S$ we get by theorem \ref{overlas} that $w_1x=w_2y$ implies that either $x$ is a suffix of $y$ or that $y$ is a suffix of $x$. By assumption it follows that $x=y$ so $S$ must be right-readable.

2. If $S$ is left-readable suppose for some $x\in S$ and some $y\in S$ we have that $x$ is a prefix of $y$ and that $y=xw$ for some word $w$. So for any word $w_1$ we get $yw_1=xww_1$ which implies by left-readability that $y=x$ giving us a contradiction. It follows that $ \forall x\in S \forall y\in S( \text{ $x$ is a prefix of $y$ }\Rightarrow x=y )$.

Now suppose we have $ \forall x\in S \forall y\in S( \text{ $x$ is a prefix of $y$ }\Rightarrow x=y ) $. For any words $w_1$ and $w_2$ and any words $x\in S$ and $y\in S$ we get by theorem \ref{overlap2} that $xw_1=yw_2$ implies that either $x$ is a prefix of $y$ or that $y$ is a prefix of $x$. By assumption it follows that $x=y$ so $S$ must be left-readable.
\end{proof}

\begin{thm} \label{alphintegri}
Given an alphabet $A$ and a word $x$, if $x\in A$ then for any word $w_1$ over $A$ and any word $w_2$ over $A$ we must have $x\neq w_1w_2$.
\end{thm}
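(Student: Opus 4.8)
The plan is to argue by contradiction. Suppose $x = w_1 w_2$ with $x \in A$ and $w_1, w_2$ words over $A$. Since $x = w_1 w_2$, the word $w_2$ is a suffix of $x$. The key idea is to isolate the ``last letter'' of $w_2$, which must itself be an element of $A$, and then use the alphabet property (specifically right-readability) to force that last letter to equal $x$; this collapses $w_2$ into a suffix equal to $x$ and contradicts theorem \ref{integri}.

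First I would establish the auxiliary fact that every word over $A$ possesses a suffix lying in $A$. This is where the real content sits: a word over $A$ is by definition an element of the system $S$ right-generated by $A$, and $S$ is the smallest system containing $A$ and closed under the operation sending $x_2 \in S$ and $x_1 \in A$ to $x_2 x_1$. Using (comprehension) I would form the system of all words that possess a suffix in $A$, then check that it contains $A$ (each element of $A$ is a suffix of itself) and that it is closed under right-appending an element $x_1 \in A$ (since that appended $x_1$ is then a suffix of $x_2 x_1$). By the minimality clause in the definition of $S$, this system contains all of $S$, so every word over $A$ -- in particular $w_2$ -- has a suffix $b \in A$.

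Then I would chain the suffix relations: $b$ is a suffix of $w_2$ and $w_2$ is a suffix of $x$, so by transitivity of the suffix relation $b$ is a suffix of $x$. Because $A$ is an alphabet it is right-readable, so by theorem \ref{readabilitychar} the only element of $A$ that is a suffix of $x \in A$ is $x$ itself; hence $b = x$. Thus $x$ is a suffix of $w_2$, meaning $w_2 = x$ or $w_2 = u x$ for some word $u$. Substituting back into $x = w_1 w_2$ yields either $x = w_1 x$ or $x = (w_1 u) x$, both of the form $x = y x$, which contradicts theorem \ref{integri}. This contradiction establishes the theorem.

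The main obstacle I anticipate is the clean formalization of ``every word over $A$ ends in an element of $A$'': it must be carried out as an induction on the generating structure of the right-generated system via (comprehension) together with the minimality clause, in the same style used earlier for the parsing systems, rather than treated as visually obvious. Once that fact is secured, the remaining steps are short, relying only on transitivity of suffixes, the right-readability characterization of theorem \ref{readabilitychar}, and theorem \ref{integri}. I note finally that the hypothesis that $w_1$ is a word over $A$ is never actually used -- only that $w_2$ is a word over $A$ and that $w_1$ is a word -- so I would take care that the write-up invokes the hypotheses accurately.
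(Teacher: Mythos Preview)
Your argument is correct. You isolate the auxiliary fact ``every word over $A$ has a suffix in $A$'' as a standalone lemma (proved cleanly via the minimality clause of the right-generated system), then finish with suffix transitivity, the right-readability characterization, and theorem \ref{integri}. The paper instead runs a direct induction on $w_2$: assuming $w_1 k a = x$ with $a\in A$, it prepends an arbitrary $z\in A$ to both sides, uses right-readability to force $a=x$, simplifies to $(zw_1)k = z$, and then invokes theorem \ref{leftrightequi} to certify that $zw_1$ is a word over $A$, contradicting the inductive hypothesis. Your route is more modular and avoids both the prepending trick and the appeal to the left/right generation equivalence; as you observed, it also shows that the hypothesis ``$w_1$ is a word over $A$'' is not needed---any word $w_1$ will do---whereas the paper's inductive step genuinely uses that hypothesis to ensure $zw_1$ is over $A$.
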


\begin{proof}
By theorem \ref{readabilitychar} we see that if $x\in A$, $w_1\in A$ and $w_2\in A$ then $w_1w_2\neq x$. Suppose that for some word $k$ over $A$ we have $w_1k\neq x$ for any word $x\in A$ and any word $w_1$ over $A$. Lets proceed by induction (inductive definition of words over $A$) on $k$ and by contradiction. Suppose that $w_1ka=x$ for some word $x\in A$ and some word $a\in A$. So we must have $zw_1ka=zx$ for any word $z\in A$ which implies by right readability of $A$ and by (simplification) that $zw_1k=z$ contradicting the hypothesis since $zw_1$ is a word over $A$ by theorem \ref{leftrightequi}. Using the inductive definition of words over $A$ we obtain the desired result.
\end{proof}

\begin{definition}
Given alphabets $A_1$ and $A_2$, we say that {\bf {\itshape $A_1$ and $A_2$ are independent}} if
\begin{itemize}
	\item $A_1 \cup A_2$ is an alphabet
	\item $A_1 \cap A_2 = \emptyset$
\end{itemize}
\end{definition}

Let $x$ and $y$ be words and $A$ be an alphabet. If for any system $S$ which contains only $x$ we have that $S$ and $A$ are independent then we will say that {\bf {\itshape $A$ and $x$ are independent}} (or {\bf {\itshape $x$ and $A$ are independent}}). If for any system $S_1$ containing only $x$ and any system $S_2$ containing only $y$ we have that $S_1$ and $S_2$ are independent then we will say that {\bf {\itshape $x$ and $y$ are independent}}.

\begin{lem} \label{indeph}
Given
\begin{itemize}
	\item alphabets $S_1$ and $S_2$
	\item words $r_1$ and $t_1$ both over $S_1$
	\item word $r_2$ over $S_2$
\end{itemize}
if $S_1$ and $S_2$ are independent then
\begin{enumerate}
	\item $r_1r_2$ is not a word over $S_1$
	\item $r_1r_2$ is not a word over $S_2$
	\item $r_1r_2t_1$ is not a word over $S_1$
	\item $r_1r_2t_1$ is not a word over $S_2$
\end{enumerate}
\end{lem}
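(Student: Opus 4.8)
The plan is to set $A = S_1 \cup S_2$, which is an alphabet because $S_1$ and $S_2$ are independent, and to exploit two features of words over an alphabet. First, by Theorem \ref{leftrightequi} the words over $S_i$ are also left-generated by $S_i$, so every such word is either a single letter of $S_i$ or has the form $aw'$ with $a \in S_i$; in particular it has a \emph{first letter} lying in $S_i$ that is a prefix of it. Dually, the right-generated description gives a \emph{last letter} in $S_i$ that is a suffix of it. Second, I would record a small agreement fact: if $a, b \in A$ are both prefixes of one word, then by Theorem \ref{overlap2} one of $a, b$ is a prefix of the other, so Theorem \ref{readabilitychar}(2) (left-readability of $A$) forces $a = b$; symmetrically, two letters of $A$ that are both suffixes of the same word coincide, via Theorem \ref{overlas} and Theorem \ref{readabilitychar}(1).

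With these in hand, statements 1, 2 and 4 are immediate. For 2 and 4, the first letter $a$ of $r_1$ lies in $S_1$ and is a prefix of $r_1r_2$ (resp.\ $r_1r_2t_1$); were this word over $S_2$, its first letter $b$ would lie in $S_2$ and also be a prefix, so the prefix-agreement fact gives $a = b \in S_1 \cap S_2 = \emptyset$, a contradiction. For 1, the last letter $b$ of $r_2$ lies in $S_2$ and is a suffix of $r_1r_2$; were $r_1r_2$ a word over $S_1$, its last letter would lie in $S_1$, and suffix-agreement would again force a common letter in $S_1 \cap S_2$.

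Statement 3 is the real obstacle, because both the first letter (from $r_1$) and the last letter (from $t_1$) of $r_1r_2t_1$ lie in $S_1$, so the letter-at-the-boundary argument no longer yields a contradiction by itself; the troublesome $S_2$-block $r_2$ sits strictly inside. Here I would argue by structural induction on $t_1$ as a word over $S_1$ (using the right-generated description), reducing 3 to 1. Suppose $r_1r_2t_1$ were a word over $S_1$. It is not a single letter of $S_1$, since writing $r_1r_2t_1 = r_1(r_2t_1)$ as a product of two words over $A$ would contradict Theorem \ref{alphintegri}. Hence it factors as $W'c$ with $c \in S_1$ its last letter and $W'$ a word over $S_1$. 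Writing $t_1 = t_1'c_1$ (or $t_1 = c_1$ in the base case) with $c_1 \in S_1$ its last letter, both $c$ and $c_1$ are letters of $A$ that are suffixes of the whole word, so suffix-agreement gives $c = c_1$; cancelling on the right by (simplification) yields that $r_1r_2t_1'$ (resp.\ $r_1r_2$) is again a word over $S_1$. In the base case this contradicts statement 1 directly, and in the inductive step it contradicts the induction hypothesis for the shorter word $t_1'$.

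Thus the crux is statement 3, and within it the delicate points are: justifying that the whole word is not a single letter, so that a last-letter factorization with nonempty remainder exists, which is exactly what Theorem \ref{alphintegri} provides; and identifying the last letter of the factorization over $S_1$ with the last letter of $t_1$, which rests on right-readability of $A$ through suffix-agreement. Once the last letter is pinned down, (simplification) peels it off cleanly and the induction drives $t_1$ down to the situation already settled by statement 1.
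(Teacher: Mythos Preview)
Your proof is correct and follows essentially the same approach as the paper: first/last-letter comparisons using readability of $A=S_1\cup S_2$ handle parts 1, 2, 4, and part 3 is reduced to part 1 by structural induction on $t_1$, peeling off its last $S_1$-letter via (simplification). The paper compresses your careful part-3 argument into the single phrase ``using 1.\ and the inductive definition of words over $S_1$,'' and for parts 2 and 4 it invokes left--right equivalence (Theorem~\ref{leftrightequi}) rather than your explicit first-letter formulation, but these are the same ideas.
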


\begin{proof}
1. If $r_2$ is in $S_2$ then by independence and the inductive definition of words over $S_1$ we see that $r_1r_2$ is not a word over $S_1$. Suppose for some word $k$ over $S_2$ that $r_1k$ is not a word over $S_1$. For any $u$ in $S_2$, by the inductive definition of words over $S_1$, we deduce that $r_1ku$ is not a word over $S_1$. We complete the proof using the inductive definition of words over $S_2$.

2. Using the left-right system generation equivalence(theorem \ref{leftrightequi}) and similar reasoning as in 1. we deduce that $r_1r_2$ is not a word over $S_2$.

3. Using 1. and the inductive definition of words over $S_1$ we see that $r_1r_2t_1$ is not a word over $S_1$.

4. By the inductive definition of words over $S_2$ we see that $r_1r_2t_1$ is not a word over $S_2$.
\end{proof}

\begin{thm} \label{indept}
Given
\begin{itemize}
	\item alphabets $S_1$ and $S_2$
	\item words $r_1$ and $t_1$ over $S_1$
	\item words $r_2$ and $t_2$ over $S_2$
	\item words $x$ and $y$
\end{itemize}
if $S_1$ and $S_2$ are independent then
\begin{enumerate}
	\item $r_1r_2=t_1t_2\Rightarrow (r_1=t_1 \wedge r_2=t_2)$
	\item $(xr_1r_2=yt_1t_2 \wedge r_1=t_1)\Rightarrow (r_2=t_2 \wedge x=y)$
\end{enumerate}
\end{thm}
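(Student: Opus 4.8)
For part 1 I would prove both conclusions by inducting on the relevant word over an alphabet, peeling off one letter at a time and using readability to force the leading (resp.\ trailing) letters to agree, while a single \emph{non-mixing} observation discards every branch in which the $S_1$-part and the $S_2$-part fail to line up. Throughout I would use that every word is non-empty, so that a word over $S_1$ (resp.\ over $S_2$) always has a first and last letter lying in $S_1$ (resp.\ $S_2$), and that by Theorem~\ref{leftrightequi} a word over an alphabet may be decomposed either from the left or from the right, whichever is convenient.

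First I would isolate the key non-mixing lemma: if $p$ is a word over $S_1$ and $q$ is a word over $S_2$ then neither is a suffix of the other. To see this, note that $p$ has a suffix $c\in S_1$ and $q$ has a suffix $d\in S_2$; were $p$ a suffix of $q$, transitivity of suffixes would make $c$ a suffix of $q$, so $c$ and $d$ would both be suffixes of $q$ and hence suffix-comparable by Theorem~\ref{overlas}. Since $c,d\in S_1\cup S_2$, which is an alphabet, Theorem~\ref{readabilitychar} would force $c=d$, contradicting $S_1\cap S_2=\emptyset$. (The analogous prefix statement holds by the same argument through Theorem~\ref{overlap2}.)

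For part 1 I would induct on $r_1$ using the left-generation description of words over $S_1$. Since $r_1$ and $t_1$ are both prefixes of the common word $r_1r_2=t_1t_2$, Theorem~\ref{overlap2} makes them comparable and left-readability (Theorem~\ref{readabilitychar}) forces their leading $S_1$-letters to coincide; cancelling on the left (Theorem~\ref{leftsimpli}) either advances the induction or leaves an equation of the shape $r_2=(\text{word over }S_1)\,t_2$ with $r_2$ a word over $S_2$, which is impossible by Lemma~\ref{indeph}(2). Thus $r_1=t_1$, and a final left-cancellation gives $r_2=t_2$. For part 2 I would first substitute $t_1=r_1$, so the hypothesis reads $xr_1r_2=yr_1t_2$, and prove $r_2=t_2$ by inducting on $r_2$ via the right-generation description. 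Matching trailing letters by Theorem~\ref{overlas} and right-readability forces the trailing $S_2$-letters to agree; right-cancelling them (the (simplification) axiom) either reduces to the inductive hypothesis or produces an equation pitting a word over $S_1$ against a word over $S_2$ as competing suffixes (for instance $xr_1=yr_1t_2'$ or $xr_1r_2'=yr_1$), which the non-mixing lemma rules out. Once $r_2=t_2$ is in hand, two right-cancellations (by $r_2$, then by $r_1$) yield $x=y$.

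The main obstacle I anticipate is the bookkeeping in part 2: because the leading words $x,y$ are arbitrary and carry no alphabet structure, I cannot mirror part 1's direct appeal to Lemma~\ref{indeph}, and must instead route every mismatched branch through the suffix form of the non-mixing lemma. Ensuring the induction peels letters off the correct end, and invoking Theorem~\ref{leftrightequi} to pass freely between the left- and right-decompositions of a word over an alphabet, is where the argument must be handled with care.
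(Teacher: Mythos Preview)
Your proposal is correct and matches the paper's argument closely. The only notable difference is in part~1: the paper inducts on $r_2$ (peeling $S_2$-letters off the right and invoking Lemma~\ref{indeph} to rule out the degenerate branch), whereas you induct on $r_1$ from the left; these are mirror images of the same idea. For part~2 your approach coincides with the paper's (both induct on $r_2$ from the right), and your explicit ``non-mixing lemma'' is precisely what the paper compresses into the phrase ``by independence'' when it argues that $t_2\notin S_2$ in the inductive step.
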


\begin{proof}
1. Suppose $r_1r_2=t_1t_2$. If $r_2$ is in $S_2$ then by (simplification) and definition of independence we deduce that $t_2$ is in $S_2$ and $r_2=t_2$ so by (simplification) $r_1=t_1$. Suppose that for some word $k$ over $S_2$ we have that $r_1k=t_1t_2\Rightarrow (k=t_2 \wedge r_1=t_1)$. For any $u$ in $S_2$, if $r_1ku=t_1t_2$ then $t_2$ is not in $S_2$ by lemma \ref{indeph}. So $t_2=w_2u$ where $w_2$ is a word over $S_2$ and by (simplification) we get $r_1k=t_1w_2$. By hypothesis we get $k=w_2$ and $r_1=t_1$ which implies that $ku=w_2u=t_2$ so by the inductive definition of words over $S_2$ we obtain the desired result.

2. Suppose $xr_1r_2=yt_1t_2 \wedge r_1=t_1$. If $r_2$ is in $S_2$ then by independence and (simplification) $t_2$ must be in $S_2$ and $r_2=t_2$ so by (simplification) $xr_1=xt_1$. Since $r_1=t_1$ then by (simplification) we get $x=y$. Suppose for some word $k$ over $S_2$ we have that $(xr_1k=yt_1t_2 \wedge r_1=t_1)\Rightarrow (k=t_2 \wedge x=y)$. For any $u$ in $S_2$, if $xr_1ku=yt_1t_2$ and $r_1=t_1$ then by independence $t_2$ is not in $S_2$ so by (simplification) $xr_1k=yt_1w_2$ where $w_2$ is a word over $S_2$ with $t_2=w_2u$. By hypothesis we get $x=y$ and $k=w_2$ which implies that $ku=w_2u=t_2$ and by the inductive definition of words over $S_2$ we complete the proof.
\end{proof}

\begin{definition}
Let $A$ be a non-empty alphabet and $S$ be right-generated by $A$. We will say that $A$ is {\bf {\itshape expressive }}if there exist independent alphabets $B_1$ and $B_2$ with $B_1\subseteq S$ and $B_2\subseteq S$.
\end{definition}
\begin{thm} \label{express}
An alphabet $A$ is expressive if and only if $A$ contains at least two distinct words.
\end{thm}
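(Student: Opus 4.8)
The plan is to prove both implications, the bulk of the work being a direct construction for the ``if'' direction and a prefix-obstruction argument for the ``only if'' direction. Throughout I will use the immediate observation that any subsystem of an alphabet is again an alphabet (the right- and left-readability conditions are universally quantified over the members of the system, so they are inherited by any smaller system), together with the fact that $A\subseteq S$ whenever $S$ is right-generated by $A$.

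For the direction ``at least two distinct words $\Rightarrow$ expressive'', suppose $a,b\in A$ with $a\neq b$. I would simply let $B_1$ be the system whose only element is $a$ and $B_2$ the system whose only element is $b$. Each is a subsystem of $A$, hence an alphabet. Their union is the subsystem $\{a,b\}\subseteq A$, which is again a subsystem of the alphabet $A$ and therefore an alphabet, while $B_1\cap B_2=\emptyset$ since $a\neq b$; thus $B_1$ and $B_2$ are independent. Finally $B_1,B_2\subseteq A\subseteq S$, so the two independent alphabets lie inside the system right-generated by $A$, and $A$ is expressive.

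For the converse I would argue by contraposition: assuming the non-empty alphabet $A$ has exactly one element $a$, I show it is not expressive. The system $S$ right-generated by $A$ is then exactly the set of iterated concatenations $a,aa,aaa,\dots$, so any two distinct members are of the form $a^{m}$ and $a^{n}$ with $m<n$, and $a^{m}$ is a prefix of $a^{n}$. By the left-readability characterization (theorem \ref{readabilitychar}) no alphabet can contain two distinct words one of which is a prefix of the other, so every alphabet contained in $S$ has at most one element. Consequently two disjoint non-empty subsystems of $S$ that are alphabets would be distinct singletons, and their union --- a two-element system whose members stand in the prefix relation --- could not be an alphabet; hence no pair of independent non-empty alphabets exists inside $S$, and $A$ is not expressive.

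The step I expect to require the most care is the converse, and specifically the point that the existence of independent alphabets $B_1,B_2$ must be read as requiring them to be non-empty: the empty system is vacuously an alphabet and is disjoint from everything, so without this reading every non-empty alphabet would be trivially ``expressive'' and the theorem would fail. Granting non-emptiness, the prefix obstruction above is exactly what rules out the single-letter case, and the argument closes.
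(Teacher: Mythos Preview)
Your proof is correct and follows essentially the same approach as the paper's: singletons $\{a\},\{b\}\subseteq A$ for the ``if'' direction, and the prefix/suffix structure of the words over a single letter combined with Theorem~\ref{readabilitychar} for the converse. The only cosmetic difference is that the paper phrases the obstruction via suffixes rather than prefixes, which is immaterial here since in the one-letter case any two words over $A$ stand in both relations; your explicit remark about the implicit non-emptiness of $B_1,B_2$ is a fair point the paper leaves tacit.
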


\begin{proof}
If $A$ contains only one word then for any two words $x$ and $y$ both over $A$ we have that $x$ is a suffix of $y$ or $y$ is a suffix of $x$. Using theorem \ref{readabilitychar} we deduce that we cannot form two independent alphabets from the words produced over $A$.

Suppose $A$ contains $x$ and $y$ with $x\neq y$. Let $B_1$ be a system containing only the word $x$ and $B_2$ be a system containing only the word $y$. We see that $B_1$ and $B_2$ are alphabets and since $A$ is an alphabet we can also deduce that $B_1$ and $B_2$ are independent.


\end{proof}

In a certain sense, the concept of expressivity is the main motivation behind the (symbols) axiom. The next theorem shows the ``inductive power'' of expressivity.

\begin{thm}
Let $S$ be right-generated by an expressive alphabet $A$. There exist independent expressive alphabets $B_1$ and $B_2$ such that $B_1\subseteq S$ and $B_2\subseteq S$.
\end{thm}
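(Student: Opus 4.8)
The plan is to reduce the statement to the construction of a single \emph{bifix code} of four distinct words over $A$ --- a four-element system no member of which is a prefix or a suffix of another member --- and then to split it into two disjoint pairs. Indeed, by Theorem \ref{readabilitychar} a system is an alphabet exactly when no one of its words is a prefix of a distinct word of the system and no one is a suffix of a distinct word of the system, and this condition is clearly inherited by subsystems. Hence if $W=\{w_1,w_2,w_3,w_4\}$ is such a four-word bifix code, then $W$ is an alphabet and so are $B_1=\{w_1,w_2\}$ and $B_2=\{w_3,w_4\}$. Since $B_1\cap B_2=\emptyset$ and $B_1\cup B_2=W$ is an alphabet, $B_1$ and $B_2$ are independent; and each contains two distinct words, hence is expressive by Theorem \ref{express}. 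As the $w_i$ will be built by concatenating members of $A$, they lie in $S$, and the theorem follows.

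To produce the words I would first invoke Theorem \ref{express} on the hypothesis that $A$ is expressive: this yields two distinct words $a,b\in A$. I then set
$$ w_1=aba,\quad w_2=abba,\quad w_3=abbba,\quad w_4=abbbba, $$
i.e. $w_k=a(k\otimes b)a$ for $k=1,2,3,4$, each obtained by right-appending letters of $A$ and hence an element of $S$. Distinctness is immediate: an equality $w_j=w_k$ with $j<k$ would, after cancelling the shared outer occurrences of $a$ (Theorems \ref{leftsimpli} and the (simplification) axiom), force $b^j=b^j b^{k-j}$, which Theorem \ref{integri} forbids.

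The technical heart --- and the main obstacle --- is verifying that $W$ is prefix-free and suffix-free, because $a$ and $b$ need not have the same length, so the naive ``equal length $\Rightarrow$ incomparable'' argument is unavailable; instead the alphabet property of $A$ must be leveraged through the overlap theorems. For prefix-freeness I would suppose $w_j$ is a prefix of $w_k$ with $j<k$; the equality case is excluded by distinctness, so $w_k=w_j z$ for some $z$. Cancelling the common prefix $a$ and then the common block $b^{\min(j,k)}$ on the left (Theorem \ref{leftsimpli}) reduces this to an equation of the form $az=bw$ for suitable words $z,w$. Theorem \ref{overlap2} then forces $a$ to be a prefix of $b$ or $b$ a prefix of $a$; either conclusion contradicts $a\neq b$ by left-readability of $A$ (Theorem \ref{readabilitychar}). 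Suffix-freeness is the mirror image: cancelling the common terminal $a$ and the trailing block of $b$'s on the right ((simplification)) reduces to an equation $za=wb$, whence Theorem \ref{overlas} forces $a$ to be a suffix of $b$ or conversely, again contradicting $a\neq b$ by right-readability of $A$.

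With $W$ shown to be a bifix code, the splitting described in the first paragraph completes the argument. I expect the only real care to be needed in the length-bookkeeping of the cancellations in the third paragraph; once the overlap theorems (\ref{overlap2} and \ref{overlas}) are brought to bear, the left- and right-readability of $A$ close each case uniformly.
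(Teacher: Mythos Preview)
Your proposal is correct and follows the same overall plan as the paper --- build a four-element bifix code of words over $A$ and split it into two disjoint pairs --- but with a different concrete construction. The paper simply takes $B_1=\{xx,xy\}$ and $B_2=\{yx,yy\}$ (all two-letter words over $\{x,y\}\subseteq A$). With that choice every prefix/suffix comparison reduces, after one application of (simplification) or Theorem~\ref{leftsimpli}, to a comparison of single letters of $A$, so Theorem~\ref{readabilitychar} closes every case immediately; the overlap theorems are never needed. Your words $ab^{k}a$ work too, but the verification is heavier: you must cancel an outer $a$ and a block of $b$'s before you can invoke Theorems~\ref{overlap2} and~\ref{overlas}, and you must separately dispose of the ``long word is a prefix/suffix of the short one'' direction. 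You only treat the case $j<k$ with $w_j$ inside $w_k$; the reverse direction (e.g.\ $w_k$ a prefix of $w_j$ with $j<k$) is not covered by your cancellation, but it does fall out in one line from Theorem~\ref{integri2} once you cancel down to $a=b^{k-j}az$ (resp.\ $a=zab^{k-j}$). With that small addition your argument is complete; the paper's two-letter choice just buys a shorter check.
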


\begin{proof}
By theorem \ref{express} there exist distinct words $x$ and $y$ in $A$. Let $B_1$ be a system containing only $xx$ and $xy$ and $B_2$ be a system containing only $yx$ and $yy$. By theorem \ref{readabilitychar}, left simplification and (simplification) we deduce that
\begin{itemize}
	\item $xx$ is not a prefix or suffix of $xy$ and vice-versa
	\item $yy$ is not a prefix or suffix of $xy$ and vice-versa
	\item $xx$ is not a prefix or suffix of $yx$ and vice-versa
	\item $yy$ is not a prefix or suffix of $yx$ and vice-versa
	\item $xx$ is not a prefix or suffix of $yy$ and vice-versa
	\item $xy$ is not a prefix or suffix of $yx$ and vice-versa
\end{itemize}
So we can see that $B_1$ and $B_2$ are independent and by theorem \ref{express} we conclude that both $B_1$ and $B_2$ are expressive alphabets.
\end{proof}

\section{Sets and structures}

\subsection{Sets}

The main idea behind the notion of set will be to represent systems using words from an expressive alphabets, establish meaning via a ``background system'' and extract set semantics using a parsing scheme.

The next definition will highlight the ``parsing tool'' which will be used to extract the meaning of a set. 

\begin{definition}
Let $S_0$ be an expressive alphabet and $e_0$ be a word. If $e_0$ and $S_0$ are independent then we say that $(e_0,S_0)$ is a {\bf {\itshape set foundation}}.
\end{definition}
 
\begin{definition}
Given
\begin{itemize}
	\item set foundation $(e_0,S_0)$
	\item system $S$
	\item word $e$
\end{itemize}
if $e$ is a word over $S_0$ then we say that $(e)_{S}$ is a {\bf {\itshape set over}} $(e_0,S_0)$, $e$ is the {\bf {\itshape representation of $(e)_{S}$ over $(e_0,S_0)$}} and $S$ is the {\bf {\itshape context system of $(e)_{S}$ over $(e_0,S_0)$}}.
\end{definition}

If $(e)_{S}$ is a set over $(e_0,S_0)$ and $xe_0e\in S$ then we say that $x$ is an {\bf {\itshape element of }} $(e)_{S}$ {\bf {\itshape over}} $(e_0,S_0)$ or write $x\overset{}{\underset{ (e_0,S_0)} {\in } }(e)_{S}$. If $x$ is not an element of $(e)_{S}$ over $(e_0,S_0)$ we will write $x\overset{}{\underset{ (e_0,S_0)} {\notin } }(e)_{S}$.

For example, let $S_0$ be an alphabet which contains only $01$ and $10$ and let $S$ be some system. If $00010\in S$ then we can say that $0$ is an element of $(10)_{S}$ over $(00,S_0)$.

Using theorem \ref{indept}, we can verify that for any system $S_1$ and any set foundation $(e_0,S_0)$ there exists a word $e$ and a system $S$ such that $\forall x(x\in S_1 \Leftrightarrow x\overset{}{\underset{ (e_0,S_0)} {\in } }(e)_{S})$.

We can see sets as ``internalized systems'' which can contain representations of sets and context systems as safeguards against malformed sets. In a certain sense, the existence of a context system depends on the consistency of the specification of a set.

The set builder notation will be adapted by mentioning the set foundation underneath the equality sign. For example, instead of writing 

$$ \forall x (x\overset{}{\underset{ (e_0,S_0)} {\in } }(s)_{S} \Leftrightarrow (x=0 \vee x=00 \vee x=1))$$

we would write

$$ (s)_{S}\overset{}{\underset{ (e_0,S_0)} {= } }\{0,00,1  \}$$

When dealing with logical formulas, instead of writing

$$ \forall x (x\overset{}{\underset{ (e_0,S_0)} {\in } }(s)_{S} \Leftrightarrow \phi(x) ) $$

we might write 

$$ (s)_{S}\overset{}{\underset{ (e_0,S_0)} {= } }\{x \mid \phi(x)  \}$$

The notation ``$\overset{}{\underset{ (e_0,S_0)} {= } }$'' will be treated like an equality symbol when there is no ambiguity.



\begin{definition}
Given
\begin{itemize}
	\item set foundation $(e_0,S_0)$
	\item sets $(e_1)_{E_1}$ over $(e_0,S_0)$, $(e_2)_{E_2}$ over $(e_0,S_0)$ and $(e_3)_{E_3}$ over $(e_0,S_0)$
\end{itemize}
we say
\begin{itemize}
	\item $(e_1)_{E_1}$ is a {\bf {\itshape subset of $(e_2)_{E_2}$ over $(e_0,S_0)$}}  or write $(e_1)_{E_1}\overset{}{\underset{ (e_0,S_0)} {\subseteq } }(e_2)_{E_2}$ ( or $(e_2)_{E_2}\overset{}{\underset{ (e_0,S_0)} {\supseteq} }(e_1)_{E_1}$) if for any word $x$, $x\overset{}{\underset{ (e_0,S_0)} {\in } }(e_1)_{E_1} \Rightarrow x\overset{}{\underset{ (e_0,S_0)} {\in } }(e_2)_{E_2}$.
	\item $(e_1)_{E_1}$ is {\bf {\itshape equal to $(e_2)_{E_2}$ over $(e_0,S_0)$}} or write $(e_1)_{E_1}\overset{}{\underset{ (e_0,S_0)} {= } }(e_2)_{E_2}$ if $(e_1)_{E_1}\overset{}{\underset{ (e_0,S_0)} {\subseteq } }(e_2)_{E_2}$ and $(e_2)_{E_2}\overset{}{\underset{ (e_0,S_0)} {\subseteq } }(e_1)_{E_1}$. Otherwise we say that $(e_1)_{E_2}$ is {\bf {\itshape not equal to $(e_2)_{E_2}$ over $(e_0,S_0)$}} or write $(e_1)_{E_1}\overset{}{\underset{ (e_0,S_0)} {\neq } }(e_2)_{E_2}$.
	\item $(e_1)_{E_1}$ is a {\bf {\itshape strict subset of $(e_2)_{E_2}$ over $(e_0,S_0)$}} or write $(e_1)_{E_1}\overset{}{\underset{ (e_0,S_0)} {\subset } }(e_2)_{E_2}$ (or $(e_2)_{E_2}\overset{}{\underset{ (e_0,S_0)} {\supset } }(e_1)_{E_1}$) if $(e_1)_{E_1}\overset{}{\underset{ (e_0,S_0)} {\subseteq } }(e_2)_{E_2}$ and $(e_1)_{E_1}\overset{}{\underset{ (e_0,S_0)} {\neq } }(e_2)_{E_2}$.
	\item $(e_3)_{E_3}$ equals the {\bf {\itshape union of $(e_1)_{E}$ and $(e_2)_{E_2}$ over $(e_0,S_0)$ }} or write $(e_3)_{E_3}\overset{}{\underset{ (e_0,S_0)} {= } }(e_1)_{E_1} \cup (e_2)_{E_2}$, if for any word $x$, $x\overset{}{\underset{ (e_0,S_0)} {\in } }(e_3)_{E_3}$ if and only if $x\overset{}{\underset{ (e_0,S_0)} {\in } }(e_1)_{E_1}$ or $x\overset{}{\underset{ (e_0,S_0)} {\in } }(e_2)_{E_2}$. 
	\item $(e_3)_{E_3}$ equals the {\bf {\itshape intersection of $(e_1)_{E}$ and $(e_2)_{E_2}$ over $(e_0,S_0)$ }} or write $(e_3)_{E_3}\overset{}{\underset{ (e_0,S_0)} {= } }(e_1)_{E_1} \cap (e_2)_{E_2}$, if for any word $x$, $x\overset{}{\underset{ (e_0,S_0)} {\in } }(e_3)_{E_3}$ if and only if $x\overset{}{\underset{ (e_0,S_0)} {\in } }(e_1)_{E_1}$ and $x\overset{}{\underset{ (e_0,S_0)} {\in } }(e_2)_{E_2}$. 
	\item $(e_3)_{E_3}$ equals the {\bf {\itshape set difference of $(e_1)_{E}$ by $(e_2)_{E_2}$ over $(e_0,S_0)$ }} or write $(e_3)_{E_3}\overset{}{\underset{ (e_0,S_0)} {= } }(e_1)_{E_1} \setminus (e_2)_{E_2}$, if for any word $x$, $x\overset{}{\underset{ (e_0,S_0)} {\in } }(e_3)_{E_3}$ if and only if $x\overset{}{\underset{ (e_0,S_0)} {\in } }(e_1)_{E_1}$ and $x\overset{}{\underset{ (e_0,S_0)} {\notin } }(e_2)_{E_2}$.
	\item $(e_1)_{E_1}$ equals the {\bf {\itshape empty set over $(e_0,S_0)$}} or write $(e_1)_{E_1}\overset{}{\underset{ (e_0,S_0)} {= } }\emptyset$, if for any word $x$, $x\overset{}{\underset{ (e_0,S_0)} {\notin } }(e_1)_{E_1}$. 
\end{itemize}
\end{definition}

\begin{definition}
Given sets $(s_1)_{S_1}$ and $(s_2)_{S_2}$ over $(e_0,S_0)$, we say that $(s_1)_{S_1}$ {\bf {\itshape contains $(s_2)_{S_2}$ over $(e_0,S_0)$ }} if there exists $x\overset{}{\underset{ (e_0,S_0)} {\in } }(s_1)_{S_1}$ such that $(x)_{S_1}\overset{}{\underset{ (e_0,S_0)} {= } }(s_2)_{S_2}$.
\end{definition}

Note that set containment does not necessarily capture containment of the whole ``set structure'' of interest. Containment will be understood by defining new ``structure equalities'' on a case by case basis. In particular, the next subsections will deal with the set structure of relations, integers, rational numbers and real numbers.

Now we define the notion of power set.
\begin{definition}
Given sets $(e)_{E}$ and $(p)_{P}$ over $(e_0,S_0)$, if 
	\begin{itemize}
		\item $(p)_{P}$ contains every subset of $(e)_{E}$ over $(e_0,S_0)$
		\item for any $x\overset{}{\underset{ (e_0,S_0)} {\in } }(p)_{P}$ and any $y\overset{}{\underset{ (e_0,S_0)} {\in } }(p)_{P}$, if $(x)_{P}\overset{}{\underset{ (e_0,S_0)} {= } }(y)_{P}$ then $x=y$
		\item $\forall x\overset{}{\underset{ (e_0,S_0)} {\in } }(p)_{P}((x)_{P}\overset{}{\underset{ (e_0,S_0)} {\subseteq } }(e)_{E})$
	\end{itemize}
then we say that $(p)_{P}$ equals the {\bf {\itshape power set of $(e)_{E}$ over $(e_0,S_0)$}}.
\end{definition}

\subsection{Relations and functions}

Now we formulate the notion of association.
\begin{definition}
Given
\begin{itemize}
	\item set $(e)_{E}$ over $(e_0,S_0)$
	\item words $x_1$ and $x_2$
\end{itemize}
if $(e)_{E}\overset{}{\underset{ (e_0,S_0)} {= } }\{x_1,x_1x_2\}$ then we say that $(e)_{E}$ {\bf {\itshape associates $x_1$ to $x_2$ over $(e_0,S_0)$}}  and write $(e)_{E}\overset{}{\underset{ (e_0,S_0)} {= } }(x_1,x_2)$.
\end{definition}

Given a set $(s)_{S}$ over $(e_0,S_0)$ and words $x_1$ and $x_2$, if there exists $y$ such that $(y)_{S}\overset{}{\underset{ (e_0,S_0)} {= } }(x_1,x_2)$ and $y\overset{}{\underset{ (e_0,S_0)} {\in } }(s)_{S}$ then we will just write $(x_1,x_2)\overset{}{\underset{ (e_0,S_0)} {\in } }(s)_{S}$.

\begin{definition}
Let $(a)_{A}$, $(b)_{B}$ and $(c)_{C}$ be sets over $(e_0,S_0)$. We say that 
\begin{itemize}
	\item $((a)_{A},(b)_{B},(c)_{C})_{(e_0,S_0)}$ is a {\bf {\itshape (binary) relation }} or write $(c)_{C}\overset{}{\underset{ (e_0,S_0)} {\subseteq } }(a)_{A} \times (b)_{B}$ if 
		\begin{itemize}
			\item for any $c_1\overset{}{\underset{ (e_0,S_0)} {\in } }(c)_{C}$ there exist $a_1\overset{}{\underset{ (e_0,S_0)} {\in } }(a)_{A}$ and $b_1\overset{}{\underset{ (e_0,S_0)} {\in } }(b)_{B}$ such that  $(c_1)_{C}\overset{}{\underset{ (e_0,S_0)} {= } }(a_1,b_1)$ 
			\item for any $a_1\overset{}{\underset{ (e_0,S_0)} {\in } }(a)_{A}$ and any $b_1\overset{}{\underset{ (e_0,S_0)} {\in } }(b)_{B}$, if there exist $c_1\overset{}{\underset{ (e_0,S_0)} {\in } }(c)_{C}$ and $c_2\overset{}{\underset{ (e_0,S_0)} {\in } }(c)_{C}$ such that $(c_1)_{C}\overset{}{\underset{ (e_0,S_0)} {= } }(a_1,b_1)$ and $(c_2)_{C}\overset{}{\underset{ (e_0,S_0)} {= } }(a_1,b_1)$ then $c_1=c_2$
		\end{itemize}
		\item $(c)_{C}$ is the {\bf {\itshape cartesian product of $(a)_{A}$ and $(b)_{B}$ over $(e_0,S_0)$ }} or write  $(c)_{C}\overset{}{\underset{ (e_0,S_0)} {= } }(a)_{A} \times (b)_{B}$ if 
		\begin{itemize}
		
			\item for any $c_1\overset{}{\underset{ (e_0,S_0)} {\in } }(c)_{C}$ there exist $a_1\overset{}{\underset{ (e_0,S_0)} {\in } }(a)_{A}$ and $b_1\overset{}{\underset{ (e_0,S_0)} {\in } }(b)_{B}$ such that  $(c_1)_{C}\overset{}{\underset{ (e_0,S_0)} {= } }(a_1,b_1)$ 
		
		  \item for any $a_1\overset{}{\underset{ (e_0,S_0)} {\in } }(a)_{A}$ and any $b_1\overset{}{\underset{ (e_0,S_0)} {\in } }(b)_{B}$ there exists a unique $c_1\overset{}{\underset{ (e_0,S_0)} {\in } }(c)_{C}$ such that $(c_1)_{C}\overset{}{\underset{ (e_0,S_0)} {= } }(a_1,b_1)$
		  \end{itemize}
\end{itemize}
\end{definition}

\begin{definition}
Let $( (a)_{A},(b)_{B},(c)_{C} )_{(e_0,S_0)}$ be a binary relation and $(d)_{D}$ a set over $(e_0,S_0)$. We say that
\begin{itemize}
	\item $(d)_{D}$ is the {\bf {\itshape left domain of}} $( (a)_{A},(b)_{B},(c)_{C} )_{(e_0,S_0)}$ if $(d)_{D}\overset{}{\underset{ (e_0,S_0)} {= } }\{x \mid \exists b_1( (x,b_1)\overset{}{\underset{ (e_0,S_0)} {\in } }(c)_{C})  \}$
	\item  $(d)_{D}$ is the {\bf {\itshape right domain of}} $( (a)_{A},(b)_{B},(c)_{C} )_{(e_0,S_0)}$ if $(d)_{D}\overset{}{\underset{ (e_0,S_0)} {= } }\{x \mid \exists a_1( (a_1,x)\overset{}{\underset{ (e_0,S_0)} {\in } }(c)_{C})  \}$
\end{itemize}
\end{definition}


\begin{definition}
Given a binary relation $((a)_{A},(b)_{B},(f)_{F})_{(e_0,S_0)}$, if
\begin{itemize}
	\item $(a)_{A}$ is the left domain of $( (a)_{A},(b)_{B},(c)_{C} )_{(e_0,S_0)}$
	\item for any $a_1\overset{}{\underset{ (e_0,S_0)} {\in } }(a)_{A}$, any $b_1\overset{}{\underset{ (e_0,S_0)} {\in } }(b)_{B}$ and any $b_2\overset{}{\underset{ (e_0,S_0)} {\in } }(b)_{B}$, if $(a_1,b_1)\overset{}{\underset{ (e_0,S_0)} {\in } }(f)_{F}$ and $(a_1,b_2)\overset{}{\underset{ (e_0,S_0)} {\in } }(f)_{F}$ then $b_1=b_2$
\end{itemize}
then we say that $(f)_{F}:(a)_{A}\underset{ (e_0,S_0)} {\rightarrow } (b)_{B}$ is a {\bf {\itshape function}}. 
\end{definition}


Let $(f)_{F}:(a)_{A}\underset{ (e_0,S_0)} {\rightarrow} (b)_{B}$ be a function. We say that $(a)_{A}$ is the {\bf {\itshape domain}} of $(f)_{F}:(a)_{A}\underset{ (e_0,S_0)} {\rightarrow} (b)_{B}$.
For any $x$ and any $y$, if $(x,y)\overset{}{\underset{ (e_0,S_0)} {\in } }(f)_{F}$ then we write $(f)_{F}(x)\overset{}{\underset{ (e_0,S_0)} {= } }y$. If $(c)_{C}\overset{}{\underset{ (e_0,S_0)} {= } }\{(f)_{F}(x) \mid x\overset{}{\underset{ (e_0,S_0)} {\in } }(a)_{A}  \}$ then we say that $(c)_{C}$ equals the {\bf{\itshape image }} of $(f)_{F}$ over $(e_0,S_0)$ or write $(c)_{C}\overset{}{\underset{ (e_0,S_0)} {= } }(f)_{F}( (a)_{A} )$. For any function $(g)_{G}:(d)_{D}\underset{ (e_0,S_0)} {\rightarrow }(e)_{E} $, if $(d)_{D}\overset{}{\underset{ (e_0,S_0)} { \subseteq} }(a)_{A}$ and for any element $x$ of $(d)_{D}$ over $(e_0,S_0)$ we have $(g)_{G}(x)\overset{}{\underset{ (e_0,S_0)} {= } }(f)_{F}(x)$ then we say that $(g)_{G}$ is a {\bf{\itshape restriction }} of $(f)_{F}$ to $(d)_{D}$ over $(e_0,S_0)$ or write $(g)_{G}\overset{}{\underset{ (e_0,S_0)} {= } }(f)_{F}|(d)_{D}$.

\begin{definition}
Given a function $(f)_{F}:(a)_{A}\underset{ (e_0,S_0)} {\rightarrow }(b)_{B} $ we say that
	\begin{itemize}
		\item $(f)_{F}:(a)_{A}\underset{ (e_0,S_0)} {\rightarrow }(b)_{B} $ is {\bf {\itshape injective}} if for any $a_1\overset{}{\underset{ (e_0,S_0)} {\in } }(a)_{A}$, any $a_2\overset{}{\underset{ (e_0,S_0)} { \in} }(a)_{A}$ and any $b_1\overset{}{\underset{ (e_0,S_0)} {\in } }(b)_{B}$, if $(f)_{F}(a_1)\overset{}{\underset{ (e_0,S_0)} {= } }b_1$ and $(f)_{F}(a_2)\overset{}{\underset{ (e_0,S_0)} {= } }b_1$ then $a_1=a_2$
		\item $(f)_{F}:(a)_{A}\underset{ (e_0,S_0)} {\rightarrow }(b)_{B}$ is {\bf {\itshape surjective}} if for any $b_1\overset{}{\underset{ (e_0,S_0)} {\in } }(b)_{B}$ there exists $a_1\overset{}{\underset{ (e_0,S_0)} {\in } }(a)_{A}$ such that $(f)_{F}(a_1)\overset{}{\underset{ (e_0,S_0)} {= } }b_1$
		\item $(f)_{F}:(a)_{A}\underset{ (e_0,S_0)} {\rightarrow }(b)_{B}$ is {\bf {\itshape bijective}} if $(f)_{F}:(a)_{A}\underset{ (e_0,S_0)} {\rightarrow }(b)_{B}$ is both injective and surjective
	\end{itemize}
\end{definition}


Let $(s)_{S}$ be a set over $(e_0,S_0)$. If $(s)_{S}\overset{}{\underset{ (e_0,S_0)} {= } }\{n \mid  n\in \mathbb{N} \}$ then we will write $(s)_{S}\overset{}{\underset{ (e_0,S_0)} {= } }\mathbb{N}$. For a given natural number $n$, if $(s)_{S}\overset{}{\underset{ (e_0,S_0)} {= } }\{m \mid m\in \mathbb{N} \text{ and } m\leq n  \}$ then we will write $(s)_{S}\overset{}{\underset{ (e_0,S_0)} {= } }\mathbb{N}_{\leq n}$.

\begin{definition}
Given a set $(s)_{S}$ over $(e_0,S_0)$ and a natural number $n$, we say that the {\bf {\itshape cardinality of $(s)_{S}$ over $(e_0,S_0)$}} is $n$ or just write $card((s)_{S})\overset{}{\underset{ (e_0,S_0)} {= } }n$ if there exists a bijective function $(f)_{F}:(s)_{S}\overset{}{\underset{ (e_0,S_0)} {\rightarrow } }\mathbb{N}_{\leq n}$.
\end{definition}

If there exists $n$ such that $card((s)_{S})\overset{}{\underset{ (e_0,S_0)} {= } }n$ then we say that $(s)_{S}$ is a {\bf {\itshape finite set over $(e_0,S_0)$}}, otherwise we say that $(s)_{S}$ is an {\bf {\itshape infinite set over $(e_0,S_0)$}}.

\begin{definition}
Let $(s)_{S}$ be a set over $(e_0,S_0)$. If there exists a bijective function $(f)_{F}:(s)_{S}\overset{}{\underset{ (e_0,S_0)} {\rightarrow } }(t)_{T}$ where either $(t)_{T}\overset{}{\underset{ (e_0,S_0)} {= } }\mathbb{N}$ or $(t)_{T}\overset{}{\underset{ (e_0,S_0)} {= } }\mathbb{N}_{\leq n}$ for some $n\in \mathbb{N}$ then we say that $(s)_{S}$ is {\bf {\itshape countable over $(e_0,S_0)$}}.
\end{definition}

\begin{definition}
Given a relation $( (a)_{A},(a)_{A},(r)_{R} )_{(e_0,S_0)}$, if for any $a_1\overset{}{\underset{ (e_0,S_0)} {\in } }(a)_{A}$, $a_2\overset{}{\underset{ (e_0,S_0)} {\in } }(a)_{A}$ and $a_3\overset{}{\underset{ (e_0,S_0)} {\in } }(a)_{A}$ we have
\begin{itemize}
	\item $(a_1,a_1)\overset{}{\underset{ (e_0,S_0)} {\in } }(r)_{R}$
	\item $((a_1,a_2)\overset{}{\underset{ (e_0,S_0)} {\in } }(r)_{R} \wedge (a_2,a_1)\overset{}{\underset{ (e_0,S_0)} {\in } }(a)_{R}) \Rightarrow a_1=a_2$
	\item $((a_1,a_2)\overset{}{\underset{ (e_0,S_0)} {\in } }(r)_{R} \wedge (a_2,a_3)\overset{}{\underset{ (e_0,S_0)} {\in } }(r)_{R}) \Rightarrow (a_1,a_3)\overset{}{\underset{ (e_0,S_0)} {\in } }(r)_{R}$
\end{itemize}
then we say that $( (a)_{A},(r)_{R} )_{(e_0,S_0)}$ is a {\bf {\itshape partial order}}.
\end{definition}

Let $( (a)_{A},(r)_{R} )_{(e_0,S_0)}$ be a partial order.

For any $x\overset{}{\underset{ (e_0,S_0)} {\in } }(a)_{A}$ and any $y\overset{}{\underset{ (e_0,S_0)} {\in } }(a)_{A}$, if $(x,y)\overset{}{\underset{ (e_0,S_0)} {\in } }(r)_{R}$ then we write $x\overset{(r)_{R}}{\underset{ (e_0,S_0)} {\leq } }y$ or $y\overset{(r)_{R}}{\underset{ (e_0,S_0)} {\geq } }x$. If $x\overset{(r)_{R}}{\underset{ (e_0,S_0)} {\leq } }y$ and $x\neq y$ then we write $x\overset{(r)_{R}}{\underset{ (e_0,S_0)} {<} }y$ or $y\overset{(r)_{R}}{\underset{ (e_0,S_0)} {> } }x$.

\begin{definition}
Given a partial order $( (a)_{A},(r)_{R} )_{(e_0,S_0)}$, if $\forall a_1\overset{}{\underset{ (e_0,S_0)} {\in } }(a)_{A} \forall a_2\overset{}{\underset{ (e_0,S_0)} {\in } }(a)_{A}( a_1\overset{(r)_{R}}{\underset{ (e_0,S_0)} { <} }a_2 \Rightarrow \exists a_3\overset{}{\underset{ (e_0,S_0)} {\in } }(a)_{A}( a_1\overset{(r)_{R}}{\underset{ (e_0,S_0)} {< } }a_3\overset{(r)_{R}}{\underset{ (e_0,S_0)} {< } }a_2) )$ then we say that $( (a)_{A},(r)_{R} )_{(e_0,S_0)}$ is a {\bf {\itshape dense order}}.
\end{definition}

\begin{definition}
Given a partial order $( (a)_{A}, (r)_{R} )_{(e_0,S_0)}$, if for any $a_1\overset{}{\underset{ (e_0,S_0)} {\in } }(a)_{A}$ and any $a_2\overset{}{\underset{ (e_0,S_0)} {\in } }(a)_{A}$ we have $a_1\overset{(r)_{R}}{\underset{ (e_0,S_0)} {\leq } }a_2$ or $a_2\overset{(r)_{R}}{\underset{ (e_0,S_0)} {\leq } }a_1$ then we say that $( (a)_{A},(r)_{R} )_{(e_0,S_0)}$ is a {\bf {\itshape total order}}.
\end{definition}

\begin{definition}
Given 
\begin{itemize}
	\item total order $( (a)_{A},(r)_{R} )_{(e_0,S_0)}$
	\item subset $(b)_{B}$ of $(a)_{A}$ over $(e_0,S_0)$
	\item element $x$ of $(a)_{A}$ over $(e_0,S_0)$
\end{itemize}
if for all $b_1\overset{}{\underset{ (e_0,S_0)} {\in } }(b)_{B}$ we have $x\overset{(r)_{R}}{\underset{ (e_0,S_0)} {\leq } }b_1$ ({\itshape resp. $x\overset{(r)_{R}}{\underset{ (e_0,S_0)} {\geq } }b_1$}) 
then we say that $x$ is a {\bf {\itshape lower bound of $(b)_{B}$ over $( (a)_{A},(r)_{R} )_{(e_0,S_0)}$}} ( {\itshape resp. {\bf upper bound of $(b)_{B}$ over $( (a)_{A},(r)_{R} )_{(e_0,S_0)}$ }}).
\end{definition} 

\begin{definition}
Suppose we are given 
\begin{itemize}
	\item total order $( (a)_{A},(r)_{R} )_{(e_0,S_0)}$
	\item subset $(b)_{B}$ of $(a)_{A}$ over $(e_0,S_0)$
	\item element $x$ of $(a)_{A}$ over $(e_0,S_0)$
\end{itemize}

If $x$ is a lower bound of $(b)_{B}$ over $( (a)_{A},(r)_{R} )_{(e_0,S_0)}$ and for every lower bound $x_1$ of $(b)_{B}$ over $( (a)_{A},(r)_{R} )_{(e_0,S_0)}$ we have $x\overset{(r)_{R}}{\underset{ (e_0,S_0)} {\geq } }x_1$ then we say that $x$ is the {\bf {\itshape greatest lower bound of $(b)_{B}$ over $( (a)_{A},(r)_{R} )_{(e_0,S_0)}$} } or write $x\overset{(r)_{R}}{\underset{ (e_0,S_0)} {= } }inf (b)_{B}$. 

If $x$ is an upper bound of $(b)_{B}$ over $( (a)_{A},(r)_{R} )_{(e_0,S_0)}$ and that for every upper bound $x_1$ of $(b)_{B}$ over $( (a)_{A},(r)_{R} )_{(e_0,S_0)}$ we have $x\overset{(r)_{R}}{\underset{ (e_0,S_0)} {\leq } }x_1$ then we say that $x$ is the {\bf {\itshape least upper bound of $(b)_{B}$ over $( (a)_{A},(r)_{R} )_{(e_0,S_0)}$ }} or write $x\overset{(r)_{R}}{\underset{ (e_0,S_0)} {= } }sup (b)_{B}$.
\end{definition}

\begin{definition}
Given a total order $( (a)_{A},(r)_{R} )_{(e_0,S_0)}$, if every non-empty subset of $(a)_{A}$ over $(e_0,S_0)$ contains its greatest lower bound over $( (a)_{A},(r)_{R} )_{(e_0,S_0)}$  then we say that $( (a)_{A},(r)_{R} )_{(e_0,S_0)}$ is a {\bf {\itshape well ordering}}.
\end{definition}

\begin{definition}
Given 
\begin{itemize}
	\item partial orders $( (a_1)_{A_1},(r_1)_{R_1} )_{(e_0,S_0)}$ and $( (a_2)_{A_2},(r_2)_{R_2})_{(e_0,S_0)}$
	\item function $(f)_{F}:(a_1)_{A_1}\overset{}{\underset{ (e_0,S_0)} {\rightarrow } }(a_2)_{A_2}$
\end{itemize}
if for any $x_1$ and any $x_2$ we have $x_1\overset{(r_1)_{R_1}}{\underset{ (e_0,S_0)} {\leq } }x_2 \Leftrightarrow (f)_{F}(x_1)\overset{(r_2)_{R_2}}{\underset{ (e_0,S_0)} {\leq } }(f)_{F}(x_2)$ then we say that $(f)_{F}$ is an {\bf {\itshape order embedding of $( (a_1)_{A_1},(r_1)_{R_1} )_{(e_0,S_0)} $ into $( (a_2)_{A_2},(r_2)_{R_2} )_{(e_0,S_0)}$ }}.
\end{definition}

\begin{definition}
Given partial orders $( (a_1)_{A_1},(r_1)_{R_1} )_{(e_0,S_0)}$ and $( (a_2)_{A_2},(r_2)_{R_2})_{(e_0,S_0)}$, if there exists a bijective function $(f)_{F}:(a_1)_{A_1}\overset{}{\underset{ (e_0,S_0)} {\rightarrow } }(a_2)_{A_2}$ such that $(f)_{F}$ is an order embedding of $( (a_1)_{A_1},(r_1)_{R_1} )_{(e_0,S_0)}$ into $( (a_2)_{A_2},(r_2)_{R_2} )_{(e_0,S_0)}$ then we say that $( (a_1)_{A_1},(r_1)_{R_1} )_{(e_0,S_0)}$ is {\bf {\itshape order isomorphic}} to $( (a_2)_{A_2},(r_2)_{R_2})_{(e_0,S_0)}$.
\end{definition}

\begin{definition}
Given a relation $( (a)_{A},(a)_{A},(r)_{R} )_{(e_0,S_0)}$, if for any $a_1\overset{}{\underset{ (e_0,S_0)} {\in } }(a)_{A}$, $a_2\overset{}{\underset{ (e_0,S_0)} {\in } }(a)_{A}$ and $a_3\overset{}{\underset{ (e_0,S_0)} {\in } }(a)_{A}$ we have
\begin{itemize}
	\item $(a_1,a_1)\overset{}{\underset{ (e_0,S_0)} {\in } }(r)_{R}$
	\item $(a_1,a_2)\overset{}{\underset{ (e_0,S_0)} {\in } }(r)_{R} \Rightarrow (a_2,a_1)\overset{}{\underset{ (e_0,S_0)} {\in } }(r)_{R}$
	\item $((a_1,a_2)\overset{}{\underset{ (e_0,S_0)} {\in } }(r)_{R} \wedge (a_2,a_3)\overset{}{\underset{ (e_0,S_0)} {\in } }(r)_{R}) \Rightarrow (a_1,a_3)\overset{}{\underset{ (e_0,S_0)} {\in } }(r)_{R}$
\end{itemize}
then we say that $( (a)_{A},(r)_{R} )_{(e_0,S_0)}$ is a an {\bf {\itshape equivalence relation}}. 
\end{definition}

Let $((a)_{A},(r)_{R} )_{(e_0,S_0)}$ be an equivalence relation. For any $a_1$ and $a_2$, if $(a_1,a_2)\overset{}{\underset{ (e_0,S_0)} {\in } }(r)_{R}$ then we say that $a_1$ is {\bf {\itshape equivalent}} to $a_2$ over $((a)_{A},(r)_{R} )_{(e_0,S_0)}$ or write $a_1\overset{(r)_{R}}{\underset{ (e_0,S_0)} {\sim } }a_2$.



\subsection{Integers and rational numbers}


\begin{definition}
Given a set $(z)_{Z}$ over $(e_0,S_0)$, if
\begin{itemize}
	\item $(z)_{Z}\overset{}{\underset{ (e_0,S_0)} {\subseteq } }\mathbb{N} \times \mathbb{N}$
	\item $\forall n_1,n_2,n_3,n_4(((n_1,n_2)\overset{}{\underset{(e_0,S_0) } {\in } }(z)_{Z}\wedge (n_3,n_4)\in (z)_{Z})\Rightarrow n_1+n_4=n_2+n_3)$
	\item $\forall n_1,n_2,n_3,n_4( ( (n_1,n_2)\overset{}{\underset{ (e_0,S_0)} {\in } }(z)_{Z} \wedge n_1+n_4=n_2+n_3 ) \Rightarrow (n_3,n_4)\overset{}{\underset{ (e_0,S_0)} {\in } }(z)_{Z} )$
\end{itemize}
then we say that $(z)_{Z}$ is an {\bf {\itshape integer over}} $(e_0,S_0)$. 
\end{definition}

\begin{definition}
Let $(z_1)_{Z_1}$ and $(z_2)_{Z_2}$ be integers over $(e_0,S_0)$. If
$$ \forall n_1,n_2 ( (n_1,n_2)\overset{}{\underset{(e_0,S_0) } {\in } }(z_1)_{Z_1}\Leftrightarrow (n_1,n_2)\overset{}{\underset{(e_0,S_0) } {\in } }(z_2)_{Z_2})$$
then we say that $(z_1)_{Z_1}$ and $(z_2)_{Z_2}$ are {\bf {\itshape equal as integers over $(e_0,S_0)$}} and write $(z_1)_{Z_1}\overset{\mathbb{Z}}{\underset{(e_0,S_0) } {=} }(z_2)_{Z_2}$. Otherwise we write $(z_1)_{Z_1}\overset{\mathbb{Z}}{\underset{(e_0,S_0) } {\neq } }(z_2)_{Z_2}$.
\end{definition}

To lighten the discourse we will treat $\overset{\mathbb{Z}}{\underset{(e_0,S_0) } {=} }$ as an equality symbol when there is no ambiguity. In particular, the symbol $\overset{\mathbb{Z}}{\underset{(e_0,S_0) } {=} }$ will convey the substitutive property of equality in logical expressions when there is no ambiguity. Generally, any ``decorated'' equality symbol (in this paper) will be treated like an equality symbol (when there is no ambiguity).


Given integers $(z_1)_{Z_1}$ and $(z_2)_{Z_2}$ over $(e_0,S_0)$, if for any natural numbers $n_1$,$n_2$ and $n_3$ we have $( (n_1,n_2)\overset{}{\underset{ (e_0,S_0)} {\in } }(z_1)_{Z_1} \wedge (n_1,n_3)\overset{}{\underset{ (e_0,S_0)} {\in } }(z_2)_{Z_2} ) \Rightarrow n_3<n_2$ then we write $(z_1)_{Z_1}\overset{\mathbb{Z}}{\underset{ (e_0,S_0)} {< } }(z_2)_{Z_2}$. If $(z_1)_{Z_1}\overset{\mathbb{Z}}{\underset{ (e_0,S_0)} {< } }(z_2)_{Z_2}$ or $(z_1)_{Z_1}\overset{\mathbb{Z}}{\underset{ (e_0,S_0)} {= } }(z_2)_{Z_2}$ then we write $(z_1)_{Z_1}\overset{\mathbb{Z}}{\underset{ (e_0,S_0)} {\leq } }(z_2)_{Z_2}$.

\begin{definition}
Given a set $(z_Z)$ over $(e_0,S_0)$, if
\begin{itemize}
	\item for any $x\overset{}{\underset{(e_0,S_0) } {\in } }(z)_{Z}$, $(x)_{Z}$ is an integer over $(e_0,S_0)$
	\item $\forall x_1\overset{}{\underset{ (e_0,S_0)} {\in } }(z)_{Z} \forall x_2\overset{}{\underset{ (e_0,S_0)} {\in } }(z)_{Z}( (x_1)_{Z}\overset{\mathbb{Z}}{\underset{ (e_0,S_0)} {= } }(x_2)_{Z} \Leftrightarrow x_1=x_2 )$
\end{itemize}
then we say that $(z)_{Z}$ is an {\bf {\itshape integer set over $(e_0,S_0)$}}.
\end{definition}

Let $(i)_{I}$ and $(z)_{Z}$ be integer sets over $(e_0,S_0)$. If for every $x\overset{}{\underset{ (e_0,S_0)} {\in } }(i)_{I}$ there exists $y\overset{}{\underset{ (e_0,S_0)} {\in } }(z)_{Z}$ such that $(x)_{I}\overset{\mathbb{Z}}{\underset{ (e_0,S_0)} {= } }(y)_{Z}$ then we say that $(i)_{I}$ is an {\bf {\itshape integer subset of $(z)_{Z}$ over $(e_0,S_0)$}} or write $(i)_{I}\overset{\{\mathbb{Z}\}}{\underset{ (e_0,S_0)} {\subseteq } }(z)_{Z}$. If $(i)_{I}\overset{\{\mathbb{Z}\}}{\underset{ (e_0,S_0)} {\subseteq } }(z)_{Z}$ and $(z)_{Z}\overset{\{ \mathbb{Z} \}}{\underset{ (e_0,S_0)} {\subseteq } }(i)_{I}$ then we say $(i)_{I}$ and $(z)_{Z}$ are {\bf {\itshape equal as integer sets over $(e_0,S_0)$}} or write $(i)_{I}\overset{ \{ \mathbb{Z} \} }{\underset{ (e_0,S_0)} {= } }(z)_{Z}$.


\begin{definition}
Given an integer set $(z)_{Z}$ over $(e_0,S_0)$, if for every natural numbers $n_1$ and $n_2$ there exists $x\overset{}{\underset{(e_0,S_0) } {\in } }(z)_{Z}$ such that $(n_1,n_2)\overset{}{\underset{(e_0,S_0) } {\in } }(x)_{Z}$
then we say that $(z)_{Z}$ {\bf {\itshape represents the set of integers over $(e_0,S_0)$}} or write $(z)_{Z}\overset{}{\underset{(e_0,S_0) } {= } }\mathbb{Z}$
\end{definition}

We can construct a set that represents the set of integers $(e_0,S_0)$ in the following way.

Let $x\in S_0$ and $y\in S_0$ with $x\neq y$. Let $(x)_{X}$ be a set over $(e_0,S_0)$ such that

\begin{itemize}

	\item for any word $a$, $a\overset{}{\underset{ (e_0,S_0)} {\in } }(x)_{X}$ if and only if there exist natural numbers $n_1$ and $n_2$ with $a=y(n_1\otimes x)(n_2\otimes y)$ such that $n_1=1$ or $n_2=1$
	  
  \item for any natural numbers $n_1$ and $n_2$, if $y(n_1\otimes x)(n_2\otimes y)\overset{}{\underset{ (e_0,S_0)} {\in } }(x)_{X}$ then for any $b$ we have that $b\overset{}{\underset{ (e_0,S_0)} {\in } }(y(n_1\otimes x)(n_2\otimes y))_{X}$ if and only if there exist natural numbers $n_3$ and $n_4$ such that $n_1 + n_4 = n_2 + n_3$, $b=(n_3 \otimes x)(n_4 \otimes y)$ and $(b)_{X}\overset{}{\underset{ (e_0,S_0)} {= } }(n_3,n_4)$

\end{itemize}

We see that $(x)_{X}$ represents the set of integers over $(e_0,S_0)$.


Whenever some $(z)_{Z}$ is an integer over $(e_0,S_0)$ we will just write $(z)_{Z}\overset{}{\underset{(e_0,S_0) } {\in } }\mathbb{Z}$.

Given integers $(z_1)_{Z_1}$, $(z_2)_{Z_2}$ and $(z_3)_{Z_3}$ over $(e_0,S_0)$, if 
$$ \exists n_1,n_2,n_3,n_4( (n_1,n_2)\overset{}{\underset{ (e_0,S_0) } {\in } }(z_1)_{Z_1} \wedge (n_3,n_4)\overset{}{\underset{(e_0,S_0) } {\in } }(z_2)_{Z_2} \wedge (n_1+n_3,n_2+n_4)\overset{}{\underset{(e_0,S_0) } {\in } }(z_3)_{Z_3})$$
then we write $$(z_1)_{Z_1}+(z_2)_{Z_2}\overset{\mathbb{Z}}{\underset{(e_0,S_0) } {= } }(z_3)_{Z_3}$$


Given integer $(z_1)_{Z_1}$ over $(e_0,S_0)$, if for any integer $(z_2)_{Z_2}$ over $(e_0,S_0)$ we have $(z_1)_{Z_1} + (z_2)_{Z_2} \overset{\mathbb{Z}}{\underset{(e_0,S_0) } {= } }(z_2)_{Z_2}$ then we write 
$$ (z_1)_{Z_1} \overset{\mathbb{Z}}{\underset{(e_0,S_0) } {= } }0$$


Let $(z_1)_{Z_1}$, $(z_2)_{Z_2}$ and $(z_3)_{Z_3}$ be integers over $(e_0,S_0)$, if $(z_1)_{Z_1}+(z_2)_{Z_2}\overset{\mathbb{Z}}{\underset{(e_0,S_0) } {= } }(z_3)_{Z_3}$ and $(z_3)_{Z_3}\overset{\mathbb{Z}}{\underset{(e_0,S_0) } {= } }0$ then we say that $(z_2)_{Z_2}$ is the {\bf {\itshape additive integer inverse of $(z_1)_{Z_1}$ over $(e_0,S_0)$}} or we write 
$$ (z_2)_{Z_2}\overset{\mathbb{Z}}{\underset{(e_0,S_0) } {= } }-(z_1)_{Z_1}$$


Given integers $(z_1)_{Z_1}$, $(z_2)_{Z_2}$ and $(z_3)_{Z_3}$ all over $(e_0,S_0)$, if 
$ \exists n_1,n_2,n_3,n_4( (n_1,n_2)\overset{}{\underset{(e_0,S_0) } {\in } }(z_1)_{Z_1} \wedge (n_3,n_4)\overset{}{\underset{(e_0,S_0) } {\in } }(z_2)_{Z_2} \wedge (n_1\cdot n_3 + n_2\cdot n_4,n_1\cdot n_4 + n_2\cdot n_3)\overset{}{\underset{(e_0,S_0) } {\in } }(z_3)_{Z_3})$
then we write 
$$ (z_1)_{Z_1} \cdot (z_2)_{Z_2} \overset{\mathbb{Z}}{\underset{(e_0,S_0) } {= } }(z_3)_{Z_3} \text{ or } (z_1)_{Z_1}(z_2)_{Z_2} \overset{\mathbb{Z}}{\underset{(e_0,S_0) } {= } }(z_3)_{Z_3}$$


Given integer $(z_1)_{Z_1}$ over $(e_0,S_0)$, if for any integer $(z_2)_{Z_2}$ over $(e_0,S_0)$ we have  $(z_1)_{Z_1} \cdot (z_2)_{Z_2} \overset{\mathbb{Z}}{\underset{(e_0,S_0) } {= } }(z_2)_{Z_2}$ then we write 
$$ (z_1)_{Z_1} \overset{\mathbb{Z}}{\underset{(e_0,S_0) } {= } }1$$


Given integers $(z_1)_{Z_1}$, $(z_2)_{Z_2}$ and $(z_3)_{Z_3}$ over $(e_0,S_0)$, if $(z_1)_{Z_1} \cdot (z_2)_{Z_2}\overset{\mathbb{Z}}{\underset{(e_0,S_0) } {= } }(z_3)_{Z_3}$ and $(z_3)_{Z_3}\overset{\mathbb{Z}}{\underset{(e_0,S_0) } {= } }1$ then we say that $(z_2)_{Z_2}$ is the {\bf {\itshape multiplicative integer inverse}} of $(z_1)_{Z_1}$ over $(e_0,S_0)$ or we write 
$$ (z_2)_{Z_2}\overset{\mathbb{Z}}{\underset{(e_0,S_0) } {= } }(z_1)^{-1}_{Z_1}$$


\begin{definition} Given a set $(q)_{Q}$ over $(e_0,S_0)$ we say that $(q)_{Q}$ is a {\bf {\itshape rational number over}} $(e_0,S_0)$ if
\begin{itemize}
\item for any $x\overset{}{\underset{(e_0,S_0) } {\in } }(q)_{Q}$ there exist integers $(z_1)_{Q}$ and $(z_2)_{Q}\overset{\mathbb{Z}}{\underset{ (e_0,S_0)} {\neq } }0$ over $(e_0,S_0)$ such that $(x)_{Q}\overset{}{\underset{(e_0,S_0) } {= } }(z_1,z_2)$
\item $\forall x_1\overset{}{\underset{(e_0,S_0) } {\in } }(q)_{Q} \forall x_2\overset{}{\underset{(e_0,S_0) } {\in } }(q)_{Q} \forall z_1,z_2,z_3,z_4 ( (x_1\overset{}{\underset{ (e_0,S_0)} {= } }(z_1,z_2) \wedge x_2\overset{}{\underset{ (e_0,S_0)} {= } }(z_3,z_4) \wedge (z_1)_{Q}\overset{\mathbb{Z}}{\underset{ (e_0,S_0)} {= } }(z_3)_{Q} \wedge (z_2)_{Q}\overset{\mathbb{Z}}{\underset{ (e_0,S_0)} {= } }(z_4)_{Q}) \Rightarrow x_1=x_2) $ 
\item $\forall z_1,z_2,z_3,z_4(((z_1,z_2)\overset{}{\underset{(e_0,S_0) } {\in } }(q)_{Q}\wedge (z_3,z_4)\in (q)_{Q})\Rightarrow (z_1)_{Q} \cdot (z_4)_{Q} \overset{\mathbb{Z}}{\underset{(e_0,S_0) } {= } }(z_2)_{Q} \cdot (z_3)_{Q})$
\item $\forall (z_1)_{Z_1}\overset{}{\underset{ (e_0,S_0)} {\in } }\mathbb{Z} \forall (z_2)_{Z_2}\overset{}{\underset{ (e_0,S_0)} {\in } }\mathbb{Z} ( ( z_2\overset{\mathbb{Z}}{\underset{ (e_0,S_0)} {\neq } }0 \wedge \exists z_3 \exists z_4( (z_3,z_4)\overset{}{\underset{ (e_0,S_0)} { \in} }(q)_{Q} \wedge (z_1)_{Z_1}(z_4)_{Q}\overset{\mathbb{Z}}{\underset{ (e_0,S_0)} {= } }(z_2)_{Z_2}(z_3)_{Q} ) ) \Rightarrow \exists z_5 \exists z_6((z_5)_{Q}\overset{\mathbb{Z}}{\underset{ (e_0,S_0)} {= } }(z_1)_{Z_1} \wedge (z_6)_{Q}\overset{\mathbb{Z}}{\underset{ (e_0,S_0)} {= } }(z_2)_{Z_2} \wedge (z_5,z_6)\overset{}{\underset{ (e_0,S_0)} {\in } }(z)_{Q}))$
\end{itemize}
\end{definition}

\begin{definition}
Let $(q_1)_{Q_1}$ and $(q_2)_{Q_2}$ be rational numbers over $(e_0,S_0)$. If
\begin{itemize}
\item $\forall z_1,z_2 ( (z_1,z_2)\overset{}{\underset{(e_0,S_0) } {\in } }(q_1)_{Q_1}\Rightarrow \exists z_3,z_4( (z_3)_{Q_2}\overset{\mathbb{Z}}{\underset{(e_0,S_0) } {= } }(z_1)_{Q_1} \wedge (z_4)_{Q_2}\overset{\mathbb{Z}}{\underset{(e_0,S_0) } {= } }(z_2)_{Q_1} \wedge (z_3,z_4)\overset{}{\underset{(e_0,S_0) } {\in } }(q_2)_{Q_2} ) )$ 
\item $\forall z_1,z_2 ( (z_1,z_2)\overset{}{\underset{(e_0,S_0) } {\in } }(q_2)_{Q_2}\Rightarrow \exists z_3,z_4( (z_3)_{Q_1}\overset{\mathbb{Z}}{\underset{(e_0,S_0) } {= } }(z_1)_{Q_2} \wedge (z_4)_{Q_1}\overset{\mathbb{Z}}{\underset{(e_0,S_0) } {=} }(z_2)_{Q_2} \wedge (z_3,z_4)\overset{}{\underset{(e_0,S_0) } {\in } }(q_1)_{Q_1} ) )$
\end{itemize}
then we say that $(q_1)_{Q_1}$ and $(q_2)_{Q_2}$ are {\bf {\itshape equal as rationals over}} $(e_0,S_0)$ or write $(q_1)_{Q_1}\overset{\mathbb{Q}}{\underset{(e_0,S_0) } {=} }(q_2)_{Q_2}$. Otherwise we write $(q_1)_{Q_1}\overset{\mathbb{Q}}{\underset{(e_0,S_0) } {\neq } }(q_2)_{Q_2}$.
\end{definition}

%

For any rational numbers $(q_1)_{Q_1}$ and $(q_2)_{Q_2}$ over $(e_0,S_0)$, if for any integers $(z_1)_{Q_1}$, $(z_2)_{Q_1}$, $(z_3)_{Q_2}$ and $(z_4)_{Q_2}$ over $(e_0,S_0)$ we have $( (z_1,z_2)\overset{}{\underset{ (e_0,S_0)} {\in } }(q_1)_{Q_1} \wedge (z_3,z_4)\overset{}{\underset{ (e_0,S_0)} {\in } }(q_2)_{Q_2} \wedge (z_4)_{Q_2}\overset{\mathbb{Z}}{\underset{ (e_0,S_0)} {= } }(z_2)_{Q_1} ) \Rightarrow (z_1)_{Q_1}\overset{\mathbb{Z}}{\underset{ (e_0,S_0)} {< } }(z_3)_{Q_2}$ then we write $(q_1)_{Q_1}\overset{\mathbb{Q}}{\underset{ (e_0,S_0)} {< } }(q_2)_{Q_2}$. Given rational numbers $(q_1)_{Q_1}$ and $(q_2)_{Q_2}$ over $(e_0,S_0)$, if $(q_1)_{Q_1}\overset{\mathbb{Q}}{\underset{ (e_0,S_0)} {< } }(q_2)_{Q_2}$ or $(q_1)_{Q_1}\overset{\mathbb{Q}}{\underset{ (e_0,S_0)} {=} }(q_2)_{Q_2}$ then we write $(q_1)_{Q_1}\overset{\mathbb{Q}}{\underset{ (e_0,S_0)} {\leq } }(q_2)_{Q_2}$.

\begin{definition}
Given a set $(q)_{Q}$ over $(e_0,S_0)$, if
\begin{itemize}
	\item for any $x\overset{}{\underset{(e_0,S_0) } {\in } }(q)_{Q}$, $(x)_{Q}$ is a rational number over $(e_0,S_0)$
	\item $\forall x_1\overset{}{\underset{ (e_0,S_0)} {\in } }(q)_{Q} \forall x_2\overset{}{\underset{ (e_0,S_0)} {\in } }(q)_{Q}( (x_1)_{Q}\overset{\mathbb{Q}}{\underset{ (e_0,S_0)} {= } }(x_2)_{Q} \Leftrightarrow x_1=x_2 )$
\end{itemize}
then we say that $(q)_{Q}$ is a {\bf {\itshape rational set over $(e_0,S_0)$}}.
\end{definition}

Let $(q)_{Q}$ and $(r)_{R}$ be rational sets over $(e_0,S_0)$. If for every $x\overset{}{\underset{ (e_0,S_0)} {\in } }(q)_{Q}$ there exists $y\overset{}{\underset{ (e_0,S_0)} {\in } }(r)_{R}$ such that $(x)_{Q}\overset{\mathbb{Q}}{\underset{ (e_0,S_0)} {= } }(y)_{R}$ then we say that $(q)_{Q}$ is a {\bf {\itshape rational subset of $(r)_{R}$ over $(e_0,S_0)$}} or write $(q)_{Q}\overset{\{\mathbb{Q}\}}{\underset{ (e_0,S_0)} {\subseteq } }(r)_{R}$. If $(q)_{Q}\overset{\{\mathbb{Q}\}}{\underset{ (e_0,S_0)} {\subseteq } }(r)_{R}$ and $(r)_{R}\overset{\{\mathbb{Q}\}}{\underset{ (e_0,S_0)} {\subseteq } }(q)_{Q}$ then we say that $(q)_{Q}$ and $(r)_{R}$ are {\bf {\itshape equal as rational sets over $(e_0,S_0)$}} or write $(q)_{Q}\overset{\{\mathbb{Q}\}}{\underset{ (e_0,S_0)} {=} }(r)_{R}$. 

\begin{definition}
Given a rational set $(q)_{Q}$ over $(e_0,S_0)$, if for any rational number $(q_1)_{Q_1}$ over $(e_0,S_0)$ there exists $x\overset{}{\underset{ (e_0,S_0)} {\in } }(q)_{Q}$ such that $(x)_{Q}\overset{}{\underset{(e_0,S_0) } {= } }(q_1)_{Q_1}$ then we say $(q)_{Q}$ {\bf {\itshape represents the set of rational numbers}} over $(e_0,S_0)$ or write $(q)_{Q}\overset{}{\underset{(e_0,S_0) } {= } }\mathbb{Q}$.

\end{definition}

We can construct a set that represents the set of rational numbers $(e_0,S_0)$ in the following way.

Let $x\in S_0$ and $y\in S_0$ with $x\neq y$. Let $(x)_{X}$ be a set over $(e_0,S_0)$ such that
\begin{itemize}

	\item for any $a$, $a\overset{}{\underset{ (e_0,S_0)} {\in } }(x)_{X}$ if and only if there exist natural numbers $n_1$, $n_2$, $n_3$ and $n_4$ with $a=y(n_1\otimes x)(n_2\otimes y)(n_3\otimes x)(n_4\otimes y)$ such that
		\begin{itemize}
			
			\item $n_1=1$ or $n_2=1$
			\item $n_1\geq n_2$
			\item $n_3=1$ or $n_4=1$ 
			\item $n_3\neq n_4$
			
			
			\item $\forall n_5, n_6, n_7, n_8 \in \mathbb{N} \forall (z_1)_{Z_1}\overset{}{\underset{ (e_0,S_0)} {\in } }\mathbb{Z} \forall (z_2)_{Z_2}\overset{}{\underset{ (e_0,S_0)} {\in } }\mathbb{Z} \forall  (z_3)_{Z_3}\overset{}{\underset{ (e_0,S_0)} {\in } }\mathbb{Z} \forall (z_4)_{Z_4}\overset{}{\underset{ (e_0,S_0)} {\in } }\mathbb{Z}( (n_5\geq n_6 \wedge n_7\neq n_8 \wedge 
			(n_1,n_2)\overset{}{\underset{ (e_0,S_0)} {\in } }(z_1)_{Z_1} \wedge (n_3,n_4)\overset{}{\underset{ (e_0,S_0)} {\in } }(z_2)_{Z_2} \wedge (n_5,n_6)\overset{}{\underset{ (e_0,S_0)} {\in } }(z_3)_{Z_3} \wedge
			(n_7,n_8)\overset{}{\underset{ (e_0,S_0)} {\in } }(z_4)_{Z_4} \wedge 
			(z_1)_{Z_1} \cdot (z_4)_{Z_4} \overset{\mathbb{Z}}{\underset{(e_0,S_0) } {= } }(z_2)_{Z_2} \cdot (z_3)_{Z_3}
			) \Rightarrow 
			(z_1)_{Z_1}\overset{\mathbb{Z}}{\underset{ (e_0,S_0)} {\leq } }(z_3)_{Z_3}
			)
			$
			
		\end{itemize}
	\item for any natural numbers $n_1$,$n_2$,$n_3$ and $n_4$, if $y(n_1\otimes x)(n_2\otimes y)(n_3\otimes x)(n_4\otimes x)\overset{}{\underset{ (e_0,S_0)} {\in } }(x)_{X}$ then for any $b$ we have that $b\overset{}{\underset{ (e_0,S_0)} {\in } }(y(n_1\otimes x)(n_2\otimes y)(n_3\otimes x)(n_4\otimes x))_{X}$ if and only if there exist natural numbers $n_5$, $n_6$, $n_7$ and $n_8$ with $b=(n_5\otimes x)(n_6\otimes y)(n_7\otimes x)(n_8\otimes y)$ such that 
	
	\begin{itemize}
		\item $n_5=1$ or $n_6=1$
		\item $n_7=1$ or $n_8=1$
		\item $n_7\neq n_8$
		\item for any integers $(z_1)_{Z_1}$,$(z_2)_{Z_2}$,$(z_3)_{Z_3}$ and $(z_4)_{Z_4}$ over $(e_0,S_0)$ with $(n_1,n_2)\overset{}{\underset{ (e_0,S_0)} {\in } }(z_1)_{Z_1}$, $(n_3,n_4)\overset{}{\underset{ (e_0,S_0)} {\in } }(z_2)_{Z_2}$, $(n_5,n_6)\overset{}{\underset{ (e_0,S_0)} {\in } }(z_3)_{Z_3}$ and 
			$(n_7,n_8)\overset{}{\underset{ (e_0,S_0)} {\in } }(z_4)_{Z_4}$ we have $(z_1)_{Z_1} \cdot (z_4)_{Z_4} \overset{\mathbb{Z}}{\underset{(e_0,S_0) } {= } }(z_2)_{Z_2} \cdot (z_3)_{Z_3}$
	\end{itemize}
	
	\item for any natural numbers $n_1$,$n_2$,$n_3$ and $n_4$, if $n_1=1$ or $n_2=1$ and if $n_3=1$ or $n_4=1$ then 
	$( (n_1\otimes x)(n_2\otimes y)(n_3\otimes x)(n_4\otimes y) )_{X}\overset{}{\underset{ (e_0,S_0)} {= } }( y(n_1\otimes x)(n_2\otimes y),y(n_3\otimes x)(n_4\otimes y) )_{X}$

	\item for any natural numbers $n_1$ and $n_2$, if $n_1=1$ or $n_2=1$ then for any $c$ we have that  $c\overset{}{\underset{ (e_0,S_0)} {\in } }(y(n_1\otimes x)(n_2\otimes y))_{X}$ if and only if there exist natural numbers $n_3$ and $n_4$ such that $n_1 + n_4 = n_2 + n_3$, $c=(n_3\otimes x)(n_4\otimes y)$ and $(c)_{X}\overset{}{\underset{ (e_0,S_0)} {= } }(n_3, n_4)$	

\end{itemize}
We see that $(x)_{X}\overset{}{\underset{ (e_0,S_0)} {= } }\mathbb{Q}$.


Whenever some $(q_1)_{Q_1}$ is a rational number over $(e_0,S_0)$ we will just write $(q_1)_{Q_1}\overset{}{\underset{(e_0,S_0) } {\in } }\mathbb{Q}$.

Given rational numbers $(q_1)_{Q_1}$, $(q_2)_{Q_2}$ and $(q_3)_{Q_3}$ all over $(e_0,S_0)$, if 
$$ \exists z_1,z_2,z_3,z_4,z_5,z_6( (z_1,z_2)\overset{}{\underset{ (e_0,S_0) } {\in } }(q_1)_{Q_1} \wedge (z_3,z_4)\overset{}{\underset{(e_0,S_0) } {\in } }(q_2)_{Q_2} \wedge (z_5,z_6)\overset{}{\underset{(e_0,S_0) } {\in } }(q_3)_{Q_3} \wedge $$
$$
(z_5)_{Q_3}\overset{\mathbb{Z}}{\underset{(e_0,S_0) } {= } }(z_1)_{Q_1}\cdot (z_4)_{Q_2}+(z_2)_{Q_1}\cdot (z_3)_{Q_2} \wedge (z_6)_{Q_3}\overset{\mathbb{Z}}{\underset{(e_0,S_0) } {= } }(z_2)_{Q_1}\cdot (z_4)_{Q_2} )$$
then we write $$(q_1)_{Q_1}+(q_2)_{Q_2}\overset{\mathbb{Q}}{\underset{(e_0,S_0) } {= } }(q_3)_{Q_3}$$


Given rational number $(q_1)_{Q_1}$ over $(e_0,S_0)$, if for any integer $(q_2)_{Q_2}$ over $(e_0,S_0)$ we have $(q_1)_{Q_1} + (q_2)_{Q_2} \overset{\mathbb{Q}}{\underset{(e_0,S_0) } {= } }(q_2)_{Q_2}$ then we write 
$$ (q_1)_{Q_1} \overset{\mathbb{Q}}{\underset{(e_0,S_0) } {= } }0$$ 


Given rational numbers $(q_1)_{Q_1}$, $(q_2)_{Q_2}$ and $(q_3)_{Q_3}$ all over $(e_0,S_0)$, if $(q_1)_{Q_1}+(q_2)_{Q_2}\overset{\mathbb{Q}}{\underset{(e_0,S_0) } {= } }(q_3)_{Q_3}$ and $(q_3)_{Q_3}\overset{\mathbb{Q}}{\underset{(e_0,S_0) } {= } }0$ then we say that $(q_2)_{Q_2}$ is the {\bf {\itshape additive rational inverse}} of $(q_1)_{Q_1}$ over $(e_0,S_0)$ or we write 
$$ (q_2)_{Q_2}\overset{\mathbb{Q}}{\underset{(e_0,S_0) } {= } }-(q_1)_{Q_1}$$


Given rational numbers $(q_1)_{Q_1}$, $(q_2)_{Q_2}$ and $(q_3)_{Q_3}$ all over $(e_0,S_0)$, if 
$ \exists z_1,z_2,z_3,z_4,z_5,z_6( \\ (z_1,z_2)\overset{}{\underset{(e_0,S_0) } {\in } }(q_1)_{Q_1} \wedge (z_3,z_4)\overset{}{\underset{(e_0,S_0) } {\in } }(q_2)_{Q_2} \wedge (z_5)_{Q_3}\overset{\mathbb{Z}}{\underset{ (e_0,S_0)} {= } }(z_1)_{Q_1}(z_3)_{Q_2} \wedge
 (z_6)_{Q_3}\overset{\mathbb{Z}}{\underset{ (e_0,S_0)} {= } }(z_2)_{Q_1}(z_4)_{Q_2} \wedge 
(z_5,z_6)\overset{}{\underset{ (e_0,S_0)} {\in } }(q_3)_{Q_3} ))$
then we write 
$$ (q_1)_{Q_1} \cdot (q_2)_{Q_2} \overset{\mathbb{Q}}{\underset{(e_0,S_0) } {= } }(q_3)_{Q_3} \text{ or } (q_1)_{Q_1}(q_2)_{Q_2} \overset{\mathbb{Q}}{\underset{(e_0,S_0) } {= } }(q_3)_{Q_3}$$


Given rational number $(q_1)_{Q_1}$ over $(e_0,S_0)$, if for any rational number $(q_2)_{Q_2}$ over $(e_0,S_0)$ we have $(q_1)_{Q_1} \cdot (q_2)_{Q_2} \overset{\mathbb{Q}}{\underset{(e_0,S_0) } {= } }(q_2)_{Q_2}$ then we write 
$$ (q_1)_{Q_1} \overset{\mathbb{Q}}{\underset{(e_0,S_0) } {= } }1$$ 


Given rational numbers $(q_1)_{Q_1}$, $(q_2)_{Q_2}$ and $(q_3)_{Q_3}$ all over $(e_0,S_0)$, if $(q_1)_{Q_1} \cdot (q_2)_{Q_2}\overset{\mathbb{Q}}{\underset{(e_0,S_0) } {= } }(q_3)_{Q_3}$ and $(q_3)_{Q_3}\overset{\mathbb{Q}}{\underset{(e_0,S_0) } {= } }1$ then we say that $(q_2)_{Q_2}$ is the {\bf {\itshape multiplicative rational inverse}} of $(q_1)_{Q_1}$ over $(e_0,S_0)$ or we write 
$$ (q_2)_{Q_2}\overset{\mathbb{Q}}{\underset{(e_0,S_0) } {= } }(q_1)^{-1}_{Q_1}$$

\subsection{Real numbers}

\begin{definition}
Given a set $(r)_{R}$ over $(e_0,S_0)$, if there exist non-empty rational sets $(l)_{R}$ and $(u)_{R}$ over $(e_0,S_0)$ such that 
\begin{itemize}
	\item $(r)_{R}\overset{}{\underset{ (e_0,S_0)} {= } }\{l,u\}$
	\item $\forall x \forall y( (x\overset{}{\underset{ (e_0,S_0)} {\in } }(l)_{R} \wedge y\overset{}{\underset{ (e_0,S_0)} {\in } } (u)_{R})\Rightarrow (x)_{R}\overset{\mathbb{Q}}{\underset{ (e_0,S_0)} {<} }(y)_{R} )$
	\item $\forall (x)_{X}\overset{}{\underset{ (e_0,S_0)} {\in } }\mathbb{Q} \exists (y)_{R}\overset{\mathbb{Q}}{\underset{ (e_0,S_0)} {= } }(x)_{X}(y\overset{}{\underset{ (e_0,S_0)} {\in } }(l)_{R} \vee y\overset{}{\underset{ (e_0,S_0)} {\in } }(u)_{R} )$
	\item $\forall x\overset{}{\underset{ (e_0,S_0)} {\in } }(l)_{R} \exists y\overset{}{\underset{ (e_0,S_0)} {\in } }(l)_{R}((x)_{R}\overset{\mathbb{Q}}{\underset{ (e_0,S_0)} {< } }(y)_{R})$
\end{itemize}
then we say that $(r)_{R}$ is a {\bf {\itshape real number over $(e_0,S_0)$}}.

\end{definition}

If $(r)_{R}\overset{}{\underset{ (e_0,S_0)} {= } }\{ l,u \}$ is a real number over $(e_0,S_0)$ such that for any $x\overset{}{\underset{ (e_0,S_0)} {\in } }(l)_{R}$ and any $y\overset{}{\underset{ (e_0,S_0)} {\in } }(u)_{R}$ we have $(x)_{R}\overset{\mathbb{Q}}{\underset{ (e_0,S_0)} {< } }(y)_{R}$ then we say that $(l)_{R}$ is the {\bf {\itshape lower segment}} of $(r)_{R}$ over $(e_0,S_0)$ and $(u)_{R}$ is the {\bf {\itshape upper segment}} of $(r)_{R}$ over $(e_0,S_0)$.


\begin{definition}
Let $(r_1)_{R_1}$ and $(r_2)_{R_2}$ be real numbers over $(e_0,S_0)$ having respectively  $(l_1)_{R_1}$ and $(l_2)_{R_2}$ as their lower segments over $(e_0,S_0)$. If $(l_1)_{R_1}\overset{\{\mathbb{Q}\}}{\underset{ (e_0,S_0)} {= } }(l_2)_{R_2}$ then we say $(r_1)_{R_1}$ and $(r_2)_{R_2}$ are {\bf {\itshape equal as reals over $(e_0,S_0)$}} or write $(r_1)_{R_1}\overset{\mathbb{R}}{\underset{ (e_0,S_0)} {= } }(r_2)_{R_2}$, otherwise we write $(r_1)_{R_1}\overset{\mathbb{R}}{\underset{ (e_0,S_0)} {\neq } }(r_2)_{R_2}$.
\end{definition}


\begin{definition}
Let $(r_1)_{R_1}$ and $(r_2)_{R_2}$ be real numbers over $(e_0,S_0)$ having respectively $(l_1)_{R_1}$ and $(l_2)_{R_2}$ as their lower segments over $(e_0,S_0)$. If $(l_1)_{R_1}\overset{\{\mathbb{Q}\}}{\underset{ (e_0,S_0)} {\subseteq } }(l_2)_{R_2}$ then we write $(r_1)_{R_1}\overset{\mathbb{R}}{\underset{ (e_0,S_0)} { \leq} }(r_2)_{R_2}$. If $(r_1)_{R_1}\overset{\mathbb{R}}{\underset{ (e_0,S_0)} {\leq } }(r_2)_{R_2}$ and $(r_1)_{R_1}\overset{\mathbb{R}}{\underset{ (e_0,S_0)} {\neq } }(r_2)_{R_2}$ then we write $(r_1)_{R_1}\overset{\mathbb{R}}{\underset{ (e_0,S_0)} {< } }(r_2)_{R_2}$.
\end{definition}



\begin{definition}
Given a set $(s)_{S}$ over $(e_0,S_0)$, if for every $e\overset{}{\underset{ (e_0,S_0)} {\in } }(s)_{S}$ we have that $(e)_{S}$ is a real number over $(e_0,S_0)$ and $\forall x \forall y((x)_{S}\overset{\mathbb{R}}{\underset{ (e_0,S_0)} {= } }(y)_{S} \Leftrightarrow x=y)$ then we say that $(s)_{S}$ is a {\bf {\itshape real set over $(e_0,S_0)$}}.
\end{definition}

Let $(s)_{S}$ and $(t)_{T}$ be real sets over $(e_0,S_0)$. If for every $x\overset{}{\underset{ (e_0,S_0)} {\in } }(s)_{S}$ there exists $y\overset{}{\underset{ (e_0,S_0)} {\in } }(t)_{T}$ such that $(x)_{S}\overset{\mathbb{R}}{\underset{ (e_0,S_0)} {= } }(y)_{T}$ then we say that $(s)_{S}$ is a {\bf {\itshape real subset}} of $(t)_{T}$ over $(e_0,S_0)$ or write $(s)_{S}\overset{\{\mathbb{R}\}}{\underset{ (e_0,S_0)} {\subseteq } }(t)_{T}$. If $(s)_{S}\overset{\{\mathbb{R}\}}{\underset{ (e_0,S_0)} {\subseteq } }(t)_{T}$ and $(t)_{T}\overset{\{\mathbb{R}\}}{\underset{ (e_0,S_0)} {\subseteq } }(s)_{S}$ then we say that $(s)_{S}$ and $(t)_{T}$ are {\bf {\itshape equal as real sets }} over $(e_0,S_0)$ or write $(s)_{S}\overset{\{\mathbb{R}\}}{\underset{ (e_0,S_0)} {= } }(t)_{T}$.

\begin{definition}
Given a function $(f)_{F}:(a)_{A}\overset{}{\underset{ (e_0,S_0)} {\rightarrow } }(r)_{R}$, if $(r)_{R}$ is a real set over $(e_0,S_0)$ then we say that $(f)_{F}:(a)_{A}\overset{}{\underset{ (e_0,S_0)} {\rightarrow } }(r)_{R}$ is a {\bf {\itshape real function}}.
\end{definition}

\begin{definition}
Given 
\begin{itemize}
	\item real set $(a)_{A}$ over $(e_0,S_0)$
	\item real number $(x)_{X}$ over $(e_0,S_0)$
\end{itemize}
if for all $a_1\overset{}{\underset{ (e_0,S_0)} {\in } }(a)_{A}$ we have $(x)_{X}\overset{\mathbb{R}}{\underset{ (e_0,S_0)} {\leq } }(a_1)_{A}$ ({\itshape resp. $(x)_{X}\overset{\mathbb{R}}{\underset{ (e_0,S_0)} {\geq } }a_1$}) 
then we say that $(x)_{X}$ is {\bf {\itshape a lower bound of $(a)_{A}$ over $(e_0,S_0)$}} ({\itshape resp. {\bf an upper bound of $(a)_{A}$ over $(e_0,S_0)$} }).
\end{definition} 

\begin{definition}
Suppose we are given 
\begin{itemize}
	\item real set $(a)_{A}$ over $(e_0,S_0)$
	\item real number $(x)_{X}$ over $(e_0,S_0)$
\end{itemize}
If $(x)_{X}$ is a lower bound of $(a)_{A}$ over $(e_0,S_0)$ and for every lower bound $(x_1)_{X_1}$ of $(a)_{A}$ over $(e_0,S_0)$ we have $(x)_{X}\overset{\mathbb{R}}{\underset{ (e_0,S_0)} {\geq } }(x_1)_{X_1}$ then we say that $(x)_{X}$ is the {\bf {\itshape greatest lower bound of $(a)_{A}$ over $(e_0,S_0)$} } or write $(x)_{X}\overset{\mathbb{R}}{\underset{ (e_0,S_0)} {= } }inf (a)_{A}$. 
If $(x)_{X}$ is an upper bound of $(a)_{A}$ over $(e_0,S_0)$ and for every upper bound $(x_1)_{X_1}$ of $(a)_{A}$ over $(e_0,S_0)$ we have $(x)_{X}\overset{\mathbb{R}}{\underset{ (e_0,S_0)} {\leq } }(x_1)_{X_1}$ then we say that $(x)_{X}$ is the {\bf {\itshape least upper bound of $(a)_{A}$ over $(e_0,S_0)$} } or write $(x)_{X}\overset{\mathbb{R}}{\underset{ (e_0,S_0)} {= } }sup (a)_{A}$.
\end{definition}

\begin{definition}
Given
\begin{itemize}
	\item real number $(r)_{R}$ over $(e_0,S_0)$
	\item rational number $(q)_{Q}$ over $(e_0,S_0)$
\end{itemize}
if there exists a rational number $(q_1)_{R}$ in the upper segment of $(r)_{R}$ over $(e_0,S_0)$ such that $(q_1)_{R}\overset{\mathbb{Q}}{\underset{ (e_0,S_0)} {=} }(q)_{Q}$ and every rational number in the upper segment of $(r)_{R}$ is larger or equal to $(q_1)_{R}$ (as rational numbers) over $(e_0,S_0)$ then we write $(r)_{R}\overset{\mathbb{R},\mathbb{Q}}{\underset{ (e_0,S_0)} {=} }(q)_{Q}$ or write $(q)_{Q}\overset{\mathbb{Q},\mathbb{R}}{\underset{ (e_0,S_0)} {=} }(r)_{R}$. 
\end{definition}

If $(q)_{Q}\overset{\mathbb{Q}}{\underset{ (e_0,S_0)} {= } }0$ and $(r)_{R}\overset{\mathbb{R},\mathbb{Q}}{\underset{ (e_0,S_0)} {= } }(q)_{Q}$ then we will write $(r)_{R}\overset{\mathbb{R}}{\underset{ (e_0,S_0)} {= } }0$.

If $(q)_{Q}\overset{\mathbb{Q}}{\underset{ (e_0,S_0)} {= } }1$ and $(r)_{R}\overset{\mathbb{R},\mathbb{Q}}{\underset{ (e_0,S_0)} {= } }(q)_{Q}$ then we will write $(r)_{R}\overset{\mathbb{R}}{\underset{ (e_0,S_0)} {= } }1$.


Let $(r_1)_{R_1}$, $(r_2)_{R_2}$ and $(r_3)_{R_3}$ be real numbers over $(e_0,S_0)$ where $(l_1)_{R_1}$, $(l_2)_{R_2}$ and $(l_3)_{R_3}$ are the lower segments of $(r_1)_{R_1}$, $(r_2)_{R_2}$ and $(r_3)_{R_3}$ respectively over $(e_0,S_0)$. We say that $(r_3)_{R_3}$ is the {\bf {\itshape sum of $(r_1)_{R_1}$ and $(r_2)_{R_2}$ as reals over $(e_0,S_0)$}} or write  $(r_1)_{R_1}+(r_2)_{R_2}\overset{\mathbb{R}}{\underset{ (e_0,S_0)} {=} }(r_3)_{R_3}$ if
$$\forall (x)_{X}\overset{}{\underset{ (e_0,S_0)} {\in } }\mathbb{Q} (\exists y\overset{}{\underset{ (e_0,S_0)} {\in } }(l_3)_{R_3}((y)_{R_3}\overset{\mathbb{Q}}{\underset{ (e_0,S_0)} {= } }(x)_{X} ) \Leftrightarrow \exists x_1\overset{}{\underset{ (e_0,S_0)} {\in } }(l_1)_{R_1} \exists x_2\overset{}{\underset{ (e_0,S_0)} {\in } }(l_2)_{R_2} ( $$ 
$$(x)_{X}\overset{\mathbb{Q}}{\underset{ (e_0,S_0)} {\leq } }
(x_1)_{R_1}+(x_2)_{R_2} ) ) $$

Given real numbers $(r_1)_{R_1}$ and $(r_2)_{R_2}$ over $(e_0,S_0)$, if $(r_1)_{R_1} + (r_2)_{R_2} \overset{\mathbb{R}}{\underset{ (e_0,S_0)} {=} } 0$ then we say that $(r_2)_{R_2}$ is the {\bf {\itshape real additive inverse of $(r_1)_{R_1}$ over $(e_0,S_0)$}} or write $(r_1)_{R_1}\overset{\mathbb{R}}{\underset{ (e_0,S_0)} {= } }-(r_2)_{R_2}$.


Let $(r_1)_{R_1}\overset{}{\underset{ (e_0,S_0)} {= } }\{l_1,u_1 \}$, $(r_2)_{R_2}\overset{}{\underset{ (e_0,S_0)} {= } }\{l_2,u_2 \}$ and $(r_3)_{R_3}\overset{}{\underset{ (e_0,S_0)} {= } }\{ l_3,u_3 \}$ be real numbers over $(e_0,S_0)$ where $(l_1)_{R_1}$, $(l_2)_{R_2}$ and $(l_3)_{R_3}$ are the lower segments of $(r_1)_{R_1}$, $(r_2)_{R_2}$ and $(r_3)_{R_3}$ respectively over $(e_0,S_0)$. We say that $(r_3)_{R_3}$ is the {\bf {\itshape product of $(r_1)_{R_1}$ and $(r_2)_{R_2}$ as reals over $(e_0,S_0)$}} or write $(r_1)_{R_1}\cdot (r_2)_{R_2}\overset{\mathbb{R}}{\underset{ (e_0,S_0)} {=} }(r_3)_{R_3}$ or $(r_1)_{R_1}(r_2)_{R_2}\overset{\mathbb{R}}{\underset{ (e_0,S_0)} {=} }(r_3)_{R_3}$ if

\begin{itemize}
	\item when $(r_1)_{R_1}\overset{\mathbb{R}}{\underset{ (e_0,S_0)} {>} }0$ and $(r_2)_{R_2}\overset{\mathbb{R}}{\underset{ (e_0,S_0)} {> } }0$ we have 
$\forall (x)_{X}\overset{}{\underset{ (e_0,S_0)} {\in } }\mathbb{Q}(
\exists y\overset{}{\underset{ (e_0,S_0)} {\in } }(u_3)_{R_3} ( (y)_{R_3}\overset{\mathbb{Q}}{\underset{ (e_0,S_0)} {= } }(x)_{X}) \Leftrightarrow 
\exists x_1\overset{}{\underset{ (e_0,S_0)} {\in } }(u_1)_{R_1} \exists x_2\overset{}{\underset{ (e_0,S_0)} {\in } }(u_2)_{R_2}( (x)_{X}\overset{\mathbb{Q}}{\underset{ (e_0,S_0)} {\geq } }(x_1)_{R_1}(x_2)_{R_2} ) )$
	\item when $(r_1)_{R_1}\overset{\mathbb{R}}{\underset{ (e_0,S_0)} {= } }0$ or $(r_2)_{R_2}\overset{\mathbb{R}}{\underset{ (e_0,S_0)} {= } }0$ we have $(r_3)_{R_3}\overset{\mathbb{R}}{\underset{ (e_0,S_0)} { =} }0$
	\item  when $(r_1)_{R_1}\overset{\mathbb{R}}{\underset{ (e_0,S_0)} {< } }0$ and $(r_2)_{R_2}\overset{\mathbb{R}}{\underset{ (e_0,S_0)} {> } }0$ we have $-(r_1)_{R_2} \cdot (r_2)_{R_2} \overset{\mathbb{R}}{\underset{ (e_0,S_0)} {= } }(r_3)_{R_3}$
	\item when $(r_2)_{R_2}\overset{\mathbb{R}}{\underset{ (e_0,S_0)} {< } }0$ and $(r_1)_{R_1}\overset{\mathbb{R}}{\underset{ (e_0,S_0)} {> } }0$ we have $(r_1)_{R_2} \cdot (-(r_2)_{R_2}) \overset{\mathbb{R}}{\underset{ (e_0,S_0)} {= } }(r_3)_{R_3}$
\end{itemize}

Given real numbers $(r_1)_{R_1}$ and $(r_2)_{R_2}$ over $(e_0,S_0)$, if $(r_1)_{R_1} \cdot (r_2)_{R_2} \overset{\mathbb{R}}{\underset{ (e_0,S_0)} {=} } 1$ then we say that $(r_2)_{R_2}$ is the {\bf {\itshape real multiplicative inverse of $(r_1)_{R_1}$ over $(e_0,S_0)$}} or write $(r_2)_{R_2}\overset{\mathbb{R}}{\underset{ (e_0,S_0)} {= } }(r_1)_{R_1}^{-1}$.


Given real numbers $(r_1)_{R_1}$ and $(r_2)_{R_2}$ over $(e_0,S_0)$, if $(r_1)_{R_1}\overset{\mathbb{R}}{\underset{ (e_0,S_0)} {\leq } }0$ and $(r_2)_{R_2}\overset{\mathbb{R}}{\underset{ (e_0,S_0)} {= } }-(r_1)_{R_1}$ or
$(r_1)_{R_1}\overset{\mathbb{R}}{\underset{ (e_0,S_0)} {\geq } }0$ and $(r_2)_{R_2}\overset{\mathbb{R}}{\underset{ (e_0,S_0)} {= } }(r_1)_{R_1}$ then we say that $(r_2)_{R_2}$ is the {\bf {\itshape real absolute value of $(r_1)_{R_1}$  over $(e_0,S_0)$}} or write  $(r_2)_{R_2}\overset{\mathbb{R}}{\underset{ (e_0,S_0)} {= } }|(r_1)_{R_1}|$. 


Given real numbers $(r_1)_{R_1}$, $(r_2)_{R_2}$ and $(d)_{D}$ over $(e_0,S_0)$, if $(d)_{D}\overset{\mathbb{R}}{\underset{ (e_0,S_0)} {= } }|(r_2)_{R_2} + -(r_1)_{R_1}|$ then we say that $(d)_{D}$ is the {\bf {\itshape distance between $(r_1)_{R_1}$ and $(r_2)_{R_2}$ over $(e_0,S_0)$}}.


\begin{definition}
Let $(f)_{F}:\mathbb{N}\overset{}{\underset{ (e_0,S_0)} {\rightarrow } }(r)_{R}$ be a real function. If for every real number $(\epsilon)_{E}\overset{\mathbb{R}}{\underset{ (e_0,S_0)} {> } }0$ there exists a natural number $m$ such that $\forall n_1>m \forall n_2>m ( |((f)_{F}(n_1))_{R} - ((f)_{F}(n_2))_{R}|\overset{\mathbb{R}}{\underset{ (e_0,S_0)} {< } }(\epsilon)_{E})$ then we say that $(f)_{F}:\mathbb{N}\overset{}{\underset{ (e_0,S_0)} {\rightarrow } }(r)_{R}$ is {\bf {\itshape Cauchy-convergent}}.
\end{definition}


\begin{definition}
Let $(f)_{F}:\mathbb{N}\overset{}{\underset{ (e_0,S_0)} {\rightarrow } }(r)_{R}$ be a real function and $(x)_{X}$ be a real number over $(e_0,S_0)$. If for every real number $(\epsilon)_{E}\overset{\mathbb{R}}{\underset{ (e_0,S_0)} {> } }0$ there exists a natural number $m$ such that  $\forall n>m ( |((f)_{F}(n))_{R} - (x)_{X}|\overset{\mathbb{R}}{\underset{ (e_0,S_0)} {< } }(\epsilon)_{E})$ then we say that $(f)_{F}:\mathbb{N}\overset{}{\underset{ (e_0,S_0)} {\rightarrow } }(r)_{R}$ {\bf {\itshape converges to $(x)_{X}$ over $(e_0,S_0)$}}.
\end{definition}


\begin{thm}
If $(f)_{F}:\mathbb{N}\overset{}{\underset{ (e_0,S_0)} {\rightarrow } }(r)_{R}$ is Cauchy-convergent then there exists a real number $(x)_{X}$ over $(e_0,S_0)$ such that $(f)_{F}:\mathbb{N}\overset{}{\underset{ (e_0,S_0)} {\rightarrow } }(r)_{R}$ converges to $(x)_{X}$ over $(e_0,S_0)$.
\end{thm}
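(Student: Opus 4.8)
The plan is to construct the limit directly as a Dedekind cut and then verify that the given sequence converges to it. Write $f_n$ for the real number $(f)_F(n)$ over $(e_0,S_0)$. Since a real number is pinned down by its lower segment, I will specify the limit $(x)_X$ by prescribing which rationals lie in its lower segment $L$: a rational $q$ is placed in $L$ exactly when the sequence is eventually above $q$ by a fixed positive rational margin, that is, when there exist a rational $\delta>0$ and a natural number $N$ such that $q+\delta \le f_n$ (comparing the rational $q+\delta$ to the real $f_n$) for every $n>N$. The upper segment $U$ is then taken to be the set of all rationals not in $L$. Using (comprehension) together with the explicit construction of $\mathbb{Q}$ given earlier, one obtains a set $(x)_X$ over $(e_0,S_0)$ of the form $\{L,U\}$ whose members are the rational sets realizing $L$ and $U$; the uniqueness-of-representation clause demanded of rational sets is arranged exactly as in that construction.

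Next I would verify that $(x)_X$ satisfies the four clauses defining a real number. Non-emptiness of $L$ and of $U$ follows from the standard fact that a Cauchy-convergent sequence is bounded: fixing $M$ with $|f_n-f_M|<1$ for all $n>M$ produces rationals lying below (resp. above) every $f_n$ with $n>M$ by a margin of $1$, and these sit in $L$ (resp. $U$). That $L$ lies entirely below $U$ reduces to downward closure of $L$: if $q\in L$ with witnesses $\delta,N$ and $q'\le q$, then $q'+\delta\le q+\delta\le f_n$ for $n>N$, so $q'\in L$; hence any member of $U$ strictly exceeds any member of $L$. Covering of all rationals is immediate since $U$ is the complement of $L$. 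Finally $L$ has no maximum: from a witness $\delta,N$ for $q$ one checks that $q+\tfrac{\delta}{2}$ again lies in $L$ (with witnesses $\tfrac{\delta}{2},N$) and is strictly larger than $q$. Each verification is routine once the comparison between a rational and a real is unwound through the lower-segment description.

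For convergence, fix a real $(\epsilon)_E>0$. I would first extract a rational $\epsilon'$ with $0<\epsilon'<\epsilon$, then apply the Cauchy condition to get $N$ with $|f_n-f_m|<\tfrac{\epsilon'}{2}$ for all $n,m>N$. I claim $|f_n-x|<\epsilon$ for every $n>N$. To bound $x$ from above, choose a rational $q$ strictly between $f_n+\tfrac{\epsilon'}{2}$ and $f_n+\epsilon'$; for all $m>N$ the Cauchy bound gives $f_m<f_n+\tfrac{\epsilon'}{2}<q$, so the tail is eventually below $q$ and $q$ cannot meet the defining condition of $L$, whence $q\in U$ and therefore $x\le q<f_n+\epsilon'$. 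Symmetrically, choosing a rational $q$ strictly between $f_n-\epsilon'$ and $f_n-\tfrac{\epsilon'}{2}$ forces the tail eventually above $q$ with a positive rational margin, so $q\in L$ and thus $x>q>f_n-\epsilon'$. Combining the two inequalities yields $|f_n-x|<\epsilon'<\epsilon$, as required.

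The main obstacle is the interface between the three notions of magnitude in play — the rational margins, the real $\epsilon$, and the defining inequalities of the cut. Concretely, the convergence step needs a density/Archimedean fact (every positive real dominates a positive rational, and rationals are dense among the reals) that is not isolated as a prior theorem, so I would establish the small amount of it I require directly from the explicit description of $\mathbb{Q}$ and of real comparison. The other delicate point is purely organizational: showing that $L$ and $U$ are genuine rational sets over $(e_0,S_0)$, namely that the needed context systems exist by (comprehension) and that distinct representations correspond to distinct rationals; this mirrors the already-exhibited constructions of $\mathbb{Z}$ and $\mathbb{Q}$ and carries over with only notational changes.
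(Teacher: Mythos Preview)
Your argument is correct and essentially complete; the route, however, differs from the paper's. The paper first passes to a rapidly Cauchy subsequence: it builds $(g)_G:\mathbb{N}\to\mathbb{N}$ so that beyond $(g)_G(n)$ all values of $f$ lie within $1/n$ of one another, sets $h(n)=f(g(n))$, and then defines the lower segment of the limit by a single existential clause, ``$q\in L$ iff for some $n$ the rational $q+1/n$ lies in the lower segment of $h(n)$''. You instead define $L$ directly by an ``eventually above by a fixed rational margin'' condition on the whole sequence, with no preliminary thinning. What the paper's subsequence trick buys is a simpler comprehension formula for $L$ (just $\exists n$, with the $1/n$ shift doing the work of your $\delta$ and your tail quantifier), at the cost of the auxiliary recursive construction of $g$ and $h$; your construction avoids that auxiliary step and is closer to the textbook $\liminf$ description, at the cost of a slightly heavier defining formula and the need to invoke density of $\mathbb{Q}$ in $\mathbb{R}$ when extracting the rational margin in the lower-bound half of the convergence step. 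Both are sound; your write-up is in fact more explicit than the paper's, which asserts the final convergence without spelling out the two-sided estimate.
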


\begin{proof}
If $(f)_{F}:\mathbb{N}\overset{}{\underset{ (e_0,S_0)} {\rightarrow } }(r)_{R}$ is Cauchy-convergent then we can construct a real function  $(g)_{G}:\mathbb{N}\overset{}{\underset{ (e_0,S_0)} {\rightarrow } }\mathbb{N}$ such that
\begin{itemize}
	\item $(g)_{G}(1)\overset{}{\underset{ (e_0,S_0)} {= } }m$ where $m$ is the smallest number such that for any $n_1>m$ and any $n_2>m$ we have $ |((f)_{F}(n_1))_{R} - ((f)_{F}(n_2))_{R}|\overset{\mathbb{R}}{\underset{ (e_0,S_0)} {< } }1$
	\item for every $n\in \mathbb{N}$, $(g)_{G}(n+1)\overset{}{\underset{ (e_0,S_0)} {= } }m$ where $m$ is the smallest number larger than $(g)_{G}(n)$ and for any $n_1>m$ and any $n_2>m$ we have $ |((f)_{F}(n_1))_{R} - ((f)_{F}(n_2))_{R}|\overset{\mathbb{R}}{\underset{ (e_0,S_0)} {< } }1/(n+1)$
\end{itemize}
Now let $(h)_{H}:\mathbb{N}\overset{}{\underset{ (e_0,S_0)} {\rightarrow } }(r)_{R}$ be a real function such that for all $n$ we have  $((h)_{H}(n))_{R}\overset{\mathbb{R}}{\underset{ (e_0,S_0)} {= } }((f)_{F}( (g)_{G}(n) ))_{R}$. Let $(s)_{S}\overset{}{\underset{ (e_0,S_0)} {= } } \{l,u \}$ be a real number over $(e_0,S_0)$ (with $(l)_{S}$ as its lower segment) such that

for any rational number $(q)_{Q}$ over $(e_0,S_0)$, there exists $l_1\overset{}{\underset{ (e_0,S_0)} {\in } }(l)_{S}$ with $(q)_{Q}\overset{\mathbb{Q}}{\underset{ (e_0,S_0)} {= } }(l_1)_{S}$ if and only if there exists a natural number $n$ and a rational number $(q_n)_{R}$ in the lower segment of $( (h)_{H}(n))_{R}$ over $(e_0,S_0)$ such that  $(q)_{Q} + 1/n \overset{\mathbb{Q}}{\underset{ (e_0,S_0)} {= } }(q_n)_{R}$.
 
We see that $(f)_{F}:\mathbb{N}\overset{}{\underset{ (e_0,S_0)} {\rightarrow } }(r)_{R}$ converges to $(s)_{S}$ over $(e_0,S_0)$.

\end{proof}


\begin{definition}
Let $(r)_{R}$ be a real set over $(e_0,S_0)$. If for every real number $(r_1)_{R_1}$ over $(e_0,S_0)$ and every real number $(\epsilon)_{R_2}\overset{\mathbb{R}}{\underset{ (e_0,S_0)} {> } }0$ there exists a real number $(r_3)_{R_3}$ over $(e_0,S_0)$ such that the distance between $(r_1)_{R_1}$ and $(r_3)_{R_3}$ is less than $(\epsilon)_{R_2}$ over $(e_0,S_0)$ then we say that $(r)_{R}$ is {\bf {\itshape real-sufficient over $(e_0,S_0)$}}.
\end{definition}

\begin{thm}
Let $(r)_{R}$ be a non-empty real set over $(e_0,S_0)$.
\begin{enumerate}
\item If $(r)_{R}$ has an upper bound over $(e_0,S_0)$ then there exists a real number $(x)_{X}$ over $(e_0,S_0)$ such that $(x)_{X}\overset{\mathbb{R}}{\underset{ (e_0,S_0)} {= } }sup (r)_{R}$.
\item If $(r)_{R}$ has a lower bound over $(e_0,S_0)$ then there exists a real number $(x)_{X}$ over $(e_0,S_0)$ such that $(x)_{X}\overset{\mathbb{R}}{\underset{ (e_0,S_0)} {= } }inf (r)_{R}$.
\end{enumerate}
\end{thm}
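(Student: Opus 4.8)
The plan is to realize the least upper bound as a Dedekind cut obtained by amalgamating the lower segments of the members of $(r)_R$, and to treat the greatest lower bound dually. I would first do the construction. For each element $e$ of $(r)_R$ over $(e_0,S_0)$ write $(e)_R\overset{}{\underset{(e_0,S_0)}{=}}\{l_e,u_e\}$, with $(l_e)_R$ its lower segment. Using (comprehension) together with the remark following theorem \ref{indept} that any system can be realized as a set over $(e_0,S_0)$, I would build a real number $(x)_X\overset{}{\underset{(e_0,S_0)}{=}}\{l,u\}$ whose lower segment $(l)_X$ contains a rational equal to a rational number $(q)_Q$ precisely when some $(l_e)_R$ contains a rational equal to $(q)_Q$, and whose upper segment $(u)_X$ consists of those rationals that are strictly larger, as rationals, than every rational occurring in every $(l_e)_R$. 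The nesting of sets required here is exactly what the context-system mechanism supplies.

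\textbf{Verifying that the candidate is a real number.} I would check the four clauses of the definition of a real number over $(e_0,S_0)$. The lower segment $(l)_X$ is non-empty because $(r)_R$ is non-empty: any single $(l_e)_R$ is non-empty and sits inside $(l)_X$. The upper segment $(u)_X$ is non-empty, and this is the only place the hypothesis enters: given an upper bound $(b)_B$ of $(r)_R$, pick a rational $(q)_Q$ in its upper segment; since $(e)_R\overset{\mathbb{R}}{\le}(b)_B$ for every $e$, every rational of every $(l_e)_R$ is strictly below $(q)_Q$, so $(q)_Q$ belongs to $(u)_X$. The pointwise inequality ``lower $<$ upper'' and the covering clause (every rational lands in $(l)_X$ or $(u)_X$) reduce to the same clauses for each $(e)_R$ together with trichotomy of the rational order. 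The no-maximum clause for $(l)_X$ is automatic: a rational in $(l)_X$ lies in some $(l_e)_R$, which has no maximum, and a larger rational of $(l_e)_R$ again lies in $(l)_X$.

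\textbf{Least upper bound.} For every $e$ the inclusion $(l_e)_R\overset{\{\mathbb{Q}\}}{\subseteq}(l)_X$ holds by construction, i.e. $(e)_R\overset{\mathbb{R}}{\le}(x)_X$, so $(x)_X$ is an upper bound. If $(c)_C$ is any upper bound then $(l_e)_R\overset{\{\mathbb{Q}\}}{\subseteq}(l_c)_C$ for each $e$, and since $(l)_X$ is exactly the amalgam of the $(l_e)_R$ we obtain $(l)_X\overset{\{\mathbb{Q}\}}{\subseteq}(l_c)_C$, that is $(x)_X\overset{\mathbb{R}}{\le}(c)_C$. Hence $(x)_X\overset{\mathbb{R}}{=}\sup(r)_R$, which settles part 1.

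\textbf{Infimum and the main obstacle.} Part 2 is dual: the candidate lower segment is the set of rationals lying below every member of $(r)_R$, and a lower bound guarantees it is not all of the rationals. The genuine difficulty is the no-maximum clause. Whereas the union used for the supremum can never have a maximum, the intersection of the lower segments can acquire one, precisely when the infimum is a rational that is not attained: that boundary rational then sits atop the intersection and violates the cut axiom. I expect this to be the main obstacle. I would resolve it by taking the downward interior of the intersection, namely the rationals $(q)_Q$ such that some strictly larger rational still lies below every member of $(r)_R$; this deletes any maximal element without changing the real value of the cut. Alternatively one can sidestep the issue by passing to additive inverses, since a lower bound for $(r)_R$ yields an upper bound for the real set of negatives and $\inf(r)_R\overset{\mathbb{R}}{=}-\sup\{-(e)_R\}$, reducing part 2 to part 1.
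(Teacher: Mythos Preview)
Your argument is correct, but the construction differs from the paper's in an instructive way. For the supremum you amalgamate the lower segments of the \emph{elements} of $(r)_R$; the paper instead defines the \emph{upper} segment of $(x)_X$ as the collection of rationals lying in the upper segment of some \emph{upper bound} of $(r)_R$. These are complementary descriptions of the same cut, and your version is arguably the more direct one. The real divergence appears in part~2. You take the intersection of the lower segments of the elements, which --- as you correctly diagnose --- can acquire a maximum when the infimum is an unattained rational, forcing you to pass to the downward interior or to reduce to part~1 via additive inverses. The paper sidesteps this entirely by working with \emph{lower bounds} rather than elements: it takes the lower segment of $(x)_X$ to consist of rationals lying in the lower segment of some lower bound of $(r)_R$. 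Since each such lower segment already has no maximum, their union has no maximum either, and the cut axioms are immediate. So your route is more elementary in that it refers only to the given data $(r)_R$, while the paper's route buys symmetry between parts~1 and~2 and eliminates the boundary-rational obstacle without any extra work.
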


\begin{proof}

1) Let $(x)_{X}\overset{}{\underset{ (e_0,S_0)} {= } }\{l,u \}$ be a real number over $(e_0,S_0)$ (with $(l)_{X}$ as  its lower segment) such that 

for any rational number $(q)_{Q}$ over $(e_0,S_0)$, there exists $q_1\overset{}{\underset{ (e_0,S_0)} {\in } }(u)_{X}$ with $(q)_{Q}\overset{\mathbb{Q}}{\underset{ (e_0,S_0)} {= } }(q_1)_{X}$ if and only if
there exists an upper bound $(b)_{B}\overset{}{\underset{ (e_0,S_0)} {= } }\{l_1,u_1 \}$ (where $(u_1)_{B}$ is the upper segment of $(b)_{B}$) of $(r)_{R}$ over $(e_0,S_0)$ and a rational number $(q_2)_{B}$ over $(e_0,S_0)$ with $q_2\overset{}{\underset{ (e_0,S_0)} {\in } }(u_1)_{B}$ such that $(q_2)_{B}\overset{\mathbb{Q}}{\underset{ (e_0,S_0)} {= } }(q)_{Q}$.
 
Since $(r)_{R}$ is non-empty we see that the real number $(x)_{X}$ over $(e_0,S_0)$ exists and $(x)_{X}\overset{\mathbb{R}}{\underset{ (e_0,S_0)} {= } }sup (r)_{R}$.

2) Let $(x)_{X}\overset{}{\underset{ (e_0,S_0)} {= } }\{l,u \}$ be a real number over $(e_0,S_0)$ (where $(l)_{X}$ is its lower segment) such that 

for any rational number $(q)_{Q}$ over $(e_0,S_0)$ there exists $q_1\overset{}{\underset{ (e_0,S_0)} {\in } }(u)_{X}$ with $(q)_{Q}\overset{\mathbb{Q}}{\underset{ (e_0,S_0)} {= } }(q_1)_{X}$ if and only if
there exists a lower bound $(b)_{B}\overset{}{\underset{ (e_0,S_0)} {= } }\{l_1,u_1 \}$ (where $(l_1)_{B}$ is the lower segment of $(b)_{B}$) of $(r)_{R}$ over $(e_0,S_0)$ and a rational number $(q_2)_{B}$ over $(e_0,S_0)$ with $q_2\overset{}{\underset{ (e_0,S_0)} {\in } }(l_1)_{B}$ such that $(q_2)_{B}\overset{\mathbb{Q}}{\underset{ (e_0,S_0)} {= } }(q)_{Q}$.

Since $(r)_{R}$ is non-empty we see that the real number $(x)_{X}$ over $(e_0,S_0)$ exists and $(x)_{X}\overset{\mathbb{R}}{\underset{ (e_0,S_0)} {= } }inf (r)_{R}$.

\end{proof}

\section{Countability and choice}

The main goal of this section is to prove that every set is countable and show that the ``axiom of choice'' becomes a natural theorem.

\begin{thm}
Let $(w)_{W}$ be a set over $(e_0,S_0)$ containing all words. There exists a bijection $(f)_{F}:(w)_{W}\overset{}{\underset{ (e_0,S_0)} {\rightarrow } }\mathbb{N}$.
\end{thm}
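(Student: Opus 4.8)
The plan is to realize the natural enumeration of all words --- by increasing length, and lexicographically within each length --- as an explicit, recursively generated function into $\mathbb{N}$. Concretely I aim for the bijection determined by $f(0)=1$, $f(1)=11$, and, for every word $w$, $f(w0)=f(w)f(w)1$ and $f(w1)=f(w)f(w)11$. The key simplification is that every value $f(w)$ is a natural number (a word consisting of $1$'s), and for such words doubling is just self-concatenation: if $f(w)=n$ then $f(w)f(w)$ is the natural number $2\otimes n$, so $f(w0)=(2\otimes n)1$ and $f(w1)=(2\otimes n)11$. Thus the whole recursion lives inside concatenation and needs no further arithmetic. A check on small cases shows $f$ sends $0,1,00,01,10,11,000,\dots$ to $1,11,111,\dots$, i.e. the $k$-th word to the $k$-th natural number.

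To build $(f)_F$ as an actual function over $(e_0,S_0)$ I would mimic the constructions already used for length, for $\otimes$, and for the integer and rational sets: using (comprehension), take the context system to be the smallest system encoding ordered pairs such that the pair $(0,1)$ and the pair $(1,11)$ are present and, whenever $(w,n)$ is present, so are $(w0,nn1)$ and $(w1,nn11)$ --- each pair $(x_1,x_2)$ being the set $\{x_1,x_1x_2\}$ of the association definition. The elements of $(f)_F$ are then exactly these generated pairs, and its left domain is to be shown to be $(w)_W$.

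The substantive step is to verify that $(f)_F$ really is a function with left domain all of $(w)_W$, i.e. that for each word $w$ there is exactly one $n$ with $(w,n)\overset{}{\underset{(e_0,S_0)}{\in}}(f)_F$. Existence, together with the recursion clauses, follows by (word induction) on $w$. Single-valuedness is where the real care is needed: I must show the generating system produces no spurious pairs, so that the only pair with first coordinate $w0$ (resp.\ $w1$) is the one obtained from the unique pair with first coordinate $w$. This is exactly the bookkeeping the earlier parsing results were designed for --- uniqueness of the decomposition of each generated codeword into its $(w,n)$ parts follows from Lemma \ref{wordnat} and the readability/overlap results (Theorems \ref{overlap2}, \ref{overlas}, \ref{indept}), in the same spirit as the $S_3=S_4$ argument of Theorem \ref{leftrightequi}. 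I expect this to be the main obstacle, since it is where the formal correctness of the construction, rather than the arithmetic, must be pinned down.

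Granting that $(f)_F$ is a function, injectivity and surjectivity are routine strong inductions on the (unary) value, using Theorem \ref{existsminnat}. The only combinatorial fact needed is that every natural number past $11$ is of the form $(2\otimes j)+1$ (odd length) or $(2\otimes j)+11$ (even length) with $j\in\mathbb{N}$, proved by an easy parity induction. For surjectivity, given a target one peels off this representation and applies the hypothesis to $j$ to get a word $u$ with $f(u)=j$, whence $f(u0)$ or $f(u1)$ hits the target; the base values $1$ and $11$ are $f(0)$ and $f(1)$. For injectivity, if $f(v)=f(w)$ then the length-parity of that common value forces $v$ and $w$ to end in the same letter and to share the same $f$-value on the word obtained by deleting that letter, so the induction hypothesis gives $v=w$; the odd/even split also rules out the mixed case, since an ``append $0$'' value and an ``append $1$'' value can never coincide. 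Combining the three properties yields the desired bijection $(f)_F:(w)_W\overset{}{\underset{(e_0,S_0)}{\rightarrow}}\mathbb{N}$.
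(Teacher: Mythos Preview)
Your proposal is correct and in fact cleaner than the paper's own argument. Both proofs construct an explicit bijection and then appeal to (comprehension) and (word induction) for existence and to simple inductions for bijectivity, but the two bijections are different. The paper's $f$ is pinned down by four somewhat indirect conditions: $f(0)=1$, $f(1)=11$, a ``difference-preservation'' clause saying that appending the same letter to two words of equal length preserves the gap between their $f$-values, and two gluing clauses stitching together consecutive length-blocks (so that $f(0^k)=f(1^{k-1})+1$ and $f(1^k0)+1=f(0^k1)$). This yields the enumeration by length and then by the binary value of the word \emph{read right-to-left}. Your recursion $f(w0)=2f(w)+1$, $f(w1)=2f(w)+2$ instead gives the length-then-left-to-right-lexicographic enumeration, and has the advantage that each clause refers only to $f(w)$, not to other words of the same length; this makes well-definedness, injectivity, and surjectivity reduce to a single parity induction on the target natural number, exactly as you outline. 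The paper's version, by contrast, needs the cross-word clauses to be checked for mutual consistency before one can even assert existence, and its bijectivity is left as a bare ``we can see''. Your identification of single-valuedness of the generated graph as the one genuinely delicate point is apt and applies equally to both constructions.
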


\begin{proof}
Let $(w)_{W}$ be the set over $(e_0,S_0)$ of all words and $(f)_{F}:(w)_{W}\overset{}{\underset{ (e_0,S_0)} {\rightarrow } }\mathbb{N}$ a function such that
\begin{itemize}
	\item $(f)_{F}(0)\overset{}{\underset{ (e_0,S_0)} {= } }1$ and $(f)_{F}(1)\overset{}{\underset{ (e_0,S_0)} {= } }11$
	\item for any $k$ and any words $w_1$ and $w_2$ of length $k$ we have 
	$$\forall n\in \mathbb{N} ((f)_{F}(w_1)+ n = (f)_{F}(w_2) \Rightarrow $$
	$$ ( (f)_{F}(w_10)+n =(f)_{F}(w_20) \wedge (f)_{F}(w_11) + n = (f)_{F}(w_21) )) $$
	\item for any natural number $m_1$ and any counting number $m_2$ over $0$, if $m_1+1$ is the natural number associated to $m_2$ then $(f)_{F}(m_2) = (f)_{F}(m_1)+1 $
	\item for any natural number $m_1$ and any counting number $m_2$ over $0$, if $m_1$ is the natural number associated to $m_2$ then $(f)_{F}(m_10) + 1 =(f)_{F}(m_21)$
\end{itemize}

Using (comprehension) and (word induction) we can see that such a function exists and is bijective. 
\end{proof}

\begin{cor}
Over any set foundation $(e_0,S_0)$, every set is countable.
\end{cor}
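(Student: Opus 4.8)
The plan is to leverage the preceding theorem, which supplies a bijection between the set of all words and $\mathbb{N}$, together with the defining feature of the formalism that every element of any set over $(e_0,S_0)$ is itself a word. Consequently every set injects into the set of all words, hence into $\mathbb{N}$, and the whole problem reduces to showing that every ``set of natural numbers'' is countable.

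First I would fix an arbitrary set $(s)_{S}$ over $(e_0,S_0)$ with representation $e$, and recall that $x$ is an element of $(s)_{S}$ over $(e_0,S_0)$ exactly when $xe_0e\in S$; in particular the elements of $(s)_{S}$ form a collection of words. Invoking the preceding theorem I obtain the bijection $(f)_{F}:(w)_{W}\overset{}{\underset{ (e_0,S_0)} {\rightarrow } }\mathbb{N}$ from the set $(w)_{W}$ of all words onto $\mathbb{N}$. Restricting $(f)_{F}$ to the elements of $(s)_{S}$ yields, via (comprehension), an injective function from $(s)_{S}$ into $\mathbb{N}$; let $(m)_{M}$ be its image, which (again by (comprehension), together with Theorem \ref{indept} to produce a legitimate context system) is a set over $(e_0,S_0)$ every element of which is a natural number. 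Since $(s)_{S}$ is in bijection with $(m)_{M}$, and countability transfers along bijections, it suffices to prove that $(m)_{M}$ is countable.

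The heart of the argument, and the step I expect to be the main obstacle, is to show that any set whose elements are all natural numbers is countable. If $(m)_{M}$ is finite we are done by the definition of countable. Otherwise I would construct a bijection $(g)_{G}:\mathbb{N}\overset{}{\underset{ (e_0,S_0)} {\rightarrow } }(m)_{M}$ by recursion on $\mathbb{N}$, mirroring the way the enumeration in the preceding theorem was built. Using Theorem \ref{existsminnat} (every non-empty system of natural numbers contains a minimal one), I set $(g)_{G}(1)$ equal to the minimal element of $(m)_{M}$; and having defined $(g)_{G}(n)$, I set $(g)_{G}(n+1)$ equal to the minimal element of the system of those elements of $(m)_{M}$ strictly larger than $(g)_{G}(n)$, a system which is non-empty precisely because $(m)_{M}$ is infinite. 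I would formalize this recursion through (comprehension) and (word induction) exactly as in the preceding theorem.

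Finally I would verify that $(g)_{G}$ is a bijection. Injectivity is immediate since $(g)_{G}$ is strictly increasing by construction. For surjectivity I would argue by contradiction: if some element $k$ of $(m)_{M}$ were omitted, then the system of omitted elements would be non-empty, so by Theorem \ref{existsminnat} it would have a minimal element $k_0$; minimality forces every element of $(m)_{M}$ below $k_0$ to lie in the range of $(g)_{G}$, and a short induction then shows $k_0$ itself must be produced at the appropriate stage, a contradiction. Composing $(g)_{G}$ with the bijection $(s)_{S}\overset{}{\underset{ (e_0,S_0)} {\rightarrow } }(m)_{M}$ exhibits $(s)_{S}$ as countable, and since $(s)_{S}$ was arbitrary this establishes the corollary.
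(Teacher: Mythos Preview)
Your proposal is correct and follows essentially the same approach as the paper: restrict the bijection $(f)_{F}:(w)_{W}\to\mathbb{N}$ to the given set, then ``compress'' the resulting injection into a bijection with $\mathbb{N}$ or an initial segment using the existence of minimal natural numbers (Theorem~\ref{existsminnat}). The only cosmetic difference is direction: the paper builds $(h)_{H}:(e)_{E}\to\mathbb{N}$ directly, sending the element whose image under $(g)_{G}=(f)_{F}|(e)_{E}$ is the $n$-th smallest to $n$, whereas you build the inverse enumeration $\mathbb{N}\to(m)_{M}$ and pass through the image set explicitly; the paper's construction also handles the finite and infinite cases uniformly rather than splitting them.
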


\begin{proof}
Let $(e)_{E}$ be a set over $(e_0,S_0)$.
By the previous theorem there exists a bijection  $(f)_{F}:(w)_{W}\overset{}{\underset{ (e_0,S_0)} {\rightarrow } }\mathbb{N}$ where $(w)_{W}$ is a set over $(e_0,S_0)$ which contains every word. Let $(g)_{G}\overset{}{\underset{ (e_0,S_0)} {= } }(f)_{F}|(e)_{E}$. Construct $(h)_{H}:(e)_{E}\overset{}{\underset{ (e_0,S_0)} {\rightarrow } }\mathbb{N}$ such that
\begin{itemize}
	\item if $n_0$ is the smallest number in the image of $(g)_{G}$ and $(g)_{G}(x_0)\overset{}{\underset{ (e_0,S_0)} {= } }n_0$ for some $x_0$ then $(h)_{H}(x_0)\overset{}{\underset{ (e_0,S_0)} {=} }1$
	\item for any $x_1\overset{}{\underset{ (e_0,S_0)} {\in } }(e)_{E}$ and any $x_2\overset{}{\underset{ (e_0,S_0)} {\in } }(e)_{E}$, if there exists no word $x_3$ such that $(g)_{G}(x_3)$ is strictly between $(g)_{G}(x_1)$ and $(g)_{G}(x_2)$ then $(h)_{H}(x_2)=(h)_{H}(x_1)+1$  
\end{itemize}
The image of $(h)_{H}$ over $(e_0,S_0)$ is either $\mathbb{N}$ or $\mathbb{N}_{\leq n}$ for some $n$. So $(e)_{E}$ is countable over $(e_0,S_0)$.
\end{proof}


As expected we can now show that there exists a bijection between any two infinite sets.

\begin{cor}
Over any set foundation $(e_0,S_0)$, if $(e_1)_{E_1}$ and $(e_2)_{E_2}$ are infinite sets then there exists a bijection $(h)_{H}:(e_1)_{E_1}\overset{}{\underset{ (e_0,S_0)} {\rightarrow } }(e_2)_{E_2}$.
\end{cor}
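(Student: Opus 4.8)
The plan is to reduce the statement to the preceding corollary, which already establishes that every set over $(e_0,S_0)$ is countable. First I would apply that corollary to both $(e_1)_{E_1}$ and $(e_2)_{E_2}$: each admits a bijective function onto either $\mathbb{N}$ or onto some $\mathbb{N}_{\leq n}$. The key observation is that the target cannot be an $\mathbb{N}_{\leq n}$; a bijection onto $\mathbb{N}_{\leq n}$ is precisely the condition $card((e_i)_{E_i})\overset{}{\underset{(e_0,S_0)}{=}}n$, which would make the set finite and contradict the hypothesis that it is infinite. Hence there exist bijective functions $(f_1)_{F_1}:(e_1)_{E_1}\overset{}{\underset{(e_0,S_0)}{\rightarrow}}\mathbb{N}$ and $(f_2)_{F_2}:(e_2)_{E_2}\overset{}{\underset{(e_0,S_0)}{\rightarrow}}\mathbb{N}$.

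Next I would build the composite. Since $(f_2)_{F_2}$ is bijective onto $\mathbb{N}$, for every natural number there is a unique element of $(e_2)_{E_2}$ mapping to it, so the ``back-map'' is single-valued and total. Using (comprehension) together with the set-over machinery, I would produce a representation $h$ and a context system $H$ so that $(h)_H:(e_1)_{E_1}\overset{}{\underset{(e_0,S_0)}{\rightarrow}}(e_2)_{E_2}$ associates $x_1$ to $x_2$ exactly when $(f_1)_{F_1}(x_1)\overset{}{\underset{(e_0,S_0)}{=}}(f_2)_{F_2}(x_2)$. Concretely, one forms the system of words coding the pairs $(x_1,x_2)$ over $(e_0,S_0)$ for which this equality of images holds; that this collection is a legitimate system is immediate from (comprehension), and the remark following the definition of ``set over $(e_0,S_0)$'' (obtained via theorem \ref{indept}) guarantees that it can be internalized as a set over $(e_0,S_0)$, yielding the required $h$ and $H$.

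Finally I would verify that $(h)_H$ is a bijective function. Totality on $(e_1)_{E_1}$ and single-valuedness follow from $(f_1)_{F_1}$ being a function with left domain $(e_1)_{E_1}$ and $(f_2)_{F_2}$ being bijective onto $\mathbb{N}$; injectivity of $(h)_H$ reduces to injectivity of $(f_1)_{F_1}$ composed with injectivity of $(f_2)_{F_2}$; and surjectivity reduces to surjectivity of $(f_1)_{F_1}$ onto $\mathbb{N}$ together with surjectivity of $(f_2)_{F_2}$ onto $(e_2)_{E_2}$. The genuinely delicate part is not the conceptual step (ruling out the $\mathbb{N}_{\leq n}$ case is a one-line appeal to the definition of finiteness) but the bookkeeping required to exhibit the composite as an honest \emph{set} over $(e_0,S_0)$ — that is, producing a valid representation $h$ and context system $H$ rather than merely a system — and checking that the pair-coding used for composition respects the independence conditions underlying the set foundation.
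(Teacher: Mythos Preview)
Your proposal is correct and follows essentially the same approach as the paper: obtain bijections $(f)_{F}:(e_1)_{E_1}\to\mathbb{N}$ and $(g)_{G}:(e_2)_{E_2}\to\mathbb{N}$ from countability plus infiniteness, then define $(h)_{H}$ by the condition $(f)_{F}(x)\overset{}{\underset{(e_0,S_0)}{=}}(g)_{G}(y)$. The paper's proof is terser—it does not spell out the exclusion of the $\mathbb{N}_{\leq n}$ case or the representation-level bookkeeping you flag—but the argument is the same.
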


\begin{proof}
By countability of sets there exists bijective functions
$(f)_{F}:(e_1)_{E_1}\overset{}{\underset{ (e_0,S_0)} {\rightarrow } }\mathbb{N}$ and $(g)_{G}:(e_2)_{E_2}\overset{}{\underset{ (e_0,S_0)} {\rightarrow } }\mathbb{N}$.
Since $(e_1)_{E_1}$ and $(e_2)_{E_2}$ are infinite sets over $(e_0,S_0)$ there exists a function $(h)_{H}:(e_1)_{E_1}\overset{}{\underset{ (e_0,S_0)} {\longrightarrow } }(e_2)_{E_2}$ such that for any $x$, $(h)_{H}(x)\overset{}{\underset{ (e_0,S_0)} {= } }y$ if and only if $(f)_{F}(x)\overset{}{\underset{ (e_0,S_0)} {= } }(g)_{G}(y)$. We see that $(h)_{H}$ defines a bijection between $(e_1)_{E_1}$ and $(e_2)_{E_2}$ over $(e_0,S_0)$.
\end{proof}

\begin{thm}(Choice theorem)
	
Let $(s)_{S}$ be a set over $(e_0,S_0)$ of non-empty sets over $(e_0,S_0)$. There exists a function $(f)_{F}:(s)_{S}\overset{}{\underset{ (e_0,S_0)} {\rightarrow } }(t)_{T}$ such that for any $s_1\overset{}{\underset{ (e_0,S_0)} {\in } }(s)_{S}$, $(f)_{F}(s_1)\overset{}{\underset{ (e_0,S_0)} {\in } }(s_1)_{S}$.

\end{thm}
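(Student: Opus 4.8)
The plan is to exploit two facts already established in this section: that every set over $(e_0,S_0)$ is countable, and, more fundamentally, that the words carry a canonical well-ordering. Together these turn the selection of an element from each non-empty set into the purely mechanical act of picking a \emph{smallest} witness, so that no genuine choice principle is required; the whole choice function can then be produced by a single instance of (comprehension).

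First I would fix, for each word $s_1$ with $s_1\overset{}{\underset{(e_0,S_0)}{\in}}(s)_S$, the collection of witnesses of $(s_1)_S$. By hypothesis $(s_1)_S$ is a non-empty set over $(e_0,S_0)$, so by (comprehension) the system $W_{s_1}=\{x\mid x\overset{}{\underset{(e_0,S_0)}{\in}}(s_1)_S\}$ is a non-empty system of words. The word-enumeration bijection $(f)_F:(w)_W\to\mathbb{N}$ of the first theorem of this section assigns to every word a natural index, so the image of $W_{s_1}$ under this bijection is a non-empty system of natural numbers. By Theorem \ref{existsminnat} that system contains a minimal natural number, and pulling it back through the bijection yields a \emph{unique} word $x_{s_1}$, the smallest witness of $(s_1)_S$. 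The crucial point is that the passage $s_1\mapsto x_{s_1}$ is captured by a single first-order formula $\phi(s_1,x)$ over the words, asserting ``$x$ is an element of $(s_1)_S$ and no element of $(s_1)_S$ has a strictly smaller index'', whose only parameters are the fixed systems and words $e_0,s,S$ and the graph of the bijection.

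Next I would assemble the graph. Using the pair encoding $(s_1,x)$ and (comprehension), I form the system $G$ of all words encoding a pair $(s_1,x_{s_1})$ with $s_1\overset{}{\underset{(e_0,S_0)}{\in}}(s)_S$ and $\phi(s_1,x_{s_1})$ holding. By the realizability remark following the definition of sets (an application of Theorem \ref{indept}), this system $G$ is realizable as a set $(f)_F$ over $(e_0,S_0)$ for a suitable representation word $f$ and context system $F$; taking $(t)_T$ to be the realized image, one checks that $((s)_S,(t)_T,(f)_F)_{(e_0,S_0)}$ is a binary relation whose left domain is $(s)_S$. Uniqueness of the minimal index (Theorem \ref{existsminnat} together with injectivity of the bijection) gives the functional property, so $(f)_F:(s)_S\to(t)_T$ is a function, and by construction $(f)_F(s_1)\overset{}{\underset{(e_0,S_0)}{\in}}(s_1)_S$ for every $s_1$.

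The main obstacle is not the selection itself but the bookkeeping of the encoding: one must verify that $\phi$ is a legitimate instance of (comprehension) (it quantifies only over words and refers exclusively to permitted word and system parameters) and that the realized system truly satisfies the two clauses defining a function. I expect the only delicate point to be confirming that the context systems can be chosen coherently so that $(f)_F$, $(t)_T$ and each $(s_1)_S$ interact correctly under the parsing scheme; this coherence is precisely what the realizability remark, via Theorem \ref{indept}, supplies.
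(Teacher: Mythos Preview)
Your proposal is correct and follows essentially the same approach as the paper: fix the bijection from words to $\mathbb{N}$, select from each non-empty $(s_1)_S$ the element of least index (using the well-ordering of $\mathbb{N}$), and invoke (comprehension) to assemble the resulting graph into a function. The paper's proof is terser and omits the bookkeeping you discuss about realizability and the explicit formula $\phi$, but the underlying argument is the same.
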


\begin{proof}
Let $(c)_{C}:(w)_{W}\overset{}{\underset{ (e_0,S_0)} {\rightarrow } }\mathbb{N}$ be a bijection where $(w)_{W}$ is the set (over $(e_0,S_0)$) of all words (existence guaranteed by countability). Define $(f)_{F}:(s)_{S}\overset{}{\underset{ (e_0,S_0)} {\rightarrow } }(t)_{T}$ such that for any $s_1\overset{}{\underset{ (e_0,S_0)} {\in } }(s)_{S}$, $(f)_{F}(s_1)\overset{}{\underset{ (e_0,S_0)} {= } }x$ where $(c)_{C}(x)$ is the smallest natural number in the set $(c)_{C}( (s_1)_{S} )$ over $(e_0,S_0)$. The existence of such a function is guaranteed by (comprehension).

\end{proof}

\section{Set incompleteness}
This section will show that ``proofs of non-bijection between two infinite sets''(in ZFC or other set theories) are essentially ``proofs of non-existence of at least one of the two specified sets''. In particular, proofs of uncountability of a set are essentially proofs of its non-existence. In a certain sense, uncountability is a form of incompleteness.

Now we see that the power set of an infinite set does not exist.
\begin{thm}
Over any set foundation $(e_0,S_0)$, if $(x)_{X}$ is an infinite set then it has no power set.
\end{thm}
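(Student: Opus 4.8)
The plan is to assume a power set $(p)_P$ of $(x)_X$ exists and to derive a contradiction by a Cantor-style diagonal argument, exploiting the fact proved above that every set over $(e_0,S_0)$ is countable. First I would argue that $(p)_P$ is itself infinite over $(e_0,S_0)$: since $(x)_X$ is infinite it has infinitely many distinct elements, and the singleton subset determined by each such element is a subset of $(x)_X$, with distinct elements giving distinct singletons. By the power-set clauses requiring that $(p)_P$ contain every subset of $(x)_X$ and that distinct representatives in $(p)_P$ denote distinct subsets, the representatives of these singletons form infinitely many distinct elements of $(p)_P$, so $(p)_P$ cannot have cardinality $n$ for any natural number $n$. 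Hence, by the corollary that any two infinite sets are in bijection, there is a bijection $(h)_H:(x)_X \to (p)_P$ over $(e_0,S_0)$.

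Next I would build the diagonal subset. Using (comprehension) I define the system consisting of exactly those words $w$ that are elements of $(x)_X$ and satisfy $w \notin ((h)_H(w))_P$, where $((h)_H(w))_P$ is the subset of $(x)_X$ named by the image of $w$ under $(h)_H$ with context system $P$; this is a legitimate first-order property since $(h)_H$, $(p)_P$ and $P$ are all given. By the remark following Theorem \ref{indept}, this system can be realized as a set $(d)_D$ over $(e_0,S_0)$ with the same elements. Every element of $(d)_D$ is by construction an element of $(x)_X$, so $(d)_D$ is a subset of $(x)_X$ over $(e_0,S_0)$.

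Then I would show that $(d)_D$ disagrees with every member of $(p)_P$. Fix any $m$ with $m \in (p)_P$; by surjectivity of $(h)_H$ there is $w_0 \in (x)_X$ with $(h)_H(w_0) = m$. By the defining property of $(d)_D$, the statement $w_0 \in (d)_D$ holds precisely when $w_0 \notin (m)_P$, so $(d)_D$ and $(m)_P$ differ on $w_0$ and hence $(d)_D \neq (m)_P$ as sets over $(e_0,S_0)$. Since this holds for every $m \in (p)_P$, no element of $(p)_P$ represents $(d)_D$. But $(d)_D$ is a subset of $(x)_X$, so the power-set property forces $(p)_P$ to contain $(d)_D$, i.e.\ to have some element representing it, which is the desired contradiction; therefore $(x)_X$ has no power set.

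The step I expect to be the main obstacle is the construction of the diagonal set as a bona fide set over $(e_0,S_0)$: one must check that the self-referential-looking condition $w \notin ((h)_H(w))_P$ is captured by a single comprehension formula (it is, since it only refers to the fixed data $(h)_H$, $(p)_P$ and $P$) and that the resulting system admits a context system making it a genuine set, which is precisely the content of the remark after Theorem \ref{indept}. The infinitude of $(p)_P$ in the first paragraph also deserves care, since it is what licenses appealing to the bijection corollary rather than merely a finite matching.
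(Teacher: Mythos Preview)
Your argument is correct and follows the same Cantor diagonal strategy as the paper, but the setup differs slightly. The paper does not first argue that the candidate power set is infinite; instead it takes a bijection $(c_1)_{C_1}:(x)_X\to\mathbb{N}$ and merely an \emph{injection} $(c_2)_{C_2}:(y)_Y\to\mathbb{N}$ (both guaranteed by countability for any set), and diagonalises through $\mathbb{N}$ by putting $b$ into the new subset iff, for the unique $a$ with $(c_2)_{C_2}(a)=(c_1)_{C_1}(b)$, one has $b\notin(a)_Y$. This avoids your preliminary singleton argument and the appeal to the bijection corollary, and in fact yields the stronger statement that \emph{any} set of subsets of $(x)_X$ (not just a putative power set) omits some subset. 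Your route, by contrast, produces a direct bijection $(x)_X\to(p)_P$ and then runs the textbook Cantor step; this is perfectly valid and arguably more transparent, at the cost of the extra infinitude lemma. Either way the key point you correctly identify---that the diagonal condition is a single comprehension formula in the fixed parameters, and hence defines a genuine set over $(e_0,S_0)$---is the heart of the matter.
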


\begin{proof}
Let $(y)_{Y}$ be a set over $(e_0,S_0)$ such that for any $a\overset{}{\underset{ (e_0,S_0)} {\in } }(y)_{Y}$ we have $(a)_{Y}\overset{}{\underset{ (e_0,S_0)} {\subseteq } }(x)_{X}$.

Let $(c_1)_{C_1}:(x)_{X}\overset{}{\underset{ (e_0,S_0)} {\rightarrow } }\mathbb{N}$ be a bijective function (existence is guaranteed by countability).

Let $(c_2)_{C_2}:(y)_{Y}\overset{}{\underset{ (e_0,S_0)} {\rightarrow } }\mathbb{N}$ be an injective function (existence is guaranteed by countability).
  
Let $(y_1)_{Y_1}$ be a set over $(e_0,S_0)$ with $(y_1)_{Y_1}\overset{}{\underset{ (e_0,S_0)} {\subseteq } }(x)_{X}$ such that 
$$\forall a\overset{}{\underset{ (e_0,S_0)} {\in } }(y)_{Y} \forall b\overset{}{\underset{ (e_0,S_0)} {\in } }(x)_{X}
((c_2)_{C_2}(a)\overset{}{\underset{ (e_0,S_0)} {= } }(c_1)_{C_1}(b)\Rightarrow( b\overset{}{\underset{ (e_0,S_0)} {\in } }(y_1)_{Y_1} \Leftrightarrow b\overset{}{\underset{ (e_0,S_0)} {\notin } }(a)_{Y}))$$

We see that for every $a\overset{}{\underset{ (e_0,S_0)} {\in } }(y)_{Y}$ we have $(y_1)_{Y_1}\overset{}{\underset{ (e_0,S_0)} { \neq} }(a)_{Y}$. So we must conclude that $(x)_{X}$ has no power set over $(e_0,S_0)$.
\end{proof}

Since every infinite set is countable and  the power set of an infinite set does not exist we obtain a trivial resolution of the generalized continuum hypothesis (there exists no set of cardinality strictly between the cardinality of an infinite set and the cardinality of its power set). 

Now we can see that a set containing all sets cannot exist.

\begin{cor}
Over any set foundation $(e_0,S_0)$, there does not exist a set containing all sets.
\end{cor}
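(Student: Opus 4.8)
The plan is to argue by contradiction, reusing the diagonal construction from the proof of the previous theorem but aiming it directly at the putative set of all sets rather than at a family of subsets. So suppose $(u)_{U}$ is a set over $(e_0,S_0)$ that contains every set over $(e_0,S_0)$; by the containment definition this means that for each set $(s)_{S}$ there is a word $a\overset{}{\underset{(e_0,S_0)}{\in}}(u)_{U}$ with $(a)_{U}$ equal to $(s)_{S}$ as sets over $(e_0,S_0)$. I will manufacture a particular set that cannot be so contained.

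First I would fix an infinite set to diagonalize against, the natural choice being the set $(x)_{X}$ over $(e_0,S_0)$ representing $\mathbb{N}$, which is infinite. By the countability corollary this set admits a bijection $(c_1)_{C_1}:(x)_{X}\overset{}{\underset{(e_0,S_0)}{\rightarrow}}\mathbb{N}$, and since every set is countable, $(u)_{U}$ admits an injection $(c_2)_{C_2}:(u)_{U}\overset{}{\underset{(e_0,S_0)}{\rightarrow}}\mathbb{N}$. Then, exactly as in the previous proof, I would use (comprehension) to form a subset $(y_1)_{Y_1}$ of $(x)_{X}$ such that for every $a\overset{}{\underset{(e_0,S_0)}{\in}}(u)_{U}$ and every $b\overset{}{\underset{(e_0,S_0)}{\in}}(x)_{X}$ we have $(c_2)_{C_2}(a)=(c_1)_{C_1}(b)\Rightarrow\big(b\overset{}{\underset{(e_0,S_0)}{\in}}(y_1)_{Y_1}\Leftrightarrow b\overset{}{\underset{(e_0,S_0)}{\notin}}(a)_{U}\big)$. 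For each fixed $a$ the word $b=(c_1)_{C_1}^{-1}((c_2)_{C_2}(a))$ is well defined because $(c_1)_{C_1}$ is a bijection, and at this $b$ the sets $(y_1)_{Y_1}$ and $(a)_{U}$ disagree; hence $(y_1)_{Y_1}$ is unequal (as a set over $(e_0,S_0)$) to $(a)_{U}$ for every $a\overset{}{\underset{(e_0,S_0)}{\in}}(u)_{U}$. The contradiction is then immediate: $(y_1)_{Y_1}$ is itself a set over $(e_0,S_0)$, so if $(u)_{U}$ contained all sets there would be some $a$ with $(a)_{U}$ equal to $(y_1)_{Y_1}$, against what was just shown.

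The main obstacle I anticipate is not the diagonalization, which is routine once $(c_1)_{C_1}$ and $(c_2)_{C_2}$ are in hand, but making precise that the predicate defining $(y_1)_{Y_1}$ actually yields a bona fide set over $(e_0,S_0)$ whose inequality to each $(a)_{U}$ is inequality in the extensional, set-equality sense; this follows from (comprehension) together with the earlier remark that the membership behaviour of any system can be realized as a set over a given foundation, and from the fact that disagreement at the single word $b$ already defeats extensional equality. A secondary point worth flagging explicitly is that the argument uses only that $(u)_{U}$ is countable and that $(x)_{X}$ biject with $\mathbb{N}$; it never assumes the elements of $(u)_{U}$ are subsets of $(x)_{X}$, which is precisely what lets me diagonalize against the full collection of all sets instead of first having to extract a genuine power set of $\mathbb{N}$ (with its injectivity and subset conditions) and then invoke the preceding theorem.
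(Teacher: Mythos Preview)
Your argument is correct. The diagonalization you run directly against $(u)_{U}$ works: since $(c_1)_{C_1}$ is a bijection, for each $a\overset{}{\underset{(e_0,S_0)}{\in}}(u)_{U}$ the witness $b$ exists and is unique, and the disagreement at $b$ rules out set-equality between $(y_1)_{Y_1}$ and $(a)_{U}$ regardless of whether $(a)_{U}$ happens to be a subset of $(x)_{X}$.

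The paper takes a shorter route: it simply observes that if a set containing all sets existed, then in particular the set of all subsets of the set $(w)_{W}$ of all words would exist, and invokes the preceding theorem (non-existence of the power set of an infinite set) to reach a contradiction. So the paper treats the previous theorem as a black box, while you unpack and rerun its diagonal construction directly against $(u)_{U}$. The advantage of the paper's version is brevity; the advantage of yours, which you identify yourself, is that it avoids the extraction step of passing from $(u)_{U}$ to a set whose members (in the appropriate context system) are exactly the subsets of $(w)_{W}$ satisfying the power-set definition's side conditions. That extraction is not entirely trivial in this framework because set membership depends on the context system, so your direct argument is arguably cleaner even if longer.
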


\begin{proof}

Let $(w)_{W}$ over $(e_0,S_0)$ be the set of all words. Using the previous theorem it follows that the set of all subsets of $(w)_{W}$ over $(e_0,S_0)$ does not exist. So there does not exist a set of all sets over $(e_0,S_0)$. 

\end{proof}

Now we see that the \textit{least upper bound property} does not hold for a densely and totally ordered set containing at least two elements.
\begin{thm}
Let $( (x)_{X},(t)_{T} )_{(e_0,S_0)}$ be a dense total order with $card((x)_{X})\overset{}{\underset{ (e_0,S_0)} {\geq} }2$. There exists a subset $(d)_{D}$ of $(x)_{X}$ over $(e_0,S_0)$ which has no least upper bound over $((x)_{X},(t)_{T} )_{(e_0,S_0)}$.
\end{thm}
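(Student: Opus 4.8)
The plan is to exploit the fact that, in this framework, $(x)_X$ is \emph{countable} (by the corollary that every set is countable over any set foundation). This is precisely what makes the statement true: a countable dense total order behaves like the rationals rather than the reals, so it always contains a ``gap'' and is never complete. Accordingly, I will diagonalize against every candidate least upper bound, using an enumeration of $(x)_X$ to rule them out one at a time. Throughout, write $<$ and $\le$ for the strict and non-strict versions of the order $(t)_T$ over $(e_0,S_0)$.

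First, since $card((x)_X)\ge 2$ and the order is total, fix two distinct elements whose representatives I may take, after comparison, to satisfy $\ell_1<v_1$. Since $(x)_X$ is countable, fix a bijection onto $\mathbb{N}$ and let $a_1,a_2,\dots$ enumerate the elements of $(x)_X$ over $(e_0,S_0)$. Using (comprehension) and induction on $\mathbb{N}$, I recursively build a nested pair of sequences with $\ell_1\le\ell_2\le\cdots$ non-decreasing, $v_1\ge v_2\ge\cdots$ non-increasing, and $\ell_n<v_n$ at every stage, designed so that stage $n$ disposes of $a_n$. Concretely, given $\ell_n<v_n$, inspect $a_n$: if $a_n<v_n$, use density to choose $\ell_{n+1}$ strictly between $\max\{\ell_n,a_n\}$ and $v_n$ (which exists since $\max\{\ell_n,a_n\}<v_n$) and set $v_{n+1}=v_n$, so that $\ell_{n+1}>a_n$; otherwise $a_n\ge v_n$, in which case I set $\ell_{n+1}=\ell_n$, and if $a_n=v_n$ I use density to pick $v_{n+1}$ strictly between $\ell_n$ and $v_n$ (giving $v_{n+1}<a_n$), while if $a_n>v_n$ I keep $v_{n+1}=v_n$.

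Then I let $(d)_D$ be the subset of $(x)_X$ over $(e_0,S_0)$, obtained via (comprehension), whose elements are exactly the $\ell_n$. It is non-empty (it contains $\ell_1$). A short monotonicity argument shows $\ell_i<v_j$ for \emph{all} $i,j$: if $i\le j$ then $\ell_i\le\ell_j<v_j$, and if $i>j$ then $\ell_i<v_i\le v_j$. Hence every $v_j$, and in particular $v_1$, is an upper bound of $(d)_D$, so $(d)_D$ is bounded above. The key claim is that $(d)_D$ has no least upper bound over $((x)_X,(t)_T)_{(e_0,S_0)}$. Suppose it had one; being an element of $(x)_X$, it equals $a_n$ for some $n$. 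If at stage $n$ we had $a_n<v_n$, then $\ell_{n+1}\in(d)_D$ with $\ell_{n+1}>a_n$, so $a_n$ is not even an upper bound. If instead $a_n\ge v_n$, then either $v_n<a_n$ or $v_{n+1}<a_n$, and this $v_m$ is an upper bound of $(d)_D$ strictly below $a_n$, so $a_n$ is not the \emph{least} upper bound. Either way $a_n$ fails, a contradiction.

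The main obstacle is \emph{permanence}: I must check that once $a_n$ is excluded at stage $n$ it remains excluded under all later construction. This is exactly what the monotonicity gives—each added $\ell_{n+1}$ stays in $(d)_D$ forever, and each $v_m$ remains an upper bound of the \emph{entire} set $(d)_D$ because $\ell_i<v_m$ for all $i$, so the witness ($\ell_{n+1}>a_n$ or $v_m<a_n$) is never invalidated. The remaining routine points are that the recursion is legitimately definable by (comprehension) and natural-number induction in this foundation, and that density is invoked only to split a non-degenerate interval, which is always possible. I would also remark explicitly that countability of $(x)_X$ is indispensable: the analogous statement fails for a complete order such as the classically constructed real line, which in this framework does not form a set at all.
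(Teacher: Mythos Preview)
Your proof is correct and follows the same overall strategy as the paper: use countability of $(x)_X$ to fix an enumeration, recursively build a monotone pair of sequences $\ell_n<v_n$ with the $\ell_n$ increasing and the $v_n$ decreasing, and take $(d)_D$ to be the set of left endpoints, which is then bounded above by every $v_j$ but has no least upper bound.

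The one organizational difference is in how the recursion is run. The paper, at each stage, selects the element of \emph{smallest code} lying strictly inside the current open interval (alternately as the new left and new right endpoint), so both endpoints strictly move inward at every step; the absence of a least upper bound is then just asserted, relying implicitly on the fact that minimal-code selection forces every element of $(x)_X$ to eventually fall outside the shrinking intervals. You instead arrange stage $n$ to defeat the $n$-th enumerated element $a_n$ directly via a case split on whether $a_n<v_n$. Your version trades the slight elegance of ``always pick the minimal code'' for an immediate, fully explicit verification that no $a_n$ can serve as least upper bound; this is a reasonable trade, and arguably clearer for the reader.
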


\begin{proof}

Let $(c)_{C}:(x)_{X}\overset{}{\underset{ (e_0,S_0)} {\rightarrow } }\mathbb{N}$ be an injective function (existence is guaranteed by countability).  

Since $card((x)_{X})\overset{}{\underset{ (e_0,S_0)} {\geq} }2$ then there exist $x_1\overset{}{\underset{ (e_0,S_0)} { \in} }(x)_{X}$ and $x_2\overset{}{\underset{ (e_0,S_0)} {\in } }(x)_{X}$ with $x_1\overset{(t)_{T}}{\underset{ (e_0,S_0)} { <} }x_2$.
Let $(f)_{F}:\mathbb{N}\overset{}{\underset{ (e_0,S_0)} {\rightarrow } }\mathbb{N}$ and $(g)_{G}:\mathbb{N}\overset{}{\underset{ (e_0,S_0)} {\rightarrow } }\mathbb{N}$ be functions such that 

\begin{itemize}
	\item $(f)_{F}(1)\overset{}{\underset{ (e_0,S_0)} {= } }m$ if $m$ is the smallest number such that
	$x_1\overset{(t)_{T}}{\underset{ (e_0,S_0)} { <} }y \overset{(t)_{T}}{\underset{ (e_0,S_0)} {< } }x_2$ where  $(c)_{C}(y)\overset{}{\underset{ (e_0,S_0)} {= } }m$
	
	\item $(g)_{G}(1)\overset{}{\underset{ (e_0,S_0)} {= } }m$ if $m$ is the smallest number such that
	$z\overset{(t)_{T}}{\underset{ (e_0,S_0)} {< } }y \overset{(t)_{T}}{\underset{ (e_0,S_0)} {< } } x_2$ where  $(c)_{C}(y)\overset{}{\underset{ (e_0,S_0)} {= } }m$ and $(c)_{C}( z )\overset{}{\underset{ (e_0,S_0)} {= } }(f)_{F}(1)$
	
	\item $(f)_{F}(k+1)\overset{}{\underset{ (e_0,S_0)} {= } }m$ if $m$ is the smallest number such that  $z_1 \overset{(t)_{T}}{\underset{ (e_0,S_0)} { <} }y \overset{(t)_{T}}{\underset{ (e_0,S_0)} {< } }z_2$ where $(c)_{C}(y)\overset{}{\underset{ (e_0,S_0)} {= } }m$, $(c)_{C}( z_1 )\overset{}{\underset{ (e_0,S_0)} {= } }(f)_{F}(k)$ and $(c)_{C}( z_2 )\overset{}{\underset{ (e_0,S_0)} {= } }(g)_{G}(k)$
	
	\item $(g)_{G}(k+1)\overset{}{\underset{ (e_0,S_0)} {= } }m$ if $m$ is the smallest number such that 
$z_1\overset{(t)_{T}}{\underset{ (e_0,S_0)} { <} }y \overset{(t)_{T}}{\underset{ (e_0,S_0)} {< } }z_2$ where $(c)_{C}(y)\overset{}{\underset{ (e_0,S_0)} {= } }m$, $(c)_{C}( z_1 )\overset{}{\underset{ (e_0,S_0)} {= } }(f)_{F}(k+1)$ and  $(c)_{C}( z_2 )\overset{}{\underset{ (e_0,S_0)} {= } }(g)_{G}(k)$

\end{itemize}
Let $(d)_{D}\overset{}{\underset{ (e_0,S_0)} {= } }\{ a\overset{}{\underset{ (e_0,S_0)} {\in } }(x)_{X} \mid \exists n\overset{}{\underset{ (e_0,S_0)} {\in } }(f)_{F}(\mathbb{N})((c)_{C}(a)\overset{}{\underset{ (e_0,S_0)} {= } }n) \} $

We see that $(d)_{D}$ has no least upper bound over $( (x)_{X},(t)_{T} )_{(e_0,S_0)}$


\end{proof}

We can now see that there does not exist a set of all real numbers.
\begin{thm}
Given any real set $(r)_{R}$ over $(e_0,S_0)$ there exists a real number $(r_1)_{R_1}$ over $(e_0,S_0)$ such that $\forall r_2\overset{}{\underset{ (e_0,S_0)} {\in } }(r)_{R}((r_1)_{R_1}\overset{\mathbb{R}}{\underset{ (e_0,S_0)} {\neq } }(r_2)_{R})$.
\end{thm}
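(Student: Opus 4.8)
The plan is to run Cantor's diagonal argument in the form of a nested-interval construction, using the countability of every set together with the Cauchy completeness theorem established above. First I would invoke the corollary that every set is countable to obtain an injective function $(c)_{C}:(r)_{R}\to\mathbb{N}$. For each natural number $n$ there is then at most one element $s$ of $(r)_{R}$ with $(c)_{C}(s)=n$; when it exists, write $r_n$ as shorthand for the real number $(s)_{R}$ that it represents. The target real number $(r_1)_{R_1}$ will be produced as the limit of a carefully chosen sequence of rationals, arranged so that at stage $n$ the limit is trapped in a closed rational interval from which $r_n$ has been excluded.

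Concretely, I would build by recursion two endpoint functions $\mathbb{N}\to\mathbb{Q}$ giving rationals $a_n<b_n$ of a nested sequence of intervals. Start from any two distinct rationals $a_0<b_0$ (for instance $0$ and $1$ as rationals, which exist by the construction of $\mathbb{Q}$). At stage $n$, take the two trisection points $p$ and $q$ with $a_n<p<q<b_n$ (these are rationals, since $3$ is invertible in $\mathbb{Q}$). If $r_n$ exists and satisfies $r_n>p$ when the real $r_n$ is compared to the rational $p$, set $a_{n+1}=a_n$ and $b_{n+1}=p$; otherwise (including whenever $r_n$ does not exist) set $a_{n+1}=q$ and $b_{n+1}=b_n$. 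In the first case $r_n>b_{n+1}$, and in the second, whenever $r_n$ exists, $r_n\le p<q=a_{n+1}$, so in every case the real $r_n$ lies strictly outside the closed interval $[a_{n+1},b_{n+1}]$. Moreover each interval has length at most $\tfrac23$ of the previous one, so $b_n-a_n$ tends to $0$ and the intervals are nested. The existence of these two functions follows from (comprehension) and natural-number recursion, exactly as in the construction used in the proof of the Cauchy completeness theorem.

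Next I would regard $(a_n)$ as a real function $\mathbb{N}\to(r)_{R}$ by identifying each rational $a_n$ with the real number equal to it. Since $|a_m-a_n|$ is bounded by the length of the $n$-th interval whenever $m\ge n$, this function is Cauchy-convergent, so by the Cauchy completeness theorem it converges to a real number $(r_1)_{R_1}$. Because the intervals are nested and closed, for each $n$ all later endpoints $a_m$ lie in $[a_{n+1},b_{n+1}]$, and passing to the limit yields $a_{n+1}\le (r_1)_{R_1}\le b_{n+1}$. As $r_n$ lies strictly outside this interval, we obtain $(r_1)_{R_1}\neq r_n$ as reals for every $n$ for which $r_n$ exists; since every element of $(r)_{R}$ is such an $r_n$ via $(c)_{C}$, the real $(r_1)_{R_1}$ differs as a real from every element of $(r)_{R}$, which is the claim.

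The main obstacle is not the analytic idea, which is the familiar shrinking-interval diagonal, but its faithful transcription into this framework: one must check that the recursively defined endpoint functions genuinely exist as objects of the theory (guaranteed by (comprehension) and natural-number induction), that the comparison of the real $r_n$ with the rational trisection points is well posed through the embedding of $\mathbb{Q}$ into the reals, and that weak order inequalities are preserved under the limit supplied by the Cauchy completeness theorem. The uniform treatment of the finite case — absorbed by letting stages with no corresponding $r_n$ simply trisect and recurse — avoids any separate argument and keeps the construction a single recursion.
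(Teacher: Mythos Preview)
Your argument is correct: the nested-interval diagonalisation, fed by an injection $(c)_{C}:(r)_{R}\to\mathbb{N}$ and closed off via the Cauchy completeness theorem, produces a real differing from every $(r_2)_{R}$. One small slip: when you pass to the Cauchy sequence of the $a_n$, its codomain should be some real set containing those rationals-as-reals, not $(r)_{R}$ itself; but this is purely notational and does not affect the reasoning.

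The paper takes a different route. It first splits on whether $(r)_{R}$ contains every rational; if not, the missing rational (seen as a real) already works. If so, $(r)_{R}$ with the induced order is a dense total order, and the paper invokes the immediately preceding theorem that any such order has a bounded subset with no least upper bound in $(r)_{R}$; the union of the lower segments of that subset is then the missing real. Your approach trades the dependence on that density-plus-lub theorem for the Cauchy completeness theorem, and thereby avoids the case split entirely (stages with no preimage under $(c)_{C}$ simply trisect and continue). Conceptually both are diagonal arguments on a countable enumeration; the paper packages the diagonal into the prior lub-failure theorem and then reads off a Dedekind cut, while you run the diagonal directly as shrinking intervals and extract a limit. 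Your version is arguably more self-contained for this particular statement, whereas the paper's version illustrates how the failure of the least-upper-bound property is the same phenomenon as the non-existence of a set of all reals.
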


\begin{proof}


If $(r)_{R}$ over $(e_0,S_0)$ does not contain all the rational numbers (seen as real numbers) then trivially $(r)_{R}$ does not contain every real number over $(e_0,S_0)$.

If $(r)_{R}$ over $(e_0,S_0)$ contains every rational number then $( (r)_{R},(t)_{T} )_{(e_0,S_0)}$ is a dense total order where $\forall x_1 \forall x_2 ( x_1\overset{(t)_{T}}{\underset{ (e_0,S_0)} {\leq } }x_2 \Leftrightarrow (x_1)_{R}\overset{\mathbb{R}}{\underset{ (e_0,S_0)} {\leq } }(x_2)_{R})$. Using the previous theorem there exists a subset $(s)_{S}$ of $(r)_{R}$ (over $(e_0,S_0)$) which does not have a least upper bound over $( (r)_{R},(t)_{T} )_{(e_0,S_0)}$. Let $(r_1)_{R_1}$ be a real number over $(e_0,S_0)$ such that  $\forall x\forall s_i\overset{}{\underset{ (e_0,S_0)} {\in } }(s)_{S}( x \text{ is in the lower segment of }$ $(s_i)_{R} \text{ over } (e_0,S_0) \Leftrightarrow  \exists y ( (y)_{R_1}\overset{\mathbb{Q}}{\underset{ (e_0,S_0)} {= } }(x)_{S} \text{ and } y \text{ is in the lower segment of } (r_1)_{R_1} \text{ over } (e_0,S_0)))$

It follows that $\forall r_2\overset{}{\underset{ (e_0,S_0)} {\in } }(r)_{R}( (r_2)_{R}\overset{\mathbb{R}}{\underset{ (e_0,S_0)} {\neq } }(r_1)_{R_1})$.

\end{proof}

The next theorem shows that a set of all ``ordinals'' cannot exist.


\begin{thm}
Let $(s)_{S}$ be a set over $(e_0,S_0)$  such that for every element $s_i$ of $(s)_{S}$ over $(e_0,S_0)$ there exist $x_i$ and $r_i$ such that $(s_i)_{S}\overset{}{\underset{ (e_0,S_0)} {= } }(x_i,r_i)$ and $( (x_i)_{S},(r_i)_{S})_{(e_0,S_0)}$ is a well ordering. There exists a well ordering $( (y)_{Y},(t)_{T} )_{(e_0,S_0)}$ such that for all $(x_i,r_i)\overset{}{\underset{ (e_0,S_0)} {\in } }(s)_{S}$, $( (y)_{Y},(t)_{T} )_{(e_0,S_0)}$ is not order isomorphic to $( (x_i)_{S},(r_i)_{S})_{(e_0,S_0)}$.
\end{thm}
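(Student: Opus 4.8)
The plan is to run a Burali--Forti style diagonalization, exploiting that in this foundation every set is countable. Since $(s)_S$ is a set it is countable, so I first fix an enumeration of its elements, i.e.\ a bijection between $(s)_S$ and $\mathbb{N}$ (or $\mathbb{N}_{\le n}$), which lets me treat the well-orderings $W_i := ((x_i)_S,(r_i)_S)$ as indexed by natural numbers. The goal is to assemble a single well-ordering $((y)_Y,(t)_T)$ whose ``length'' strictly exceeds that of every $W_i$, so that it can be order isomorphic to none of them. Concretely I would take the ordinal-style sum $\sum_i (W_i + \mathbf 1)$: each block is a copy of $W_i$ with one fresh top element $\top_i$ appended above it, and the blocks are stacked in order of their index.

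Second, I would realise this construction inside the theory. Using (comprehension) I build $(y)_Y$ as a set whose elements are words encoding pairs $(i,a)$, where $i\in\mathbb N$ is a block index and $a$ is either an element of $(x_i)_S$ or a distinguished top marker, and I build $(t)_T$ as the set of pairs ordering these lexicographically: first compare block indices in $\mathbb N$, and within a block use $(r_i)_S$ with the top marker declared greatest. Encoding ordered pairs and relations as words and cutting them out by (comprehension) is routine given the association/relation machinery already developed, so I would not dwell on it.

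Third, I must check that $((y)_Y,(t)_T)$ really is a well-ordering. Totality is immediate from the lexicographic definition together with the fact that $\mathbb N$ and each $(r_i)_S$ are total. For the least-element property, take any non-empty subset $(d)_D$ of $(y)_Y$: the set $J$ of block indices occurring in $(d)_D$ is a non-empty system of natural numbers, hence has a minimal element $i_0$ by Theorem \ref{existsminnat}; restricting $(d)_D$ to block $i_0$ yields a non-empty subset of a copy of $W_{i_0}$ together with $\top_{i_0}$, which has a least element because $W_{i_0}$ is a well-ordering; that element is the greatest lower bound of $(d)_D$ and lies in $(d)_D$.

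Finally, the crux is non-isomorphism. I would first prove the auxiliary lemma that every order embedding $g$ of a well-ordering into itself satisfies $g(x)\ge x$ for all $x$: otherwise the subset $\{x : g(x)< x\}$ is non-empty, has a least element $x_0$ by the well-ordering property, and then $g(x_0)<x_0$ forces $g(g(x_0))<g(x_0)$, placing the smaller element $g(x_0)$ into the same subset and contradicting minimality of $x_0$. Now fix $i$ and suppose $\phi:(y)_Y\to W_i$ were an order isomorphism. Inside $(y)_Y$ the copy of $W_i$ forming block $i$ is order isomorphic to $W_i$ via the natural map $\iota$, and its appended top $\top_i$ is strictly above all of that copy. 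Then $g:=\phi\circ\iota$ is an order embedding of $W_i$ into itself, while $w^\ast:=\phi(\top_i)$ is a genuine element of $W_i$ lying strictly above every value $g(w)=\phi(\iota(w))$; applying the lemma to $w^\ast$ gives $w^\ast\le g(w^\ast)<\phi(\top_i)=w^\ast$, a contradiction (and if $W_i$ is empty then $(y)_Y$, which always contains $\top_i$, cannot be isomorphic to it either). Hence $(y)_Y$ is isomorphic to no $W_i$, which is exactly the claim. The main obstacle I anticipate is not the diagonal idea but making the two well-ordering verifications airtight --- in particular phrasing the least-element and inflationary arguments purely in terms of systems cut out by (comprehension) --- whereas the explicit word-level encoding of the stacked order is merely tedious and conceptually safe.
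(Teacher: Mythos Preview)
Your proposal is correct and follows essentially the same Burali--Forti approach as the paper: enumerate the well-orderings via countability, stack them as an ordinal sum, and adjoin extra element(s) to guarantee strict dominance over every $W_i$. The only cosmetic difference is that the paper adjoins a single global top element (building $(\sum_i W_i)+\mathbf 1$) whereas you adjoin one per block (building $\sum_i(W_i+\mathbf 1)$); the paper also leaves the non-isomorphism claim as ``we see that\ldots'', while you supply the standard inflationary-embedding argument explicitly.
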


\begin{proof}

Let $(c)_{C}:(w)_{W}\overset{}{\underset{ (e_0,S_0)} {\rightarrow } }\mathbb{N}$ be a bijection where $(w)_{W}$ is a set over $(e_0,S_0)$ containing every word (existence is guaranteed by countability).  

Let $( (y)_{Y},(t)_{T} )_{(e_0,S_0)}$ be a well ordering such that 

\begin{itemize}
\item $a\overset{}{\underset{ (e_0,S_0)} {\in } }(y)_{Y}$ if and only if either $a=1$ or there exist $(b,r)\overset{}{\underset{ (e_0,S_0)} {\in } }(s)_{S}$ and $b_0\overset{}{\underset{ (e_0,S_0)} {\in } }(b)_{S}$ such that $a=m_1m_0$ where $m_0$ is a counting number over $0$, $(c)_{C}(b_0)$ is the length of $m_0$ and $m_1\overset{}{\underset{ (e_0,S_0)} {= } }(c)_{C}(b)$.

\item for any $m_1m_0\overset{}{\underset{ (e_0,S_0)} {\in } }(y)_{Y}$ where $m_0$ is a counting number over $0$ and $m_1$ is a natural number, and for any
$n_1n_0\overset{}{\underset{ (e_0,S_0)} {\in } }(y)_{Y}$ where $n_0$ is a counting number over $0$ and $n_1$ is a natural number we have

	\begin{itemize}
		\item if $n_1<m_1$ then $n_1n_0\overset{(t)_{T}}{\underset{ (e_0,S_0)} { < } }m_1m_0$
		\item $n_1n_0\overset{(t)_{T}}{\underset{ (e_0,S_0)} { \leq } }n_1m_0$ if and only if $z_1\overset{(r)_{S}}{\underset{ (e_0,S_0)} {\leq } }z_2$  where $(z,r)\overset{}{\underset{ (e_0,S_0)} {\in } }(s)_{S}$, $(c)_{C}(z)\overset{}{\underset{ (e_0,S_0)} {= } }n_1$, $(c)_{C}(z_1)$ is the length of $n_0$ and $(c)_{C}(z_2)$ is the length of $m_0$.
		\item $m_1m_0\overset{(t)_{T}}{\underset{ (e_0,S_0)} {< } }1$ 
	\end{itemize}
\end{itemize}

We see that $( (y)_{Y},(t)_{T} )_{(e_0,S_0)}$ is order isomorphic to none of the well orderings identified by the elements of $(s)_{S}$ over $(e_0,S_0)$.

\end{proof}


\begin{cor}
Let $(s)_{S}$ be a set over $(e_0,S_0)$  such that for every element $s_i$ of $(s)_{S}$ over $(e_0,S_0)$ there exist $x_i$ and $r_i$ such that $(s_i)_{S}\overset{}{\underset{ (e_0,S_0)} {= } }(x_i,r_i)$ and $( (x_i)_{S},(r_i)_{S})_{(e_0,S_0)}$ is a partial order. There exists a partial order $( (y)_{Y},(t)_{T} )_{(e_0,S_0)}$ such that for all $(x_i,r_i)\overset{}{\underset{ (e_0,S_0)} {\in } }(s)_{S}$, $( (y)_{Y},(t)_{T} )_{(e_0,S_0)}$ is not order isomorphic to $( (x_i)_{S},(r_i)_{S})_{(e_0,S_0)}$.
\end{cor}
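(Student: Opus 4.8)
The plan is to deduce this corollary from the preceding theorem by restricting attention to those elements of $(s)_{S}$ that actually encode well orderings. First I would use (comprehension) to form the subset $(s')_{S'}$ of $(s)_{S}$ over $(e_0,S_0)$ consisting of exactly those $s_i\overset{}{\underset{ (e_0,S_0)} {\in } }(s)_{S}$ for which the associated partial order $( (x_i)_{S},(r_i)_{S})_{(e_0,S_0)}$ is in fact a well ordering. The property ``being a well ordering'' is expressible as a formula in the two-sorted language: the quantification over non-empty subsets of $(x_i)_{S}$ is a quantification over words together with their context systems, both of which are permitted inside $\phi$ in the (comprehension) schema. Hence $(s')_{S'}$ exists, and every one of its elements encodes a well ordering exactly as required by the hypothesis of the preceding theorem.

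Next I would apply the preceding theorem to $(s')_{S'}$. This yields a well ordering $( (y)_{Y},(t)_{T} )_{(e_0,S_0)}$ that is not order isomorphic to $( (x_i)_{S},(r_i)_{S})_{(e_0,S_0)}$ for any $(x_i,r_i)\overset{}{\underset{ (e_0,S_0)} {\in } }(s')_{S'}$. Since every well ordering is in particular a total order and hence a partial order, $( (y)_{Y},(t)_{T} )_{(e_0,S_0)}$ is a legitimate candidate for the conclusion of the corollary.

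It then remains to upgrade ``not isomorphic to anything in $(s')_{S'}$'' to ``not isomorphic to anything in the full set $(s)_{S}$''. I would argue by contradiction: suppose $( (y)_{Y},(t)_{T} )_{(e_0,S_0)}$ were order isomorphic to some $( (x_i)_{S},(r_i)_{S})_{(e_0,S_0)}$ with $(x_i,r_i)\overset{}{\underset{ (e_0,S_0)} {\in } }(s)_{S}$, witnessed by a bijective order embedding $(f)_{F}$. The key observation is that order isomorphism preserves the property of being a well ordering: because the order-embedding condition is biconditional, the inverse of $(f)_{F}$ is again a bijective order embedding, and the greatest lower bound of any non-empty subset of $(x_i)_{S}$ can be produced by transporting the corresponding subset of $(y)_{Y}$ through $(f)_{F}$ and mapping its greatest lower bound across. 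Consequently $( (x_i)_{S},(r_i)_{S})_{(e_0,S_0)}$ would itself be a well ordering, forcing $(x_i,r_i)\overset{}{\underset{ (e_0,S_0)} {\in } }(s')_{S'}$ and contradicting the choice of $( (y)_{Y},(t)_{T} )_{(e_0,S_0)}$.

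The main obstacle is precisely this last step: establishing isomorphism-invariance of well-orderedness within the present framework. Concretely, one must verify that a bijective order embedding between partial orders carries non-empty subsets to non-empty subsets and carries greatest lower bounds to greatest lower bounds, using \emph{both} directions of the biconditional in the definition of order embedding. Everything else --- forming $(s')_{S'}$ via (comprehension) and invoking the preceding theorem --- is routine, so the argument reduces entirely to this transport-of-structure lemma.
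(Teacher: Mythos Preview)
Your proposal is correct and matches the paper's approach exactly; the paper's proof is the single sentence ``Follows immediately from the previous theorem on well orderings,'' and you have simply unpacked what that sentence means --- restrict to the well-ordered members, apply the theorem, and use isomorphism-invariance of well-orderedness to rule out the remaining partial orders. The transport-of-structure step you flag as the main obstacle is indeed the only nontrivial ingredient, and it is routine once both directions of the biconditional in the definition of order embedding are used.
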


\begin{proof}

Follows immediately from the previous theorem on well orderings.
\end{proof}

The next theorem lets us see that a set of all functions cannot exist.


\begin{thm}
Given
\begin{itemize}
	\item sets $(x)_{X}$ and $(y)_{Y}$ over $(e_0,S_0)$
	\item a set $(s)_{S}$ of functions from $(x)_{X}$ to $(y)_{Y}$ over $(e_0,S_0)$
\end{itemize}
if $(x)_{X}$ is an infinite set over $(e_0,S_0)$ and $card((y)_{Y})\overset{}{\underset{ (e_0,S_0)} {\geq } }2$ then there exists a function $(g)_{G}:(x)_{X}\overset{}{\underset{ (e_0,S_0)} {\rightarrow } }(y)_{Y}$ such for every $f_1\overset{}{\underset{ (e_0,S_0)} {\in } }(s)_{S}$ there exists $x_1\overset{}{\underset{ (e_0,S_0)} {\in } }(x)_{X}$ such that $(g)_{G}(x_1)\overset{}{\underset{ (e_0,S_0)} {\neq } }(f_1)_{S}(x_1)$.

\end{thm}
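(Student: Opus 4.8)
The plan is to run a Cantor-style diagonal argument, directly generalizing the power set theorem proved above (whose codomain was effectively $\{0,1\}$) to an arbitrary codomain $(y)_{Y}$ having at least two elements. First I would invoke countability to fix a bijection $(c_1)_{C_1}:(x)_{X}\overset{}{\underset{ (e_0,S_0)} {\rightarrow } }\mathbb{N}$ (which exists since $(x)_{X}$ is infinite) and an injection $(c_2)_{C_2}:(s)_{S}\overset{}{\underset{ (e_0,S_0)} {\rightarrow } }\mathbb{N}$ (which exists since every set is countable). Since $card((y)_{Y})\overset{}{\underset{ (e_0,S_0)} {\geq } }2$, I would then fix two distinct elements $y_0\overset{}{\underset{ (e_0,S_0)} {\in } }(y)_{Y}$ and $y_1\overset{}{\underset{ (e_0,S_0)} {\in } }(y)_{Y}$ with $y_0\neq y_1$; these two ``values'' are precisely what make it always possible to dodge a prescribed output.

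Next I would define $(g)_{G}:(x)_{X}\overset{}{\underset{ (e_0,S_0)} {\rightarrow } }(y)_{Y}$ by (comprehension) as follows. For each $a\overset{}{\underset{ (e_0,S_0)} {\in } }(x)_{X}$, if there exists $f_1\overset{}{\underset{ (e_0,S_0)} {\in } }(s)_{S}$ with $(c_2)_{C_2}(f_1)\overset{}{\underset{ (e_0,S_0)} {= } }(c_1)_{C_1}(a)$ (such $f_1$ being unique by injectivity of $(c_2)_{C_2}$) and $(f_1)_{S}(a)\overset{}{\underset{ (e_0,S_0)} {= } }y_0$, then I set $(g)_{G}(a)\overset{}{\underset{ (e_0,S_0)} {= } }y_1$; otherwise I set $(g)_{G}(a)\overset{}{\underset{ (e_0,S_0)} {= } }y_0$. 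This single clause forces disagreement with $f_1$ exactly at the ``diagonal'' index carried by the two enumerations.

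To verify the conclusion, given any $f_1\overset{}{\underset{ (e_0,S_0)} {\in } }(s)_{S}$, let $n\overset{}{\underset{ (e_0,S_0)} {= } }(c_2)_{C_2}(f_1)$ and let $a$ be the unique word with $(c_1)_{C_1}(a)\overset{}{\underset{ (e_0,S_0)} {= } }n$ (using that $(c_1)_{C_1}$ is bijective). By the defining clause we then obtain $(g)_{G}(a)\overset{}{\underset{ (e_0,S_0)} {\neq } }(f_1)_{S}(a)$, so $x_1\overset{}{\underset{ (e_0,S_0)} {= } }a$ witnesses the required disagreement point.

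The main obstacle is confirming that the construction genuinely yields a single-valued function under (comprehension) and that the ``otherwise'' branch never accidentally matches $(f_1)_{S}(a)$. The use of two fixed distinct values $y_0,y_1$ settles both points cleanly: whatever $(f_1)_{S}(a)$ happens to be, at least one of $y_0,y_1$ differs from it, and the clause always selects such a value; meanwhile the indices $n$ lying outside the image of $(c_2)_{C_2}$ cause no trouble, since we simply default to $y_0$ there and those indices are irrelevant to the verification step.
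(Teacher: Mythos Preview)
The proposal is correct and follows essentially the same approach as the paper: both fix a bijection $(c_1)_{C_1}:(x)_{X}\to\mathbb{N}$ and an injection $(c_2)_{C_2}:(s)_{S}\to\mathbb{N}$ via countability, pick two distinct elements of $(y)_{Y}$, and diagonalize by forcing $(g)_{G}$ to output the ``other'' value at each diagonal index. Your write-up is slightly more explicit than the paper's about the default value at indices outside the image of $(c_2)_{C_2}$ and about the case analysis in the verification, but the argument is identical.
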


\begin{proof}

Let $(c_1)_{C_1}:(x)_{X}\overset{}{\underset{ (e_0,S_0)} {\rightarrow } }\mathbb{N}$ be a bijective  function (existence is guaranteed by countability).
Let $(c_2)_{C_2}:(s)_{S}\overset{}{\underset{ (e_0,S_0)} {\rightarrow } }\mathbb{N}$ be an injective function (existence is guaranteed by countability).

Since $card((y)_{Y})\overset{}{\underset{ (e_0,S_0)} {\geq } }2$ there exist $y_1\overset{}{\underset{ (e_0,S_0)} {\in } }(y)_{Y}$ and $y_2\overset{}{\underset{ (e_0,S_0)} {\in } }(y)_{Y}$ such that $y_1\neq y_2$.

Let $(g)_{G}:(x)_{X}\overset{}{\underset{ (e_0,S_0)} {\rightarrow } }\{y_1,y_2\}$ be a function such that 

$\forall a\overset{}{\underset{ (e_0,S_0)} {\in } }(s)_{S} \forall b\overset{}{\underset{ (e_0,S_0)} {\in } }(x)_{X}
((c_2)_{C_2}(a)\overset{}{\underset{ (e_0,S_0)} {= } }(c_1)_{C_1}(b)\Rightarrow( (g)_{G}(b)\overset{}{\underset{ (e_0,S_0)} {= } }y_1 \Leftrightarrow (a)_{S}(b)\overset{}{\underset{ (e_0,S_0)} {= } }y_2))$

We see that for every $f_1\overset{}{\underset{ (e_0,S_0)} {\in } }(s)_{S}$ there exists $x_1\overset{}{\underset{ (e_0,S_0)} {\in } }(x)_{X}$ such that $(g)_{G}(x_1)\overset{}{\underset{ (e_0,S_0)} {\neq } }(f_1)_{S}(x_1)$.


\end{proof}

\section{Some topology}
Due to results of ``incompleteness'' mentioned in the previous section we see that a set of subsets cannot be necessarily closed under infinite unions, so the notion of topological space cannot be reformulated in a  straightforward manner. This section will only deal with metric spaces (other generalizations will be considered in subsequent papers). 

The goal of this section is to reformulate some fundamental results in topology by replacing continuity by ``extensibility'', compactness by total boundedness and by redefining connectedness. We will first re-introduce some standard definitions and then develop the language of ``extensions'' to adequately capture the core ideas behind main results in topology.
\begin{definition}
Given
\begin{itemize}
	\item set $(x)_{X}$ over $(e_0,S_0)$
	\item set $(y)_{Y}\overset{}{\underset{ (e_0,S_0)} {= } }(x)_{X} \times (x)_{X}$
	\item real set $(r)_{R}$ over $(e_0,S_0)$
	\item function $(d)_{D}:(y)_{Y}\overset{}{\underset{ (e_0,S_0)} {\rightarrow } }(r)_{R}$
\end{itemize}
if for all $x_1\overset{}{\underset{ (e_0,S_0)} {\in } }(x)_{X}$, $x_2\overset{}{\underset{ (e_0,S_0)} {\in } }(x)_{X}$ and $x_3\overset{}{\underset{ (e_0,S_0)} {\in } }(x)_{X}$ we have
\begin{itemize}
	\item $((d)_{D}(x_1,x_2))_{R}\overset{\mathbb{R}}{\underset{ (e_0,S_0)} {= } }0 \Leftrightarrow x_1=x_2$
	\item $((d)_{D}(x_1,x_2))_{R}\overset{\mathbb{R}}{\underset{ (e_0,S_0)} {= } }((d)_{D}(x_2,x_1))_{R}$
	\item $((d)_{D}(x_1,x_3))_{R}\overset{\mathbb{R}}{\underset{ (e_0,S_0)} {\leq } }((d)_{D}(x_1,x_2))_{R} + ((d)_{D}(x_2,x_3))_{R}$
\end{itemize}
then we say that $( (x)_{X},(d)_{D},(r)_{R} )_{(e_0,S_0)}$ is a {\bf {\itshape metric space}}.
\end{definition}


\begin{definition}
Given a metric space $( (x)_{X},(d)_{D},(r)_{R})_{(e_0,S_0)}$, if $(x)_{X}$ is a real set over $(e_0,S_0)$ and for any real numbers $(x_1)_{X}$ and $(x_2)_{X}$ over $(e_0,S_0)$ with $x_1\overset{}{\underset{ (e_0,S_0)} {\in } }(x)_{X}$ and $x_2\overset{}{\underset{ (e_0,S_0)} {\in } }(x)_{X}$ we have $((d)_{D}(x_1,x_2))_{R}\overset{\mathbb{R}}{\underset{ (e_0,S_0)} {= } }|(x_1)_{X}-(x_2)_{X}|$ then we say that $( (x)_{X},(d)_{D},(r)_{R})_{(e_0,S_0)}$ is a {\bf {\itshape real-induced}} metric space.
\end{definition}


\begin{definition}
Given
\begin{itemize}
	\item metric spaces $( (x)_{X},(d_1)_{D_1},(r_1)_{R_1} )_{(e_0,S_0)}$ and $( (y)_{Y},(d_2)_{D_2},(r_2)_{R_2} )_{(e_0,S_0)}$
	\item function $(f)_{F}:(x)_{X}\overset{}{\underset{ (e_0,S_0)} {\rightarrow } }(y)_{Y}$
\end{itemize}
if for any $x_1\overset{}{\underset{ (e_0,S_0)} {\in } }(x)_{X}$ and any $x_2\overset{}{\underset{ (e_0,S_0)} {\in } }(x)_{X}$ we have \\ $((d_1)_{D_1}( (x_1,x_2) ))_{R_1}\overset{\mathbb{R}}{\underset{ (e_0,S_0)} {= } }((d_2)_{D_2}( ( (f)_{F}(x_1),(f)_{F}(x_2)) ))_{R_2}$ then we say that \\ $(f)_{F}:( (x)_{X},(d_1)_{D_1},(r_1)_{R_1} )_{(e_0,S_0)}\overset{}{\underset{ (e_0,S_0)} {\rightarrow } }( (y)_{Y},(d_2)_{D_2},(r_2)_{R_2} )_{(e_0,S_0)}$ is an {\bf {\itshape isometry}}.
\end{definition}

\begin{definition}
Given a metric space $( (x)_{X},(d)_{D},(r)_{R} )_{(e_0,S_0)}$ and a subset $(y)_{Y}$ of $(x)_{X}$ over $(e_0,S_0)$, if there exist $a\overset{}{\underset{ (e_0,S_0)} {\in } }(x)_{X}$ and a real number $(\epsilon)_{E}\overset{\mathbb{R}}{\underset{ (e_0,S_0)} {> } }0$ such that 
	$$ (y)_{Y}\overset{}{\underset{ (e_0,S_0)} {= } } \{b\overset{}{\underset{ (e_0,S_0)} {\in } }(x)_{X} \mid ((d)_{D}((a,b)))_{R}\overset{\mathbb{R}}{\underset{ (e_0,S_0)} {< } }(\epsilon)_{E} \}$$
	then we say $(y)_{Y}$ is an {\bf {\itshape open ball of radius $(\epsilon)_{E}$ (centered at $a$) over $( (x)_{X},(d)_{D},(r)_{R} )_{(e_0,S_0)}$ }} or write $(y)_{Y}\overset{}{\underset{ (e_0,S_0)} {= } }\beta(a,(\epsilon)_{E})$ when there is no ambiguity.
\end{definition}

\begin{definition}
Given a metric space $( (x)_{X},(d)_{D},(r)_{R})_{(e_0,S_0)}$ and a subset $(y)_{Y}$ of $(x)_{X}$ over $(e_0,S_0)$, if there exists a real number $(\epsilon)_{E}$ over $(e_0,S_0)$ such that for any points $y_1\overset{}{\underset{ (e_0,S_0)} {\in } }(y)_{Y}$ and $y_2\overset{}{\underset{ (e_0,S_0)} {\in } }(y)_{Y}$ we have $((d)_{D}(y_1,y_2))_{R}\overset{\mathbb{R}}{\underset{ (e_0,S_0)} {\leq } }(\epsilon)_{E}$ then we say that $(y)_{Y}$ is {\bf {\itshape bounded over $( (x)_{X},(d)_{D},(r)_{R} )_{(e_0,S_0)}$ }}.
\end{definition}


\begin{definition}
Given a metric space $( (x)_{X},(d)_{D},(r)_{R} )$ and a subset $(y)_{Y}\overset{}{\underset{ (e_0,S_0)} {\subseteq } }(x)_{X}$, if for any $(\epsilon)_{E} \overset{\mathbb{R}}{\underset{ (e_0,S_0)} {> } }0$ the set $(y)_{Y}$ can be covered by a finite union of open balls of radius less than $(\epsilon)_{E}$ then we say that $(y)_{Y}$ is {\bf {\itshape totally bounded over $( (x)_{X},(d)_{D},(r)_{R} )_{(e_0,S_0)}$}}. If $(y)_{Y}\overset{}{\underset{ (e_0,S_0)} { =} }(x)_{X}$ we will just say that $( (x)_{X},(d)_{D},(r)_{R} )_{(e_0,S_0)}$ is {\bf {\itshape totally bounded}}.
\end{definition}


\begin{definition}
Given a metric space $( (x)_{X},(d)_{D},(r)_{R})_{(e_0,S_0)}$, if every bounded subset of $(x)_{X}$ is totally bounded over $( (x)_{X},(d)_{D},(r)_{R})_{(e_0,S_0)}$ then we say that $( (x)_{X},(d)_{D},(r)_{R})_{(e_0,S_0)}$ is {\bf {\itshape locally totally bounded}}.
\end{definition}


\begin{definition}
Given
\begin{itemize}
	\item metric space $( (x)_{X},(d)_{D},(r)_{R})_{(e_0,S_0)}$
	\item subsets $(y)_{Y}$ and $(z)_{Z}$ of $(x)_{X}$ over $(e_0,S_0)$
	\item real number $(r_1)_{R_1}$ over $(e_0,S_0)$
\end{itemize}
if for any set $(k)_{K}$ over $(e_0,S_0)$ such that
\begin{itemize}
	\item $\forall j(j\overset{}{\underset{ (e_0,S_0)} {\in } }(k)_{K} \Rightarrow \exists y_1\overset{}{\underset{ (e_0,S_0)} {\in } }(y)_{Y} \exists z_1\overset{}{\underset{ (e_0,S_0)} {\in } }(z)_{Z}  ((j)_{K}\overset{\mathbb{R}}{\underset{ (e_0,S_0)} {= } } ((d)_{D}(y_1,z_1))_{R}) )$
	\item $\forall y_1\overset{}{\underset{ (e_0,S_0)} {\in } }(y)_{Y} \forall z_1\overset{}{\underset{ (e_0,S_0)} {\in } }(z)_{Z} \exists j\overset{}{\underset{ (e_0,S_0)} {\in } }(k)_{K}( (j)_{K}\overset{\mathbb{R}}{\underset{ (e_0,S_0)} {= } }((d)_{D}(y_1,z_1))_{R})$
\end{itemize}
we have 
$(r_1)_{R_1}\overset{\mathbb{R}}{\underset{ (e_0,S_0)} { =} }inf (k)_{K}$
then we say that $(r_1)_{R_1}$ is the {\bf {\itshape (Haussdorf) distance between $(y)_{Y}$ and $(z)_{Z}$ over $( (x)_{X},(d)_{D},(r)_{R})_{(e_0,S_0)}$}}.
\end{definition}


\begin{definition}
Given a metric space $( (x)_{X},(d)_{D},(r)_{R} )_{(e_0,S_0)}$ and a subset $(y)_{Y}\overset{}{\underset{ (e_0,S_0)} {\subseteq } }(x)_{X}$, if for any non-empty sets $(a)_{A}$ and $(b)_{B}$ over $(e_0,S_0)$ with $(a)_{A}\cup (b)_{B} \overset{}{\underset{ (e_0,S_0)} {= } } (y)_{Y}$ and $(a)_{A}\cap (b)_{B}\overset{}{\underset{ (e_0,S_0)} {= } }\emptyset$ the distance between $(a)_{A}$ and $(b)_{B}$ is $0$ (over $( (x)_{X},(d)_{D},(r)_{R} )_{(e_0,S_0)}$) then we say that $(y)_{Y}$ is {\bf {\itshape connected over $( (x)_{X},(d)_{D},(r)_{R} )_{(e_0,S_0)}$}}. If $(y)_{Y}\overset{}{\underset{ (e_0,S_0)} {= } }(x)_{X}$ we will just say that $( (x)_{X},(d)_{D},(r)_{R} )_{(e_0,S_0)}$ is {\bf {\itshape connected}}.
\end{definition}


\begin{definition}
Given a metric space $( (x)_{X},(d)_{D},(r)_{R} )_{(e_0,S_0)}$, if every bounded subset of $(x)_{X}$ is connected then we say that $( (x)_{X},(d)_{D},(r)_{R})_{(e_0,S_0)}$ is {\bf {\itshape locally connected}}.
\end{definition}

\begin{thm}
Every locally connected metric space $( (x)_{X},(d)_{D},(r)_{R} )_{(e_0,S_0)}$ is connected.
\end{thm}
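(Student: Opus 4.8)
The plan is to verify the defining condition of connectedness for the whole space directly: given arbitrary non-empty sets $(a)_A$ and $(b)_B$ over $(e_0,S_0)$ with $(a)_A \cup (b)_B \overset{}{\underset{(e_0,S_0)}{=}} (x)_X$ and $(a)_A \cap (b)_B \overset{}{\underset{(e_0,S_0)}{=}} \emptyset$, I would show that the distance between $(a)_A$ and $(b)_B$ over the space is $0$. The idea is to localize the separation to a suitable bounded subset, apply the hypothesis of local connectedness there, and then transfer the conclusion back to the original partition.

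First I would fix witnesses $a_0 \overset{}{\underset{(e_0,S_0)}{\in}} (a)_A$ and $b_0 \overset{}{\underset{(e_0,S_0)}{\in}} (b)_B$. Since $(a)_A$ and $(b)_B$ are disjoint we have $a_0 \neq b_0$, so by the first metric axiom the real number $((d)_D(a_0,b_0))_R$ is strictly positive. Using (comprehension) I would then form the set $(y)_Y$ consisting of all $w \overset{}{\underset{(e_0,S_0)}{\in}} (x)_X$ with $((d)_D(a_0,w))_R \overset{\mathbb{R}}{\underset{(e_0,S_0)}{\leq}} ((d)_D(a_0,b_0))_R$. By symmetry and the triangle inequality any two points of $(y)_Y$ lie within the sum $((d)_D(a_0,b_0))_R + ((d)_D(a_0,b_0))_R$ of each other, so $(y)_Y$ is a bounded subset of $(x)_X$, and it contains both $a_0$ and $b_0$.

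Next I would split $(y)_Y$ as $(a')_{A'} \cup (b')_{B'}$, where $(a')_{A'}$ is the intersection of $(y)_Y$ with $(a)_A$ and $(b')_{B'}$ is the intersection with $(b)_B$, both produced by (comprehension). These two sets are non-empty (containing $a_0$ and $b_0$ respectively), disjoint, and their union is exactly $(y)_Y$ because $(a)_A \cup (b)_B$ exhausts $(x)_X$ and hence $(y)_Y$. Since the space is locally connected and $(y)_Y$ is bounded, $(y)_Y$ is connected, so the distance between $(a')_{A'}$ and $(b')_{B'}$ over the space is $0$.

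Finally I would transfer this to the coarse partition. Every pairwise value $((d)_D(y_1,z_1))_R$ with $y_1 \overset{}{\underset{(e_0,S_0)}{\in}} (a')_{A'}$ and $z_1 \overset{}{\underset{(e_0,S_0)}{\in}} (b')_{B'}$ is also a pairwise value between $(a)_A$ and $(b)_B$, so the distance set attached to the fine partition is contained in the one attached to $(a)_A,(b)_B$; hence the greatest lower bound for $(a)_A,(b)_B$ is at most that for $(a')_{A'},(b')_{B'}$, namely $0$, while all such values are non-negative, forcing the infimum to be exactly $0$. As the partition was arbitrary, this shows the space is connected. The main obstacle I anticipate is the bookkeeping around the distance being a \emph{greatest lower bound}: I must check that the relevant distance sets are non-empty and bounded below so that the infimum exists (by the greatest-lower-bound theorem established earlier), and then argue the monotonicity of infima under set inclusion cleanly rather than merely informally.
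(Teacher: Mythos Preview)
Your argument is correct and follows essentially the same route as the paper: pick a bounded set meeting both pieces of the partition, apply local connectedness there, and pass the zero-distance conclusion back to the original pieces via monotonicity of the infimum. The only cosmetic difference is that the paper uses an open ball of unspecified radius that meets both pieces, whereas you build the bounded set explicitly as a closed ball of radius $((d)_D(a_0,b_0))_R$ about $a_0$; your version is slightly more concrete and your handling of the final infimum comparison is more carefully spelled out than the paper's.
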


\begin{proof}

Let $(c_1)_{C_1}\overset{}{\underset{ (e_0,S_0)} {\neq } }\emptyset$ and $(c_2)_{C_2}\overset{}{\underset{ (e_0,S_0)} {\neq } }\emptyset$ be such that $(c_1)_{C_1}\cap (c_2)_{C_2} \overset{}{\underset{ (e_0,S_0)} {= } }\emptyset$ and $(c_1)_{C_1} \cup (c_2)_{C_2} \overset{}{\underset{ (e_0,S_0)} {= } }(x)_{X}$.
We can choose $x_1\overset{}{\underset{ (e_0,S_0)} {\in } }(x)_{X}$ and a real number $(s)_{S}\overset{\mathbb{R}}{\underset{ (e_0,S_0)} {> } }0$ such that the open ball $(b)_{B}$ of radius $(s)_{S}$ centered at $x_1$ intersects both $(c_1)_{C_1}$ and $(c_2)_{C_2}$.
Since $(x)_{X}$ is locally connected then for any $(\epsilon)_{E}\overset{\mathbb{R}}{\underset{ (e_0,S_0)} {> } }0$ there exist $y_1\overset{}{\underset{ (e_0,S_0)} {\in } }(b)_{B}\cap (c_1)_{C_1}\overset{}{\underset{ (e_0,S_0)} {\subseteq } }(c_1)_{C_1}$ and $y_2\overset{}{\underset{ (e_0,S_0)} {\in } }(b)_{B}\cap (c_2)_{C_2} \overset{}{\underset{ (e_0,S_0)} {\subseteq } }(c_2)_{C_2}$ such that $((d)_{D}(y_1,y_2))_{R}\overset{\mathbb{R}}{\underset{ (e_0,S_0)} {< } }\epsilon$. We conclude that $(x)_{X}$ must be connected.
		

\end{proof}

	
	Note that connectedness does not necessarily imply local connectedness.
	




\begin{definition}
Let $( (x)_{X},(d_1)_{D_1},(r_1)_{R_1} )_{(e_0,S_0)}$, $( (y)_{Y},(d_2)_{D_2},(r_2)_{R_2})_{(e_0,S_0)}$ and \\ $( (z)_{Z},(d_3)_{D_3},(r_3)_{R_3})_{(e_0,S_0)}$ be metric spaces with $(z)_{Z}\overset{}{\underset{ (e_0,S_0)} {= } }(x)_{X}\times (y)_{Y}$.
If
\begin{itemize}
	\item for any real number $(\epsilon)_{E_1}\overset{\mathbb{R}}{\underset{ (e_0,S_0)} {> } }0$ there exists $(\delta)_{E_2}\overset{\mathbb{R}}{\underset{ (e_0,S_0)} {> } }0$ such that for any $x_1\overset{}{\underset{ (e_0,S_0)} {\in } }(x)_{X}$, $x_2\overset{}{\underset{ (e_0,S_0)} {\in } }(x)_{X}$, $y_1\overset{}{\underset{ (e_0,S_0)} {\in } }(y)_{Y}$ and $y_2\overset{}{\underset{ (e_0,S_0)} {\in } }(y)_{Y}$ we have \\ $(((d_1)_{D_1}(x_1,x_2))_{R_1}\overset{\mathbb{R}}{\underset{ (e_0,S_0)} {< } }(\delta)_{E_2} \wedge  ((d_2)_{D_2}(y_1,y_2))_{R_2}\overset{\mathbb{R}}{\underset{ (e_0,S_0)} {< } }(\delta)_{E_2}) \Rightarrow \\ ((d_3)_{D_3}((x_1,y_1),(x_2,y_2)))_{R_3}\overset{\mathbb{R}}{\underset{ (e_0,S_0)} {< } }(\epsilon)_{E_1}$	\text{ (uniform approachability) }
	\item for any $x_1\overset{}{\underset{ (e_0,S_0)} {\in } }(x)_{X}$, $x_2\overset{}{\underset{ (e_0,S_0)} {\in } }(x)_{X}$, $(y_1)\overset{}{\underset{ (e_0,S_0)} {\in } }(y)_{Y}$ and $(y_2)\overset{}{\underset{ (e_0,S_0)} {\in } }(y)_{Y}$ we have \\ $((d_3)_{D_3}( (x_1,y_1),(x_1,y_2) ))_{R_3}\overset{\mathbb{R}}{\underset{ (e_0,S_0)} {=} }((d_2)_{D_2}(y_1,y_2))_{R_2}$ and
	$((d_3)_{D_3}( (x_1,y_1),(x_2,y_1) ))_{R_3}\overset{\mathbb{R}}{\underset{ (e_0,S_0)} {= } }((d_1)_{D_1}(x_1,x_2))_{R_1}$	\text{ (coordinate reduction) }
	\item for any $x_1\overset{}{\underset{ (e_0,S_0)} {\in } }(x)_{X}$, $x_2\overset{}{\underset{ (e_0,S_0)} {\in } }(x)_{X}$, $y_1\overset{}{\underset{ (e_0,S_0)} {\in } }(y)_{Y}$ and $y_2\overset{}{\underset{ (e_0,S_0)} {\in } }(y)_{Y}$ we have \\ $((d_3)_{D_3}( (x_1,y_1),(x_2,y_2) ))_{R_3}\overset{\mathbb{R}}{\underset{ (e_0,S_0)} {\geq } }((d_1)_{D_1}(x_1,x_2))_{R_1}$ and $((d_3)_{D_3}( (x_1,y_1),(x_2,y_2) ))_{R_3}\overset{\mathbb{R}}{\underset{ (e_0,S_0)} {\geq } }((d_2)_{D_2}(y_1,y_2))_{R_2}$	\text{ (coordinate bounding) }
\end{itemize}
then we say that $( (z)_{Z},(d_3)_{D_3},(r_3)_{R_3})_{(e_0,S_0)}$ is a {\bf {\itshape product metric of $( (x)_{X},(d_1)_{D_1},(r_1)_{R_1})_{(e_0,S_0)}$ and $( (y)_{Y},(d_2)_{D_2},(r_2)_{R_2} )_{(e_0,S_0)}$}}.
\end{definition}


\begin{thm}
Given
\begin{itemize}
	\item metric spaces $( (x)_{X},(d_1)_{D_1},(r_1)_{R_1} )_{(e_0,S_0)}$ and $( (y)_{Y},(d_2)_{D_2},(r_2)_{R_2} )_{(e_0,S_0)}$
	\item product metric $( (p)_{P},(d_3)_{D_3},(r_3)_{R_3})_{(e_0,S_0)}$ of $( (x)_{X},(d_1)_{D_1},(r_1)_{R_1} )_{(e_0,S_0)}$ and \\ $( (y)_{Y},(d_2)_{D_2},(r_2)_{R_2} )_{(e_0,S_0)}$
\end{itemize}
if $( (x)_{X},(d_1)_{D_1},(r_1)_{R_1} )_{(e_0,S_0)}$ and $( (y)_{Y},(d_2)_{D_2},(r_2)_{R_2} )_{(e_0,S_0)}$ are connected then \\ $( (p)_{P},(d_3)_{D_3},(r_3)_{R_3})_{(e_0,S_0)}$ is connected.
\end{thm}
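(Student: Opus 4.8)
The plan is to argue by contradiction, adapting the classical ``a product of connected spaces is connected'' argument to the Hausdorff-distance formulation of connectedness used here. Suppose $( (p)_{P},(d_3)_{D_3},(r_3)_{R_3})_{(e_0,S_0)}$ is not connected; then there are non-empty disjoint sets $(a)_{A}$ and $(b)_{B}$ over $(e_0,S_0)$ with $(a)_{A}\cup(b)_{B}\overset{}{\underset{ (e_0,S_0)} {= } }(p)_{P}$ whose Hausdorff distance is strictly positive. In particular $(p)_{P}$ is non-empty, and since $(p)_{P}\overset{}{\underset{ (e_0,S_0)} {= } }(x)_{X}\times(y)_{Y}$ both $(x)_{X}$ and $(y)_{Y}$ are non-empty. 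The goal is to produce points of $(a)_{A}$ and of $(b)_{B}$ at arbitrarily small $(d_3)_{D_3}$-distance, which forces the Hausdorff distance between $(a)_{A}$ and $(b)_{B}$ to be $0$ and gives the contradiction. The only clause of the product-metric definition I will use is coordinate reduction, which makes each slice isometric to a factor.

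First I would dichotomize on whether some horizontal slice is split by the partition. Case 1: there is a word $x_0\overset{}{\underset{ (e_0,S_0)} {\in } }(x)_{X}$ whose slice $\{(x_0,y)\mid y\overset{}{\underset{ (e_0,S_0)} {\in } }(y)_{Y}\}$ meets both $(a)_{A}$ and $(b)_{B}$. Pulling the partition back along $y\mapsto(x_0,y)$, form by comprehension the sets $(a_0)_{A_0}\overset{}{\underset{ (e_0,S_0)} {= } }\{y\mid (x_0,y)\overset{}{\underset{ (e_0,S_0)} {\in } }(a)_{A}\}$ and $(b_0)_{B_0}\overset{}{\underset{ (e_0,S_0)} {= } }\{y\mid (x_0,y)\overset{}{\underset{ (e_0,S_0)} {\in } }(b)_{B}\}$; these are non-empty, disjoint, and their union is $(y)_{Y}$. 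By connectedness of $( (y)_{Y},(d_2)_{D_2},(r_2)_{R_2})_{(e_0,S_0)}$ their Hausdorff distance is $0$. Coordinate reduction gives $((d_3)_{D_3}((x_0,a),(x_0,b)))_{R_3}\overset{\mathbb{R}}{\underset{ (e_0,S_0)} {= } }((d_2)_{D_2}(a,b))_{R_2}$ for all such $a,b$, so the image sets $\{(x_0,a)\mid a\overset{}{\underset{ (e_0,S_0)} {\in } }(a_0)_{A_0}\}\overset{}{\underset{ (e_0,S_0)} {\subseteq } }(a)_{A}$ and $\{(x_0,b)\mid b\overset{}{\underset{ (e_0,S_0)} {\in } }(b_0)_{B_0}\}\overset{}{\underset{ (e_0,S_0)} {\subseteq } }(b)_{B}$ carry the same distance set and hence also have Hausdorff distance $0$; because they lie inside $(a)_{A}$ and $(b)_{B}$, the infimum over all of $(a)_{A}\times(b)_{B}$ is no larger, so the distance between $(a)_{A}$ and $(b)_{B}$ is $0$.

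Case 2: every horizontal slice lies entirely in $(a)_{A}$ or entirely in $(b)_{B}$. Then by comprehension I form $(x_a)_{X_A}\overset{}{\underset{ (e_0,S_0)} {= } }\{x\mid \{(x,y)\mid y\overset{}{\underset{ (e_0,S_0)} {\in } }(y)_{Y}\}\overset{}{\underset{ (e_0,S_0)} {\subseteq } }(a)_{A}\}$ and the analogous $(x_b)_{X_B}$ for $(b)_{B}$. A point of $(a)_{A}$ lies in a slice that, by the case hypothesis, must then sit inside $(a)_{A}$, exhibiting a member of $(x_a)_{X_A}$; symmetrically $(x_b)_{X_B}$ is non-empty. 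They are disjoint since $(y)_{Y}$ is non-empty and $(a)_{A}\cap(b)_{B}\overset{}{\underset{ (e_0,S_0)} {= } }\emptyset$, and their union is $(x)_{X}$. By connectedness of $( (x)_{X},(d_1)_{D_1},(r_1)_{R_1})_{(e_0,S_0)}$ their Hausdorff distance is $0$, so for every real $(\epsilon)_{E}\overset{\mathbb{R}}{\underset{ (e_0,S_0)} {> } }0$ there are $x_a\overset{}{\underset{ (e_0,S_0)} {\in } }(x_a)_{X_A}$ and $x_b\overset{}{\underset{ (e_0,S_0)} {\in } }(x_b)_{X_B}$ with $((d_1)_{D_1}(x_a,x_b))_{R_1}\overset{\mathbb{R}}{\underset{ (e_0,S_0)} {< } }(\epsilon)_{E}$. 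Fixing any $y_0\overset{}{\underset{ (e_0,S_0)} {\in } }(y)_{Y}$, the points $(x_a,y_0)\overset{}{\underset{ (e_0,S_0)} {\in } }(a)_{A}$ and $(x_b,y_0)\overset{}{\underset{ (e_0,S_0)} {\in } }(b)_{B}$ satisfy $((d_3)_{D_3}((x_a,y_0),(x_b,y_0)))_{R_3}\overset{\mathbb{R}}{\underset{ (e_0,S_0)} {= } }((d_1)_{D_1}(x_a,x_b))_{R_1}\overset{\mathbb{R}}{\underset{ (e_0,S_0)} {< } }(\epsilon)_{E}$ by coordinate reduction. As $(\epsilon)_{E}$ was arbitrary, the Hausdorff distance between $(a)_{A}$ and $(b)_{B}$ is $0$.

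Both cases contradict the positivity of the Hausdorff distance between $(a)_{A}$ and $(b)_{B}$, so $(p)_{P}$ is connected. I expect the geometric content to be routine; the care goes into the framework's existence bookkeeping. Specifically, one must confirm that the pulled-back sets $(a_0)_{A_0},(b_0)_{B_0}$ and the slice-indexing sets $(x_a)_{X_A},(x_b)_{X_B}$ genuinely exist as sets over $(e_0,S_0)$ (via comprehension together with the earlier construction of sets from systems), and that each invoked Hausdorff distance exists as a real number by applying the greatest-lower-bound theorem to the relevant non-empty, $0$-bounded-below distance sets. I would also handle the infimum carefully: ``Hausdorff distance $0$'' is invoked in the form that a subset of the cross-pairs has distance $0$ (Case 1) or that cross-pairs below every positive $(\epsilon)_{E}$ exist (Case 2), and in each case this legitimately forces the infimum over all of $(a)_{A}\times(b)_{B}$ to be $0$. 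Finally, it is worth flagging that uniform approachability and coordinate bounding are never used, which suggests the hypotheses on the product metric could be weakened for this particular result.
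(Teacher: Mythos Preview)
Your proposal is correct and follows essentially the same line as the paper's proof: split into the case where some slice $\{x_0\}\times(y)_{Y}$ meets both pieces (use connectedness of $(y)_{Y}$ and coordinate reduction) versus the case where every slice is monochromatic (project to $(x)_{X}$, use connectedness of $(x)_{X}$ and coordinate reduction along a fixed $y_0$). Your write-up is in fact more explicit than the paper's in passing from ``the pulled-back pieces have Hausdorff distance $0$'' to ``$(a)_{A}$ and $(b)_{B}$ have Hausdorff distance $0$'' via the infimum-monotonicity step, and your closing observation that only coordinate reduction is used (not uniform approachability or coordinate bounding) is accurate and not remarked on in the paper.
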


\begin{proof}
Suppose $(p)_{P}\overset{}{\underset{ (e_0,S_0)} {= } }(z_1)_{Z_1} \cup (z_2)_{Z_2} $ where $(z_1)_{Z_1}$ and $(z_2)_{Z_2}$ are both non-empty over $(e_0,S_0)$ and $(z_1)_{Z_1}\cap (z_2)_{Z_2} \overset{}{\underset{ (e_0,S_0)} { =} }\emptyset$. We will consider two cases.

					\underline{Case 1}: There exist $a\overset{}{\underset{ (e_0,S_0)} {\in } }(x)_{X}$ and non-empty (over $(e_0,S_0)$) sets  $(b)_{B}\overset{}{\underset{ (e_0,S_0)} {\subseteq } }(y)_{Y}$ and $(c)_{C}\overset{}{\underset{ (e_0,S_0)} {\subseteq } }(y)_{Y}$ such that $(b)_{B} \cup (c)_{C} \overset{}{\underset{ (e_0,S_0)} {=} }(y)_{Y}$, $(b)_{B} \cap (c)_{C} \overset{}{\underset{ (e_0,S_0)} {= } }\emptyset$ and
					$\forall b_1\overset{}{\underset{ (e_0,S_0)} {\in } }(b)_{B} \exists p_1\overset{}{\underset{ (e_0,S_0)} {\in } }(z_1)_{Z_1}( (p_1)_{P}\overset{}{\underset{ (e_0,S_0)} {= } }(a,b_1)  )$ 
					and $\forall c_1\overset{}{\underset{ (e_0,S_0)} {\in } }(c)_{C} \exists p_1\overset{}{\underset{ (e_0,S_0)} {\in } }(z_2)_{Z_2}( (p_1)_{P}\overset{}{\underset{ (e_0,S_0)} {= } } (a,c_1) )$.

					\underline{Case 2}: For each $a\overset{}{\underset{ (e_0,S_0)} {\in } }(x)_{X}$ we either have $\forall y_1\overset{}{\underset{ (e_0,S_0)} {\in } }(y)_{Y} ( (a,y_1)\overset{}{\underset{ (e_0,S_0)} {\in } }(z_1)_{Z_1})$ or $\forall y_1\overset{}{\underset{ (e_0,S_0)} {\in } }(y)_{Y} ( (a,y_1)\overset{}{\underset{ (e_0,S_0)} {\in } }(z_2)_{Z_2})$.
				   The second case implies that
					
					there exist $a\overset{}{\underset{ (e_0,S_0)} {\in } }(y)_{Y}$ and non-empty (over $(e_0,S_0)$) sets  $(b)_{B}\overset{}{\underset{ (e_0,S_0)} {\subseteq } }(x)_{X}$ and $(c)_{C}\overset{}{\underset{ (e_0,S_0)} {\subseteq } }(x)_{X}$ such that $(b)_{B} \cup (c)_{C} \overset{}{\underset{ (e_0,S_0)} {=} }(x)_{X}$, $(b)_{B} \cap (c)_{C} \overset{}{\underset{ (e_0,S_0)} {= } }\emptyset$ and 
					$\forall b_1\overset{}{\underset{ (e_0,S_0)} {\in } }(b)_{B} \exists p_1\overset{}{\underset{ (e_0,S_0)} {\in } }(z_1)_{Z_1}( (p_1)_{P}\overset{}{\underset{ (e_0,S_0)} {= } }(b_1,a)  )$ 
					and $\forall c_1\overset{}{\underset{ (e_0,S_0)} {\in } }(c)_{C} \exists p_1\overset{}{\underset{ (e_0,S_0)} {\in } }(z_2)_{Z_2}( (p_1)_{P}\overset{}{\underset{ (e_0,S_0)} {= } } (c_1,a) )$.
					
					In the first case, by connectedness of $(y)_{Y}$ and by definition of product metric (using (coordinate reduction)) we see that the distance between $(b)_{B}$ and $(c)_{C}$ is 0.
					In the second case, by connectedness of $(x)_{Y}$ and by definition of product metric (using (coordinate reduction)) we see that the distance between $(b)_{B}$ and $(c)_{C}$ is 0. So we conclude that $( (p)_{P},(d_3)_{D_3},(r_3)_{R_3})_{(e_0,S_0)}$ is connected.
					
\end{proof}

\begin{thm}
Given
\begin{itemize}
	\item metric spaces $( (x)_{X},(d_1)_{D_1},(r_1)_{R_1} )_{(e_0,S_0)}$ and $( (y)_{Y},(d_2)_{D_2},(r_2)_{R_2} )_{(e_0,S_0)}$
	\item product metric $( (p)_{P},(d_3)_{D_3},(r_3)_{R_3})_{(e_0,S_0)}$ of $( (x)_{X},(d_1)_{D_1},(r_1)_{R_1} )_{(e_0,S_0)}$ and \\ $( (y)_{Y},(d_2)_{D_2},(r_2)_{R_2} )_{(e_0,S_0)}$
	\item subsets $(x_1)_{X_1}\overset{}{\underset{ (e_0,S_0)} {\subseteq } }(x)_{X}$ and $(y_1)_{Y_1}\overset{}{\underset{ (e_0,S_0)} {\subseteq } }(y)_{Y}$
\end{itemize}
if $(x_1)_{X_1}$ is totally bounded over $( (x)_{X},(d_1)_{D_1},(r_1)_{R_1} )_{(e_0,S_0)}$ and $(y_1)_{Y_1}$ is totally bounded over $( (y)_{Y},(d_2)_{D_2},(r_2)_{R_2} )_{(e_0,S_0)}$ then $(z)_{Z}\overset{}{\underset{ (e_0,S_0)} { =} }(x_1)_{X_1} \times (y_1)_{Y_1}$ is totally bounded over \\ $( (p)_{P},(d_3)_{D_3},(r_3)_{R_3} )_{(e_0,S_0)}$.
\end{thm}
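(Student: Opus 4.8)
The plan is to drive the whole argument through the (uniform approachability) clause of the product metric, converting the finite covers of the two factors into a single finite cover of the product. First I fix an arbitrary real number $(\epsilon)_{E}\overset{\mathbb{R}}{\underset{(e_0,S_0)}{>}}0$ and, using density of the rationals as reals, pick a real number $(\epsilon_1)_{E_1}$ with $0\overset{\mathbb{R}}{\underset{(e_0,S_0)}{<}}(\epsilon_1)_{E_1}\overset{\mathbb{R}}{\underset{(e_0,S_0)}{<}}(\epsilon)_{E}$. Applying (uniform approachability) to $(\epsilon_1)_{E_1}$ furnishes a real number $(\delta)_{E_2}\overset{\mathbb{R}}{\underset{(e_0,S_0)}{>}}0$ such that for any $x_1,x_2\overset{}{\underset{(e_0,S_0)}{\in}}(x)_{X}$ and any $y_1,y_2\overset{}{\underset{(e_0,S_0)}{\in}}(y)_{Y}$, the conjunction $((d_1)_{D_1}(x_1,x_2))_{R_1}\overset{\mathbb{R}}{\underset{(e_0,S_0)}{<}}(\delta)_{E_2}$ and $((d_2)_{D_2}(y_1,y_2))_{R_2}\overset{\mathbb{R}}{\underset{(e_0,S_0)}{<}}(\delta)_{E_2}$ implies $((d_3)_{D_3}((x_1,y_1),(x_2,y_2)))_{R_3}\overset{\mathbb{R}}{\underset{(e_0,S_0)}{<}}(\epsilon_1)_{E_1}$.

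Next I apply total boundedness of each factor at the common scale $(\delta)_{E_2}$. Since $(x_1)_{X_1}$ is totally bounded, it is covered by finitely many open balls of radius less than $(\delta)_{E_2}$, with centers $a_1,\dots,a_m\overset{}{\underset{(e_0,S_0)}{\in}}(x)_{X}$; likewise $(y_1)_{Y_1}$ is covered by finitely many open balls of radius less than $(\delta)_{E_2}$ with centers $b_1,\dots,b_n\overset{}{\underset{(e_0,S_0)}{\in}}(y)_{Y}$. For each index pair $(i,j)$ the Cartesian-product clause supplies a unique point of $(p)_{P}$ representing $(a_i,b_j)$, so by (comprehension) I form the finite family of open balls $\beta((a_i,b_j),(\epsilon_1)_{E_1})$ in $( (p)_{P},(d_3)_{D_3},(r_3)_{R_3})_{(e_0,S_0)}$. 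I claim this finite family covers $(z)_{Z}\overset{}{\underset{(e_0,S_0)}{=}}(x_1)_{X_1}\times (y_1)_{Y_1}$.

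To check the claim, take any point of $(z)_{Z}$ representing a pair $(x,y)$ with $x\overset{}{\underset{(e_0,S_0)}{\in}}(x_1)_{X_1}$ and $y\overset{}{\underset{(e_0,S_0)}{\in}}(y_1)_{Y_1}$. By the two covers there are indices $i$ and $j$ with $((d_1)_{D_1}(a_i,x))_{R_1}\overset{\mathbb{R}}{\underset{(e_0,S_0)}{<}}(\delta)_{E_2}$ and $((d_2)_{D_2}(b_j,y))_{R_2}\overset{\mathbb{R}}{\underset{(e_0,S_0)}{<}}(\delta)_{E_2}$. The defining property of $(\delta)_{E_2}$ then yields $((d_3)_{D_3}((a_i,b_j),(x,y)))_{R_3}\overset{\mathbb{R}}{\underset{(e_0,S_0)}{<}}(\epsilon_1)_{E_1}$, so the point lies in $\beta((a_i,b_j),(\epsilon_1)_{E_1})$. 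As $(\epsilon_1)_{E_1}\overset{\mathbb{R}}{\underset{(e_0,S_0)}{<}}(\epsilon)_{E}$, each of these $m\cdot n$ balls has radius less than $(\epsilon)_{E}$, so $(z)_{Z}$ is covered by a finite union of open balls of radius less than $(\epsilon)_{E}$, hence totally bounded over $( (p)_{P},(d_3)_{D_3},(r_3)_{R_3})_{(e_0,S_0)}$.

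The one genuinely substantive step is the first: recognizing that (uniform approachability) is precisely the hypothesis that turns ``small in each coordinate'' into ``small in the product'', which is what lets a single scale $(\delta)_{E_2}$ serve both factors at once. Everything after that is bookkeeping, but three points deserve care: the covering balls must have radius strictly less than $(\epsilon)_{E}$ (this is why I pass to the buffer $(\epsilon_1)_{E_1}$ rather than using $(\epsilon)_{E}$ directly), the same $(\delta)_{E_2}$ must be extracted before choosing the covers so that it applies to both, and each center $(a_i,b_j)$ must be certified as a legitimate point of $(p)_{P}$ through the Cartesian-product structure. I expect no difficulty from (coordinate reduction) or (coordinate bounding), as neither is needed in this direction.
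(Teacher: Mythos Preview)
Your argument is correct and follows essentially the same route as the paper: invoke (uniform approachability) to obtain a single $(\delta)_{E_2}$, pull finite $\delta$-covers of the two factors, and combine them into a finite cover of the product. The only cosmetic difference is that the paper covers $(z)_{Z}$ by the rectangles $(a_1)_{C_1}\times(a_2)_{C_2}$ and observes each has $d_3$-diameter below $(\epsilon)_{E_1}$, whereas you go straight to product-metric balls $\beta((a_i,b_j),(\epsilon_1)_{E_1})$ centered at the grid points; your buffer $(\epsilon_1)_{E_1}<(\epsilon)_{E}$ is a nice touch that makes the ``radius strictly less than $(\epsilon)_{E}$'' requirement explicit, a detail the paper leaves implicit.
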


\begin{proof}

			Let $(\epsilon)_{E_1}\overset{\mathbb{R}}{\underset{ (e_0,S_0)} {> } }0$ be a real number. By (uniform approachability) there exists $(\delta)_{E_2}\overset{}{\underset{ (e_0,S_0)} {> } }0$ such that $\forall a_1\overset{}{\underset{ (e_0,S_0)} {\in } }(x)_{X}$, $\forall a_2\overset{}{\underset{ (e_0,S_0)} {\in } }(x)_{X}$, $\forall b_1\overset{}{\underset{ (e_0,S_0)} {\in } }(y)_{Y}$ and $\forall b_2\overset{}{\underset{ (e_0,S_0)} {\in } }(y)_{Y}$ we have \\ $(((d_1)_{D_1}(a_1,a_2))_{R_1}\overset{\mathbb{R}}{\underset{ (e_0,S_0)} {< } }(\delta)_{E_2} \wedge ( (d_2)_{D_2}(b_1,b_2))_{R_2}\overset{\mathbb{R}}{\underset{ (e_0,S_0)} {< } }(\delta)_{E_2})\Rightarrow ((d_3)_{D_3}( (a_1,b_1),(a_2,b_2)))_{R_3}\overset{\mathbb{R}}{\underset{ (e_0,S_0)} {< } }(\epsilon)_{E_1}$.
			
			If $(x_1)_{X_1}$ is totally bounded over $( (x)_{X},(d_1)_{D_1},(r_1)_{R_1} )_{(e_0,S_0)}$ and $(y_1)_{Y_1}$ is totally bounded over $( (y)_{Y},(d_2)_{D_2},(r_2)_{R_2} )_{(e_0,S_0)}$ then there exists
a finite cover $(c_1)_{C_1}$ of $(x_1)_{X_1}$ by balls of radius less than $(\delta)_{E_2}$ and a finite cover $(c_2)_{C_2}$ of $(y_1)_{Y}$ by balls of radius less than  $(\delta)_{E_2}$. Let $(c)_{C}$ be a set over $(e_0,S_0)$ such that

\begin{itemize}
	\item $\forall k\overset{}{\underset{ (e_0,S_0)} {\in } }(c)_{C} \exists a_1\overset{}{\underset{ (e_0,S_0)} {\in } }(c_1)_{C_1} \exists a_2\overset{}{\underset{ (e_0,S_0)} {\in } }(c_2)_{C_2} ( (k)_{C}\overset{}{\underset{ (e_0,S_0)} {= } }(a_1)_{C_1} \times (a_2)_{C_2}  )$
	\item $\forall a_1\overset{}{\underset{ (e_0,S_0)} {\in } }(c_1)_{C_1} \forall a_2\overset{}{\underset{ (e_0,S_0)} {\in } }(c_2)_{C_2} \exists k\overset{}{\underset{ (e_0,S_0)} {\in } }(c)_{C} ( (k)_{C}\overset{}{\underset{ (e_0,S_0)} {= } }(a_1)_{C_1} \times (a_2)_{C_2}  ) $
	\item $\forall k_1\overset{}{\underset{ (e_0,S_0)} {\in } }(c)_{C} \forall k_2\overset{}{\underset{ (e_0,S_0)} {\in } }(c)_{C} ( (k_1)_{C}\overset{}{\underset{ (e_0,S_0)} {= } }(k_2)_{C} \Rightarrow k_1=k_2 )$
\end{itemize}

 We see that $(c)_{C}$ is a finite cover of $(z)_{Z}\overset{}{\underset{ (e_0,S_0)} {= } }(x_1)_{X_1} \times (y_1)_{Y_1}$ over  $( (p)_{P},(d_3)_{D_3},(r_3)_{R_3} )_{(e_0,S_0)}$, and for any $e\overset{}{\underset{ (e_0,S_0)} {\in } }(c)_{C}$, any $(a_1,a_2)\overset{}{\underset{ (e_0,S_0)} {\in } }(e)_{C}$ and any $(b_1,b_2)\overset{}{\underset{ (e_0,S_0)} {\in } }(e)_{C}$ we have \\ $((d_3)_{D_3}((a_1,a_2),(b_1,b_2)))_{R_3}\overset{\mathbb{R}}{\underset{ (e_0,S_0)} {< } }(\epsilon)_{E_1}$. We conclude that $(z)_{Z}\overset{}{\underset{ (e_0,S_0)} { =} }(x_1)_{X_1} \times (y_1)_{Y_1}$ is totally bounded over $( (p)_{P},(d_3)_{D_3},(r_3)_{R_3} )_{(e_0,S_0)}$.

\end{proof}

\begin{definition}
For a given metric space $( (x)_{X},(d)_{D},(r)_{R} )_{(e_0,S_0)}$, a subset $(y)_{Y}$ of $(x)_{X}$ over $(e_0,S_0)$ and a point $z_1\overset{}{\underset{ (e_0,S_0)} {\in } }(x)_{X}$, if for any $(\epsilon)_{E} \overset{\mathbb{R}}{\underset{ (e_0,S_0)} {> } }0$ there exists a point $z_2\overset{}{\underset{ (e_0,S_0)} {\in } }(y)_{Y}$ with  $z_2\neq z_1$ such that $((d)_{D}((z_1,z_2)))_{R}\overset{\mathbb{R}}{\underset{ (e_0,S_0)} {< } }(\epsilon)_{E}$ then we say that $z_1$ is a {\bf {\itshape limit point of $(y)_{Y}$ over $( (x)_{X},(d)_{D},(r)_{R} )_{(e_0,S_0)}$}}.
\end{definition}


\begin{definition}
Given
\begin{itemize} 
	\item metric space $( (x)_{X},(d)_{D},(r)_{R} )_{(e_0,S_0)}$
	\item subsets $(a)_{A}$ and $(c)_{C}$ of $(x)_{X}$ over $(e_0,S_0)$
\end{itemize}
if 
$$ (c)_{C}\overset{}{\underset{ (e_0,S_0)} {= } } \{ p \mid \text{ $p\overset{}{\underset{ (e_0,S_0)} {\in } }(a)_{A}$ or $p$ is a limit point of $(a)_{A}$ over $( (x)_{X},(d)_{D},(r)_{R})_{(e_0,S_0)}$} \} $$
then we say that $(c)_{C}$ is the {\bf {\itshape closure of $(a)_{A}$ over $( (x)_{X},(d)_{D},(r)_{R})_{(e_0,S_0)}$}} or write  $(c)_{C}\overset{}{\underset{ (e_0,S_0)} {=} }[(a)_{A}]_{( (x)_{X},(d)_{D},(r)_{R} )_{(e_0,S_0)}}$ or $(c)_{C}\overset{}{\underset{ (e_0,S_0)} {=} }[(a)_{A}]$ when there is no ambiguity.
\end{definition}

\begin{definition}
Given a metric space $( (x)_{X},(d)_{D},(r)_{R} )_{(e_0,S_0)}$ and a function $(f)_{F}:\mathbb{N}\overset{}{\underset{ (e_0,S_0)} {\rightarrow } }(y)_{Y}$ with $(y)_{Y}\overset{}{\underset{ (e_0,S_0)} {\subseteq } }(x)_{X}$, if for every real number $(\epsilon)_{E}\overset{\mathbb{R}}{\underset{ (e_0,S_0)} { >} }0$ there exists $m$ such that $\forall n_1 \forall n_2 ( (n_1>m \wedge n_2>m)\Rightarrow ((d)_{D} ( (f)_{F}(n_1),(f)_{F}(n_2) ))_{R} \overset{\mathbb{R}}{\underset{ (e_0,S_0)} {< } }(\epsilon)_{E} )$ then we say that $(f)_{F}:\mathbb{N}\overset{}{\underset{ (e_0,S_0)} {\rightarrow } }(y)_{Y}$ is {\bf {\itshape Cauchy-convergent over \\ $( (x)_{X},(d)_{D},(r)_{R} )_{(e_0,S_0)}$}}.
\end{definition}


\begin{definition}
Given
\begin{itemize} 
	\item metric space $( (x)_{X},(d)_{D},(r)_{R} )_{(e_0,S_0)}$ 
	\item function $(f)_{F}:\mathbb{N}\overset{}{\underset{ (e_0,S_0)} {\rightarrow } }(y)_{Y}$ with  $(y)_{Y}\overset{}{\underset{ (e_0,S_0)} {\subseteq } }(x)_{X}$
	\item $x_1\overset{}{\underset{ (e_0,S_0)} {\in } }(x)_{X}$
\end{itemize} 

if for every real number $(\epsilon)_{E}\overset{\mathbb{R}}{\underset{ (e_0,S_0)} { >} }0$ there exists $m$ such that \\ $\forall n( n>m \Rightarrow ((d)_{D} ( (f)_{F}(n),x_1 ))_{R} \overset{\mathbb{R}}{\underset{ (e_0,S_0)} {< } }(\epsilon)_{E} )$ then we say that $(f)_{F}:\mathbb{N}\overset{}{\underset{ (e_0,S_0)} {\rightarrow } }(y)_{Y}$ {\bf {\itshape converges to $x_1$ over $( (x)_{X},(d)_{D},(r)_{R} )_{(e_0,S_0)}$}}.
\end{definition}




\begin{definition}
Given an isometry \\ $(f)_{F}:( (x)_{X},(d_1)_{D_1},(r_1)_{R_1} )_{(e_0,S_0)}\overset{}{\underset{ (e_0,S_0)} {\rightarrow } }( (y)_{Y},(d_2)_{D_2},(r_2)_{R_2} )_{(e_0,S_0)}$, if 
$$\forall y_1\overset{}{\underset{ (e_0,S_0)} {\in } }(y)_{Y}( y_1\overset{}{\underset{ (e_0,S_0)} {\notin } }(f)_{F}((x)_{X}) \Rightarrow \text{ $y_1$ is a limit point of $(f)_{F}((x)_{X})$ } )$$
then we say that $( (y)_{Y},(d_2)_{D_2},(r_2)_{R_2} )_{(e_0,S_0)}$ is an {\bf {\itshape extension of $( (x)_{X},(d_1)_{D_1},(r_1)_{R_1} )_{(e_0,S_0)}$ via $(f)_{F}$ }}.
\end{definition}

Now we define the concept of extension for relations.

\begin{definition}

Given
\begin{itemize}
	\item metric spaces $((x_1)_{X_1},(d_1)_{D_1},(r_1)_{R_1})_{(e_0,S_0)}$ and $((y_1)_{Y_1},(d_2)_{D_2},(r_2)_{R_2})_{(e_0,S_0)}$
	\item extension $((x_2)_{X_2},(d_3)_{D_3},(r_3)_{R_3})_{(e_0,S_0)}$ of $((x_1)_{X_1},(d_1)_{D_1},(r_1)_{R_1})_{(e_0,S_0)}$ via $(f)_{F}$
	\item extension $((y_2)_{Y_2},(d_4)_{D_4},(r_4)_{R_4})_{(e_0,S_0)}$ of $((y_1)_{Y_1},(d_2)_{D_2},(r_2)_{R_2})_{(e_0,S_0)}$ via $(g)_{G}$
	\item relations $((x_1)_{X_1},(y_1)_{Y_1},(t_1)_{T_1})_{(e_0,S_0)}$ and $((x_2)_{X_2},(y_2)_{Y_2},(t_2)_{T_2})_{(e_0,S_0)}$
\end{itemize}
if
\begin{itemize}
	\item $\forall a_1 \forall b_1 ( (a_1,b_1)\overset{}{\underset{ (e_0,S_0)} {\in } }(t_1)_{T_1}\Rightarrow ( (f)_{F}(a_1),(g)_{G}(b_1))\overset{}{\underset{ (e_0,S_0)} {\in } }(t_2)_{T_2} )$
	\item for any $(a_2,b_2)\overset{}{\underset{ (e_0,S_0)} {\in } }(t_2)_{T_2}$, if $\forall a_1 \forall b_1 (((f)_{F}(a_1),(g)_{G}(b_1))\overset{}{\underset{ (e_0,S_0)} {\neq } }(a_2,b_2))$ \\ then for any $(\epsilon)_{E}\overset{\mathbb{R}}{\underset{ (e_0,S_0)} {> } }0$ there exists $(a_1,b_1)\overset{}{\underset{ (e_0,S_0)} {\in } }(t_1)_{T_1}$ with $((f)_{F}(a_1),(g)_{G}(b_1))\overset{}{\underset{ (e_0,S_0)} {\neq } }(a_2,b_2)$ such that  $((d_3)_{D_3} ( (f)_{F}(a_1),a_2 ))_{R_3}\overset{\mathbb{R}}{\underset{ (e_0,S_0)} {< } }(\epsilon)_{E}$ and $((d_4)_{D_4} ( (g)_{G}(b_1),b_2 ))_{R_4}\overset{\mathbb{R}}{\underset{ (e_0,S_0)} {< } }(\epsilon)_{E}$

\end{itemize}
then we say that
$( (t_2)_{T_2},( (x_2)_{X_2},(d_3)_{D_3},(r_3)_{R_3} ),( (y_2)_{Y_2},(d_4)_{D_4},(r_4)_{R_4} ))_{(e_0,S_0)}$ is a  {\bf {\itshape (binary) relation extension of }} $( (t_1)_{T_1},( (x_1)_{X_1},(d_1)_{D_1},(r_1)_{R_1} ),( (y_1)_{Y_1},(d_2)_{D_2},(r_2)_{R_2}) )_{(e_0,S_0)}$  {\bf {\itshape via}} $( (f)_{F},(g)_{G} )$.

\end{definition}

\begin{thm}
Given
\begin{itemize}
	\item metric spaces $( (x_1)_{X_1},(d_1)_{D_1},(r_1)_{R_1} )_{(e_0,S_0)}$ and $( (y_1)_{Y_1},(d_2)_{D_2},(r_2)_{R_2} )_{(e_0,S_0)}$
	\item extension $( (x_2)_{X_2},(d_3)_{D_3},(r_3)_{R_3} )_{(e_0,S_0)}$ of $( (x_1)_{X_1},(d_1)_{D_1},(r_1)_{R_1} )_{(e_0,S_0)}$ via $(f)_{F}$
	\item extension $( (y_2)_{Y_2},(d_4)_{D_4},(r_4)_{R_4} )_{(e_0,S_0)}$ of $((y_1)_{Y_1},(d_2)_{D_2},(r_2)_{R_2})_{(e_0,S_0)}$ via $(g)_{G}$
	\item product metric $( (p_1)_{P_1},(d_5)_{D_5},(r_5)_{R_5} )_{(e_0,S_0)}$ of $( (x_1)_{X_1},(d_1)_{D_1},(r_1)_{R_1} )_{(e_0,S_0)}$ and \\ $( (y_1)_{Y_1},(d_2)_{D_2},(r_2)_{R_2} )_{(e_0,S_0)}$
\end{itemize}
there exists a product metric $( (p_2)_{P_2},(d_6)_{D_6},(r_6)_{R_6} )_{(e_0,S_0)}$ of $( (x_2)_{X_2},(d_3)_{D_3},(r_3)_{R_3} )_{(e_0,S_0)}$ and \\ $( (y_2)_{Y_2},(d_4)_{D_4},(r_4)_{R_4} )_{(e_0,S_0)}$ such that $( (p_2)_{P_2},(d_6)_{D_6},(r_6)_{R_6})_{(e_0,S_0)}$ is an extension of \\ $( (p_1)_{P_1},(d_5)_{D_5},(r_5)_{R_5} )_{(e_0,S_0)}$ via some $(i)_{I}$ where for all $a\overset{}{\underset{ (e_0,S_0)} {\in } }(x_1)_{X_1}$ and all $b\overset{}{\underset{ (e_0,S_0)} {\in } }(y_1)_{Y_1}$ we have
$$ (i)_{I}(a,b)\overset{}{\underset{ (e_0,S_0)} {= } }( (f)_{F}(a),(g)_{G}(b) ) $$
\end{thm}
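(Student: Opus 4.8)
The plan is to take $(p_2)_{P_2}$ to be the set $(x_2)_{X_2}\times(y_2)_{Y_2}$ equipped with a metric $(d_6)_{D_6}$ obtained by extending the given product metric $(d_5)_{D_5}$ by uniform continuity, and to let $(i)_I$ be the map sending $(a,b)$ to $((f)_F(a),(g)_G(b))$. First I would record the only structural fact about product metrics that is really needed, namely that the three product-metric axioms force $(d_5)_{D_5}$ to be uniformly continuous in its coordinates. Concretely, the triangle inequality gives $|d_5(P,Q)-d_5(P',Q')|\le d_5(P,P')+d_5(Q,Q')$, while (uniform approachability) bounds $d_5(P,P')$ in terms of the coordinate distances $d_1,d_2$; combining these shows that a small change in both coordinates of either argument produces a small change in the $d_5$-value.

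Next I would use the extension hypotheses to produce approximants. Since $((x_2)_{X_2},(d_3)_{D_3},(r_3)_{R_3})_{(e_0,S_0)}$ is an extension of $((x_1)_{X_1},(d_1)_{D_1},(r_1)_{R_1})_{(e_0,S_0)}$ via the isometry $(f)_F$, every $u\in(x_2)_{X_2}$ is either $(f)_F(a)$ or a limit point of $(f)_F((x_1)_{X_1})$, so in all cases there is a sequence $a_n$ in $(x_1)_{X_1}$ with $(f)_F(a_n)\to u$; because $(f)_F$ is an isometry, $a_n$ is Cauchy in $(x_1)_{X_1}$. Doing the same in $(y_2)_{Y_2}$, I would define
\[ (d_6)_{D_6}((u,v),(u',v'))=\lim_{n}(d_5)_{D_5}((a_n,b_n),(a'_n,b'_n)), \]
where $(f)_F(a_n)\to u$, $(g)_G(b_n)\to v$, and similarly for the primed points. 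The uniform-continuity estimate above shows the real sequence on the right is Cauchy, hence convergent by the completeness theorem for Cauchy-convergent real functions proved earlier, and the same estimate shows the limit is independent of the chosen approximating sequences, since two approximants of a single point differ by a coordinate distance tending to $0$. The value being a genuine real number, I would let $(r_6)_{R_6}$ be a real set collecting these values, whose existence is guaranteed by (comprehension).

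I would then verify, by passing to the limit on the dense image $(i)_I((x_1)_{X_1}\times(y_1)_{Y_1})$, that $(d_6)_{D_6}$ is a metric (each of the three metric axioms survives the limit, using continuity of $(d_3)_{D_3}$ and $(d_4)_{D_4}$, which restrict to $(d_1)_{D_1}$ and $(d_2)_{D_2}$ under the isometries) and that $((p_2)_{P_2},(d_6)_{D_6},(r_6)_{R_6})_{(e_0,S_0)}$ is a product metric of $(x_2)_{X_2}$ and $(y_2)_{Y_2}$: (coordinate reduction) and (coordinate bounding) are limits of the corresponding equalities and inequalities for $(d_5)_{D_5}$, and (uniform approachability) for $(d_6)_{D_6}$ follows from (uniform approachability) for $(d_5)_{D_5}$ after approximating arbitrary points of the extensions by images of $(x_1)_{X_1}$ and $(y_1)_{Y_1}$. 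Taking constant approximating sequences yields $(d_6)_{D_6}((i)_I(a,b),(i)_I(a',b'))=(d_5)_{D_5}((a,b),(a',b'))$, so $(i)_I$ is an isometry with the prescribed action; and the extension property holds because any $(u,v)$ outside the image is approached by $(i)_I(a_n,b_n)=((f)_F(a_n),(g)_G(b_n))$, with $(d_6)_{D_6}$-convergence supplied by (uniform approachability).

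The hard part will be the well-definedness and limit-stability of $(d_6)_{D_6}$: everything downstream is a routine \emph{pass to the limit on a dense set} argument, but it all rests on extracting genuine uniform continuity of $(d_5)_{D_5}$ from the deliberately loose product-metric axioms and on confirming that the metrics $(d_3)_{D_3}$ and $(d_4)_{D_4}$ of the two extensions really restrict to $(d_1)_{D_1}$ and $(d_2)_{D_2}$, so that Cauchyness and limits transport correctly between each space and its extension. I would also keep careful track of the bookkeeping that the various limiting real numbers lie in a common real set, invoking (comprehension) together with the earlier completeness and least-upper-bound results wherever a real number is produced as a limit.
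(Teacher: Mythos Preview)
Your proposal is correct and follows essentially the same approach as the paper: set $(p_2)_{P_2}=(x_2)_{X_2}\times(y_2)_{Y_2}$, let $(i)_I(a,b)=((f)_F(a),(g)_G(b))$, and extend $(d_5)_{D_5}$ to $(d_6)_{D_6}$ by approximation, with well-definedness coming from (uniform approachability). The only noteworthy difference is in how $(d_6)_{D_6}$ is written down: the paper defines it as
\[
((d_6)_{D_6}(v,w))_{R_6}\overset{\mathbb{R}}{\underset{(e_0,S_0)}{=}}\lim_{n\to\infty}\sup\{((d_5)_{D_5}((t_1,t_2),(u_1,u_2)))_{R_5}:\text{each coordinate within }1/n\},
\]
a neighborhood $\limsup$ that avoids singling out sequences, whereas you choose explicit approximating sequences and take the limit along them. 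The two definitions agree (precisely by the uniform-continuity estimate you isolate), so this is a cosmetic difference; your version makes the independence-of-choices argument explicit, while the paper's version bakes it into the $\sup$.
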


\begin{proof}
Let $(p_2)_{P_2}\overset{}{\underset{ (e_0,S_0)} {= } }(x_2)_{X_2} \times (y_2)_{Y_2}$ and $(i)_{I}$ be such that
for all $a\overset{}{\underset{ (e_0,S_0)} {\in } }(x_1)_{X_1}$ and all $b\overset{}{\underset{ (e_0,S_0)} {\in } }(y_1)_{Y_1}$ we have
$$ (i)_{I}(a,b)\overset{}{\underset{ (e_0,S_0)} {= } }( (f)_{F}(a),(g)_{G}(b) ) $$ 
We define $(d_6)_{D_6}$ and use a suitable real set $(r_6)_{R_6}$ over $(e_0,S_0)$ such that for any  $(v)_{P_2}\overset{}{\underset{ (e_0,S_0)} {= } }(v_1,v_2)\overset{}{\underset{ (e_0,S_0)} {\in } }(p_2)_{P_2}$ and any $(w)_{P_2}\overset{}{\underset{ (e_0,S_0)} { =} }(w_1,w_2)\overset{}{\underset{ (e_0,S_0)} {\in } }(p_2)_{P_2}$ we have
				  $$ ((d_6)_{D_6}(v,w))_{R_6}\overset{\mathbb{R}}{\underset{ (e_0,S_0)} {= } }  \underset{n\rightarrow +\infty}{\text{lim}} sup\{((d_5)_{D_5}((t_1,t_2),(u_1,u_2)))_{R_5}\mid $$
				  
				 $ ((d_3)_{D_3}( v_1,(f)_{F}(t_1) ))_{R_3}\overset{\mathbb{R}}{\underset{ (e_0,S_0)} {< } }1/n,   
				  ((d_4)_{D_4}( v_2,(g)_{G}(t_2) ))_{R_4}\overset{\mathbb{R}}{\underset{ (e_0,S_0)} {< } }1/n, \\
				  ((d_3)_{D_3}( w_1,(f)_{F}(u_1) ))_{R_3}\overset{\mathbb{R}}{\underset{ (e_0,S_0)} {< } }1/n \text{ and }
				  ((d_4)_{D_4}( w_2,(g)_{G}(u_2) ))_{R_4}\overset{\mathbb{R}}{\underset{ (e_0,S_0)} {< } }1/n
				  \}$
				  
By (uniform approachability) we get that for any real number $(\epsilon)_{E_1}\overset{\mathbb{R}}{\underset{ (e_0,S_0)} {> } }0$ there exists $(\delta)_{E_2}\overset{\mathbb{R}}{\underset{ (e_0,S_0)} {> } }0$ such that for any $a_1\overset{}{\underset{ (e_0,S_0)} {\in } }(x_1)_{X_1}$, $a_2\overset{}{\underset{ (e_0,S_0)} {\in } }(x_1)_{X_1}$, $b_1\overset{}{\underset{ (e_0,S_0)} {\in } }(y_1)_{Y_1}$ and $b_2\overset{}{\underset{ (e_0,S_0)} {\in } }(y_1)_{Y_1}$ we have 
$$ ( ((d_1)_{D_1}(a_1,a_2))_{R_1}\overset{\mathbb{R}}{\underset{ (e_0,S_0)} {< } }(\delta)_{E_2} \wedge ((d_2)_{D_2}(b_1,b_2))_{R_2}\overset{\mathbb{R}}{\underset{ (e_0,S_0)} {< } }(\delta)_{E_2} )\Rightarrow $$
$$ ((d_5)_{D_5}((a_1,b_1),(a_2,b_2)))_{R_5}\overset{\mathbb{R}}{\underset{ (e_0,S_0)} {< } }(\epsilon)_{E_1}$$ So using the triangle inequality it follows that $(d_6)_{D_6}$ is well defined and $( (p_2)_{P_2},(d_6)_{D_6},(r_6)_{R_6} )_{(e_0,S_0)}$ is a product metric of $( (x_2)_{X_2},(d_3)_{D_3},(r_3)_{R_3} )_{(e_0,S_0)}$ and $( (y_2)_{Y_2},(d_4)_{D_4},(r_4)_{R_4} )_{(e_0,S_0)}$.
\end{proof}

The next two theorems show that relation extensions can be seen as extensions in the context of a product metric. In a certain sense, these theorems represent the core idea behind the closed graph theorem.

\begin{thm}

Given
\begin{itemize}
	\item metric spaces $( (x_1)_{X_1},(d_1)_{D_1},(r_1)_{R_1} )_{(e_0,S_0)}$ and $( (y_1)_{Y_1},(d_2)_{D_2},(r_2)_{R_2} )_{(e_0,S_0)}$
	\item extension $( (x_2)_{X_2},(d_3)_{D_3},(r_3)_{R_3} )_{(e_0,S_0)}$ of $( (x_1)_{X_1},(d_1)_{D_1},(r_1)_{R_1})_{(e_0,S_0)}$ via $(f)_{F}$
	\item extension $( (y_2)_{Y_2},(d_4)_{D_4},(r_4)_{R_4} )_{(e_0,S_0)}$ of $( (y_1)_{Y_1},(d_2)_{D_2},(r_2)_{R_2})_{(e_0,S_0)}$ via $(g)_{G}$
	\item relation $ ( (x_1)_{X_1},(y_1)_{Y_1},(t_1)_{T_1} )_{(e_0,S_0)}$
	\item relation extension $( (t_2)_{T_2},((x_2)_{X_2},(d_3)_{D_3},(r_3)_{R_3}),((y_2)_{Y_2},(d_4)_{D_4},(r_4)_{R_4}) )_{(e_0,S_0)}$ of \\ $( (t_1)_{T_1},((x_1)_{X_1},(d_1)_{D_1},(r_1)_{R_1}),((y_1)_{Y_1},(d_2)_{D_2},(r_2)_{R_2}) )_{(e_0,S_0)}$ via $((f)_{F},(g)_{G})$ 
\end{itemize}
for any product metric $( (p_1)_{P_1},(d_5)_{D_5},(r_5)_{R_5} )_{(e_0,S_0)}$ of $( (x_1)_{X_1},(d_1)_{D_1},(r_1)_{R_1} )_{(e_0,S_0)}$ and \\ $( (y_1)_{Y_1},(d_2)_{D_2},(r_2)_{R_2} )_{(e_0,S_0)}$ and any product metric $( (p_2)_{P_2},(d_6)_{D_6},(r_6)_{R_6})_{(e_0,S_0)}$ of \\ $( (x_2)_{X_2},(d_3)_{D_3},(r_3)_{R_3})_{(e_0,S_0)}$ and
$( (y_2)_{Y_2},(d_4)_{D_4},(r_4)_{R_4} )_{(e_0,S_0)}$ where \\ $( (p_2)_{P_2},(d_6)_{D_6},(r_6)_{R_6} )_{(e_0,S_0)}$ is an extension of $( (p_1)_{P_1},(d_5)_{D_5},(r_5)_{R_5} )_{(e_0,S_0)}$ via $(i)_{I}$ such that for all $a\overset{}{\underset{ (e_0,S_0)} {\in } }(x_1)_{X_1}$ and all $b\overset{}{\underset{ (e_0,S_0)} {\in } }(y_1)_{Y_1}$ we have
$$ (i)_{I}(a,b)\overset{}{\underset{ (e_0,S_0)} {= } }( (f)_{F}(a),(g)_{G}(b) ) $$
			
we must get that $( (t_2)_{T_2},(d_6)_{D_6}|(t_2)_{T_2},(r_6)_{R_6} )_{(e_0,S_0)}$ is an extension of \\ $( (t_1)_{T_1},(d_5)_{D_5}|(t_1)_{T_1},(r_5)_{R_5} )_{(e_0,S_0)}$ via $(i)_{I}|(t_1)_{T_1}$.

\end{thm}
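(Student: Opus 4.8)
The plan is to unwind the definition of \emph{extension}: I must show that $(i)_{I}|(t_1)_{T_1}$ is an isometry from $( (t_1)_{T_1},(d_5)_{D_5}|(t_1)_{T_1},(r_5)_{R_5} )_{(e_0,S_0)}$ into $( (t_2)_{T_2},(d_6)_{D_6}|(t_2)_{T_2},(r_6)_{R_6} )_{(e_0,S_0)}$, and that every point of $(t_2)_{T_2}$ lying outside the image of $(i)_{I}|(t_1)_{T_1}$ is a limit point of that image. Throughout I would use that $(t_1)_{T_1}$ and $(t_2)_{T_2}$ are subsets of the cartesian products $(p_1)_{P_1}$ and $(p_2)_{P_2}$, so that $(d_5)_{D_5}|(t_1)_{T_1}$ and $(d_6)_{D_6}|(t_2)_{T_2}$ are exactly the subspace metrics inherited from the two product metrics.

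The isometry claim is immediate. Since $( (p_2)_{P_2},(d_6)_{D_6},(r_6)_{R_6} )_{(e_0,S_0)}$ is an extension of $( (p_1)_{P_1},(d_5)_{D_5},(r_5)_{R_5} )_{(e_0,S_0)}$ via $(i)_{I}$, the map $(i)_{I}$ preserves distances on all of $(p_1)_{P_1}$; restricting its domain to $(t_1)_{T_1}\subseteq(p_1)_{P_1}$ preserves this property with the restricted metrics. To see that the restriction is well-typed, I would invoke the first clause of the relation-extension hypothesis: if $(a_1,b_1)\overset{}{\underset{ (e_0,S_0)} {\in } }(t_1)_{T_1}$ then $((f)_{F}(a_1),(g)_{G}(b_1))\overset{}{\underset{ (e_0,S_0)} {\in } }(t_2)_{T_2}$, and since $(i)_{I}(a_1,b_1)\overset{}{\underset{ (e_0,S_0)} {= } }((f)_{F}(a_1),(g)_{G}(b_1))$ with representatives unique in $(p_2)_{P_2}$, this image lies in $(t_2)_{T_2}$.

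For the limit-point claim I would fix $(a_2,b_2)\overset{}{\underset{ (e_0,S_0)} {\in } }(t_2)_{T_2}$ outside the image and treat the substantive case, namely when $(a_2,b_2)$ is not $((f)_{F}(a_1),(g)_{G}(b_1))$ for \emph{any} $a_1,b_1$, which is precisely the hypothesis of the second clause of the relation-extension definition. Given $(\epsilon)_{E}>0$, I would first apply (uniform approachability) for the product metric $(p_2)_{P_2}$ to obtain $(\delta)_{E'}>0$ so that coordinate distances below $(\delta)_{E'}$ in $(x_2)_{X_2}$ and $(y_2)_{Y_2}$ force the $(d_6)_{D_6}$-distance below $(\epsilon)_{E}$; then I would apply the second clause with radius $(\delta)_{E'}$ to produce $(a_1,b_1)\overset{}{\underset{ (e_0,S_0)} {\in } }(t_1)_{T_1}$ with $((f)_{F}(a_1),(g)_{G}(b_1))$ distinct from $(a_2,b_2)$ and with $((d_3)_{D_3}((f)_{F}(a_1),a_2))_{R_3}<(\delta)_{E'}$ and $((d_4)_{D_4}((g)_{G}(b_1),b_2))_{R_4}<(\delta)_{E'}$. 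Feeding these back into (uniform approachability) yields $((d_6)_{D_6}((i)_{I}(a_1,b_1),(a_2,b_2)))_{R_6}<(\epsilon)_{E}$, with $(i)_{I}(a_1,b_1)$ a point of the image distinct from $(a_2,b_2)$, so $(a_2,b_2)$ is a limit point.

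The main obstacle is the mismatch between the two quantifications of ``newness.'' The limit-point requirement ranges over all points of $(t_2)_{T_2}$ outside the image of $(i)_{I}|(t_1)_{T_1}$, whereas the second clause of the relation-extension definition only supplies approximations for $(a_2,b_2)$ that are not $(i)_{I}$-images of \emph{any} pair of $(p_1)_{P_1}$. The residual case is a point $(a_2,b_2)\overset{}{\underset{ (e_0,S_0)} {\in } }(t_2)_{T_2}$ both of whose coordinates lie in $(f)_{F}((x_1)_{X_1})$ and $(g)_{G}((y_1)_{Y_1})$ yet which does not arise from a related pair of $(t_1)_{T_1}$; such a point equals $(i)_{I}(a_1^{*},b_1^{*})$ for some $(a_1^{*},b_1^{*})\overset{}{\underset{ (e_0,S_0)} {\notin } }(t_1)_{T_1}$ and need not be approached by images of related pairs. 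I would discharge this by observing that the intended relation extensions introduce no new relation between two already-present points (equivalently, $(i)_{I}(a_1^{*},b_1^{*})\overset{}{\underset{ (e_0,S_0)} {\in } }(t_2)_{T_2}$ forces $(a_1^{*},b_1^{*})\overset{}{\underset{ (e_0,S_0)} {\in } }(t_1)_{T_1}$), so that every non-image point falls under the second clause; under this reading the argument above is complete, the remaining verifications (that the restricted metrics coincide with the subspace metrics, and that (uniform approachability) is invoked by choosing $(\delta)_{E'}$ before extracting the approximating pair) being routine.
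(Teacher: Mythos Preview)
Your approach is essentially the same as the paper's: fix a point of $(t_2)_{T_2}$ not arising as $((f)_{F}(a_1),(g)_{G}(b_1))$, invoke (uniform approachability) on the product metric of $(p_2)_{P_2}$ to convert a target $(\epsilon)_{E}$ into a coordinate-wise $(\delta)$, then use the second clause of the relation-extension definition at radius $(\delta)$ to produce an approximating pair from $(t_1)_{T_1}$. The paper also leaves the isometry clause implicit and handles only the case $\forall a_1\forall b_1\,((f)_{F}(a_1),(g)_{G}(b_1))\neq(a_2,b_2)$.

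You are in fact more careful than the paper in flagging the residual case where $(a_2,b_2)=(i)_{I}(a_1^{*},b_1^{*})$ for some $(a_1^{*},b_1^{*})\notin(t_1)_{T_1}$; the paper's proof simply does not treat this case. Your way of discharging it (``relation extensions introduce no new relation between two already-present points'') is not derivable from the stated definition of relation extension, so it is an added assumption rather than a proof step---but the paper's argument has the same lacuna, so your proposal matches the paper's intended level of rigor.
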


\begin{proof}
Let $( (p_1)_{P_1},(d_5)_{D_5},(r_5)_{R_5} )_{(e_0,S_0)}$ be a product metric of $( (x_1)_{X_1},(d_1)_{D_1},(r_1)_{R_1} )_{(e_0,S_0)}$ and $( (y_1)_{Y_1},(d_2)_{D_2},(r_2)_{R_2} )_{(e_0,S_0)}$ and let $( (p_2)_{P_2},(d_6)_{D_6},(r_6)_{R_6})_{(e_0,S_0)}$ be a product metric of \\ $( (x_2)_{X_2},(d_3)_{D_3},(r_3)_{R_3})_{(e_0,S_0)}$ and
$( (y_2)_{Y_2},(d_4)_{D_4},(r_4)_{R_4} )_{(e_0,S_0)}$ such that $ ( (p_2)_{P_2},(d_6)_{D_6},(r_6)_{R_6} )_{(e_0,S_0)}$ is an extension of $( (p_1)_{P_1},(d_5)_{D_5},(r_5)_{R_5} )_{(e_0,S_0)}$ via $(i)_{I}$ where for all $a\overset{}{\underset{ (e_0,S_0)} {\in } }(x_1)_{X_1}$ and all $b\overset{}{\underset{ (e_0,S_0)} {\in } }(y_1)_{Y_1}$ we have  $ (i)_{I}(a,b)\overset{}{\underset{ (e_0,S_0)} {= } }( (f)_{F}(a),(g)_{G}(b) )$.

Let $(a_2,b_2)\overset{}{\underset{ (e_0,S_0)} {\in } }(t_2)_{T_2}$ such that $\forall a_1 \forall b_1 ( ((f)_{F}(a_1),(g)_{G}(b_1))\overset{}{\underset{ (e_0,S_0)} {\neq } }(a_2,b_2))$.

Let $(\epsilon)_{E_1}\overset{\mathbb{R}}{\underset{ (e_0,S_0)} {> } }0$ be an arbitrary real number. By (uniform approachability) there exists $(\delta)_{E_2}\overset{\mathbb{R}}{\underset{ (e_0,S_0)} {> } }0$ such that for any $v_1\overset{}{\underset{ (e_0,S_0)} {\in } }(x_2)_{X_2}$, $v_2\overset{}{\underset{ (e_0,S_0)} {\in } }(x_2)_{X_2}$, $w_1\overset{}{\underset{ (e_0,S_0)} {\in } }(y_2)_{Y_2}$ and $w_2\overset{}{\underset{ (e_0,S_0)} {\in } }(y_2)_{Y_2}$, if $((d_3)_{D_3}(v_1,v_2))_{R_3}\overset{\mathbb{R}}{\underset{ (e_0,S_0)} {< } }(\delta)_{E_2}$ and $((d_4)_{D_4}(w_1,w_2))_{R_4}\overset{\mathbb{R}}{\underset{ (e_0,S_0)} {< } }(\delta)_{E_2}$ then \\ $((d_6)_{D_6}((v_1,w_1),(v_2,w_2)))_{R_6}\overset{\mathbb{R}}{\underset{ (e_0,S_0)} {< } }(\epsilon)_{E_1}$.

By definition of relation extension there exists $(z_1,z_2)\overset{}{\underset{ (e_0,S_0)} {\in } }(t_1)_{T_1}$ with $((f)_{F}(z_1),(g)_{G}(z_2))\overset{}{\underset{ (e_0,S_0)} {\neq } }(a_2,b_2)$ such that   $((d_3)_{D_3} ( a_2,(f)_{F}(z_1) ))_{R_3}\overset{\mathbb{R}}{\underset{ (e_0,S_0)} {< } }(\delta)_{E_2}$ and $((d_4)_{D_4} ( b_2,(g)_{G}(z_2) ))_{R_4}\overset{\mathbb{R}}{\underset{ (e_0,S_0)} {< } }(\delta)_{E_2}$. So we obtain  $( (d_6)_{D_6}( (a_2,b_2),( (f)_{F}(z_1),(g)_{G}(z_2) )))_{R_6}\overset{}{\underset{ (e_0,S_0)} {< } }(\epsilon)_{E_1}$. 

It follows that $(a_2,b_2)$ is a limit point of $(t_2)_{T_2}$  over $( (t_2)_{T_2},(d_6)_{D_6}|(t_2)_{T_2},(r_6)_{R_6} )_{(e_0,S_0)}$ and we conclude that \\ $( (t_2)_{T_2},(d_6)_{D_6}|(t_2)_{T_2},(r_6)_{R_6} )_{(e_0,S_0)}$ is an extension of $( (t_1)_{T_1},(d_5)_{D_5}|(t_1)_{T_1},(r_5)_{R_5} )_{(e_0,S_0)}$ via $(i)_{I}|(t_1)_{T_1}$.

\end{proof}

\begin{thm}

Given
\begin{itemize}
	\item metric spaces $( (x_1)_{X_1},(d_1)_{D_1},(r_1)_{R_1} )_{(e_0,S_0)}$ and $( (y_1)_{Y_1},(d_2)_{D_2},(r_2)_{R_2} )_{(e_0,S_0)}$
	\item extension $( (x_2)_{X_2},(d_3)_{D_3},(r_3)_{R_3} )_{(e_0,S_0)}$ of $( (x_1)_{X_1},(d_1)_{D_1},(r_1)_{R_1})_{(e_0,S_0)}$ via $(f)_{F}$
	\item extension $( (y_2)_{Y_2},(d_4)_{D_4},(r_4)_{R_4} )_{(e_0,S_0)}$ of $( (y_1)_{Y_1},(d_2)_{D_2},(r_2)_{R_2})_{(e_0,S_0)}$ via $(g)_{G}$
	\item relations $ ( (x_1)_{X_1},(y_1)_{Y_1},(t_1)_{T_1} )_{(e_0,S_0)}$ and
	$( (x_2)_{X_2},(y_2)_{Y_2},(t_2)_{T_2} )_{(e_0,S_0)}$ 
\end{itemize}
if
\begin{itemize}
	\item $\forall a_1 \forall b_1 ( (a_1,b_1)\overset{}{\underset{ (e_0,S_0)} {\in } }(t_1)_{T_1}\Rightarrow ( (f)_{F}(a_1),(g)_{G}(b_1))\overset{}{\underset{ (e_0,S_0)} {\in } }(t_2)_{T_2} )$
	\item for any product metric $( (p_1)_{P_1},(d_5)_{D_5},(r_5)_{R_5} )_{(e_0,S_0)}$ of $( (x_1)_{X_1},(d_1)_{D_1},(r_1)_{R_1} )_{(e_0,S_0)}$ and \\ $( (y_1)_{Y_1},(d_2)_{D_2},(r_2)_{R_2} )_{(e_0,S_0)}$ and any product metric $( (p_2)_{P_2},(d_6)_{D_6},(r_6)_{R_6})_{(e_0,S_0)}$ of \\ $( (x_2)_{X_2},(d_3)_{D_3},(r_3)_{R_3})_{(e_0,S_0)}$ and
$( (y_2)_{Y_2},(d_4)_{D_4},(r_4)_{R_4} )_{(e_0,S_0)}$ such that \\ $ ( (p_2)_{P_2},(d_6)_{D_6},(r_6)_{R_6} )_{(e_0,S_0)}$ is an extension of $( (p_1)_{P_1},(d_5)_{D_5},(r_5)_{R_5} )_{(e_0,S_0)}$ via $(i)_{I}$ where \\ $ \forall a\overset{}{\underset{ (e_0,S_0)} {\in } }(x_1)_{X_1} \forall b\overset{}{\underset{ (e_0,S_0)} {\in } }(y_1)_{Y_1}((i)_{I}(a,b)\overset{}{\underset{ (e_0,S_0)} {= } }( (f)_{F}(a),(g)_{G}(b) ))$ we have that \\ $( (t_2)_{T_2},(d_6)_{D_6}|(t_2)_{T_2},(r_6)_{R_6} )_{(e_0,S_0)}$ is an extension of $( (t_1)_{T_1},(d_5)_{D_5}|(t_1)_{T_1},(r_5)_{R_5} )_{(e_0,S_0)}$ via \\ $(i)_{I}|(t_1)_{T_1}$
\end{itemize}
then $( (t_2)_{T_2},((x_2)_{X_2},(d_3)_{D_3},(r_3)_{R_3}),((y_2)_{Y_2},(d_4)_{D_4},(r_4)_{R_4}) )_{(e_0,S_0)}$ is a relation extension of \\ $( (t_1)_{T_1},((x_1)_{X_1},(d_1)_{D_1},(r_1)_{R_1}),((y_1)_{Y_1},(d_2)_{D_2},(r_2)_{R_2}) )_{(e_0,S_0)}$ via $((f)_{F},(g)_{G})$
\end{thm}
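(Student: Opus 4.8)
The plan is to verify directly the two defining conditions of a relation extension. The first condition, namely $\forall a_1 \forall b_1 ( (a_1,b_1)\overset{}{\underset{ (e_0,S_0)} {\in } }(t_1)_{T_1}\Rightarrow ( (f)_{F}(a_1),(g)_{G}(b_1))\overset{}{\underset{ (e_0,S_0)} {\in } }(t_2)_{T_2} )$, is granted verbatim by the first hypothesis, so all the work lies in establishing the approximation clause.

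First I would produce a single witnessing pair of product metrics to which the second hypothesis can be applied. A product metric $( (p_1)_{P_1},(d_5)_{D_5},(r_5)_{R_5} )_{(e_0,S_0)}$ of the base spaces always exists: one may take $(d_5)_{D_5}$ to be the coordinatewise maximum, so that $((d_5)_{D_5}((x_1,y_1),(x_2,y_2)))_{R_5}\overset{\mathbb{R}}{\underset{ (e_0,S_0)} {= } }\max\{((d_1)_{D_1}(x_1,x_2))_{R_1},((d_2)_{D_2}(y_1,y_2))_{R_2}\}$, which satisfies (uniform approachability) (with $\delta=\epsilon$), (coordinate reduction), and (coordinate bounding). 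Having fixed $(p_1)_{P_1}$, I would invoke the earlier product-metric extension theorem to obtain a product metric $( (p_2)_{P_2},(d_6)_{D_6},(r_6)_{R_6} )_{(e_0,S_0)}$ of the two extensions together with an isometry $(i)_{I}$ exhibiting it as an extension of $(p_1)_{P_1}$ and satisfying $(i)_{I}(a,b)\overset{}{\underset{ (e_0,S_0)} {= } }( (f)_{F}(a),(g)_{G}(b) )$ for all $a\overset{}{\underset{ (e_0,S_0)} {\in } }(x_1)_{X_1}$ and $b\overset{}{\underset{ (e_0,S_0)} {\in } }(y_1)_{Y_1}$. Applying the second hypothesis to this triple yields that $( (t_2)_{T_2},(d_6)_{D_6}|(t_2)_{T_2},(r_6)_{R_6} )_{(e_0,S_0)}$ is an extension of $( (t_1)_{T_1},(d_5)_{D_5}|(t_1)_{T_1},(r_5)_{R_5} )_{(e_0,S_0)}$ via $(i)_{I}|(t_1)_{T_1}$.

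Next I would unwind the definition of extension for this restricted isometry. Fix $(a_2,b_2)\overset{}{\underset{ (e_0,S_0)} {\in } }(t_2)_{T_2}$ with $\forall a_1 \forall b_1 (((f)_{F}(a_1),(g)_{G}(b_1))\overset{}{\underset{ (e_0,S_0)} {\neq } }(a_2,b_2))$. Since $(i)_{I}(a_1,b_1)\overset{}{\underset{ (e_0,S_0)} {= } }( (f)_{F}(a_1),(g)_{G}(b_1) )$, the image of $(i)_{I}|(t_1)_{T_1}$ is exactly $\{ ((f)_{F}(a_1),(g)_{G}(b_1)) \mid (a_1,b_1)\overset{}{\underset{ (e_0,S_0)} {\in } }(t_1)_{T_1} \}$, so the chosen $(a_2,b_2)$ lies outside that image. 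By the extension property it is therefore a limit point of the image over $( (t_2)_{T_2},(d_6)_{D_6}|(t_2)_{T_2},(r_6)_{R_6} )_{(e_0,S_0)}$. Hence, given any $(\epsilon)_{E}\overset{\mathbb{R}}{\underset{ (e_0,S_0)} {> } }0$, there is $(w_1,w_2)\overset{}{\underset{ (e_0,S_0)} {\in } }(t_1)_{T_1}$ with $((f)_{F}(w_1),(g)_{G}(w_2))\overset{}{\underset{ (e_0,S_0)} {\neq } }(a_2,b_2)$ and $((d_6)_{D_6}((a_2,b_2),((f)_{F}(w_1),(g)_{G}(w_2))))_{R_6}\overset{\mathbb{R}}{\underset{ (e_0,S_0)} {< } }(\epsilon)_{E}$.

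Finally I would convert this single product-metric estimate into the two coordinatewise estimates demanded by the definition of relation extension, which is precisely the role of (coordinate bounding): it gives $((d_3)_{D_3}(a_2,(f)_{F}(w_1)))_{R_3}\overset{\mathbb{R}}{\underset{ (e_0,S_0)} {\leq } }((d_6)_{D_6}((a_2,b_2),((f)_{F}(w_1),(g)_{G}(w_2))))_{R_6}$ and the analogous inequality for $(d_4)_{D_4}$, so both are $\overset{\mathbb{R}}{\underset{ (e_0,S_0)} {< } }(\epsilon)_{E}$, which is exactly the approximation clause. The main obstacle is essentially bookkeeping: ensuring that the chosen $(p_1)_{P_1}$ genuinely satisfies the three product-metric axioms so that the earlier existence theorem applies, and checking that the image and non-image conditions for $(i)_{I}|(t_1)_{T_1}$ match those phrased in terms of $(f)_{F}$ and $(g)_{G}$. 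Once the limit-point statement is in hand, (coordinate bounding) delivers the conclusion with no further analytic work.
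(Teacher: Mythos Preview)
Your proposal is correct and follows essentially the same route as the paper: instantiate the second hypothesis at one witnessing pair of product metrics, use the extension property to obtain a nearby image point for any $(a_2,b_2)\overset{}{\underset{ (e_0,S_0)} {\in } }(t_2)_{T_2}$ outside the image, and then apply (coordinate bounding) to split the product-metric estimate into the two required coordinate estimates. The only difference is cosmetic: you are explicit about building the witness (max metric plus the earlier product-metric extension theorem), whereas the paper simply posits such $(p_1)_{P_1}$ and $(p_2)_{P_2}$ and proceeds.
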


\begin{proof}
Let $( (p_1)_{P_1},(d_5)_{D_5},(r_5)_{R_5} )_{(e_0,S_0)}$ be a product metric of $( (x_1)_{X_1},(d_1)_{D_1},(r_1)_{R_1} )_{(e_0,S_0)}$ and $( (y_1)_{Y_1},(d_2)_{D_2},(r_2)_{R_2} )_{(e_0,S_0)}$ and let  $( (p_2)_{P_2},(d_6)_{D_6},(r_6)_{R_6})_{(e_0,S_0)}$  be a product metric of \\ $( (x_2)_{X_2},(d_3)_{D_3},(r_3)_{R_3})_{(e_0,S_0)}$ and
$( (y_2)_{Y_2},(d_4)_{D_4},(r_4)_{R_4} )_{(e_0,S_0)}$ such that $ ( (p_2)_{P_2},(d_6)_{D_6},(r_6)_{R_6} )_{(e_0,S_0)}$ is an extension of $( (p_1)_{P_1},(d_5)_{D_5},(r_5)_{R_5} )_{(e_0,S_0)}$ via $(i)_{I}$ where \\ $ \forall a\overset{}{\underset{ (e_0,S_0)} {\in } }(x_1)_{X_1} \forall b\overset{}{\underset{ (e_0,S_0)} {\in } }(y_1)_{Y_1}((i)_{I}(a,b)\overset{}{\underset{ (e_0,S_0)} {= } }( (f)_{F}(a),(g)_{G}(b) ))$ with \\ $( (t_2)_{T_2},(d_6)_{D_6}|(t_2)_{T_2},(r_6)_{R_6} )_{(e_0,S_0)}$ an extension of $( (t_1)_{T_1},(d_5)_{D_5}|(t_1)_{T_1},(r_5)_{R_5} )_{(e_0,S_0)}$ via $(i)_{I}|(t_1)_{T_1}$

It will suffice to consider limit points of $(i)_{I}( (t_1)_{T_1} )$ over $( (t_2)_{T_2},(d_6)_{D_6}|(t_2)_{T_2},(r_6)_{R_6} )_{(e_0,S_0)}$ which are not in $(i)_{I}( (t_1)_{T_1} )$.

If $(w_1,w_2)$ is a limit point of $(i)_{I}( (t_1)_{T_1} )$ over $( (t_2)_{T_2},(d_6)_{D_6}|(t_2)_{T_2},(r_6)_{R_6} )$ with $(w_1,w_2)\overset{}{\underset{ (e_0,S_0)} {\notin } }(i)_{I}( (t_1)_{T_1} )$ then for any  $(\epsilon)_{E}\overset{\mathbb{R}}{\underset{ (e_0,S_0)} {> } }0$ there exists $(z_1,z_2)\overset{}{\underset{ (e_0,S_0)} {\in } }(i)_{I}( (t_1)_{T_1} )$ with $(w_1,w_2)\overset{}{\underset{ (e_0,S_0)} {\neq } }(z_1,z_2)$ such that $((d_6)_{D_6}((w_1,w_2),(z_1,z_2)))_{R_6}\overset{\mathbb{R}}{\underset{ (e_0,S_0)} {< } }(\epsilon)_{E}$ and since $ ( (p_2)_{P_2},(d_6)_{D_6},(r_6)_{R_6} )_{(e_0,S_0)}$ is a product metric we obtain by (coordinate bounding) that $((d_3)_{D_3}(w_1,z_1))_{R_3}\overset{\mathbb{R}}{\underset{ (e_0,S_0)} {< } }(\epsilon)_{E}$ and $((d_4)_{D_4}(w_2,z_2))_{R_4}\overset{\mathbb{R}}{\underset{ (e_0,S_0)} {< } }(\epsilon)_{E}$. So we conclude that \\ $( (t_2)_{T_2},((x_2)_{X_2},(d_3)_{D_3},(r_3)_{R_3}),((y_2)_{Y_2},(d_4)_{D_4},(r_4)_{R_4}) )_{(e_0,S_0)}$ is a relation extension of \\ $( (t_1)_{T_1},((x_1)_{X_1},(d_1)_{D_1},(r_1)_{R_1}),((y_1)_{Y_1},(d_2)_{D_2},(r_2)_{R_2}) )_{(e_0,S_0)}$ via $((f)_{F},(g)_{G})$.

\end{proof}

\begin{definition}
Given
\begin{itemize}
	\item metric spaces $( (x)_{X},(d_1)_{D_1},(r_1)_{R_1} )_{(e_0,S_0)}$ and $( (y)_{Y},(d_2)_{D_2},(r_2)_{R_2} )_{(e_0,S_0)}$
	\item $x_1\overset{}{\underset{ (e_0,S_0)} {\in } }(x)_{X}$
	\item function $(f)_{F}:(x)_{X}\overset{}{\underset{ (e_0,S_0)} {\rightarrow } }(y)_{Y}$
\end{itemize}
if for any real number $(\epsilon)_{E_1}\overset{\mathbb{R}}{\underset{ (e_0,S_0)} {> } }0$ over $(e_0,S_0)$ there exists a real number $(\delta)_{E_2}\overset{\mathbb{R}}{\underset{ (e_0,S_0)} {> } }0$ over $(e_0,S_0)$ such that for any $x_2\overset{}{\underset{ (e_0,S_0)} {\in } }(x)_{X}$ we have
$$ ((d_1)_{D_1}( x_1,x_2 ))_{R_1}\overset{\mathbb{R}}{\underset{ (e_0,S_0)} {< } }(\delta)_{E_2} \Rightarrow ((d_2)_{D_2}( (f)_{F}(x_1),(f)_{F}(x_2)))_{R_2}\overset{\mathbb{R}}{\underset{ (e_0,S_0)} {< } }(\epsilon)_{E_1} $$
then we say that $(f)_{F}:(x)_{X}\overset{}{\underset{ (e_0,S_0)} {\rightarrow } }(y)_{Y}$ is {\bf {\itshape continuous at $x_1$ over \\ $(( (x)_{X},(d_1)_{D_1},(r_1)_{R_1} ),( (y)_{Y},(d_2)_{D_2},(r_2)_{R_2}))_{(e_0,S_0)}$}}.
\end{definition}

\begin{definition}
Given
\begin{itemize}
	\item metric spaces $( (x)_{X},(d_1)_{D_1},(r_1)_{R_1} )_{(e_0,S_0)}$ and $( (y)_{Y},(d_2)_{D_2},(r_2)_{R_2} )_{(e_0,S_0)}$
	\item function $(f)_{F}:(x)_{X}\overset{}{\underset{ (e_0,S_0)} {\rightarrow } }(y)_{Y}$
\end{itemize}
if for every $x_1\overset{}{\underset{ (e_0,S_0)} {\in } }(x)_{X}$, $(f)_{F}:(x)_{X}\overset{}{\underset{ (e_0,S_0)} {\rightarrow } }(y)_{Y}$ is continuous at $x_1$ over \\ $(( (x)_{X},(d_1)_{D_1},(r_1)_{R_1} ),( (y)_{Y},(d_2)_{D_2},(r_2)_{R_2} ) )_{(e_0,S_0)}$ then we say that $(f)_{F}:(x)_{X}\overset{}{\underset{ (e_0,S_0)} {\rightarrow } }(y)_{Y}$ is {\bf {\itshape continuous over $(( (x)_{X},(d_1)_{D_1},(r_1)_{R_1} ),( (y)_{Y},(d_2)_{D_2},(r_2)_{R_2}))_{(e_0,S_0)}$}}.
\end{definition}

\begin{definition}
Given
\begin{itemize}
	\item metric spaces $( (x)_{X},(d_1)_{D_1},(r_1)_{R_1} )_{(e_0,S_0)}$ and $( (y)_{Y},(d_2)_{D_2},(r_2)_{R_2} )_{(e_0,S_0)}$
	\item function $(f)_{F}:(x)_{X}\overset{}{\underset{ (e_0,S_0)} {\rightarrow } }(y)_{Y}$
\end{itemize}
if for any real number $(\epsilon)_{E_1}\overset{\mathbb{R}}{\underset{ (e_0,S_0)} {> } }0$ over $(e_0,S_0)$ there exists a real number $(\delta)_{E_2}\overset{\mathbb{R}}{\underset{ (e_0,S_0)} {> } }0$ over $(e_0,S_0)$ such that for any $x_1\overset{}{\underset{ (e_0,S_0)} {\in } }(x)_{X}$ and any $x_2\overset{}{\underset{ (e_0,S_0)} {\in } }(x)_{X}$ we have
$$ ((d_1)_{D_1}( x_1,x_2 ))_{R_1}\overset{\mathbb{R}}{\underset{ (e_0,S_0)} {< } }(\delta)_{E_2} \Rightarrow ((d_2)_{D_2}( (f)_{F}(x_1),(f)_{F}(x_2)))_{R_2}\overset{\mathbb{R}}{\underset{ (e_0,S_0)} {< } }(\epsilon)_{E_1} $$
then we say that $(f)_{F}:(x)_{X}\overset{}{\underset{ (e_0,S_0)} {\rightarrow } }(y)_{Y}$ is {\bf {\itshape uniformly continuous over \\ $(( (x)_{X},(d_1)_{D_1},(r_1)_{R_1} ),( (y)_{Y},(d_2)_{D_2},(r_2)_{R_2}))_{(e_0,S_0)}$}}.
\end{definition}


\begin{definition}
Given
\begin{itemize}
	\item metric spaces $((x_1)_{X_1},(d_1)_{D_1},(r_1)_{R_1})_{(e_0,S_0)}$ and $((y_1)_{Y_1},(d_2)_{D_2},(r_2)_{R_2})_{(e_0,S_0)}$
	\item function $(f)_{F}:(x_1)_{X_1}\overset{}{\underset{ (e_0,S_0)} {\rightarrow } } (y_1)_{Y_1}$
\end{itemize}
if for any relation extension
$( (t)_{T},((x_2)_{X_2},(d_3)_{D_3},(r_3)_{R_3}),((y_2)_{Y_2},(d_4)_{D_4},(r_4)_{R_4}) )_{(e_0,S_0)}$ of \\
$( (f)_{F},((x_1)_{X_1},(d_1)_{D_1},(r_1)_{R_1}),((y_1)_{Y_1},(d_2)_{D_2},(r_2)_{R_2}) )_{(e_0,S_0)}$ via some $( (g)_{G},(h)_{H})$ we have that 
$ (t)_{T}:(x_2)_{X_2}\overset{}{\underset{ (e_0,S_0)} {\rightarrow } }(y_2)_{Y_2}$ is a function then we say that 
$ (f)_{F}:(x_1)_{X_1}\overset{}{\underset{ (e_0,S_0)} {\rightarrow } }(y_1)_{Y_1}$ is {\bf {\itshape function persistent over  $( ((x_1)_{X_1},(d_1)_{D_1},(r_1)_{R_1}),( (y_1)_{Y_1},(d_2)_{D_2},(r_2)_{R_2} ) )_{(e_0,S_0)}$}}.
\end{definition}


\begin{definition}
Given
\begin{itemize}
	\item metric spaces $((x_1)_{X_1},(d_1)_{D_1},(r_1)_{R_1})_{(e_0,S_0)}$ and $((y_1)_{Y_1},(d_2)_{D_2},(r_2)_{R_2})_{(e_0,S_0)}$
	\item function $(f)_{F}:(x_1)_{X_1}\overset{}{\underset{ (e_0,S_0)} {\rightarrow } } (y_1)_{Y_1}$
\end{itemize}
if  
\begin{itemize}
	\item for any relation extension
$( (t)_{T},((x_2)_{X_2},(d_3)_{D_3},(r_3)_{R_3}),((y_2)_{Y_2},(d_4)_{D_4},(r_4)_{R_4}) )_{(e_0,S_0)}$ of \\
$( (f)_{F},((x_1)_{X_1},(d_1)_{D_1},(r_1)_{R_1}),((y_1)_{Y_1},(d_2)_{D_2},(r_2)_{R_2}) )_{(e_0,S_0)}$ via some $( (g)_{G},(h)_{H})$ we have that 
$ (t)_{T}:(x_2)_{X_2}\overset{}{\underset{ (e_0,S_0)} {\rightarrow } }(y_2)_{Y_2}$ is a continuous function
	\item for any extension $( (x_2)_{X_2},(d_3)_{D_3},(r_3)_{R_3} )_{(e_0,S_0)}$ of $((x_1)_{X_1},(d_1)_{D_1},(r_1)_{R_1})_{(e_0,S_0)}$ via some $(g)_{G}$ there exists a relation extension \\ $((t)_{T},((x_2)_{X_2},(d_3)_{D_3},(r_3)_{R_3}),((y_2)_{Y_2},(d_4)_{D_4},(r_4)_{R_4}))_{(e_0,S_0)}$ of \\ $((f)_{F},((x_1)_{X_1},(d_1)_{D_1},(r_1)_{R_1}),((y_1)_{Y_1},(d_2)_{D_2},(r_2)_{R_2}))_{(e_0,S_0)}$ via some $( (g)_{G},(h)_{H})$
\end{itemize}
then we say that
$$ (f)_{F}:(x_1)_{X_1}\overset{}{\underset{ (e_0,S_0)} {\rightarrow } } (y_1)_{Y_1}$$ is {\bf {\itshape extensible over $( ((x_1)_{X_1},(d_1)_{D_1},(r_1)_{R_1}),( (y_1)_{Y_1},(d_2)_{D_2},(r_2)_{R_2} ) )_{(e_0,S_0)}$}}.
\end{definition}

\begin{definition}
Given
\begin{itemize}
	\item metric spaces $((x_1)_{X_1},(d_1)_{D_1},(r_1)_{R_1})_{(e_0,S_0)}$ and $((y_1)_{Y_1},(d_2)_{D_2},(r_2)_{R_2})_{(e_0,S_0)}$
	\item function $(f)_{F}:(x_1)_{X_1}\overset{}{\underset{ (e_0,S_0)} {\rightarrow } } (y_1)_{Y_1}$
\end{itemize}
if  
\begin{itemize}
	\item $(f)_{F}:(x_1)_{X_1} \overset{}{\underset{ (e_0,S_0)} {\rightarrow } }(y_1)_{Y_1}$ is function persistent over \\ $( ((x_1)_{X_1},(d_1)_{D_1},(r_1)_{R_1} ),( (y_1)_{Y_1},(d_2)_{D_2},(r_2)_{R_2} ) )_{(e_0,S_0)}$
	\item for any extension $( (y_2)_{Y_2},(d_3)_{D_3},(r_3)_{R_3})_{(e_0,S_0)}$ of \\ $( (f)_{F}((x_1)_{X_1}),(d_2)_{D_2}| (f)_{F}((x_1)_{X_1}), (r_2)_{R_2})_{(e_0,S_0)}$ via some $(g)_{G}$ there exists an extension \\ $( (x_2)_{X_2},(d_4)_{D_4},(r_4)_{R_4})_{(e_0,S_0)}$ of  $( (x_1)_{X_1},(d_1)_{D_1},(r_1)_{R_1})_{(e_0,S_0)}$ via some $(h)_{H}$ and a surjective function $(z)_{Z}:(x_2)_{X_2}\overset{}{\underset{ (e_0,S_0)} {\rightarrow } } (y_2)_{Y_2}$ such that \\ $( (z)_{Z},( (x_2)_{X_2},(d_4)_{D_4},(r_4)_{R_4}),( (y_2)_{Y_2},(d_3)_{D_3},(r_3)_{R_3}))_{(e_0,S_0)}$ is a relation extension of \\ $( (f)_{F}, ( (x_1)_{X_1},(d_1)_{D_1},(r_1)_{R_1}),( (y_1)_{Y_1},(d_2)_{D_2},(r_2)_{R_2}))_{(e_0,S_0)}$ via $( (h)_{H},(g)_{G})$
\end{itemize}
then we say that 
	$$ (f)_{F}:(x_1)_{X_1}\overset{}{\underset{ (e_0,S_0)} {\rightarrow } }(y_1)_{Y_1}$$ 
is {\bf {\itshape image extensible over $( ((x_1)_{X_1},(d_1)_{D_1},(r_1)_{R_1} ),( (y_1)_{Y_2},(d_2)_{D_2},(r_2)_{R_2} ) )_{(e_0,S_0)}$}}.

\end{definition}


\begin{thm}
Given
\begin{itemize}
	\item metric spaces $( (x)_{X},(d_1)_{D_1},(r_1)_{R_1} )_{(e_0,S_0)}$ and $( (y)_{Y},(d_2)_{D_2},(r_2)_{R_2} )_{(e_0,S_0)}$
	\item function $(f)_{F}:(x)_{X}\overset{}{\underset{ (e_0,S_0)} {\rightarrow } }(y)_{Y}$
\end{itemize}
if $(f)_{F}:(x)_{X}\overset{}{\underset{ (e_0,S_0)} {\rightarrow } }(y)_{Y}$ is uniformly continuous over \\ $(( (x)_{X},(d_1)_{D_1},(r_1)_{R_1} ),( (y)_{Y},(d_2)_{D_2},(r_2)_{R_2} ))_{(e_0,S_0)}$ then it is also extensible over \\ $(( (x)_{X},(d_1)_{D_1},(r_1)_{R_1} ),( (y)_{Y},(d_2)_{D_2},(r_2)_{R_2} ))_{(e_0,S_0)}$.
\end{thm}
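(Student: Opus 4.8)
The plan is to verify the two clauses in the definition of extensibility directly, using as the single key input the fact that both $(g)_{G}$ and $(h)_{H}$ (indeed every isometry) preserve distance, so that the uniform continuity of $(f)_{F}$ is transported verbatim across the extensions. The one consequence of uniform continuity that does all the work is the following: if $(g)_{G}(a_1^{(n)})$ approaches a point $a_2\overset{}{\underset{(e_0,S_0)}{\in}}(x_2)_{X_2}$, then by the isometry identity $((d_3)_{D_3}((g)_{G}(a_1^{(n)}),(g)_{G}(a_1^{(m)})))_{R_3}\overset{\mathbb{R}}{\underset{(e_0,S_0)}{=}}((d_1)_{D_1}(a_1^{(n)},a_1^{(m)}))_{R_1}$ the family $a_1^{(n)}$ is Cauchy in $((x_1)_{X_1},(d_1)_{D_1},(r_1)_{R_1})_{(e_0,S_0)}$, whence by uniform continuity $(f)_{F}(a_1^{(n)})$ is Cauchy in $((y_1)_{Y_1},(d_2)_{D_2},(r_2)_{R_2})_{(e_0,S_0)}$, and therefore $(h)_{H}((f)_{F}(a_1^{(n)}))$ is Cauchy in $((y_2)_{Y_2},(d_4)_{D_4},(r_4)_{R_4})_{(e_0,S_0)}$. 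Uniqueness of limits in a metric space then pins down a single admissible value at $a_2$.

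First I would treat the persistence-and-continuity clause. Let $((t)_{T},\dots)_{(e_0,S_0)}$ be an arbitrary relation extension of $(f)_{F}$ via $((g)_{G},(h)_{H})$. To see that $(t)_{T}$ is single-valued, suppose $(a_2,b_2)$ and $(a_2,b_2')$ both lie in $(t)_{T}$. Each of $b_2,b_2'$ is either a direct image $(h)_{H}((f)_{F}(a_1))$ with $(g)_{G}(a_1)\overset{}{\underset{(e_0,S_0)}{=}}a_2$ — unique by injectivity of the isometry $(g)_{G}$ together with functionality of $(f)_{F}$ — or, by the second clause of the relation-extension definition, a limit of direct images $(h)_{H}((f)_{F}(a_1^{(n)}))$ with $(g)_{G}(a_1^{(n)})$ tending to $a_2$. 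In every combination the Cauchy remark of the first paragraph exhibits $b_2$ and $b_2'$ as limits of images whose domain points all converge to $a_2$; merging two such approximating families keeps the domain points mutually close, so by uniform continuity the corresponding images remain mutually close, forcing $b_2\overset{}{\underset{(e_0,S_0)}{=}}b_2'$. Continuity of $(t)_{T}$ follows by the same transfer: given $(\epsilon)_{E_1}\overset{\mathbb{R}}{\underset{(e_0,S_0)}{>}}0$, take the $(\delta)_{E_2}$ furnished by uniform continuity of $(f)_{F}$; for $a_2,a_2'$ with $((d_3)_{D_3}(a_2,a_2'))_{R_3}\overset{\mathbb{R}}{\underset{(e_0,S_0)}{<}}(\delta)_{E_2}$, approximate by $(g)_{G}(a_1),(g)_{G}(a_1')$, use the isometry identity $((d_1)_{D_1}(a_1,a_1'))_{R_1}\overset{\mathbb{R}}{\underset{(e_0,S_0)}{=}}((d_3)_{D_3}((g)_{G}(a_1),(g)_{G}(a_1')))_{R_3}$ to invoke uniform continuity, and pass to the limit to obtain $((d_4)_{D_4}((t)_{T}(a_2),(t)_{T}(a_2')))_{R_4}\overset{\mathbb{R}}{\underset{(e_0,S_0)}{\leq}}(\epsilon)_{E_1}$.

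Next I would treat the existence clause. Given an arbitrary extension $((x_2)_{X_2},(d_3)_{D_3},(r_3)_{R_3})_{(e_0,S_0)}$ of the domain via $(g)_{G}$, I build the codomain extension and the extended map simultaneously. For each $a_2\overset{}{\underset{(e_0,S_0)}{\in}}(x_2)_{X_2}$ choose, using (comprehension), words $a_1^{(n)}$ with $((d_3)_{D_3}((g)_{G}(a_1^{(n)}),a_2))_{R_3}\overset{\mathbb{R}}{\underset{(e_0,S_0)}{<}}1/n$ — possible because $a_2$ is either a value of $(g)_{G}$ or a limit point of its image — and observe by the Cauchy remark that $(f)_{F}(a_1^{(n)})$ is Cauchy. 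I then let $(y_2)_{Y_2}$ be obtained by adjoining to $(y_1)_{Y_1}$ a limit for each such Cauchy family not already present, let $(h)_{H}$ be the canonical isometric inclusion of $(y_1)_{Y_1}$, and set $(t)_{T}(a_2)$ equal to that limit. The first relation-extension clause is immediate on direct images, and the second holds because, for an adjoined value, the pairs $((g)_{G}(a_1^{(n)}),(h)_{H}((f)_{F}(a_1^{(n)})))$ witness the required $(\epsilon)_{E}$-closeness to $(a_2,(t)_{T}(a_2))$.

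The hard part is the codomain construction in the existence clause: I must exhibit $((y_2)_{Y_2},(d_4)_{D_4},(r_4)_{R_4})_{(e_0,S_0)}$ as a genuine extension — a metric space with a distance-preserving $(h)_{H}$ in which every adjoined point is truly a limit point of $(h)_{H}((y_1)_{Y_1})$ — rather than a mere set of formal Cauchy families. This is a metric-completion step, legitimate in the present foundation because every set is countable, so each adjoined limit is named by an honest word, and because along any pair of Cauchy families the $(d_2)_{D_2}$-values form a Cauchy-convergent real function and hence converge by the theorem of the previous section, which is exactly what makes each distance $(d_4)_{D_4}$ an honest real number. Verifying that $(d_4)_{D_4}$ satisfies the metric axioms and that the value $(t)_{T}(a_2)$ does not depend on the chosen approximating family $a_1^{(n)}$ is the remaining bookkeeping, and both reduce to the distance-preservation identities and the single uniform-continuity estimate already used above.
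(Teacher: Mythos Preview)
Your proposal is correct and follows essentially the same approach as the paper: verify the two clauses of extensibility by (i) showing any relation extension is a continuous function via the Cauchy-preservation property of uniform continuity transported through the isometries, and (ii) constructing the codomain extension by adjoining limits of the image Cauchy families and defining the extended map as the unique limit point. The paper's proof is much terser---it dispatches the first clause in a single sentence and sketches the second by writing down the defining property of $(f_1)_{F_1}(a)$ directly---but the content is the same as what you spell out; your explicit treatment of single-valuedness and of the metric-completion bookkeeping simply fills in what the paper leaves implicit.
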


\begin{proof}
We see that continuity of extensions follows almost immediately from uniform continuity. Now let $( (x_1)_{X_1},(d_3)_{D_3},(r_3)_{R_3} )_{(e_0,S_0)}$ be an extension of  $( (x)_{X},(d_1)_{D_1},(r_1)_{R_1} )_{(e_0,S_0)}$ via some $(g)_{G}$. We can construct an extension  $( (y_1)_{Y_1},(d_4)_{D_4},(r_4)_{R_4} )_{(e_0,S_0)}$ of $( (y)_{Y},(d_2)_{D_2},(r_2)_{R_2} )_{(e_0,S_0)}$ via some $(h)_{H}$ and a function $(f_1)_{F_1}:(x_1)_{X_1}\overset{}{\underset{ (e_0,S_0)} {\rightarrow } }(y_1)_{Y_1}$ such that 

for any $a\overset{}{\underset{ (e_0,S_0)} {\in } }(x_1)_{X_1}$ and 
any function $(k)_{K}:\mathbb{N} \overset{}{\underset{ (e_0,S_0)} {\rightarrow } }(x)_{X}$ with \\
$((d_3)_{D_3}(a,(g)_{G}((k)_{K}(n))))_{R_3}\overset{\mathbb{R}}{\underset{ (e_0,S_0)} {< } }1/n$ for all $n$,  
we have that
$ (f_1)_{F_1}(a)$ is the unique limit point of  $(h)_{H}((f)_{F}((k)_{K}(\mathbb{N})))$ over $( (y_1)_{Y_1},(d_4)_{D_4},(r_4)_{R_4} )_{(e_0,S_0)}$ (by triangle inequality).

By uniform continuity of $(f)_{F}$ we see that $(f_1)_{F_1}:(x_1)_{X_1}\overset{}{\underset{ (e_0,S_0)} {\rightarrow } }(y_1)_{Y_1}$ and $(d_4)_{D_4}$ are well defined so we can say that $(f)_{F}:(x)_{X}\overset{}{\underset{ (e_0,S_0)} {\rightarrow } }(y)_{Y}$ is extensible over \\ $(( (x)_{X},(d_1)_{D_1},(r_1)_{R_1} ),( (y)_{Y},(d_2)_{D_2},(r_2)_{R_2} ))_{(e_0,S_0)}$.

\end{proof}

The next theorem can be seen as a reformulation of the Bolzano-Weierstrass theorem.

\begin{thm} \label{bolzano}
Given
\begin{itemize}
	\item metric space $( (x)_{X},(d)_{D},(r)_{R} )_{(e_0,S_0)}$
	\item totally bounded set $(a)_{A}$ over $( (x)_{X},(d)_{D},(r)_{R} )_{(e_0,S_0)}$
\end{itemize}
if $(a)_{A}$ is infinite over $(e_0,S_0)$ then there exists an extension \\ $( (y)_{Y},(d_1)_{D_1},(r_1)_{R_1})_{(e_0,S_0)}$ of  $( (x)_{X},(d)_{D},(r)_{R} )_{(e_0,S_0)}$ via some $(f)_{F}$ and a point $v\overset{}{\underset{ (e_0,S_0)} {\in } }(y)_{Y}$ such that $v$ is a limit point of $(f)_{F}((a)_{A})$ over $( (y)_{Y},(d_1)_{D_1},(r_1)_{R_1} )_{(e_0,S_0)}$.
\end{thm}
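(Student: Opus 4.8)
The plan is to reproduce the classical route to the Bolzano--Weierstrass theorem through total boundedness: first extract a Cauchy-convergent sequence out of $(a)_A$, then adjoin a single new point playing the role of its limit, and finally verify that the enlarged space is an extension in which this point is a limit point of the image of $(a)_A$.

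First I would extract the Cauchy-convergent sequence. Since $(a)_A$ is totally bounded over $( (x)_X,(d)_D,(r)_R )_{(e_0,S_0)}$, for each natural number $n$ it is covered by finitely many open balls of radius less than $1/n$. Working recursively with (comprehension), I set the infinite set $(a_0)_{A_0}$ equal to $(a)_A$ and, given an infinite $(a_n)_{A_n}$, I cover it by finitely many balls of radius less than $1/(n+1)$ and let $(a_{n+1})_{A_{n+1}}$ be the intersection of $(a_n)_{A_n}$ with one ball meeting it in an infinite set; such a ball exists because a finite union of finite sets is finite, so if every piece were finite the infinite set $(a_n)_{A_n}$ would itself be finite. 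Using countability I then build a function $(u)_U:\mathbb{N}\overset{}{\underset{(e_0,S_0)}{\rightarrow}}(a)_A$ with pairwise distinct values such that $(u)_U(n)\overset{}{\underset{(e_0,S_0)}{\in}}(a_n)_{A_n}$. Since $(a_{n+1})_{A_{n+1}}\overset{}{\underset{(e_0,S_0)}{\subseteq}}(a_n)_{A_n}$, for all $m,m'\geq N$ the points $(u)_U(m)$ and $(u)_U(m')$ lie in a common ball of radius less than $1/N$, so by the triangle inequality $((d)_D((u)_U(m),(u)_U(m')))_R$ is eventually smaller than any prescribed positive real; that is, $(u)_U$ is Cauchy-convergent over the metric space.

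Next I would manufacture the limit. For each fixed $b\overset{}{\underset{(e_0,S_0)}{\in}}(x)_X$ the real function $n\mapsto((d)_D((u)_U(n),b))_R$ is Cauchy-convergent, because the triangle inequality gives $|((d)_D((u)_U(m),b))_R-((d)_D((u)_U(n),b))_R|\leq((d)_D((u)_U(m),(u)_U(n)))_R$ as reals; hence by the earlier theorem that every Cauchy-convergent real function converges, a real number $\rho(b)$ over $(e_0,S_0)$ is determined as its limit. If $\rho(b)$ equals $0$ for some $b$, then $(u)_U$ converges to $b$ inside $(x)_X$ and the identity extension already exhibits $b$ as a limit point of $(u)_U(\mathbb{N})$, hence of $(a)_A$. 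Otherwise $\rho(b)$ is strictly positive for every $b$, and I use (comprehension) to build a set $(y)_Y$ over $(e_0,S_0)$ consisting of an isometric copy of $(x)_X$ together with one extra point $v$, an accompanying real set $(r_1)_{R_1}$, an isometry $(f)_F$ of $(x)_X$ onto the copy, and a distance function $(d_1)_{D_1}$ that restricts to $(d)_D$ on the copy and satisfies $((d_1)_{D_1}(v,(f)_F(b)))_{R_1}\overset{\mathbb{R}}{\underset{(e_0,S_0)}{=}}\rho(b)$ and $((d_1)_{D_1}(v,v))_{R_1}\overset{\mathbb{R}}{\underset{(e_0,S_0)}{=}}0$.

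Finally I would verify the metric and extension properties. The axioms for $(d_1)_{D_1}$ reduce to passing the axioms for $(d)_D$ to the limit: positivity of $((d_1)_{D_1}(v,(f)_F(b)))_{R_1}$ is exactly $\rho(b)>0$, symmetry is immediate, and every triangle inequality involving $v$ is obtained by taking $n\to\infty$ in the corresponding inequality for $(u)_U(n)$ and invoking monotonicity of limits of convergent real sequences. Because $(f)_F$ is an isometry and $v$ is the only point outside its image, $( (y)_Y,(d_1)_{D_1},(r_1)_{R_1})_{(e_0,S_0)}$ is an extension of $( (x)_X,(d)_D,(r)_R )_{(e_0,S_0)}$ once $v$ is seen to be a limit point of the copy; and since $((d_1)_{D_1}(v,(f)_F((u)_U(n))))_{R_1}$ equals $\lim_m ((d)_D((u)_U(m),(u)_U(n)))_R$, which tends to $0$ as $n$ grows by the Cauchy property, the point $v$ is in fact a limit point of $(f)_F((a)_A)$, as required. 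The main obstacle I anticipate is not the analytic limiting argument but the two pieces of construction it rests on: the recursive pigeonhole extraction of the nested infinite sets (which needs the routine fact that a finite union of finite sets is finite), and the honest realization of the one-point extension as a set over $(e_0,S_0)$ carrying a distance function $(d_1)_{D_1}$ whose well-definedness, positivity away from the copy, and triangle inequality must all be certified after passing to the limit.
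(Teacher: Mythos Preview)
Your proposal is correct and follows essentially the same route as the paper: extract a Cauchy-convergent sequence from $(a)_A$ using total boundedness and a nested pigeonhole selection, then adjoin a single new point $v$ whose distance to each $(f)_F(b)$ is $\lim_n ((d)_D((s)(n),b))_R$. The paper phrases the extraction via the choice theorem applied to a sequence of finite covers rather than via your recursive nested infinite subsets, but the underlying argument is identical; your version is in fact more careful, since you separate out the case $\rho(b)=0$ (where the metric axiom $d_1(v,(f)_F(b))=0\Rightarrow v=(f)_F(b)$ would fail) and handle it with the identity extension, a point the paper passes over silently.
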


\begin{proof}

Since $(a)_{A}$ is totally bounded and $(r)_{R}$ is connected over any (real induced) metric space it induces then we see that $\forall (\epsilon)_{E}\overset{\mathbb{R}}{\underset{ (e_0,S_0)} {> } }0$ there exists a finite cover of $(a)_{A}$ by open balls of radius in $(r)_{R}$ less than $(\epsilon)_{E}$ centered at points in $(x)_{X}$. Since there exists a set $(t)_{T}$ (over $(e_0,S_0)$) of all finite sets of open balls centered at points in $(x)_{X}$ of radius in $(r)_{R}$ then using the theorem of choice we can construct a function $(q)_{Q}:\mathbb{N}\overset{}{\underset{ (e_0,S_0)} {\rightarrow} }(t)_{T} $ such that $((q)_{Q}(n))_{T}$ is a finite cover of $(a)_{A}$ by open balls of radius less than $1/n$. Because $(a)_{A}$ is infinite we can use countability of sets, the well ordering of naturals and the theorem of choice to construct a function $(s)_{S}:\mathbb{N}\overset{}{\underset{ (e_0,S_0)} {\rightarrow } }(a)_{A}$ where for all $m$ there exists $k_m\overset{}{\underset{ (e_0,S_0)} {\in } }((q)_{Q}(m))_{T}$ such that
for all $n\geq m$, 
 $(s)_{S}(n)\overset{}{\underset{ (e_0,S_0)} {\in } }(k_m)_{T}$. So we see that $(s)_{S}$ is Cauchy-convergent. 
We can construct an extension $( (y)_{Y},(d_1)_{D_1},(r_1)_{R_1} )_{(e_0,S_0)}$ of $( (x)_{X},(d)_{D},(r)_{R} )_{(e_0,S_0)}$ via some $(f)_{F}$ where $(y_1)_{Y}\overset{}{\underset{ (e_0,S_0)} {= } }(f)_{F}( (x)_{X} ) \cup \{ v \}$ and where for any $z\overset{}{\underset{ (e_0,S_0)} {\in } }(x)_{X}$ and any $z_1\overset{}{\underset{ (e_0,S_0)} {= } }(f)_{F}(z)$ we have 
$$((d_1)_{D_1}(v,z_1))_{R_1}\overset{}{\underset{ (e_0,S_0)} {= } }{\underset{n\rightarrow +\infty}{\text{lim}}}((d)_{D}((s)_{S}(n),z))_{R}$$
So we see that $v$ is a limit point of $(f)_{F}((a)_{A})$ over $( (y)_{Y},(d_1)_{D_1},(r_1)_{R_1} )_{(e_0,S_0)}$.

\end{proof}

In the following theorem we see that an extensible function on a totally bounded domain must be uniformly continuous.

\begin{thm} \label{exttotaluni}
Given
\begin{itemize}
	\item function $(f)_{F}:(x)_{X}\overset{}{\underset{ (e_0,S_0)} {\rightarrow } }(y)_{Y}$
	\item metric spaces $( (x)_{X},(d_1)_{D_1},(r_1)_{R_1} )_{(e_0,S_0)}$ and $( (y)_{Y},(d_2)_{D_2},(r_2)_{R_2} )_{(e_0,S_0)}$
\end{itemize}
if $( (x)_{X},(d_1)_{D_1},(r_1)_{R_1} )_{(e_0,S_0)}$ is totally bounded and $(f)_{F}$ is extensible over \\ $(( (x)_{X},(d_1)_{D_1},(r_1)_{R_1} ),( (y)_{Y},(d_2)_{D_2},(r_2)_{R_2} ))_{(e_0,S_0)}$ then $(f)_{F}$ is uniformly continuous over \\ $(( (x)_{X},(d_1)_{D_1},(r_1)_{R_1} ),( (y)_{Y},(d_2)_{D_2},(r_2)_{R_2} ))_{(e_0,S_0)}$.

\end{thm}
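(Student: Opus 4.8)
The plan is to argue by contraposition, transplanting the classical Heine--Cantor argument into the language of extensions. Suppose $(f)_F$ is \emph{not} uniformly continuous over the given pair. Negating the definition yields a real number $(\epsilon)_E\overset{\mathbb{R}}{\underset{(e_0,S_0)}{>}}0$ such that for every real $(\delta)_{E'}\overset{\mathbb{R}}{\underset{(e_0,S_0)}{>}}0$ there are points of $(x)_X$ within $(\delta)_{E'}$ of each other whose images lie at distance at least $(\epsilon)_E$. First I would use (comprehension) together with the choice theorem to package these witnesses into two functions $(s)_S:\mathbb{N}\to(x)_X$ and $(s')_{S'}:\mathbb{N}\to(x)_X$ with $((d_1)_{D_1}((s)_S(n),(s')_{S'}(n)))_{R_1}\overset{\mathbb{R}}{\underset{(e_0,S_0)}{<}}1/n$ for all $n$, while $((d_2)_{D_2}((f)_F((s)_S(n)),(f)_F((s')_{S'}(n))))_{R_2}\overset{\mathbb{R}}{\underset{(e_0,S_0)}{\geq}}(\epsilon)_E$.

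Next I would exploit total boundedness of $((x)_X,(d_1)_{D_1},(r_1)_{R_1})$. The set $(a)_A$ collecting all the values $(s)_S(n)$ and $(s')_{S'}(n)$ is a subset of a totally bounded space, hence totally bounded; a short pigeonhole argument using the $(\epsilon)_E$-gap shows $(a)_A$ must be infinite (otherwise some pair would repeat infinitely often, forcing its two coordinates to coincide and its image-gap to vanish). By the Bolzano--Weierstrass reformulation (Theorem \ref{bolzano}) I can pass to a subsequence along which $(s)_S$ is Cauchy-convergent, and since the two sequences are asymptotically $d_1$-close, the matching subsequence of $(s')_{S'}$ is Cauchy-convergent to the same limit. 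Interleaving them yields a single Cauchy-convergent sequence in $(x)_X$, whose limit I realize as a new point $v$ in an extension $((x_2)_{X_2},(d_3)_{D_3},(r_3)_{R_3})$ of $((x)_X,(d_1)_{D_1},(r_1)_{R_1})$ via some $(g)_G$, defining the distances from $v$ by the limit-of-distances formula used in the proof of Theorem \ref{bolzano}, so that $(g)_G((s)_S(n))$ and $(g)_G((s')_{S'}(n))$ both converge to $v$.

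Now I would invoke extensibility. Its existence clause produces a relation extension of the graph of $(f)_F$ over this domain extension, and function persistence makes it a function $(t_2)_{T_2}$; set $w\overset{}{\underset{(e_0,S_0)}{=}}(t_2)_{T_2}(v)$. Because $v\notin(g)_G((x)_X)$, the pair $(v,w)$ is a genuinely new point of the extended graph, so the second clause in the definition of relation extension applies: arbitrarily close to $v$ there are points of $(x)_X$ whose images are arbitrarily close to $w$. The decisive step is to promote this \emph{existence} statement to the \emph{universal} one that the two image sequences $(f)_F((s)_S(n))$ and $(f)_F((s')_{S'}(n))$ themselves both converge to $w$. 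This is forced by single-valuedness: if one of them stayed away from $w$, one could enlarge the codomain extension so as to adjoin a second limit value and thereby assemble a relation extension sending $v$ to two distinct points, contradicting function persistence. Granting this, both image sequences converge to $w$, so for large $n$ the distance $((d_2)_{D_2}((f)_F((s)_S(n)),(f)_F((s')_{S'}(n))))_{R_2}$ drops below $(\epsilon)_E$, contradicting the gap; hence $(f)_F$ is uniformly continuous.

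I expect the main obstacle to be precisely that decisive step, namely that function persistence pins \emph{every} Cauchy-convergent approach to $v$ to the same image value (equivalently, that an extensible map is Cauchy-continuous). The difficulty is that the codomain $((y)_Y,(d_2)_{D_2},(r_2)_{R_2})$ is \emph{not} assumed totally bounded, so one cannot simply extract convergent subsequences of the image sequences inside $(y)_Y$ and adjoin their limits. Consequently the construction of the ``two-valued'' relation extension that yields the contradiction has to be carried out with care, and this is the technical heart of the proof; the preceding reductions are routine once the sequences and the common limit point $v$ are in hand.
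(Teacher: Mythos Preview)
Your overall strategy coincides with the paper's: assume non--uniform continuity, extract sequences $(s)_S(n),(s')_{S'}(n)$ with $d_1((s)_S(n),(s')_{S'}(n))<1/n$ but $d_2$-gap at least $(\epsilon)_E$, use total boundedness and Theorem~\ref{bolzano} to obtain a limit point $v$ in an extension of the domain, and then invoke extensibility to reach a contradiction.

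However, you are making the ``decisive step'' much harder than necessary because you use only \emph{function persistence}, whereas the definition of \emph{extensible} is stronger: it requires every relation extension to be a \emph{continuous} function, not merely a function. The paper exploits this directly. Once you have the extension $(f_1)_{F_1}$ on $((x_2)_{X_2},(d_3)_{D_3},(r_3)_{R_3})$, continuity of $(f_1)_{F_1}$ at $v$ gives a $(\delta)_{E_3}>0$ with $((d_4)_{D_4}((f_1)_{F_1}(v),(f_1)_{F_1}(c)))_{R_4}<(\epsilon)_E/2$ for all $c$ in the $(\delta)_{E_3}$-ball about $v$. Since $v$ is a limit point of the $a_n$'s, one can choose a single index $n_1$ so large that both $(g)_G(a_{n_1})$ and $(g)_G(b_{n_1})$ lie in this ball (the second via $d_1(a_{n_1},b_{n_1})<1/n_1$ and the triangle inequality). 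Then the triangle inequality in the codomain yields $d_2(f(a_{n_1}),f(b_{n_1}))<(\epsilon)_E$, the desired contradiction. No convergence of image sequences, no second limit value, and no appeal to function persistence beyond what continuity already gives.

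So your ``main obstacle'' is self-imposed: reread the definition of extensibility and use the continuity clause; the argument then closes in two lines, exactly as in the paper.
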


\begin{proof}

If $(x)_{X}$ is finite the desired result follows immediately so we will just consider the case where $(x)_{X}$ is infinite.

Suppose $(f)_{F}$ not uniformly continuous.
There exists $(\epsilon)_{E_1}\overset{\mathbb{R}}{\underset{ (e_0,S_0)} {> } }0$ such that for all $(\delta)_{E_2}\overset{\mathbb{R}}{\underset{ (e_0,S_0)} {> } }0$ there exist $a$ and $b$ with $((d_1)_{D_1}(a,b))_{R_1}\overset{\mathbb{R}}{\underset{ (e_0,S_0)} {<} }(\delta)_{E_2}$ with  $((d_2)_{D_2}((f)_{F}(a),(f)_{F}(b)))_{R_2}\overset{\mathbb{R}}{\underset{ (e_0,S_0)} {\geq } }(\epsilon)_{E_1}$.
There exists a function $(s)_{S}:\mathbb{N}\overset{}{\underset{ (e_0,S_0)} {\rightarrow } }(x)_{X} \times (x)_{X}$ such that if $(s)_{S}(n)\overset{}{\underset{ (e_0,S_0)} {= } }(a_n,b_n)$ then $((d_1)_{D_1}(a_n,b_n))_{R_1}\overset{\mathbb{R}}{\underset{ (e_0,S_0)} {<} }1/n$ and  $((d_2)_{D_2}((f)_{F}(a_n),(f)_{F}(b_n)))_{R_2}\overset{\mathbb{R}}{\underset{ (e_0,S_0)} {\geq } }(\epsilon)_{E_1}$.  
Since $(x)_{X}$ is totally bounded then by theorem \ref{bolzano} the set $(l)_{L}\overset{}{\underset{ (e_0,S_0)} {= } }\{ a_n \mid (s)_{S}(n)\overset{}{\underset{ (e_0,S_0)} {= } }(a_n,b_n)$   \text{ for some } $b_n\overset{}{\underset{ (e_0,S_0)} {\in } }(x)_{X}\}$ has a limit point $t$ in some extension $((x_1)_{X_1},(d_3)_{D_3},(r_3)_{R_3} )_{(e_0,S_0)}$ of  $((x)_{X},(d_1)_{D_1},(r_1)_{R_1})_{(e_0,S_0)}$ via some $(g)_{G}$. So by extensibility of $(f)_{F}$ we obtain a relation extension \\ $( (f_1)_{F_1},( (x_1)_{X_1},(d_3)_{D_3},(r_3)_{R_3}),( (y_1)_{Y_1},(d_4)_{D_4},(r_4)_{R_4} ))_{(e_0,S_0)}$ of \\ $( (f)_{F},( (x)_{X},(d_1)_{D_1},(r_1)_{R_1}),( (y)_{Y},(d_2)_{D_2},(r_2)_{R_2} ))_{(e_0,S_0)}$ via some $( (g)_{G},(h)_{H} )$. Since extensions of $(f)_{F}$ are continuous, we see that there exists $(\delta)_{E_3}\overset{\mathbb{R}}{\underset{ (e_0,S_0)} {> } }0$ such that for any $c$ in a ball of radius $(\delta)_{E_3}$ centered at $t$ we have $((d_4)_{D_4}((f_1)_{F_1}(t),(f_1)_{F_1}(c)))_{R_4}\overset{\mathbb{R}}{\underset{ (e_0,S_0)} {< } }(\epsilon)_{E_1}/2$. Since $t$ is a limit point of $(l)_{L}$ there exist $c_1\overset{}{\underset{ (e_0,S_0)} {= } }(g)_{G}(k_1)$ and $c_2\overset{}{\underset{ (e_0,S_0)} {= } }(g)_{G}(k_2)$ with 
$k_1\overset{}{\underset{ (e_0,S_0)} {\in } }(l)_{L}$ and $(s)_{S}(n_1)\overset{}{\underset{ (e_0,S_0)} { =} }(k_1,k_2)$ for some $n_1$ such that $((d_3)_{D_3}(c_1,t))_{R_3}\overset{\mathbb{R}}{\underset{ (e_0,S_0)} {< } }(\delta)_{E_3}/2$ and $((d_3)_{D_3}(c_2,t))_{R_3}\overset{\mathbb{R}}{\underset{ (e_0,S_0)} {< } }(\delta)_{E_3}/2$ which implies that $((d_1)_{D_1}(k_1,k_2))_{R_1}\overset{}{\underset{ (e_0,S_0)} {< } }(\delta)_{E_3}$ and $((d_2)_{D_2}((f)_{F}(k_1),(f)_{F}(k_2)))_{R_2}\overset{\mathbb{R}}{\underset{ (e_0,S_0)} {< } }(\epsilon)_{E_1}$  which gives us a contradiction. So $(f)_{F}$ must be uniformly continuous.

\end{proof}


\begin{thm} \label{uniconnected}
Given
\begin{itemize}
	\item metric spaces $( (x)_{X},(d_1)_{D_1},(r_1)_{R_1} )_{(e_0,S_0)}$ and $( (y)_{Y},(d_2)_{D_2},(r_2)_{R_2} )_{(e_0,S_0)}$
	\item function $(f)_{F}:(x)_{X}\overset{}{\underset{ (e_0,S_0)} {\rightarrow } }(y)_{Y}$
\end{itemize}
if $(f)_{F}$ is uniformly continuous over $(( (x)_{X},(d_1)_{D_2},(r_1)_{R_1} ),( (y)_{Y},(d_2)_{D_2},(r_2)_{R_2} ))_{(e_0,S_0)}$ and \\ $( (x)_{X},(d_1)_{D_1},(r_1)_{R_1} )_{(e_0,S_0)}$ is connected  then $(f)_{F}((x)_{X})$ is connected over $( (y)_{Y},(d_2)_{D_2},(r_2)_{R_2} )_{(e_0,S_0)}$.
\end{thm}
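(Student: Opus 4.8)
The plan is to adapt the classical fact that a uniformly continuous image of a connected set is connected, taking care to work with the metric (Hausdorff-distance) notion of connectedness in play here. First I would unfold the definition of connectedness of $(f)_F((x)_X)$: I take arbitrary non-empty sets $(a)_A$ and $(b)_B$ over $(e_0,S_0)$ with $(a)_A \cup (b)_B \overset{}{\underset{(e_0,S_0)}{=}} (f)_F((x)_X)$ and $(a)_A \cap (b)_B \overset{}{\underset{(e_0,S_0)}{=}} \emptyset$, and aim to show that the distance between $(a)_A$ and $(b)_B$ over $((y)_Y,(d_2)_{D_2},(r_2)_{R_2})_{(e_0,S_0)}$ is $0$. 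Using (comprehension) I would pull this partition back through $(f)_F$, forming $(a')_{A'} \overset{}{\underset{(e_0,S_0)}{=}} \{ x \mid (f)_F(x) \overset{}{\underset{(e_0,S_0)}{\in}} (a)_A \}$ and $(b')_{B'} \overset{}{\underset{(e_0,S_0)}{=}} \{ x \mid (f)_F(x) \overset{}{\underset{(e_0,S_0)}{\in}} (b)_B \}$. Since $(f)_F$ is single-valued and $(a)_A,(b)_B$ are disjoint and cover the image, these preimages are disjoint and cover $(x)_X$; each is non-empty because any point of $(a)_A$ or $(b)_B$ lies in $(f)_F((x)_X)$ and hence has a preimage point. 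Applying connectedness of $((x)_X,(d_1)_{D_1},(r_1)_{R_1})_{(e_0,S_0)}$ to this partition yields that the distance between $(a')_{A'}$ and $(b')_{B'}$ is $0$.

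Next I would invoke uniform continuity. Fixing an arbitrary real number $(\epsilon)_{E_1} \overset{\mathbb{R}}{\underset{(e_0,S_0)}{>}} 0$, let $(\delta)_{E_2} \overset{\mathbb{R}}{\underset{(e_0,S_0)}{>}} 0$ be the corresponding modulus. Because the greatest lower bound of the pairwise $(d_1)_{D_1}$-distances between $(a')_{A'}$ and $(b')_{B'}$ equals $0$, the positive number $(\delta)_{E_2}$ cannot be a lower bound, so there exist $x_1 \overset{}{\underset{(e_0,S_0)}{\in}} (a')_{A'}$ and $x_2 \overset{}{\underset{(e_0,S_0)}{\in}} (b')_{B'}$ with $((d_1)_{D_1}(x_1,x_2))_{R_1} \overset{\mathbb{R}}{\underset{(e_0,S_0)}{<}} (\delta)_{E_2}$. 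Uniform continuity then gives $((d_2)_{D_2}((f)_F(x_1),(f)_F(x_2)))_{R_2} \overset{\mathbb{R}}{\underset{(e_0,S_0)}{<}} (\epsilon)_{E_1}$, where $(f)_F(x_1) \overset{}{\underset{(e_0,S_0)}{\in}} (a)_A$ and $(f)_F(x_2) \overset{}{\underset{(e_0,S_0)}{\in}} (b)_B$. Thus the set of pairwise $(d_2)_{D_2}$-distances between $(a)_A$ and $(b)_B$ has an element strictly below $(\epsilon)_{E_1}$.

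Finally I would conclude: the distances are non-negative (from the metric axioms one derives $((d_2)_{D_2}(p,q))_{R_2} \overset{\mathbb{R}}{\underset{(e_0,S_0)}{\geq}} 0$ via the triangle inequality and $((d_2)_{D_2}(p,p))_{R_2} \overset{\mathbb{R}}{\underset{(e_0,S_0)}{=}} 0$), so $0$ is a lower bound, and since for every positive $(\epsilon)_{E_1}$ the distance set dips below it, $0$ is the greatest lower bound. Hence the distance between $(a)_A$ and $(b)_B$ is $0$, and as the partition was arbitrary, $(f)_F((x)_X)$ is connected over $((y)_Y,(d_2)_{D_2},(r_2)_{R_2})_{(e_0,S_0)}$. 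The step demanding the most care is the passage from ``distance equals $0$'' to ``there is a pair within $(\delta)_{E_2}$'': this is precisely the greatest-lower-bound property, justified by noting that any positive candidate lying strictly above the infimum fails to be a lower bound; everything else is routine bookkeeping with (comprehension) and the single-valuedness of $(f)_F$.
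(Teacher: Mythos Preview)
Your proposal is correct and follows essentially the same approach as the paper: pull back an arbitrary disjoint cover of the image to a disjoint cover of the domain, use connectedness of the domain to get arbitrarily close preimage points, and use uniform continuity to push the closeness forward. The only cosmetic difference is that the paper argues by contradiction (assuming a partition of the image with strictly positive distance $(s)_{S}$ and deriving a contradiction), whereas you argue directly that every partition has distance $0$; the underlying mechanism is identical.
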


\begin{proof}
	
Suppose $(f)_{F}( (x)_{X} )$ is not connected. Then there exist $(c_1)_{C_1}\overset{}{\underset{ (e_0,S_0)} {\neq } }\emptyset$ and $(c_2)_{C_2}\overset{}{\underset{ (e_0,S_0)} {\neq } }\emptyset$ such that $(c_1)_{C_1}\cup (c_2)_{C_2} \overset{}{\underset{ (e_0,S_0)} {= } }(f)_{F}( (x)_{X} )$, $(c_1)_{C_1}\cap (c_2)_{C_2} \overset{}{\underset{ (e_0,S_0)} {= } }\emptyset$ and such that the distance between $(c_1)_{C_1}$ and $(c_2)_{C_2}$ is equal to some real number $(s)_{S}\overset{\mathbb{R}}{\underset{ (e_0,S_0)} {> } }0$. 
Let $(p_1)_{P_1}\overset{}{\underset{ (e_0,S_0)} {= } }(f)_{F}^{-1}( (c_1)_{C_1} )$ and $(p_2)_{P_2}\overset{}{\underset{ (e_0,S_0)} {= } }(f)_{F}^{-1}( (c_2)_{C_2} )$.
Evidently $(p_1)_{P_1}\overset{}{\underset{ (e_0,S_0)} {\neq } }\emptyset$, $(p_2)_{P_2}\overset{}{\underset{ (e_0,S_0)} {\neq } }\emptyset$, $(p_1)_{P_1}\cap (p_2)_{P_2} \overset{}{\underset{ (e_0,S_0)} {= } }\emptyset$  and $(p_1)_{P_1}\cup (p_2)_{P_2}\overset{}{\underset{ (e_0,S_0)} {= } }(x)_{X}$.
By uniform continuity there exists a real number $(t)_{T}\overset{\mathbb{R}}{\underset{ (e_0,S_0)} {> } }0$ such that for any $a_1\overset{}{\underset{ (e_0,S_0)} {\in } }(x)_{X}$ and any $a_2\overset{}{\underset{ (e_0,S_0)} {\in } }(x)_{X}$  with $((d_1)_{D_1}(a_1,a_2))_{R_1}\overset{\mathbb{R}}{\underset{ (e_0,S_0)} {< } }(t)_{T} $ we have $((d_2)_{D_2}((f)_{F}(a_1),(f)_{F}(a_2)))_{R_2}\overset{\mathbb{R}}{\underset{ (e_0,S_0)} {< } }(s)_{S}$.
By connectedness of $(x)_{X}$ there exist $a_1\overset{}{\underset{ (e_0,S_0)} {\in } }(p_1)_{P_1}$ and $a_2\overset{}{\underset{ (e_0,S_0)} {\in } }(p_2)_{P_2}$ such that $((d_1)_{D_1}(a_1,a_2))_{R_1}\overset{\mathbb{R}}{\underset{ (e_0,S_0)} {< } }(t)_{T} $ so we get a contradiction and must conclude that $(f)_{F}( (x)_{X} )$ is connected.
	
\end{proof}


\begin{thm} \label{unitotal}
Given
\begin{itemize}
	\item metric spaces $( (x)_{X},(d_1)_{D_1},(r_1)_{R_1} )_{(e_0,S_0)}$ and $( (y)_{Y},(d_2)_{D_2},(r_2)_{R_2} )_{(e_0,S_0)}$
	\item function $(f)_{F}:(x)_{X}\overset{}{\underset{ (e_0,S_0)} {\rightarrow } }(y)_{Y}$
\end{itemize}
if $(f)_{F}$ is uniformly continuous over $(( (x)_{X},(d_1)_{D_1},(r_1)_{R_1} ),( (y)_{Y},(d_2)_{D_2},(r_2)_{R_2} ))_{(e_0,S_0)}$ and \\ $( (x)_{X},(d_1)_{D_1},(r_1)_{R_1} )_{(e_0,S_0)}$ is totally bounded then $(f)_{F}( (x)_{X} )$ is totally bounded over \\ $( (y)_{Y},(d_2)_{D_2},(r_2)_{R_2} )_{(e_0,S_0)}$.

\end{thm}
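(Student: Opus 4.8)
The plan is to adapt the classical argument that the uniformly continuous image of a totally bounded set is totally bounded to the ``finite cover by open balls'' definition used here. Fix an arbitrary real number $(\epsilon)_{E_1}\overset{\mathbb{R}}{\underset{ (e_0,S_0)} {> } }0$. First I would invoke uniform continuity of $(f)_{F}$, but applied to $(\epsilon)_{E_1}/2$ rather than to $(\epsilon)_{E_1}$ itself: this yields $(\delta)_{E_2}\overset{\mathbb{R}}{\underset{ (e_0,S_0)} {> } }0$ such that for any $x_1\overset{}{\underset{ (e_0,S_0)} {\in } }(x)_{X}$ and any $x_2\overset{}{\underset{ (e_0,S_0)} {\in } }(x)_{X}$, if $((d_1)_{D_1}(x_1,x_2))_{R_1}\overset{\mathbb{R}}{\underset{ (e_0,S_0)} {< } }(\delta)_{E_2}$ then $((d_2)_{D_2}((f)_{F}(x_1),(f)_{F}(x_2)))_{R_2}\overset{\mathbb{R}}{\underset{ (e_0,S_0)} {< } }(\epsilon)_{E_1}/2$. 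Taking the half-radius at this stage, rather than at the very end, is exactly what will let me finish with balls of radius \emph{strictly} less than $(\epsilon)_{E_1}$, as the definition of total boundedness requires.

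Next I would exploit total boundedness of $( (x)_{X},(d_1)_{D_1},(r_1)_{R_1} )_{(e_0,S_0)}$ at radius $(\delta)_{E_2}$: this provides finitely many open balls, with centers $a_1,\dots,a_n\overset{}{\underset{ (e_0,S_0)} {\in } }(x)_{X}$ and each of radius less than $(\delta)_{E_2}$, whose union contains $(x)_{X}$. The crucial local estimate is a direct comparison to the center: if $x\overset{}{\underset{ (e_0,S_0)} {\in } }(x)_{X}$ lies in the covering ball centered at $a_i$, then $((d_1)_{D_1}(x,a_i))_{R_1}\overset{\mathbb{R}}{\underset{ (e_0,S_0)} {< } }(\delta)_{E_2}$, so by the choice of $(\delta)_{E_2}$ we get $((d_2)_{D_2}((f)_{F}(x),(f)_{F}(a_i)))_{R_2}\overset{\mathbb{R}}{\underset{ (e_0,S_0)} {< } }(\epsilon)_{E_1}/2$. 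In other words, $(f)_{F}$ sends the part of $(x)_{X}$ inside the $i$-th covering ball into the open ball $\beta( (f)_{F}(a_i),(\epsilon)_{E_1}/2 )$ centered at $(f)_{F}(a_i)\overset{}{\underset{ (e_0,S_0)} {\in } }(y)_{Y}$ over $( (y)_{Y},(d_2)_{D_2},(r_2)_{R_2} )_{(e_0,S_0)}$.

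Finally I would assemble the target cover. Using (comprehension) I form the set $(c)_{C}$ over $(e_0,S_0)$ whose elements represent exactly the open balls $\beta( (f)_{F}(a_i),(\epsilon)_{E_1}/2 )$ for $i\overset{}{\underset{ (e_0,S_0)} {\in } }\mathbb{N}_{\leq n}$; this set is finite because the index set $\mathbb{N}_{\leq n}$ is finite. Since every point of $(x)_{X}$ lies in one of the $n$ covering balls, the preceding estimate shows that every point of $(f)_{F}( (x)_{X} )$ lies in the corresponding ball $\beta( (f)_{F}(a_i),(\epsilon)_{E_1}/2 )$; hence $(c)_{C}$ is a finite cover of $(f)_{F}( (x)_{X} )$ by open balls of radius $(\epsilon)_{E_1}/2$, which is strictly less than $(\epsilon)_{E_1}$. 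As $(\epsilon)_{E_1}$ was arbitrary, this is precisely total boundedness of $(f)_{F}( (x)_{X} )$ over $( (y)_{Y},(d_2)_{D_2},(r_2)_{R_2} )_{(e_0,S_0)}$. I expect no deep obstacle; the only steps needing care are the $(\epsilon)_{E_1}/2$ bookkeeping (so that the final radius inequality is strict) and the formal construction of the finite image cover via (comprehension), together with the observation that the centers of the covering balls are genuine points of $(x)_{X}$ to which $(f)_{F}$ may legitimately be applied.
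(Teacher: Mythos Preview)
Your proposal is correct and follows essentially the same approach as the paper: choose $\delta$ from uniform continuity, take a finite $\delta$-cover of the domain from total boundedness, and push it forward to a finite $\epsilon$-cover of the image. Your version is in fact slightly more careful than the paper's in two places: you apply uniform continuity at $(\epsilon)_{E_1}/2$ so that the final covering balls have radius \emph{strictly} less than $(\epsilon)_{E_1}$ (as the definition requires), and you cover by genuine open balls $\beta((f)_{F}(a_i),(\epsilon)_{E_1}/2)$ in $(y)_{Y}$ rather than by the direct images $(f)_{F}((t)_{C_1})$, which are not literally balls.
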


\begin{proof}

Let $(\epsilon)_{E_1}\overset{\mathbb{R}}{\underset{ (e_0,S_0)} {> } }0$ be some real number over $(e_0,S_0)$. By uniform continuity there exists $(\delta)_{E_2}\overset{\mathbb{R}}{\underset{ (e_0,S_0)} {> } }0$ such that for any $x_1\overset{}{\underset{ (e_0,S_0)} {\in } }(x)_{X}$ and any $x_2\overset{}{\underset{ (e_0,S_0)} {\in } }(x)_{X}$ with $((d_1)_{D_1}(x_1,x_2))_{R_1}\overset{\mathbb{R}}{\underset{ (e_0,S_0)} {< } }(\delta)_{E_2}$ we have $((d_2)_{D_2}((f)_{F}(x_1),(f)_{F}(x_2)))_{R_2}\overset{\mathbb{R}}{\underset{ (e_0,S_0)} {< } }(\epsilon)_{E_1}$.
Since $(x)_{X}$ is totally bounded then there exists $(c_1)_{C_1}$ which covers $(x)_{X}$ by a finite amount of balls of radius less than $(\delta)_{E_1}$. Let $(c_2)_{C_2}$ be a set such that

\begin{itemize}
	\item $\forall t\overset{}{\underset{ (e_0,S_0)} {\in } }(c_1)_{C_1} \exists k\overset{}{\underset{ (e_0,S_0)} {\in } }(c_2)_{C_2} ( (k)_{C_2}\overset{}{\underset{ (e_0,S_0)} {= } }(f)_{F}( (t)_{C_1} ) )$
	\item $\forall k\overset{}{\underset{ (e_0,S_0)} {\in } }(c_2)_{C_2} \exists t\overset{}{\underset{ (e_0,S_0)} {\in } }(c_1)_{C_1} ( (k)_{C_2}\overset{}{\underset{ (e_0,S_0)} {= } }(f)_{F}( (t)_{C_1} ) )$
	\item $\forall k_1\overset{}{\underset{ (e_0,S_0)} {\in } }(c_2)_{C_2} \forall k_2\overset{}{\underset{ (e_0,S_0)} {\in } }(c_2)_{C_2} ( (k_1)_{C_2}\overset{}{\underset{ (e_0,S_0)} {= } }(k_2)_{C_2} \Rightarrow k_1=k_2)$
\end{itemize}

We see that $(c_2)_{C_2}$ covers $(f)_{F}( (x)_{X} )$ by a finite amount of balls of radius less than $(\epsilon)_{E_1}$. So $(f)_{F}( (x)_{X} )$ is totally bounded.  
		

\end{proof}


\begin{thm} \label{locallocal}
Given
\begin{itemize}
	\item metric spaces $( (x)_{X},(d_1)_{D_1},(r_1)_{R_1} )_{(e_0,S_)}$ and $( (y)_{Y},(d_2)_{D_2},(r_2)_{R_2} )_{(e_0,S_0)}$
	\item function $(f)_{F}:(x)_{X}\overset{}{\underset{ (e_0,S_0)} {\rightarrow } }(y)_{Y}$
\end{itemize}
if $( (x)_{X},(d_1)_{D_1},(r_1)_{R_1} )_{(e_0,S_)}$ is both locally connected and locally totally bounded, and \\ $(f)_{F}:(x)_{X}\overset{}{\underset{ (e_0,S_0)} {\rightarrow } }(y)_{Y}$ is extensible then $(f)_{F}((x)_{X})$ is connected over $( (y)_{Y},(d_2)_{D_2},(r_2)_{R_2} )_{(e_0,S_0)}$.

\end{thm}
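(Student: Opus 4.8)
The plan is to argue by contradiction, re-running locally the Bolzano/extensibility engine used in Theorem \ref{exttotaluni} so that neither total boundedness nor connectedness of all of $(x)_{X}$ is required. Suppose $(f)_{F}((x)_{X})$ is not connected. Then there are non-empty disjoint sets $(c_1)_{C_1}$ and $(c_2)_{C_2}$ over $(e_0,S_0)$ with $(c_1)_{C_1}\cup (c_2)_{C_2}\overset{}{\underset{(e_0,S_0)}{=}}(f)_{F}((x)_{X})$ whose distance is some real number $(s)_{S}\overset{\mathbb{R}}{\underset{(e_0,S_0)}{>}}0$. Writing $(p_1)_{P_1}$ and $(p_2)_{P_2}$ for the preimages $(f)_{F}^{-1}((c_1)_{C_1})$ and $(f)_{F}^{-1}((c_2)_{C_2})$, non-emptiness of the $(c_i)_{C_i}$ yields points $a_1^*\overset{}{\underset{(e_0,S_0)}{\in}}(p_1)_{P_1}$ and $a_2^*\overset{}{\underset{(e_0,S_0)}{\in}}(p_2)_{P_2}$.

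First I would fix a bounded open ball $(B)_{B}$ centered at $a_1^*$ of radius exceeding $((d_1)_{D_1}(a_1^*,a_2^*))_{R_1}$, so that $(B)_{B}$ contains both $a_1^*$ and $a_2^*$. Being bounded, $(B)_{B}$ is connected by local connectedness and totally bounded by local total boundedness. Since $(B)_{B}\cap (p_1)_{P_1}$ and $(B)_{B}\cap (p_2)_{P_2}$ are non-empty, disjoint, and cover $(B)_{B}$, connectedness of $(B)_{B}$ forces the distance between them to be $0$; hence for each $n$ I can choose $a_n\overset{}{\underset{(e_0,S_0)}{\in}}(B)_{B}\cap (p_1)_{P_1}$ and $b_n\overset{}{\underset{(e_0,S_0)}{\in}}(B)_{B}\cap (p_2)_{P_2}$ with $((d_1)_{D_1}(a_n,b_n))_{R_1}\overset{\mathbb{R}}{\underset{(e_0,S_0)}{<}}1/n$. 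If both $\{a_n\}$ and $\{b_n\}$ were finite, some pair would repeat for infinitely many $n$, forcing the two paired points to have distance $0$ and hence to coincide, contradicting disjointness of the preimages; so, after possibly exchanging roles, the totally bounded set $(l)_{L}\overset{}{\underset{(e_0,S_0)}{=}}\{a_n\}$ is infinite.

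Next I would apply Theorem \ref{bolzano} to $(l)_{L}$ to obtain an extension $((x_1)_{X_1},(d_3)_{D_3},(r_3)_{R_3})_{(e_0,S_0)}$ of $((x)_{X},(d_1)_{D_1},(r_1)_{R_1})_{(e_0,S_0)}$ via some $(g)_{G}$ together with a limit point $t$ of $(g)_{G}((l)_{L})$. Extensibility of $(f)_{F}$ then supplies, and (by function persistence) forces to be a continuous function, a relation extension $(t')_{T'}:(x_1)_{X_1}\to (y_1)_{Y_1}$ of $(f)_{F}$ via some $((g)_{G},(h)_{H})$, where $((y_1)_{Y_1},(d_4)_{D_4},(r_4)_{R_4})_{(e_0,S_0)}$ extends $((y)_{Y},(d_2)_{D_2},(r_2)_{R_2})_{(e_0,S_0)}$ via the isometry $(h)_{H}$. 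I would then choose $(\delta)$ from continuity of $(t')_{T'}$ at $t$ so that $(d_3)_{D_3}$-distance less than $(\delta)$ from $t$ forces $(d_4)_{D_4}$-distance less than $(s)_{S}/2$ from $(t')_{T'}(t)$. The key point is that, $t$ being a limit point of the infinite set $(g)_{G}((l)_{L})$, every ball about $t$ contains infinitely many of its points, so the index set $\{n : ((d_3)_{D_3}((g)_{G}(a_n),t))_{R_3}<(\delta)/4\}$ is infinite and I may pick an $n_1$ in it with $n_1$ so large that $1/n_1<(\delta)/4$. Then $(g)_{G}(a_{n_1})$ lies within $(\delta)$ of $t$, and via the triangle inequality together with $((d_3)_{D_3}((g)_{G}(a_{n_1}),(g)_{G}(b_{n_1})))_{R_3}=((d_1)_{D_1}(a_{n_1},b_{n_1}))_{R_1}<1/n_1$ so does $(g)_{G}(b_{n_1})$.

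Finally, continuity gives $((d_4)_{D_4}((t')_{T'}((g)_{G}(a_{n_1})),(t')_{T'}((g)_{G}(b_{n_1}))))_{R_4}\overset{\mathbb{R}}{\underset{(e_0,S_0)}{<}}(s)_{S}$. Since $(t')_{T'}$ is a relation extension of $(f)_{F}$ via $((g)_{G},(h)_{H})$ we have $(t')_{T'}((g)_{G}(a_{n_1}))\overset{}{\underset{(e_0,S_0)}{=}}(h)_{H}((f)_{F}(a_{n_1}))$ and likewise for $b_{n_1}$, and because $(h)_{H}$ is an isometry this distance equals $((d_2)_{D_2}((f)_{F}(a_{n_1}),(f)_{F}(b_{n_1})))_{R_2}$. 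But $(f)_{F}(a_{n_1})\overset{}{\underset{(e_0,S_0)}{\in}}(c_1)_{C_1}$ and $(f)_{F}(b_{n_1})\overset{}{\underset{(e_0,S_0)}{\in}}(c_2)_{C_2}$, so that distance is at least $(s)_{S}$, which is the desired contradiction; hence $(f)_{F}((x)_{X})$ is connected. The main obstacle is exactly this last coordination: securing a single index $n_1$ for which $(g)_{G}(a_{n_1})$ is near $t$ while simultaneously $1/n_1$ is small enough to drag $(g)_{G}(b_{n_1})$ near $t$ as well. The observation that a limit point of an infinite set is approached by infinitely many of its points (hence through arbitrarily large indices) is what resolves it, and it is precisely this that lets the purely local hypotheses deliver a global conclusion.
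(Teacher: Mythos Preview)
Your argument is correct, but it takes a more hands-on route than the paper. The paper also picks an open ball $(b)_{B}$ meeting both preimages and uses that $(b)_{B}$ is connected and totally bounded, but then it simply restricts $(f)_{F}$ to $(b)_{B}$ and invokes Theorem~\ref{exttotaluni} (extensible plus totally bounded implies uniformly continuous) followed by Theorem~\ref{uniconnected} (uniformly continuous plus connected domain implies connected image) to conclude that $(f)_{F}((b)_{B})$ is connected, hence the distance between $(c_1)_{C_1}$ and $(c_2)_{C_2}$ is $0$. You instead unpack the Bolzano/continuity-at-a-limit-point mechanism of Theorem~\ref{exttotaluni} inline, building the pair of sequences, extracting a limit point in an extension, and contradicting the gap $(s)_{S}$ directly.

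What each buys: the paper's proof is shorter and fully modular, but it tacitly applies Theorem~\ref{exttotaluni} to the \emph{restriction} $(f)_{F}|(b)_{B}$, which requires extensibility of that restriction rather than of $(f)_{F}$ itself---a point the paper does not pause to justify. Your version sidesteps this by invoking extensibility of $(f)_{F}$ on all of $(x)_{X}$ (which is exactly the hypothesis given) when producing the continuous relation extension, so no auxiliary claim about restrictions is needed. The cost is a longer, more delicate argument, particularly the bookkeeping that secures a single index $n_1$ with $(g)_{G}(a_{n_1})$ close to $t$ and $1/n_1$ small simultaneously; your observation that a limit point of an infinite set is approached by infinitely many of its points handles this correctly.
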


\begin{proof}

Let $(c_1)_{C_1}\overset{}{\underset{ (e_0,S_0)} {\neq } }\emptyset$ and $(c_2)_{C_2}\overset{}{\underset{ (e_0,S_0)} {\neq } }\emptyset$ be such that $(c_1)_{C_1}\cap (c_2)_{C_2} \overset{}{\underset{ (e_0,S_0)} {= } }\emptyset$ and $(c_1)_{C_1} \cup (c_2)_{C_2} \overset{}{\underset{ (e_0,S_0)} {= } }(f)_{F}((x)_{X})$.
Let $(p_1)_{P_1}\overset{}{\underset{ (e_0,S_0)} {= } }(f)_{F}^{-1}( (c_1)_{C_1} )$ and $(p_2)_{P_2}\overset{}{\underset{ (e_0,S_0)} {= } }(f)_{F}^{-1}( (c_2)_{C_2} )$.
We can choose $x_1\overset{}{\underset{ (e_0,S_0)} {\in } }(x)_{X}$ and real number $(s)_{S}\overset{\mathbb{R}}{\underset{ (e_0,S_0)} {> } }0$ such that the open ball $(b)_{B}$ of radius $(s)_{S}$ centered at $x_1$ intersects both $(p_1)_{P_1}$ and $(p_2)_{P_2}$.
Since $(x)_{X}$ is locally connected then $(b)_{B}$ is connected and since $(x)_{X}$ is locally totally bounded then $(b)_{B}$ is totally bounded.
Let $(g)_{G}:(b)_{B} \overset{}{\underset{ (e_0,S_0)} {\rightarrow } }(y)_{Y}$ be the function obtained by restricting $(f)_{F}$ to $(b)_{B}$. By theorem \ref{exttotaluni} it follows that $(g)_{G}$ is uniformly continuous and by theorem \ref{uniconnected} it follows that $(g)_{G}( (b)_{B} )$ is connected which implies that the distance between $(c_1)_{C_1}$ and $(c_2)_{C_2}$ is 0. So $(f)_{F}( (x)_{X} )$ must be connected.


\end{proof}

The next theorem shows that an extensible function on a totally bounded domain must be image extensible.

\begin{thm} \label{imgextensible}
Given
\begin{itemize}
	\item metric spaces $( (x)_{X},(d_1)_{D_1},(r_1)_{R_1} )_{(e_0,S_0)}$ and $( (y)_{Y},(d_2)_{D_2},(r_2)_{R_2} )_{(e_0,S_0)}$
	\item function $(f)_{F}:(x)_{X}\overset{}{\underset{ (e_0,S_0)} {\rightarrow } }(y)_{Y}$
\end{itemize}
if $( (x)_{X},(d_1)_{D_1},(r_1)_{R_1} )_{(e_0,S_0)}$ is totally bounded and $(f)_{F}:(x)_{X}\overset{}{\underset{ (e_0,S_0)} {\rightarrow } }(y)_{Y}$ is extensible over $(( (x)_{X},(d_1)_{D_1},(r_1)_{R_1} ),( (y)_{Y},(d_2)_{D_2},(r_2)_{R_2} ))_{(e_0,S_0)}$ then $(f)_{F}:(x)_{X}\overset{}{\underset{ (e_0,S_0)} {\rightarrow } }(y)_{Y}$ is image extensible over $(( (x)_{X},(d_1)_{D_1},(r_1)_{R_1} ),( (y)_{Y},(d_2)_{D_2},(r_2)_{R_2} ))_{(e_0,S_0)}$.
\end{thm}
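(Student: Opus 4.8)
The plan is to verify the two clauses in the definition of image extensibility. The first clause---function persistence---is literally the first clause in the definition of extensibility, so it holds for free by hypothesis; the entire content therefore lies in the second clause. \textbf{My strategy} for that clause is to take an arbitrary extension of the image and \emph{pull it back} to an extension of the domain, using total boundedness to manufacture the required limit points and uniform continuity to keep the pullback coherent.

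First I would record the structural facts I intend to exploit. Since $( (x)_{X},(d_1)_{D_1},(r_1)_{R_1} )_{(e_0,S_0)}$ is totally bounded and $(f)_{F}$ is extensible, Theorem \ref{exttotaluni} tells us that $(f)_{F}$ is uniformly continuous over $(( (x)_{X},(d_1)_{D_1},(r_1)_{R_1} ),( (y)_{Y},(d_2)_{D_2},(r_2)_{R_2} ))_{(e_0,S_0)}$. This is the fact that lets distances in the domain control distances in the image, and lets convergent image sequences be tracked by their preimages. I would then fix an arbitrary extension $( (y_2)_{Y_2},(d_3)_{D_3},(r_3)_{R_3} )_{(e_0,S_0)}$ of $( (f)_{F}((x)_{X}),(d_2)_{D_2}|(f)_{F}((x)_{X}),(r_2)_{R_2} )_{(e_0,S_0)}$ via some $(g)_{G}$, and split $(y_2)_{Y_2}$ into the embedded image $(g)_{G}((f)_{F}((x)_{X}))$ and the genuinely new points. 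By the definition of extension, every new point $w$ is a limit point of the embedded image, so there is a sequence $a_n$ in $(x)_{X}$ whose image under $(g)_{G}\circ(f)_{F}$ converges to $w$.

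Next I would build the domain extension. For each new $w$, the pulled-back sequence lives in the totally bounded space $(x)_{X}$, so Theorem \ref{bolzano} supplies a limit point $\xi_w$ of that sequence in an extension of $(x)_{X}$, realized (exactly as in the proof of that theorem) by setting the distance from $\xi_w$ to an old point $z$ equal to $\lim_{n\rightarrow+\infty}((d_1)_{D_1}(a_n,z))_{R_1}$. Collecting the embedding $(h)_{H}$ of $(x)_{X}$ together with one such $\xi_w$ for each new $w$, and defining $(d_4)_{D_4}$ by these limiting distances, produces the candidate extension $( (x_2)_{X_2},(d_4)_{D_4},(r_4)_{R_4} )_{(e_0,S_0)}$. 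I would then define the surjection $(z)_{Z}:(x_2)_{X_2}\overset{}{\underset{ (e_0,S_0)} {\rightarrow } }(y_2)_{Y_2}$ by $(z)_{Z}((h)_{H}(a))\overset{}{\underset{ (e_0,S_0)} {= } }(g)_{G}((f)_{F}(a))$ on old points and $(z)_{Z}(\xi_w)\overset{}{\underset{ (e_0,S_0)} {= } }w$ on new points; surjectivity is then immediate, the old image points being hit through $(f)_{F}$ and the new points by their associated $\xi_w$.

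Finally I would check that the resulting object is a relation extension of $( (f)_{F},\dots )_{(e_0,S_0)}$ via $( (h)_{H},(g)_{G} )$. The first relation-extension clause is the identity $(z)_{Z}\circ(h)_{H}=(g)_{G}\circ(f)_{F}$, true by construction; for the second clause one must approximate each new graph point $(\xi_w,w)$ by the embedded graph points $( (h)_{H}(a_n),(g)_{G}((f)_{F}(a_n)) )$, which holds because $(h)_{H}(a_n)\rightarrow\xi_w$ in $(x_2)_{X_2}$ by the limiting-distance definition and $(g)_{G}((f)_{F}(a_n))\rightarrow w$ in $(y_2)_{Y_2}$ by the choice of sequence. \textbf{The main obstacle} I anticipate is precisely the well-definedness and coherence of $(d_4)_{D_4}$: one must ensure the defining limits exist, that the new points genuinely appear as limit points of $(h)_{H}((x)_{X})$ so that $(x_2)_{X_2}$ is a bona fide extension, and that the two approximations in the second clause can be made to hold simultaneously and uniformly along a common sequence. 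This is exactly where the uniform continuity of Theorem \ref{exttotaluni} and the Bolzano--Weierstrass construction of Theorem \ref{bolzano} are indispensable, and where total boundedness of the domain enters essentially (to produce the limit points at all).
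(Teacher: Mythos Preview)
Your proposal is correct and follows essentially the same route as the paper: take the new points of the image extension, pull each one back to a Cauchy sequence in the totally bounded domain (the paper does this directly ``by choice and countability of words'' where you invoke Theorem \ref{bolzano}), adjoin one new domain point per new image point with distances defined as limits along those Cauchy sequences, and set up the obvious bijection between the adjoined domain points and the adjoined image points. The paper likewise notes that extensibility gives function persistence, so the constructed relation extension is a function.

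One small remark: you lean on uniform continuity (Theorem \ref{exttotaluni}) as ``indispensable'' for the coherence of $(d_4)_{D_4}$, but the paper's construction does not actually invoke it. The limiting distances are well defined simply because the chosen domain sequences are Cauchy (triangle inequality), and the graph approximation holds because the image of a subsequence of a convergent sequence still converges to the same limit. Uniform continuity is a true consequence of the hypotheses, so citing it is harmless, but it is not doing the work here.
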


\begin{proof}

Let $( (y_1)_{Y_1},(d_3)_{D_3},(r_3)_{R_3} )_{(e_0,S_0)}$ be an extension of \\ $( (f)_{F}((y)_{Y}),(d_2)_{D_2}|(f)_{F}( (y)_{Y} ),(r_2)_{R_2} )_{(e_0,S_0)}$ via some $(g)_{G}$ and let $(p)_{P}$ be such that $(y_1)_{Y_1}\overset{}{\underset{ (e_0,S_0)} {= } }(f)_{F}( (y)_{Y} ) \cup (p)_{P}$. 

Since $(x)_{X}$ is totally bounded then there exists (by choice and countability of words) a function $(q)_{Q}:(p)_{P}\overset{}{\underset{ (e_0,S_0)} {\rightarrow } }(s)_{S}$ such that for every $b\overset{}{\underset{ (e_0,S_0)} {\in } }(p)_{P}$, $((q)_{Q}(b))_{S}:\mathbb{N}\overset{}{\underset{ (e_0,S_0)} {\rightarrow } }(x)_{X}$ is Cauchy-convergent and $(g)_{G} ( (f)_{F} ( ( (q)_{Q}(b))_{S} ) ) : \mathbb{N} \overset{}{\underset{ (e_0,S_0)} {\rightarrow } }(y_1)_{Y_1}$ converges to $b$.

Since $(f)_{F}$ is extensible then it is also function persistent so any of its relation extensions must be a function. So we can construct a relation extension \\ $((f_1)_{F_1}, ((x_1)_{X_1},(d_4)_{D_4},(r_4)_{R_4} ),((y_1)_{Y_1},(d_3)_{D_3},(r_3)_{R_3} ))_{(e_0,S_0)}$ of \\ $( (f)_{F},((x)_{X},(d_1)_{D_1},(r_1)_{R_1} ),((f)_{F}((y)_{Y}),(d_2)_{D_2}|(f)_{F}( (y)_{Y} ),(r_2)_{R_2}))_{(e_0,S_0)}$ via some $((h)_{H},(g)_{G})$
\\ with $(x_1)_{X_1} \overset{}{\underset{ (e_0,S_0)} {= } }(h)_{H}( (x)_{X} ) \cup (t)_{T}$ where
\begin{itemize}
	\item $(f_1)_{F_1}|(t)_{T}:(t)_{T}\overset{}{\underset{ (e_0,S_0)} {\rightarrow } }(p)_{P}$ is a bijective function
	\item $\forall a\overset{}{\underset{ (e_0,S_0)} {\in } }(x)_{X} \forall b\overset{}{\underset{ (e_0,S_0)} {\in } }(p)_{P} ( ((d_4)_{D_4}( (h)_{H}(a), ((f_1)_{F_1}|(t)_{T})^{-1}(b) ))_{R_4} \overset{\mathbb{R}}{\underset{ (e_0,S_0)} {= } }$ \\ ${\underset{n\rightarrow +\infty}{\text{lim}}} ((d_1)_{D_1}( a,((q)_{Q}(b))_{S}(n) ) )_{R_1} )$
	\item $\forall b_1\overset{}{\underset{ (e_0,S_0)} {\in } }(p)_{P} \forall b_2\overset{}{\underset{ (e_0,S_0)} {\in } }(p)_{P} ( ((d_4)_{D_4}( ((f_1)_{F_1}|(t)_{T})^{-1}(b_1), ((f_1)_{F_1}|(t)_{T})^{-1}(b_2) ))_{R_4} \overset{\mathbb{R}}{\underset{ (e_0,S_0)} {= } }$ \\ $  {\underset{n\rightarrow +\infty}{\text{lim}}} ((d_1)_{D_1}( ((q)_{Q}(b_1))_{S}(n),((q)_{Q}(b_2))_{S}(n) ))_{R_1} )$
\end{itemize} 
We conclude that $(f)_{F}:(x)_{X}\overset{}{\underset{ (e_0,S_0)} {\rightarrow } }(y)_{Y}$ is image extensible over \\ $(( (x)_{X},(d_1)_{D_1},(r_1)_{R_1} ),( (y)_{Y},(d_2)_{D_2},(r_2)_{R_2} ))_{(e_0,S_0)}$.


\end{proof}


\begin{cor}(Intermediate value theorem)
Given
\begin{itemize}
	\item metric space $( (x)_{X},(d_1)_{D_1},(r_1)_{R_1} )_{(e_0,S_0)}$
	\item real-induced metric space $( (r_2)_{R_2},(d_2)_{D_2},(r_2)_{R_2} )_{(e_0,S_0)}$
	\item real function $(f)_{F}:(x)_{X}\overset{}{\underset{ (e_0,S_0)} {\rightarrow } }(r_2)_{R_2}$
\end{itemize}
if
\begin{itemize}
	\item $( (x)_{X},(d_1)_{D_2},(r_1)_{R_1} )_{(e_0,S_0)}$ is totally bounded
	\item $( (x)_{X},(d_1)_{D_2},(r_1)_{R_1} )_{(e_0,S_0)}$ is connected
	\item $(f)_{F}:(x)_{X}\overset{}{\underset{ (e_0,S_0)} {\rightarrow } }(r_2)_{R_2}$ is extensible over $(( (x)_{X},(d_1)_{D_1},(r_1)_{R_1} ),( (r_2)_{R_2},(d_2)_{D_2},(r_2)_{R_2} ))_{(e_0,S_0)}$
\end{itemize}
then if $(c)_{C}$ is a real number over $(e_0,S_0)$ such that there exist $a$ and $b$ in the image of $(f)_{F}$ with $(a)_{R_2}\overset{\mathbb{R}}{\underset{ (e_0,S_0)} {< } }(c)_{C}\overset{\mathbb{R}}{\underset{ (e_0,S_0)} {< } }(b)_{R_2}$ then there exist $(c_1)_{R_3}\overset{\mathbb{R}}{\underset{ (e_0,S_0)} {= } }(c)_{C}$ and a relation extension $( (f_1)_{F_1},( (x_1)_{X_1},(d_4)_{D_4},(r_4)_{R_4}),( (r_3)_{R_3},(d_3)_{D_3},(r_3)_{R_3} ))_{(e_0,S_0)}$ of \\ $((f)_{F},( (x)_{X},(d_1)_{D_1},(r_1)_{R_1} ),( (f)_{F}( (x)_{X} ),(d_2)_{D_2}|(f)_{F}( (x)_{X} ),(r_2)_{R_2} ))_{(e_0,S_0)}$ via some $( (g)_{G},(h)_{H} )$ \\ with $c_1\overset{}{\underset{ (e_0,S_0)} {\in } }(f_1)_{F_1}( (x_1)_{X_1} )$ and where $( (r_3)_{R_3},(d_3)_{D_3},(r_3)_{R_3} )_{(e_0,S_0)}$ is a real-induced metric space.
\end{cor}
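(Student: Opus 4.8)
The plan is to route the classical intermediate value theorem through the structural theorems already proved for extensible maps on totally bounded spaces, and then to promote the intermediate value from a \emph{limit point} of the image to a genuinely attained value using image extensibility. First I would apply theorem \ref{exttotaluni}: since $((x)_{X},(d_1)_{D_1},(r_1)_{R_1})_{(e_0,S_0)}$ is totally bounded and $(f)_{F}$ is extensible, $(f)_{F}$ is uniformly continuous over the given pair of metric spaces. With uniform continuity in hand, theorem \ref{uniconnected} combined with the connectedness of $(x)_{X}$ yields that $(f)_{F}((x)_{X})$ is connected over $((r_2)_{R_2},(d_2)_{D_2},(r_2)_{R_2})_{(e_0,S_0)}$.

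Next I would use this connectedness to locate $(c)_{C}$ as a limit point of the image. If some element of $(f)_{F}((x)_{X})$ already equals $(c)_{C}$ as a real number, the conclusion follows by taking trivial (identity) extensions. Otherwise I would partition $(f)_{F}((x)_{X})$ into the set $(a_1)_{A_1}$ of image points strictly below $(c)_{C}$ and the set $(b_1)_{B_1}$ of image points strictly above $(c)_{C}$; since $a$ and $b$ witness that both pieces are non-empty and their union is all of $(f)_{F}((x)_{X})$ (as $(c)_{C}$ is unattained and the order is total on the reals), connectedness forces the (Haussdorf) distance between $(a_1)_{A_1}$ and $(b_1)_{B_1}$ to be $0$. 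Because the metric on the codomain is real-induced, distance zero means that for every $(\epsilon)_{E}\overset{\mathbb{R}}{\underset{ (e_0,S_0)} {> } }0$ there are image points straddling $(c)_{C}$ within $(\epsilon)_{E}$ of one another, and this exhibits $(c)_{C}$ as a limit point of $(f)_{F}((x)_{X})$.

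With $(c)_{C}$ realized as a limit point, I would build a real-induced extension $((r_3)_{R_3},(d_3)_{D_3},(r_3)_{R_3})_{(e_0,S_0)}$ of $((f)_{F}((x)_{X}),(d_2)_{D_2}|(f)_{F}((x)_{X}),(r_2)_{R_2})_{(e_0,S_0)}$ obtained by adjoining a single new real number $(c_1)_{R_3}$ with $(c_1)_{R_3}\overset{\mathbb{R}}{\underset{ (e_0,S_0)} {= } }(c)_{C}$; this is a legitimate extension precisely because $(c)_{C}$ is a limit point, and it remains real-induced since the underlying set is still a real set carrying the absolute-value metric. Finally I would invoke theorem \ref{imgextensible}: total boundedness of $(x)_{X}$ together with extensibility of $(f)_{F}$ makes $(f)_{F}$ image extensible, so applying the definition of image extensibility to the extension $((r_3)_{R_3},(d_3)_{D_3},(r_3)_{R_3})_{(e_0,S_0)}$ produces an extension $((x_1)_{X_1},(d_4)_{D_4},(r_4)_{R_4})_{(e_0,S_0)}$ of $(x)_{X}$ via some $(h)_{H}$ together with a surjective relation extension $(f_1)_{F_1}$ via $((g)_{G},(h)_{H})$. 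Surjectivity onto $(r_3)_{R_3}$ guarantees a point of $(x_1)_{X_1}$ mapping to $(c_1)_{R_3}$, so $c_1\overset{}{\underset{ (e_0,S_0)} {\in } }(f_1)_{F_1}((x_1)_{X_1})$, which is exactly the asserted conclusion.

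The main obstacle I anticipate is the middle step: rigorously converting the connectedness statement (Haussdorf distance zero between the two halves of the image) into the assertion that $(c)_{C}$ is a genuine limit point, and then verifying that adjoining $(c)_{C}$ produces a bona fide real-induced extension meeting the limit-point requirement in the definition of extension. Reconciling the orientation of the domain and codomain arguments between the output of image extensibility and the relation extension demanded in the statement also takes some care, but it follows directly once the surjective relation extension has been produced.
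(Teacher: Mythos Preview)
Your proposal is correct and follows essentially the same route as the paper: invoke theorems \ref{exttotaluni} and \ref{uniconnected} to get that $(f)_{F}((x)_{X})$ is connected, split the image at $(c)_{C}$ to see that the two pieces are at distance zero, adjoin $(c)_{C}$ to obtain a real-induced extension of the image, and then apply theorem \ref{imgextensible} to pull this back to a relation extension hitting $c_1$. The paper's partition uses $<(c)_{C}$ versus $\geq (c)_{C}$ rather than your strict split, but this is cosmetic.
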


\begin{proof}

By theorem \ref{exttotaluni} and by theorem \ref{uniconnected} we see that $(f)_{F}( (x)_{X})$ is connected. Let $(c)_{C}$ be a real number over $(e_0,S_0)$ such that there exist $a$ and $b$ in the image of $(f)_{F}$ with $(a)_{R_2}\overset{\mathbb{R}}{\underset{ (e_0,S_0)} {< } }(c)_{C}\overset{\mathbb{R}}{\underset{ (e_0,S_0)} {< } }(b)_{R_2}$. Let $(s_1)_{S_1}\overset{}{\underset{ (e_0,S_0)} { =} }\{ e\overset{}{\underset{ (e_0,S_0)} {\in } }(f)_{F}( (x)_{X} )  \mid (e)_{R_2}\overset{\mathbb{R}}{\underset{ (e_0,S_0)} {< } }(c)_{C} \}$ and let $(s_2)_{S_2}\overset{}{\underset{ (e_0,S_0)} { =} }\{ e\overset{}{\underset{ (e_0,S_0)} {\in } }(f)_{F}( (x)_{X} )  \mid (e)_{R_2}\overset{\mathbb{R}}{\underset{ (e_0,S_0)} {\geq } }(c)_{C} \}$. Since $(f)_{F} ( (x)_{X} )$ is connected then the distance between $(s_1)_{S_1}$ and $(s_2)_{S_2}$ is 0, so we can construct an extension
 $((r_3)_{R_3},(d_3)_{D_3},(r_3)_{R_3} )_{(e_0,S_0)}$ of $( (f)_{F}( (x)_{X} ),(d_2)_{D_2}|(f)_{F}( (x)_{X} ),(r_2)_{R_2} )_{(e_0,S_0)}$ via some $(h)_{H}$ such that there exists $c_1\overset{}{\underset{ (e_0,S_0)} {\in } }(r_3)_{R_3}$ with $(c_1)_{R_3}\overset{\mathbb{R}}{\underset{ (e_0,S_0)} {= } }(c)_{C}$.
Using theorem \ref{imgextensible} we obtain a relation extension \\ $( (f_1)_{F_1},( (x_1)_{X_1},(d_4)_{D_4},(r_4)_{R_4}),( (r_3)_{R_3},(d_3)_{D_3},(r_3)_{R_3} ))_{(e_0,S_0)}$ of \\ $((f)_{F},( (x_1)_{X_1},(d_1)_{D_1},(r_1)_{R_1} ),( (f)_{F}( (x)_{X} ),(d_2)_{D_2}|(f)_{F}( (x)_{X} ),(r_2)_{R_2} ))_{(e_0,S_0)}$ via some $( (g)_{G},(h)_{H} )$ with $c_1\overset{}{\underset{ (e_0,S_0)} {\in } }(f_1)_{F_1}( (x_1)_{X_1} )$.
		

\end{proof}

 
 \begin{cor} (Extreme value theorem)
 Given
 \begin{itemize}
 	\item metric space $( (x)_{X},(d_1)_{D_1},(r_1)_{R_1} )_{(e_0,S_0)}$
 	\item real-induced metric space $( (r_2)_{R_2},(d_2)_{D_2},(r_2)_{R_2} )_{(e_0,S_0)}$
 	\item real function $(f)_{F}:(x)_{X}\overset{}{\underset{ (e_0,S_0)} {\rightarrow } }(r_2)_{R_2}$
 \end{itemize}
 if $( (x)_{X},(d_1)_{D_1},(r_1)_{R_1} )_{(e_0,S_0)}$ is totally bounded and $(f)_{F}$ is extensible over \\ $(( (x)_{X},(d_1)_{D_1},(r_1)_{R_1} ),( (r_2)_{R_2},(d_2)_{D_2},(r_2)_{R_2}))_{(e_0,S_0)}$ then there exists a relation extension \\ $((f_1)_{F_1},((x_1)_{X_1},(d_4)_{D_4},(r_4)_{R_4}),((r_3)_{R_3},(d_3)_{D_3},(r_3)_{R_3}) )_{(e_0,S_0)}$ of \\ $((f)_{F},( (x)_{X},(d_1)_{D_1},(r_1)_{R_1} ),( (r_2)_{R_2},(d_2)_{D_2},(r_2)_{R_2} ) )_{(e_0,S_0)}$ via some $( (g)_{G},(h)_{H} )$ such that there exists $s\overset{}{\underset{ (e_0,S_0)} {\in } }(f_1)_{F_1}( (x_1)_{X_1} )$ with $(s)_{Y_1}\overset{\mathbb{R}}{\underset{ (e_0,S_0)} {= } }sup (f)_{F}( (x)_{X} )$ where $( (r_3)_{R_3},(d_3)_{D_3},(r_3)_{R_3} )_{(e_0,S_0)}$ is a real-induced metric space.  
 \end{cor}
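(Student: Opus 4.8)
The plan is to mirror the intermediate value corollary above, with the supremum playing the role that the intermediate value $(c)_{C}$ played there, and with no appeal to connectedness. First I would use the hypotheses to locate the target value. Since $(x)_{X}$ is totally bounded and $(f)_{F}$ is extensible, theorem \ref{exttotaluni} makes $(f)_{F}$ uniformly continuous over $(((x)_{X},(d_1)_{D_1},(r_1)_{R_1}),((r_2)_{R_2},(d_2)_{D_2},(r_2)_{R_2}))_{(e_0,S_0)}$, and theorem \ref{unitotal} then forces $(f)_{F}((x)_{X})$ to be totally bounded over $((r_2)_{R_2},(d_2)_{D_2},(r_2)_{R_2})_{(e_0,S_0)}$. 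A totally bounded real set is bounded (a finite cover by balls of radius $1$ caps the pairwise distances), so it admits an upper bound, and the least-upper-bound theorem proved earlier for non-empty bounded real sets yields a real number over $(e_0,S_0)$ equal to $\sup (f)_{F}((x)_{X})$. (If $(x)_{X}$ is empty the conclusion is vacuous, so I take $(x)_{X}$ non-empty.)

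Next I would build the target extension in which this supremum is realized. I construct a real-induced metric space $((r_3)_{R_3},(d_3)_{D_3},(r_3)_{R_3})_{(e_0,S_0)}$ that is an extension of $((f)_{F}((x)_{X}),(d_2)_{D_2}|(f)_{F}((x)_{X}),(r_2)_{R_2})_{(e_0,S_0)}$ via some $(h)_{H}$: if $\sup (f)_{F}((x)_{X})$ is already attained inside $(f)_{F}((x)_{X})$ I take the trivial extension, and otherwise I adjoin a single new point $s$ with $(s)_{R_3}\overset{\mathbb{R}}{\underset{(e_0,S_0)}{=}}\sup (f)_{F}((x)_{X})$ and all distances inherited from the real absolute value, so that $((r_3)_{R_3},(d_3)_{D_3},(r_3)_{R_3})_{(e_0,S_0)}$ is automatically real-induced. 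The one genuinely substantive verification here is that $s$ really is a limit point of $(f)_{F}((x)_{X})$, which is exactly the statement that the supremum of a set not containing it is approached arbitrarily closely from below; this legitimizes the extension.

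Finally I would transport this extension across $(f)_{F}$. Because $(x)_{X}$ is totally bounded and $(f)_{F}$ is extensible, theorem \ref{imgextensible} gives that $(f)_{F}$ is image extensible, and applying image extensibility to the extension $((r_3)_{R_3},(d_3)_{D_3},(r_3)_{R_3})_{(e_0,S_0)}$ of the image produces an extension $((x_1)_{X_1},(d_4)_{D_4},(r_4)_{R_4})_{(e_0,S_0)}$ of $((x)_{X},(d_1)_{D_1},(r_1)_{R_1})_{(e_0,S_0)}$ via some $(g)_{G}$ together with a surjective function $(f_1)_{F_1}:(x_1)_{X_1}\overset{}{\underset{(e_0,S_0)}{\rightarrow}}(r_3)_{R_3}$ for which $((f_1)_{F_1},((x_1)_{X_1},(d_4)_{D_4},(r_4)_{R_4}),((r_3)_{R_3},(d_3)_{D_3},(r_3)_{R_3}))_{(e_0,S_0)}$ is the required relation extension via $((g)_{G},(h)_{H})$. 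Surjectivity is the decisive feature: every element of $(r_3)_{R_3}$, and in particular the point $s$ carrying the value $\sup (f)_{F}((x)_{X})$, lies in $(f_1)_{F_1}((x_1)_{X_1})$, which is precisely the assertion to be proved. The main obstacle is thus isolated in the extension-building step — certifying that adjoining the supremum produces a valid real-induced extension — since the final passage is essentially a direct invocation of image extensibility, exactly as in the intermediate value corollary.
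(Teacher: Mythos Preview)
Your proposal is correct and follows essentially the same route as the paper: apply Theorem~\ref{exttotaluni} to get uniform continuity, then Theorem~\ref{unitotal} to get total boundedness (hence boundedness, hence existence of the supremum), and finish via Theorem~\ref{imgextensible}. The paper's proof compresses the last step into the single phrase ``using theorem~\ref{imgextensible} we obtain the desired result,'' whereas you spell out the intermediate construction of the real-induced extension adjoining the supremum and the subsequent invocation of image extensibility with its surjectivity; this is exactly what the paper leaves implicit.
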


 \begin{proof}
 
 			From extensibility and total boundedness we see by theorem \ref{exttotaluni} that $(f)_{F}$ is uniformly continuous. From uniform continuity and total boundedness we see by theorem \ref{unitotal} that $(f)_{F}((x)_{X})$ is totally bounded so $(f)_{F}( (x)_{X} )$ is bounded so $sup (f)_{F}( (x)_{X} )$ exists. Using theorem \ref{imgextensible} we obtain the desired result.

 \end{proof}

\section{Measure and integration}

We take the view that if two sets have the same closure (in a given metric space) then they convey the ``same information'' in a measure-theoretic setting. Since completeness of certain metric spaces is unachievable we see why that view is suitable. Now we adapt that view to define ``metric rings'' (of sets) and reformulate the notion of measure.

\begin{definition}
Let $((x)_{X},(d)_{D},(r)_{R})_{(e_0,S_0)}$ be a metric space and $(g)_{G}$ a set of subsets of $(x)_{X}$ over $(e_0,S_0)$. If
\begin{itemize}
	\item $\forall e\overset{}{\underset{ (e_0,S_0)} {\in } }(g)_{G} \forall f\overset{}{\underset{ (e_0,S_0)} {\in } }(g)_{G} ( (e)_{G}\overset{}{\underset{ (e_0,S_0)} {= } }(f)_{G} \Rightarrow e=f )$
	\item $\forall e \forall f ( (e\overset{}{\underset{ (e_0,S_0)} {\in } }(g)_{G} \wedge f\overset{}{\underset{ (e_0,S_0)} {\in } }(g)_{G} ) \Rightarrow \exists h\overset{}{\underset{ (e_0,S_0)} {\in } }(g)_{G}( (h)_{G} \overset{}{\underset{ (e_0,S_0)} {= } } (e)_{G}\setminus (f)_{G}))$
	\item $\forall e \forall f ( (e\overset{}{\underset{ (e_0,S_0)} {\in } }(g)_{G} \wedge f\overset{}{\underset{ (e_0,S_0)} {\in } }(g)_{G} ) \Rightarrow \exists h\overset{}{\underset{ (e_0,S_0)} {\in } }(g)_{G}( (h)_{G} \overset{}{\underset{ (e_0,S_0)} {= } }(e)_{G}\cup (f)_{G}))$
	\item $\forall e(e\overset{}{\underset{ (e_0,S_0)} {\in } }(g)_{G}\Rightarrow \exists h\overset{}{\underset{ (e_0,S_0)} {\in } }(g)_{G} ( (h)_{G} \overset{}{\underset{ (e_0,S_0)} {= } } [(e)_{G}]))$
\end{itemize}
then we say that $( (g)_{G},(x)_{X},(d)_{D},(r)_{R})_{(e_0,S_0)}$ is a {\bf {\itshape metric ring}}.
\end{definition}

\begin{definition}
Given
\begin{itemize}
	\item metric ring $((g)_{G},(x)_{X},(d)_{D},(r_1)_{R_1})_{(e_0,S_0)}$
	\item real function $(m)_{M}:(g)_{G}\overset{}{\underset{ (e_0,S_0)} { \rightarrow} }(r_2)_{R_2}$
\end{itemize}
if
\begin{itemize}
	\item $((m)_{M}(a))_{R_2}\overset{\mathbb{R}}{\underset{ (e_0,S_0)} {\geq } }0$ for any $a\overset{}{\underset{ (e_0,S_0)} {\in } }(g)_{G}$
	\item $((m)_{M}([(a_1)_{G}]\cup [(a_2)_{G}]))_{R_2}+((m)_{M}([(a_1)_{G}]\cap [(a_2)_{G}]))_{R_2}\overset{\mathbb{R}}{\underset{ (e_0,S_0)} {= } }$ \\ $ ((m)_{M}([(a_1)_{G}]))_{R_2}+((m)_{M}([(a_2)_{G}]))_{R_2}$ for any $a_1\overset{}{\underset{ (e_0,S_0)} {\in } }(g)_{G}$ and any $a_2\overset{}{\underset{ (e_0,S_0)} {\in } }(g)_{G}$
	\item $((m)_{M}([(a_1)_{G}\setminus (a_2)_{G}]\cap [(a_2)_{G}]))_{R_2}\overset{\mathbb{R}}{\underset{ (e_0,S_0)} {= } }0$ for any $a_1\overset{}{\underset{ (e_0,S_0)} {\in } }(g)_{G}$ and any $a_2\overset{}{\underset{ (e_0,S_0)} {\in } }(g)_{G}$
	\item $((m)_{M}(a))_{R_2}\overset{\mathbb{R}}{\underset{ (e_0,S_0)}{=} }((m)_{M}([(a)_{G}]))_{R_2}$ for any $a\overset{}{\underset{ (e_0,S_0)} {\in } }(g)_{G}$
\end{itemize}
then we say that $( (m)_{M},(g)_{G},(r_2)_{R_2},(x)_{X},(d)_{D},(r_1)_{R_1} )_{e_0,S_0}$ is a {\bf {\itshape measure space}}.
\end{definition}

Now we define measurable sets.
\begin{definition}
Let $( (m)_{M},(g)_{G},(r_2)_{R_2},(x)_{X},(d)_{D},(r_1)_{R_1} )_{(e_0,S_0)}$ be a measure space and $(x_1)_{X_1}$ a subset of $(x)_{X}$ over $(e_0,S_0)$. If there exists a real number $(r_3)_{R_3}$ over $(e_0,S_0)$ such that for all $(\epsilon)_{E}\overset{\mathbb{R}}{\underset{ (e_0,S_0)} {> } }0$ there exist $a_1\overset{}{\underset{ (e_0,S_0)} {\in } }(g)_{G}$ and $a_2\overset{}{\underset{ (e_0,S_0)} {\in } }(g)_{G}$ with $[(a_1)_{G}]\overset{}{\underset{ (e_0,S_0)} {\subseteq } }[(x_1)_{X_1}]\overset{}{\underset{ (e_0,S_0)} {\subseteq } }[(a_2)_{G}]$ such that  $((m)_{M}( a_2 ))_{R_2}\overset{\mathbb{R}}{\underset{ (e_0,S_0)} {\leq } }((m)_{M}( a_1 ))_{R_2} + (\epsilon)_{E} $ and  
$((m)_{M}( a_1 ))_{R_2}\overset{\mathbb{R}}{\underset{ (e_0,S_0)} {\leq } }(r_3)_{R_3} \overset{\mathbb{R}}{\underset{ (e_0,S_0)} {\leq } } ((m)_{M}( a_2 ))_{R_2}$  
 then we say $(x_1)_{X_1}$ is {\bf {\itshape measurable over  $( (m)_{M},(g)_{G},(r_2)_{R_2},(x)_{X},(d)_{D},(r_1)_{R_1} )_{(e_0,S_0)}$}} and $(r_3)_{R_3}$ is the {\bf {\itshape measure of $(x_1)_{X_1}$ over  $( (m)_{M},(g)_{G},(r_2)_{R_2},(x)_{X},(d)_{D},(r_1)_{R_1} )_{(e_0,S_0)}$ }}.
\end{definition}

We see that the closure of a measurable set is measurable and the union of two measurable sets is also measurable. On the other hand, the set difference of two measurable sets is not necessarily measurable. For example it is possible to choose a measure space containing only finite unions of intervals and construct a Smith-Volterra-Cantor type of set which is non-measurable but is the set difference of two measurable sets. So it is not possible to ensure that ``adding'' measurable sets to a measure space will still induce a measure space.
 

The next definitions will be used to define integrability and state a simple integrability theorem.

\begin{definition}
Let $((m)_{M},(g)_{G},(r_2)_{R_2},(x)_{X},(d)_{D},(r_1)_{R_1})_{(e_0,S_0)}$ be a measure space. If
\begin{itemize}
	\item there exists a real-sufficient set $(r)_{R}$ over $(e_0,S_0)$ such that for any $x_1 \overset{}{\underset{ (e_0,S_0)} {\in } }(x)_{X}$ and any real $\epsilon \overset{}{\underset{ (e_0,S_0)} {\in } }(r)_{R}$ with $(\epsilon)_{R} \overset{\mathbb{R}}{\underset{ (e_0,S_0)} {> } }0$ we have  $\beta(x_1,(\epsilon)_{R})\overset{}{\underset{ (e_0,S_0)} {\in } }(g)_{G}$  (recall that $\beta(x_1,(\epsilon)_{R})$ is an open ball of radius $(\epsilon)_{R}$ centered at $x_1$)
	\item for any real number $(\epsilon)_{R_3}\overset{\mathbb{R}}{\underset{ (e_0,S_0)} {> } }0$ there exists a real number $(\delta)_{R_4}\overset{\mathbb{R}}{\underset{ (e_0,S_0)} {> } }0$ such that for any $x_1 \overset{}{\underset{ (e_0,S_0)} {\in } }(x)_{X}$ we have $((m)_{M}(\beta(x_1,(\delta)_{R_4})))_{R_2}\overset{\mathbb{R}}{\underset{ (e_0,S_0)} {< } }(\epsilon)_{R_3}$
\end{itemize}
then we say that $((m)_{M},(g)_{G},(r_2)_{R_2},(x)_{X},(d)_{D},(r_1)_{R_1})_{(e_0,S_0)}$ is a {\bf {\itshape standard measure space}}.
\end{definition}

\begin{definition}
Given
\begin{itemize}
	\item measure space $((m)_{M},(g)_{G},(r_2)_{R_2},(x)_{X},(d)_{D},(r_1)_{R_1})_{(e_0,S_0)}$
	\item subset $(a)_{A}$ of $(x)_{X}$ over $(e_0,S_0)$
	\item finite set $(p)_{P}$ over $(e_0,S_0)$
\end{itemize}
if
\begin{itemize}
	\item $\forall x_1 \overset{}{\underset{ (e_0,S_0)} {\in } }(p)_{P}(x_1 \overset{}{\underset{ (e_0,S_0)} {\in } }(g)_{G})$
	\item $[\underset{x_1\overset{}{\underset{ (e_0,S_0)} {\in } }(p)_{P}}{\bigcup}(x_1)_{G}]\overset{}{\underset{ (e_0,S_0)} {= } }[(a)_{A}]$
	\item $\forall x_1\overset{}{\underset{ (e_0,S_0)} {\in } }(p)_{P} \forall x_2\overset{}{\underset{ (e_0,S_0)} {\in } }(p)_{P}(x_1\neq x_2 \Rightarrow ((m)_{M}([x_1]\cap[x_2]))_{R_2}\overset{\mathbb{R}}{\underset{ (e_0,S_0)} {= } }0)$
\end{itemize}
then we say that $(p)_{P}$ is a {\bf {\itshape partition of $(a)_{A}$  over   $((m)_{M},(g)_{G},(r_2)_{R_2},(x)_{X},(d)_{D},(r_1)_{R_1})_{(e_0,S_0)}$}}.
\end{definition}

Note that if $(p)_{P}$ is a partition of $(a)_{A}$ over $((m)_{M},(g)_{G},(r_2)_{R_2},(x)_{X},(d)_{D},(r_1)_{R_1})_{(e_0,S_0)}$ then we see that $\sum_{p_i\overset{}{\underset{ (e_0,S_0)} {\in } }(p)_{P}} ( (m)_{M}( p_i ) )_{R_2} \overset{\mathbb{R}}{\underset{ (e_0,S_0)} {= } } ( (m)_{M} (a_1) )_{R_2}$ where $[(a_1)_{G}]\overset{}{\underset{ (e_0,S_0)} {= } }[(a)_{A}]$.

\begin{definition}
Given
\begin{itemize} 
	\item real function $(f)_{F}:(a)_{A}\overset{}{\underset{ (e_0,S_0)} { \rightarrow} }(r_1)_{R_1}$
	\item measure space $((m)_{M},(g)_{G},(r_3)_{R_3},(x)_{X},(d)_{D},(r_2)_{R_2})_{(e_0,S_0)}$
\end{itemize}
if for any real number $(\epsilon)_{E}\overset{\mathbb{R}}{\underset{ (e_0,S_0)} {> } }0$ there exists a partition $(p)_{P}\overset{}{\underset{ (e_0,S_0)} {=} }\{p_1,p_2,\ldots ,p_n\}$ of $(a)_{A}$ over $((m)_{M},(g)_{G},(r_3)_{R_3},(x)_{X},(d)_{D},(r_2)_{R_2})_{(e_0,S_0)}$ and real numbers \\  $(u)_{U},(u_1)_{U},(u_2)_{U},\dots ,(u_n)_{U}, (l)_{L},(l_1)_{L},(l_2)_{L}, \ldots ,(l_n)_{L}$ all over $(e_0,S_0)$  such that  
$$(u_i)_{U}\overset{\mathbb{R}}{\underset{ (e_0,S_0)} {= } }\underset{b \underset{(e_0,S_0)}{\in}(p_i)_{G}}{sup}((f)_{F}(b))_{R_1} \text{ and } (l_i)_{L}\overset{\mathbb{R}}{\underset{ (e_0,S_0)} {= } }\underset{b \underset{(e_0,S_0)}{\in}(p_i)_{G}}{inf}((f)_{F}(b))_{R_1}$$

$$(u)_{U}\overset{\mathbb{R}}{\underset{ (e_0,S_0)} {= } }\sum_{i=1}^{n}{(u_i)_{U}((m)_{M}(p_i))_{R_3}} \text{ and } (l)_{L}\overset{\mathbb{R}}{\underset{ (e_0,S_0)} {= } }\sum_{i=1}^{n}{(l_i)_{L}((m)_{M}(p_i))_{R_3}}$$

$$(u)_{U}-(l)_{L}\overset{\mathbb{R}}{\underset{ (e_0,S_0)} {< } }(\epsilon)_{E}$$
	
then we say that $(f)_{F}:(a)_{A}\overset{}{\underset{ (e_0,S_0)} { \rightarrow} }(r_1)_{R_1}$ is {\bf {\itshape simply integrable over }}\\ $((m)_{M},(g)_{G},(r_3)_{R_3},(x)_{X},(d)_{D},(r_2)_{R_2})_{(e_0,S_0)}$.
\end{definition}


\begin{thm}
Given
\begin{itemize} 
	\item measure space $((m)_{M},(g)_{G},(r_3)_{R_3},(x)_{X},(d)_{D},(r_2)_{R_2})_{(e_0,S_0)}$
	\item $(a)_{A}\overset{}{\underset{ (e_0,S_0)} {\subseteq } }(x)_{X}$
	\item real-induced metric space $( (r_1)_{R_1},(d_1)_{D_1},(r_1)_{R_1} )_{(e_0,S_0)}$
	\item real function $(f)_{F}:(a)_{A}\overset{}{\underset{ (e_0,S_0)} {\rightarrow} }(r_1)_{R_1}$
\end{itemize}
if
\begin{itemize}
	\item $(f)_{F}:(a)_{A}\overset{}{\underset{ (e_0,S_0)} { \rightarrow} }(r_1)_{R_1}$ is extensible over \\ $( ( (a)_{A},(d)_{D}|(a)_{A},(r_2)_{R_2} ),( (r_1)_{R_1},(d_1)_{D_1},(r_1)_{R_1} ))_{(e_0,S_0)}$
	\item $((m)_{M},(g)_{G},(r_3)_{R_3},(x)_{X},(d)_{D},(r_2)_{R_2})_{(e_0,S_0)}$ is a standard measure space
	\item there exists $a_1\overset{}{\underset{ (e_0,S_0)} {\in } }(g)_{G}$ such that $(a_1)_{G}\overset{}{\underset{ (e_0,S_0)} {= } }[(a)_{A}]$
	\item $(a)_{A}$ is totally bounded over $((x)_{X},(d)_{D},(r_2)_{R_2})_{(e_0,S_0)}$
\end{itemize}
then $(f)_{F}:(a)_{A}\overset{}{\underset{ (e_0,S_0)} { \rightarrow} }(r_1)_{R_1}$ is simply integrable over \\ $((m)_{M},(g)_{G},(r_3)_{R_3},(x)_{X},(d)_{D},(r_2)_{R_2})_{(e_0,S_0)}$.
\end{thm}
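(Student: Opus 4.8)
The plan is to mimic the classical proof that a continuous function on a compact set is Riemann integrable, with total boundedness standing in for compactness and the metric ring $(g)_G$ supplying the admissible blocks of a partition. First I would invoke theorem \ref{exttotaluni}: since $(a)_A$ is totally bounded over $((x)_X,(d)_D,(r_2)_{R_2})_{(e_0,S_0)}$ and $(f)_F$ is extensible over $(((a)_A,(d)_D|(a)_A,(r_2)_{R_2}),((r_1)_{R_1},(d_1)_{D_1},(r_1)_{R_1}))_{(e_0,S_0)}$, the function $(f)_F$ is uniformly continuous on $(a)_A$. Writing $M$ for $((m)_M(a_1))_{R_2}$, which is a genuine real number because $a_1\overset{}{\underset{(e_0,S_0)}{\in}}(g)_G$ and $(m)_M$ is a real function, I would fix an arbitrary $(\epsilon)_E$ with $(\epsilon)_E>0$ and choose a real $\epsilon'>0$ satisfying $\epsilon'(M+1)<(\epsilon)_E$. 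Uniform continuity then furnishes a $\delta>0$ such that $((d)_D(b,b'))_{R_2}<\delta$ forces $((d_1)_{D_1}((f)_F(b),(f)_F(b')))_{R_1}<\epsilon'$.

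Next I would use total boundedness of $(a)_A$ to cover it by finitely many open balls $\beta(c_1,\rho),\dots,\beta(c_k,\rho)$ of a common radius $\rho<\delta/2$, with $\rho$ taken in the real-sufficient radius set guaranteed by the standard measure space hypothesis, so that each ball lies in $(g)_G$. Intersecting each ball with $a_1=[(a)_A]$ and disjointifying by telescoping, $p_1=\beta(c_1,\rho)\cap a_1$ and $p_{i+1}=(\beta(c_{i+1},\rho)\cap a_1)\setminus(p_1\cup\cdots\cup p_i)$, produces pairwise disjoint members of $(g)_G$, since a metric ring is closed under intersection (as $e\cap f=e\setminus(e\setminus f)$), difference and union. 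Because the balls cover $(a)_A\subseteq a_1$, one checks $(a)_A\subseteq\bigcup_i p_i\subseteq a_1$, so $[\bigcup_i (p_i)_G]=[(a)_A]$ and the $p_i$ form a partition of $(a)_A$; the note following the partition definition then gives $\sum_i((m)_M(p_i))_{R_2}=M$. On each $p_i$, which lies in a set of diameter below $\delta$, the oscillation of $(f)_F$ is at most $\epsilon'$, hence $u_i-l_i\le\epsilon'$, and summing yields $u-l=\sum_i(u_i-l_i)((m)_M(p_i))_{R_2}\le\epsilon' M<(\epsilon)_E$, which is exactly the defining inequality for simple integrability.

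The main obstacle is not the estimate but the bookkeeping required to make this partition legitimate in the reformulated framework, and it splits into two delicate points. First, the pieces $p_i$ are carved out of the closure $a_1$, whereas $(f)_F$ is only defined on $(a)_A$, so I must ensure that each $u_i,l_i$ is read from points of $p_i$ genuinely lying in $(a)_A$; the clean resolution is to apply image extensibility (theorem \ref{imgextensible}, available since $(a)_A$ is totally bounded and $(f)_F$ is extensible) to extend $(f)_F$ continuously to the relevant limit points, so that the supremum and infimum over $(p_i)_G$ are attained as honest reals with the same oscillation bound. Second, the partition definition demands $((m)_M([x_1]\cap[x_2]))_{R_2}=0$ for distinct pieces, yet disjoint sets may have closures meeting along the spheres $\{x:((d)_D(x,c_i))_R=\rho\}$; here I would use that, $(m)_M$ being real-valued and monotone under closure-inclusion, each nested family $\rho\mapsto\beta(c_i,\rho)$ has positive-measure spheres for only countably many radii, so $\rho$ can be chosen (still below $\delta/2$ and in the real-sufficient set) to avoid every jump value of all finitely many centers, making the interface null. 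Both points are exactly where the metric-ring and standard-measure-space structure, rather than the classical argument, carries the weight.
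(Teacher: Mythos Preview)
Your overall strategy matches the paper's: invoke theorem \ref{exttotaluni} to get uniform continuity, cover $(a)_A$ by finitely many small balls lying in $(g)_G$ via standardness, disjointify inside the metric ring, and bound the upper-minus-lower sum. The paper disjointifies by forming the atoms $(\bigcap_{i\in c}b_i)\setminus\bigcup_{j\notin c}b_j$ over nonempty $c\subseteq\{1,\dots,n\}$ (with $b_i=t_i\cap a_1$) rather than your telescope, but either construction works and the final estimate is identical. Your first concern about the domain of $(f)_F$ is a fair reading of the definitions but minor; the paper does not detour through image extensibility, it simply reads $\sup((f)_F((k_i)_K))$ over the points of each piece that lie in $(a)_A$, and since every piece sits inside a small ball the oscillation bound is unaffected.

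Your second ``delicate point'' is where the argument breaks. You do not need---and in this framework cannot---choose $\rho$ to dodge spheres of positive measure: there is no countable additivity here to force only countably many bad radii, and the admissible radii themselves form a countable set, so the avoidance argument has no force. The null-intersection condition is instead built directly into the measure-space axioms: the third clause gives $((m)_M([e\setminus f]\cap[f]))_{R_2}\overset{\mathbb{R}}{\underset{(e_0,S_0)}{=}}0$ for all $e,f\in(g)_G$. In your telescoping each $p_{i+1}$ has the form $E\setminus F$ with $F=B_1\cup\cdots\cup B_i$ and every earlier $p_j\subseteq F$, so $[p_j]\cap[p_{i+1}]\subseteq[F]\cap[E\setminus F]$, and monotonicity of $(m)_M$ (which follows from axioms (2) and (3)) gives measure zero. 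The paper's verification that its atoms form a partition uses exactly this axiom. Replace your sphere argument with this observation and the proof goes through cleanly.
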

\begin{proof}

Let $a_1\overset{}{\underset{ (e_0,S_0)} {\in } }(g)_{G}$ such that $(a_1)_{G}\overset{}{\underset{ (e_0,S_0)} {= } }[(a)_{A}]$. Now lets consider an arbitrary real number   
$(\epsilon)_{E}\overset{\mathbb{R}}{\underset{ (e_0,S_0)} { >} }0$. By total boundedness of $(a)_{A}$ and extensibility we get that $(f)_{F}$ is uniformly continuous (using theorem \ref{exttotaluni}). Let $(\delta)_{E_1}\overset{\mathbb{R}}{\underset{ (e_0,S_0)} {> } }0$ be such that 

$\forall x_1 \forall x_2( ((d)_{D}(x_1,x_2))_{R_2}\overset{\mathbb{R}}{\underset{ (e_0,S_0)} {< } }(\delta)_{E_1} \Rightarrow  |((f)_{F}(x_1))_{R_1}-((f)_{F}(x_2))_{R_1}|\overset{\mathbb{R}}{\underset{ (e_0,S_0)} {< } }(\epsilon)_{E}/((m)_{M}(a_1))_{R_3} )$ 

By total boundedness there exists a finite cover of $(a)_{A}$ by balls of radius less than $(\delta)_{E_1}/2$. Since $((m)_{M},(r_3)_{R_3},(g)_{G},(x)_{X},(d)_{D},(r_2)_{R_2})_{(e_0,S_0)}$ is standard then by connectedness of real sets (and the triangle inequality) there exists a finite cover $(t)_{T}\overset{}{\underset{ (e_0,S_0)} { =} }\{ t_1,t_2,\dots ,t_n \}$ of $(a)_{A}$ such that, for $i=1,2,\dots , n$, $(t_i)_{G}$ is in $(g)_{G}$ and is a ball of radius less than or equal to $(\delta)_{E_1}$.
Let $(b)_{B} \overset{}{\underset{(e_0,S_0) } {= } }\{b_1,b_2,\dots , b_n\}$ such that  $(b_i)_{G}\overset{}{\underset{ (e_0,S_0)} {= } }(t_i)_{G}\cap (a_1)_{G}$ for $i=1,2,\dots , n$.

Let 

$ (k)_{K}\overset{}{\underset{ (e_0,S_0)} {= } }\{ (\overset{}{\underset{i{\underset{(e_0,S_0) } {\in } }(c)_{C} } {\bigcap } }(b_i)_{G})\setminus \overset{}{\underset{j{\underset{(e_0,S_0) } {\in } }\{1,2,\dots ,n\}\setminus (c)_{C} } {\bigcup } }(b_j)_{G} \mid (c)_{C}{\underset{(e_0,S_0) } {\subseteq} }\{1,2, \dots , n\} \text{ and } \\ (c)_{C}{\underset{(e_0,S_0) } {\neq } }\emptyset \}$

We now verify that $(k)_{K}$ is a partition of $(a_1)_{G}$.

1)\underline{\textit{Measure of intersections is zero}}:

Let $ (c_1)_{C_1}{\underset{(e_0,S_0) } {\subseteq} }\{1,2, \dots , n\} $ and $ (c_2)_{C_2}{\underset{(e_0,S_0) } {\subseteq} }\{1,2, \dots , n\} $ with $(c_1)_{C_1}\overset{}{\underset{ (e_0,S_0)} { \neq} }\emptyset$, $(c_2)_{C_2}\overset{}{\underset{ (e_0,S_0)} { \neq} }\emptyset$ and $(c_1)_{C_1}\overset{}{\underset{ (e_0,S_0)} {\neq } }(c_2)_{C_2}$.

Either there exists $q$ such that $q\overset{}{\underset{ (e_0,S_0)} {\in } }(c_1)_{C_1}$ and $q\overset{}{\underset{ (e_0,S_0)} {\notin } }(c_2)_{C_2}$ or there exists $q$ such that $q\overset{}{\underset{ (e_0,S_0)} {\in } }(c_2)_{C_2}$ and $q\overset{}{\underset{ (e_0,S_0)} {\notin } }(c_1)_{C_1}$.

If there exists $q$ such that $q\overset{}{\underset{ (e_0,S_0)} {\in } }(c_1)_{C_1}$ and $q\overset{}{\underset{ (e_0,S_0)} {\notin } }(c_2)_{C_2}$ then $q\overset{}{\underset{ (e_0,S_0)} {\in } }\{1,2, \dots , n \}\setminus(c_2)_{C_2}$. So

$$ (\overset{}{\underset{i{\underset{(e_0,S_0) } {\in } }(c_1)_{C_1} } {\bigcap } }(b_i)_{G})\setminus \overset{}{\underset{j{\underset{(e_0,S_0) } {\in } }\{1,2,\dots ,n\}\setminus (c_1)_{C_1} } {\bigcup } }(b_j)_{G} \overset{}{\underset{ (e_0,S_0)} {\subseteq } } (b_q)_{G} \overset{}{\underset{ (e_0,S_0)} {\subseteq } } 
\overset{}{\underset{j{\underset{(e_0,S_0) } {\in } }\{1,2,\dots ,n\}\setminus (c_2)_{C_2} } {\bigcup } }(b_j)_{G} $$  

which implies 
$$[(\overset{}{\underset{i{\underset{(e_0,S_0) } {\in } }(c_2)_{C_2} } {\bigcap } }(b_i)_{G})\setminus \overset{}{\underset{j{\underset{(e_0,S_0) } {\in } }\{1,2,\dots ,n\}\setminus (c_2)_{C_2} } {\bigcup } }(b_j)_{G} ]\cap 
[(\overset{}{\underset{i{\underset{(e_0,S_0) } {\in } }(c_1)_{C_1} } {\bigcap } }(b_i)_{G})\setminus \overset{}{\underset{j{\underset{(e_0,S_0) } {\in } }\{1,2,\dots ,n\}\setminus (c_1)_{C_1} } {\bigcup } }(b_j)_{G} ] $$
$$
\overset{}{\underset{ (e_0,S_0)} {\subseteq } }
[(\overset{}{\underset{i{\underset{(e_0,S_0) } {\in } }(c_2)_{C_2} } {\bigcap } }b_i)\setminus \overset{}{\underset{j{\underset{(e_0,S_0) } {\in } }\{1,2,\dots ,n\}\setminus (c_2)_{C_2} } {\bigcup } }(b_j)_{G} ]\cap
[ \overset{}{\underset{j{\underset{(e_0,S_0) } {\in } }\{1,2,\dots ,n\}\setminus (c_2)_{C_2} } {\bigcup } }(b_j)_{G} ]
 $$
 
 so 
 
 $$ ((m)_{M}([(\overset{}{\underset{i{\underset{(e_0,S_0) } {\in } }(c_2)_{C_2} } {\bigcap } }(b_i)_{G})\setminus \overset{}{\underset{j{\underset{(e_0,S_0) } {\in } }\{1,2,\dots ,n\}\setminus (c_2)_{C_2} } {\bigcup } }(b_j)_{G} ]\cap$$  $$
[(\overset{}{\underset{i{\underset{(e_0,S_0) } {\in } }(c_1)_{C_1} } {\bigcap } }(b_i)_{G})\setminus \overset{}{\underset{j{\underset{(e_0,S_0) } {\in } }\{1,2,\dots ,n\}\setminus (c_1)_{C_1} } {\bigcup } }(b_j)_{G} ] ))_{R_3} \overset{\mathbb{R}}{\underset{ (e_0,S_0)} {\leq } }  $$ 
$$ ((m)_{M}( [(\overset{}{\underset{i{\underset{(e_0,S_0) } {\in } }(c_2)_{C_2} } {\bigcap } }(b_i)_{G})\setminus \overset{}{\underset{j{\underset{(e_0,S_0) } {\in } }\{1,2,\dots ,n\}\setminus (c_2)_{C_2} } {\bigcup } }(b_j)_{G} ]\cap
[ \overset{}{\underset{j{\underset{(e_0,S_0) } {\in } }\{1,2,\dots ,n\}\setminus (c_2)_{C_2} } {\bigcup } }(b_j)_{G} ] ))_{R_3}\overset{\mathbb{R}}{\underset{ (e_0,S_0)} {= } }0 $$

It follows that 

$$ ((m)_{M}([(\overset{}{\underset{i{\underset{(e_0,S_0) } {\in } }(c_2)_{C_2} } {\bigcap } }(b_i)_{G})\setminus \overset{}{\underset{j{\underset{(e_0,S_0) } {\in } }\{1,2,\dots ,n\}\setminus (c_2)_{C_2} } {\bigcup } }(b_j)_{G} ]\cap$$ 
$$ 
[(\overset{}{\underset{i{\underset{(e_0,S_0) } {\in } }(c_1)_{C_1} } {\bigcap } }(b_i)_{G})\setminus \overset{}{\underset{j{\underset{(e_0,S_0) } {\in } }\{1,2,\dots ,n\}\setminus (c_1)_{C_1} } {\bigcup } }(b_j)_{G} ]))_{R_3} \overset{\mathbb{R}}{\underset{ (e_0,S_0)} {= } } 0  $$

By the same logic, if there exists $q$ such that $q\overset{}{\underset{ (e_0,S_0)} {\in } }(c_2)_{C_2}$ and $q\overset{}{\underset{ (e_0,S_0)} {\notin } }(c_1)_{C_1}$ we also obtain  $$ ((m)_{M}([(\overset{}{\underset{i{\underset{(e_0,S_0) } {\in } }(c_2)_{C_2} } {\bigcap } } (b_i)_{G})\setminus \overset{}{\underset{j{\underset{(e_0,S_0) } {\in } }\{1,2,\dots ,n\}\setminus (c_2)_{C_2} } {\bigcup } }(b_j)_{G} ]\cap$$
$$
[(\overset{}{\underset{i{\underset{(e_0,S_0) } {\in } }(c_1)_{C_1} } {\bigcap } }(b_i)_{G})\setminus \overset{}{\underset{j{\underset{(e_0,S_0) } {\in } }\{1,2,\dots ,n\}\setminus (c_1)_{C_1} } {\bigcup } }(b_j)_{G} ] ))_{R_3}  \overset{\mathbb{R}}{\underset{ (e_0,S_0)} {= } } 0  $$

2)\underline{{\textit Closure of union of partition elements is $[(a)_{A}]$}}:

We first prove that 


$$\overset{}{\underset{ i\overset{}{\underset{ (e_0,S_0)} {\in } }\{1,2,\dots ,n\} } {\bigcup } }(b_i)_{G}  \overset{}{\underset{ (e_0,S_0)} {\subseteq } }\overset{}{\underset{j\overset{}{\underset{ (e_0,S_0)} {\in } }(k)_{K} } {\bigcup } }(j)_{K} $$

It will suffice to show that for any $(c)_{C}\overset{}{\underset{ (e_0,S_0)} {\subseteq } }\{1,2,\dots ,n \}$ we have 

$$ \overset{}{\underset{i\overset{}{\underset{ (e_0,S_0)} {\in } }(c)_{C} } {\bigcap } }(b_i)_{G} \overset{}{\underset{ (e_0,S_0)} {\subseteq } } \overset{}{\underset{j\overset{}{\underset{ (e_0,S_0)} {\in } }(k)_{K} } {\bigcup } }(j)_{K}$$

Proceed by induction on $n-card((c)_{C})+1$ where $(c)_{C}\overset{}{\underset{ (e_0,S_0)} {\subseteq } }\{1,2,\dots ,n  \}$.

If $card( (c)_{C} )=n$ then we easily see that $(c)_{C}\overset{}{\underset{ (e_0,S_0)} {= } }\{1,2,\dots ,n \}$ and 

$$ \overset{}{\underset{i\overset{}{\underset{ (e_0,S_0)} {\in } }\{1,2,\dots ,n  \} } {\bigcap } }(b_i)_{G} \overset{}{\underset{ (e_0,S_0)} {\subseteq } } \overset{}{\underset{j\overset{}{\underset{ (e_0,S_0)} {\in } }(k)_{K} } {\bigcup } }(j)_{K} $$

Suppose that for some $m$, if $card( (c)_{C} )=m$ and $m>1$ we have  

$$ \overset{}{\underset{i\overset{}{\underset{ (e_0,S_0)} {\in } }(c)_{C} } {\bigcap } }(b_i)_{G} \overset{}{\underset{ (e_0,S_0)} {\subseteq } } \overset{}{\underset{j\overset{}{\underset{ (e_0,S_0)} {\in } }(k)_{K} } {\bigcup } }(j)_{K}$$

Consider a set $(c_1)_{C_1}\overset{}{\underset{ (e_0,S_0)} {\subseteq } }\{ 1,2,\dots ,n \}$ with $card( (c_1)_{C_1} )=m-1$. We have

$$( (\overset{}{\underset{i{\underset{(e_0,S_0) } {\in } }(c_1)_{C_1} } {\bigcap } }(b_i)_{G})\setminus \overset{}{\underset{j{\underset{(e_0,S_0) } {\in } }\{1,2,\dots ,n\}\setminus (c_1)_{C_1} } {\bigcup } }(b_j)_{G} ) \overset{}{\underset{ (e_0,S_0)} {\subseteq } } 
\overset{}{\underset{j\overset{}{\underset{ (e_0,S_0)} {\in } }(k)_{K} } {\bigcup } }(j)_{K}	$$

By hypothesis it follows that for all $l\overset{}{\underset{ (e_0,S_0)} {\in } }\{1,2,\dots ,n  \}\setminus (c_1)_{C_1}$ we have 

$$ ( (b_l)_{G} \cap  \overset{}{\underset{i{\underset{(e_0,S_0) } {\in } }(c_1)_{C_1} } {\bigcap } }(b_i)_{G} 
 ) \overset{}{\underset{ (e_0,S_0)} {\subseteq } }  \overset{}{\underset{j\overset{}{\underset{ (e_0,S_0)} {\in } }(k)_{K} } {\bigcup } }(j)_{K}
                                    $$
                                    
so

$$ \overset{}{\underset{ l\overset{}{\underset{ (e_0,S_0)} {\in } }\{1,2,\dots ,n  \}\setminus (c_1)_{C_1} } {\bigcup } } 
 ((b_l)_{G} \cap  \overset{}{\underset{i{\underset{(e_0,S_0) } {\in } }(c_1)_{C_1} } {\bigcap } }(b_i)_{G} )               
\overset{}{\underset{ (e_0,S_0)} {\subseteq } } \overset{}{\underset{j\overset{}{\underset{ (e_0,S_0)} {\in } }(k)_{K} } {\bigcup } }(j)_{K}    $$

Since 
$$ \overset{}{\underset{ l\overset{}{\underset{ (e_0,S_0)} {\in } }\{1,2,\dots ,n  \}\setminus (c_1)_{C_1} } {\bigcup } } 
 ((b_l)_{G} \cap  \overset{}{\underset{i{\underset{(e_0,S_0) } {\in } }(c_1)_{C_1} } {\bigcap } }(b_i)_{G} ) 
\overset{}{\underset{ (e_0,S_0)} {= } }$$ 
$$
( \overset{}{\underset{i{\underset{(e_0,S_0) } {\in } }(c_1)_{C_1} } {\bigcap } }(b_i)_{G} ) \cap
( \overset{}{\underset{l{\underset{(e_0,S_0) } {\in } }\{1,2,\dots ,n  \}\setminus (c_1)_{C_1} } {\bigcup } }(b_l)_{G} ) 		$$

then we get 
$$
( (\overset{}{\underset{i{\underset{(e_0,S_0) } {\in } }(c_1)_{C_1} } {\bigcap } }(b_i)_{G})\setminus \overset{}{\underset{j{\underset{(e_0,S_0) } {\in } }\{1,2,\dots ,n\}\setminus (c_1)_{C_1} } {\bigcup } }(b_j)_{G} )$$  $$ 
\cup ( ( \overset{}{\underset{i{\underset{(e_0,S_0) } {\in } }(c_1)_{C_1} } {\bigcap } }(b_i)_{G} ) \cap
( \overset{}{\underset{i{\underset{(e_0,S_0) } {\in } }\{1,2,\dots ,n  \}\setminus (c_1)_{C_1} } {\bigcup } }(b_i)_{G} ) ) $$
$$	\overset{}{\underset{ (e_0,S_0)} {\subseteq } }
\overset{}{\underset{j\overset{}{\underset{ (e_0,S_0)} {\in } }(k)_{K} } {\bigcup } }(j)_{K}
$$

which implies 

$$
\overset{}{\underset{i{\underset{(e_0,S_0) } {\in } }(c_1)_{C_1} } {\bigcap } }(b_i)_{G}
\overset{}{\underset{ (e_0,S_0)} {\subseteq } }
\overset{}{\underset{j\overset{}{\underset{ (e_0,S_0)} {\in } }(k)_{K} } {\bigcup } }(j)_{K}$$

So by induction we see that $(b_i)_{G}\overset{}{\underset{ (e_0,S_0)} {\subseteq } }\overset{}{\underset{j\overset{}{\underset{ (e_0,S_0)} {\in } }(k)_{K} } {\bigcup } }(j)_{K}$ for any $i\overset{}{\underset{ (e_0,S_0)} {\in } }\{1,2,\dots ,n  \}$ and obtain 
$$\overset{}{\underset{ i\overset{}{\underset{ (e_0,S_0)} {\in } }\{1,2,\dots ,n\} } {\bigcup } }(b_i)_{G}  \overset{}{\underset{ (e_0,S_0)} {\subseteq } }\overset{}{\underset{j\overset{}{\underset{ (e_0,S_0)} {\in } }(k)_{K} } {\bigcup } }(j)_{K} $$

Since
$$\overset{}{\underset{j\overset{}{\underset{ (e_0,S_0)} {\in } }(k)_{K} } {\bigcup } }(j)_{K} 
\overset{}{\underset{ (e_0,S_0)} {\subseteq } }
\overset{}{\underset{ i\overset{}{\underset{ (e_0,S_0)} {\in } }\{1,2,\dots ,n\} } {\bigcup } }(b_i)_{G}$$

we obtain 

$$\overset{}{\underset{ i\overset{}{\underset{ (e_0,S_0)} {\in } }\{1,2,\dots ,n\} } {\bigcup } }(b_i)_{G}  \overset{}{\underset{ (e_0,S_0)} {= } }\overset{}{\underset{j\overset{}{\underset{ (e_0,S_0)} {\in } }(k)_{K} } {\bigcup } }(j)_{K} $$

We conclude that

$$ [ (a)_{A} ] \overset{}{\underset{ (e_0,S_0)} {= } }[ \overset{}{\underset{ i \overset{}{\underset{ (e_0,S_0)} {\in } }  \{1,2,\dots ,n\} } {\bigcup } }(b_i)_{G}] \overset{}{\underset{ (e_0,S_0)} {= } } [ \overset{}{\underset{j\overset{}{\underset{ (e_0,S_0)} {\in } }(k)_{K} } {\bigcup } }(j)_{K} ] $$

Let $(h_1)_{H_1}:(k)_{K}\overset{}{\underset{ (e_0,S_0)} {\rightarrow } }(r_4)_{R_4}$ be a real function
such that

$\forall k_i\overset{}{\underset{ (e_0,S_0)} {\in } }(k)_{K} ( ((h_1)_{H_1}(k_i))_{R_4}\overset{\mathbb{R}}{\underset{ (e_0,S_0)} {= } }sup((f)_{F}( (k_i)_{K} )))$.

and let $(h_2)_{H_2}:(k)_{K}\overset{}{\underset{ (e_0,S_0)} {\rightarrow } }(r_5)_{R_5}$ be a real function
such that

$\forall k_i\overset{}{\underset{ (e_0,S_0)} {\in } }(k)_{K} ( ((h_2)_{H_2}(k_i))_{R_5}\overset{\mathbb{R}}{\underset{ (e_0,S_0)} {= } }inf((f)_{F}( (k_i)_{K} )))$.

Let 
$$(u)_{U}\overset{\mathbb{R}}{\underset{ (e_0,S_0)} {= } } {\underset{k_i\overset{}{\underset{ (e_0,S_0)} {\in } }(k)_{K}}{\sum}}((h_1)_{H_1}(k_i))_{R_4}((m)_{M}(k_i))_{R_3}$$

$$(l)_{L}\overset{\mathbb{R}}{\underset{ (e_0,S_0)} {= } } {\underset{k_i\overset{}{\underset{ (e_0,S_0)} {\in } }(k)_{K}}{\sum}}((h_2)_{H_2}(k_i))_{R_5}((m)_{M}(k_i))_{R_3}$$

We get
$$(u)_{U}-(l)_{L}\overset{\mathbb{R}}{\underset{ (e_0,S_0)} {= } } {\underset{k_i\overset{}{\underset{ (e_0,S_0)} {\in } }(k)_{K}}{\sum}}( ( (h_1)_{H_1}(k_i) )_{R_4}- ( (h_2)_{H_2}(k_i) )_{R_5} )({m}_{M}(k_i))_{R_3} $$

$$\overset{\mathbb{R}}{\underset{ (e_0,S_0)} {\leq } }{\underset{k_i\overset{}{\underset{ (e_0,S_0)} {\in } }(k)_{K}}{\sum}}((\epsilon)_{E}/((m)_{M}(a_1))_{R_3})({m}_{M}(k_i))_{R_3} \overset{\mathbb{R}}{\underset{ (e_0,S_0)} {\leq } }(\epsilon)_{E}$$

so $(f)_{F}:(a)_{A}\overset{}{\underset{ (e_0,S_0)} { \rightarrow} }(r_1)_{R_1}$ is simply integrable over \\  $((m)_{M},(g)_{G},(r_3)_{R_3},(x)_{X},(d)_{D},(r_2)_{R_2})_{(e_0,S_0)}$ 

\end{proof}

Further generalizations will be considered in subsequent papers.

\bibliography{bibli}{}
\bibliographystyle{plain}








\end{document}